\theoremstyle{plain}
\newtheorem{theorem}{Theorem}[section]
\newtheorem{corollary}{Corollary}[section]
\newtheorem{proposition}{Proposition}[section]
\theoremstyle{remark}
\newtheorem{remark}{Remark}[section]
\definecolor{green}{rgb}{0.0, 0.5, 0.0}
\newcommand{\bol}[1]{\mbox{\boldmath$#1$}}
\newcommand{\bSigma}{\mathbf{\Sigma}}
\newcommand{\bmu}{\bol{\mu}}
\newcommand{\btheta}{\bol{\theta}}
\newcommand{\bb}{\mathbf{b}}
\newcommand{\bx}{\mathbf{x}}
\newcommand{\by}{\mathbf{y}}
\newcommand{\bby}{\bar{\mathbf{y}}}
\newcommand{\bbx}{\bar{\mathbf{x}}}
\newcommand{\tbY}{\widetilde{\mathbf{Y}}}
\newcommand{\bH}{\mathbf{H}}
\newcommand{\bJ}{\mathbf{J}}
\newcommand{\bX}{\mathbf{X}}
\newcommand{\bY}{\mathbf{Y}}
\newcommand{\bv}{\mathbf{v}}
\newcommand{\bOne}{\mathbf{1}}
\newcommand{\bI}{\mathbf{I}}
\newcommand{\tr}{\operatorname{tr}}
\newcommand{\bA}{\bol{A}}
\newcommand{\bxi}{\boldsymbol{\xi}}
\newcommand{\bS}{\mathbf{S}}
\newcommand{\bU}{\mathbf{U}}
\newcommand{\bTheta}{\mathbf{\Theta}}
\newcommand{\ta}{\alpha}
\newcommand{\tb}{\beta}
\newcommand{\bi}{\mathbf{1} }
\newcommand{\argmin}{\mathop{\mathrm{argmin}}}
\DeclareMathOperator*{\argmax}{argmax}
\newcommand*{\addFileDependency}[1]{% argument=file name and extension
\typeout{(#1)}% latexmk will find this if $recorder=0
% however, in that case, it will ignore #1 if it is a .aux or 
% .pdf file etc and it exists! If it doesn't exist, it will appear 
% in the list of dependents regardless)
%
% Write the following if you want it to appear in \listfiles 
% --- although not really necessary and latexmk doesn't use this
%
\@addtofilelist{#1}
%
% latexmk will find this message if #1 doesn't exist (yet)
\IfFileExists{#1}{}{\typeout{No file #1.}}
}\makeatother
\begin{document}

\begin{frontmatter}
\title{Reviving pseudo-inverses:\\ Asymptotic properties of large dimensional Moore-Penrose and Ridge-type inverses with applications}
%\title{A sample article title with some additional note\thanksref{t1}}
\runtitle{Asymptotic properties of large inverses}
%\thankstext{T1}{A sample additional note to the title.}

\begin{aug}
%%%%%%%%%%%%%%%%%%%%%%%%%%%%%%%%%%%%%%%%%%%%%%%
%% Only one address is permitted per author. %%
%% Only division, organization and e-mail is %%
%% included in the address.                  %%
%% Additional information can be included in %%
%% the Acknowledgments section if necessary. %%
%% ORCID can be inserted by command:         %%
%% \orcid{0000-0000-0000-0000}               %%
%%%%%%%%%%%%%%%%%%%%%%%%%%%%%%%%%%%%%%%%%%%%%%%
\author[A]{\fnms{Taras}~\snm{Bodnar}\ead[label=e1]{taras.bodnar@liu.se}}
%\author[B]{\fnms{Second}~\snm{Author}\ead[label=e2]{second@somewhere.com}\orcid{0000-0000-0000-0000}}
\and
\author[B]{\fnms{Nestor}~\snm{Parolya}\ead[label=e2]{n.parolya@tudelft.nl}}
%%%%%%%%%%%%%%%%%%%%%%%%%%%%%%%%%%%%%%%%%%%%%%
%% Addresses                                %%
%%%%%%%%%%%%%%%%%%%%%%%%%%%%%%%%%%%%%%%%%%%%%%
\address[A]{{Department of Management and Engineering, Link\"{o}ping University, SE-581 83 Link\"{o}ping, Sweden\printead[presep={,\ }]{e1}}}
\address[B]{Department of Applied Mathematics, Delft University of Technology, Mekelweg 4, 2628 CD Delft, The Netherlands\printead[presep={,\ }]{e2}}
\end{aug}

\begin{abstract}
 In this paper, we derive high-dimensional asymptotic properties of the Moore-Penrose inverse and, as a byproduct, of various ridge-type inverses of the sample covariance matrix. In particular, the analytical expressions of the asymptotic behavior of the weighted sample trace moments of generalized inverse matrices are deduced in terms of the partial exponential Bell polynomials which can be easily computed in practice. The existent results for pseudo-inverses are extended in several directions: (i) First, the population covariance matrix is not assumed to be a multiple of the identity matrix; (ii) Second, the assumption of normality is not used in the derivation; (iii) Third, the asymptotic results are derived under the high-dimensional asymptotic regime. Our findings provide universal methodology for construction of fully data-driven improved shrinkage estimators of the precision matrix, optimal portfolio weights and beyond. It is found that the Moore-Penrose inverse acts asymptotically as a certain regularizer of the true covariance matrix and it seems that its proper transformation (shrinkage) performs similarly to or even outperforms the existing benchmarks in many applications, while keeping the computational time as minimal as possible.
\end{abstract}

\begin{keyword}[class=MSC]
\kwd[Primary ]{60B20}
\kwd{15A09}
\kwd{62R07}
\kwd[; secondary ]{62H12}
\kwd{62F12}
\end{keyword}

\begin{keyword}
\kwd{Moore-Penrose inverse} 
\kwd{Bell polynomials}
\kwd{Sample covariance matrix}
\kwd{Random matrix theory}
\kwd{High-dimensional asymptotics}
\end{keyword}

\end{frontmatter}
%%%%%%%%%%%%%%%%%%%%%%%%%%%%%%%%%%%%%%%%%%%%%%
%% Please use \tableofcontents for articles %%
%% with 50 pages and more                   %%
%%%%%%%%%%%%%%%%%%%%%%%%%%%%%%%%%%%%%%%%%%%%%%
%\tableofcontents

\section{Introduction}\label{sec:intro}

The covariance matrix and its inverse, the precision matrix, are present in many different applications. The covariance matrix is typically regarded as a multivariate measure of uncertainty and as a measure of linear dependence structure (see \cite{rencher2012methods}), while the precision matrix is present in the formulae of the weights of optimal portfolios in finance (see \cite{ao2019approaching}, \cite{cai2020high}, \cite{kan2022optimal}, \cite{bodnarokhrinparolya2023}, \cite{lassance2023optimalcombination}), in the expression of the minimum variance filter in signal processing (see \cite{PalomarBook2016}), in high-dimensional time-series analysis (cf., \cite{heiny2019random}, \cite{heiny2021large}), in prediction and test theory in multivariate and high-dimensional statistics (see \cite{chen2010tests}, \cite{cai2011limiting}, \cite{yao_zheng_bai_2015}, \cite{bodnardetteparolya2019}, \cite{shi2022universally}).

In practice, the unknown true covariance matrix is commonly estimated by the sample covariance matrix, while one of the most used estimators of the precision matrix is the inverse of the sample covariance matrix. The sample estimator of the covariance matrix possesses several important properties. First, it is an unbiased estimator of the true covariance matrix. Second, when the dimension of the data-generating model is fixed and the sample size tends to infinity, then both the sample covariance matrix and its inverse are consistent estimators for the population covariance matrix and the population precision matrix, respectively (see \cite{muirhead1990}).

The situation becomes challenging in the high-dimensional setting, i.e., when the model dimension is proportional to the sample size, especially if it is larger than the sample size. It is known as the high-dimensional asymptotic regime or Kolmogorov asymptotics (see \cite{bai2010spectral}). Neither the sample covariance matrix nor its inverse are consistent estimators of the corresponding population quantities without imposing some restrictions on the structure of the true covariance/precision matrix. The issue is even more difficult when the observation vectors are taken from a heavy-tailed distribution (see, e.g., \cite{heiny2022limiting}) and the dimension of the data-generating model is larger than the sample size. In the latter case, the sample covariance matrix is singular and its inverse cannot be constructed (cf., \cite{muirhead1990}, \cite{srivastava2003singular}).

Matrix algebra proposes several ways how a generalized or pseudo-inverse of a singular matrix can be defined (see, e.g., \cite{penrose1955generalized}, \cite{rao1972generalized}, \cite{ben2003generalized}, \cite{wang2018generalized}). Although the generalized inverse is not uniquely determined in general, there exists a specific type of pseudo-inverse matrices, which is uniquely defined. This is the Moore-Penrose inverse, which is also a least squares generalized inverse (see, e.g., \cite{harville1997matrix}). 

Even though the  properties of the Moore-Penrose inverse of deterministic matrices have been studied in the literature (cf., \cite{meyer1973generalized}, \cite{harville1997matrix}), only a few results are available for the Moore-Penrose inverse of the sample covariance matrix which were derived under very strict assumptions imposed on the data-generating model. Regarding the finite-sample properties, the sample covariance matrix has a singular Wishart distribution under the assumption of normality (\cite{srivastava2003singular}). The density function of the Moore-Penrose inverse of a singular Wishart distributed random matrix was derived in \cite{bodnar2008properties}. The resulting expression appears to be very complicated. As such, the moments (i.e., expectations of powers of the matrix) of the Moore-Penrose inverse of the sample covariance matrix cannot even be obtained under the assumption of normality imposed on the data-generating model when the true population covariance matrix is not restricted to be proportional to the identity matrix. Only the expressions of the upper and lower limits for the mean matrix and covariance matrix of the Moore-Penrose inverse of the sample covariance matrix were derived in \cite{imori2020mean} in the general case, while \cite{cook2011mean} provided the exact mean matrix and the covariance matrix in the very restrictive special case when the true covariance matrix is proportional to the identity matrix. Both these results are non-asymptotic and were obtained when the data-generating model follows a multivariate normal distribution. No other results have been derived either for the Moore-Penrose inverse or for the ridge-type inverse in the literature in the non-asymptotic setting to the best of our knowledge, even though both matrices are widely used in practice (see, e.g., \cite{ben2003generalized}, \cite{wang2018generalized}). 
The ridge-type inverse was used for constructing shrinkage estimators of the precision matrix in \cite{kubokawa2008estimation} and \cite{wang2015shrinkage}. While \cite{kubokawa2008estimation} derived a shrinkage estimator by assuming that the observation matrix is drawn from the multivariate normal distribution, \cite{wang2015shrinkage} derived the results in the general case by using the methods of random matrix theory. Lastly, while the limiting spectral distribution and central limit theorem (CLT) for linear spectral statistics of the Moore-Penrose inverse are discussed in \cite{bodnardetteparolya2016}, explicit results involving eigenvectors are not available. To date, this remains the only known asymptotic analysis of the large-sample behavior of the Moore-Penrose inverse of the sample covariance matrix.

We contribute to the existing literature in several directions. First, the asymptotic limits of higher-order weighted empirical trace moments (see, e.g., \cite{golub1994estimates} and \cite{BAI1996}) of Moore-Penrose and ridge-type inverses are derived by imposing no specific distributional assumption on the data-generating model. This addresses a well-known challenge in multivariate statistics, particularly when the data dimension exceeds the sample size. Our general weighting scheme enables insights into the asymptotic behavior of the matrix elements, the interaction between eigenvalues and eigenvectors, and lays the groundwork for developing new shrinkage estimators. Second, we assume only the existence of fourth moments and impose no structural assumptions on the population covariance matrix beyond positive definiteness and boundedness of its spectrum. Third, our results are established under a high-dimensional asymptotic regime, where both the dimension and sample size tend to infinity. Fourth, we extend existing results by deriving asymptotic properties of higher-order weighted trace moments for ridge-type inverses. Fifth, as a by-product of our methodology, we provide closed-form expressions for the asymptotic equivalents of the higher-order weighted trace moments of the sample covariance matrix itself. Finally, the developed theory is applied in the derivation of improved shrinkage estimators of the precision matrix and optimal portfolio weights.

The rest of the paper is structured as follows. In Section \ref{sec:main}, the main theoretical results are presented. The weighted sample trace moments of the sample Moore-Penrose inverse are given in Section \ref{sec:main-MP}, while Section \ref{sec:main-ridge} and Section \ref{sec:main-MP-ridge} present similar findings derived for the ridge-type inverse and the Moore-Penrose-ridge inverse. Section S.3 in the supplement (\cite{BP2025reviving-S}) provides additional results derived for the sample covariance matrix. The obtained theoretical results are used in the derivation of shrinkage estimators of the precision matrix in Section \ref{sec:shrink-prec} and optimal portfolio weights in Section \ref{sec:shrink-gmv}. The finite-sample performance of the theoretical findings is investigated via an extensive simulation study in Section \ref{sec:sim}. Finally, the proofs and some additional simulations are postponed to the supplementary material (\cite{BP2025reviving-S}).

\section{Asymptotic properties of pseudo-inverse matrices}\label{sec:main}
Let $\bY_n=(\by_1,\by_2,...,\by_n)$ be the $p \times n$ observation matrix with $p>n$ and let $\mathbb{E}(\mathbf{y}_i)=\bmu$ and $\mathbb{C}ov(\mathbf{y}_i)=\bSigma$ for $i \in 1,...,n$. Throughout the paper, it is assumed that there exists a $p\times n$ random matrix $\bX_n=(\bx_1,\bx_2,...,\bx_n)$ which consists of independent and identically distributed (i.i.d.) random variables with zero mean and unit variance such that
\begin{equation}\label{obs}
\bY_n \stackrel{d}{=} \bmu \bi_n^\top + \bSigma^{{1}/{2}}\bX_n ,
\end{equation}
where $\bi_n$ denotes the $n$-dimensional vector of ones and the symbol $\stackrel{d}{=}$ denotes the equality in distribution.
Using the observation matrix $\bY_n$, the sample estimator of the covariance matrix $\bSigma$ is given by
\begin{equation}\label{bS}
\bS_n= \frac{1}{n}\bY_n\bY_n^\top -\bby_n\bby_n^\top
\quad \text{with} \quad \bby_n= \frac{1}{n}\bY_n \bi_n.
\end{equation}

In many statistical and machine learning applications, one seeks best possible estimators for certain functions of the high-dimensional precision matrix $\bSigma^{-1}$ using the sample covariance matrix $\bS_n$. This problem becomes particularly challenging for the case $p>n$, due to the singularity of $\bS_n$, as one must somehow ``invert'' a non-invertible matrix. There are plenty of ways to ``invert'' $\bS_n$ when $p>n$. For a while, we assume that one of the methods has been chosen and denote the corresponding generalized inverse by $\bS^{\#}_n(t)$ where $t$ is a tuning parameter. Since none of these inverses presents a consistent estimator of $\bSigma^{-1}$ in the high-dimensional setting, much of the research focuses on finding an appropriate matrix- or vector-valued function $f(\bS_n^{\#}(t))$ that  asymptotically minimizes the distance to the target transformation of the precision matrix $g(\bSigma^{-1})$ for some prespecified function $g$ (see, e.g., \cite{Lam2020} and references therein). This can be formalized in the following way
\begin{eqnarray}\label{intro_obj}
\argmin\limits_{f\in\mathcal{C}_f}||f(\bS^{\#}_n(t))-g(\bSigma^{-1})||^q_{l_q}   
\end{eqnarray}
with $\mathcal{C}_f$ being a class of functions of interest and $||\cdot||_{l_q}$ denoting the $l_q$-norm. For example, by taking the quadratic norm $l_2$ and $\mathcal{C}_f$ as a class of linear functions, which act on the eigenvalues of $\bS_n$, one can be interested in the estimation of $\bSigma^{-1}$ itself. In that case, the well-established linear shrinkage techniques presented, e.g., in \cite{lw2004}, \cite{BodnarGuptaParolya2014} and \cite{BodnarGuptaParolya2016} among many others, can be recovered. Similarly, nonlinear shrinkage estimators were derived in \cite{lw12}, \cite{lw20} and \cite{lwQIS2020} for a nonlinear function $f$. Also, the functions $f$ and $g$ can be vector valued, which happens often in financial applications, e.g., shrinkage estimation of the optimal portfolio weights (see \cite{frahm2010}, \cite{golosnoy2007multivariate}, \cite{bodnar2018estimation} among others). Taking a closer look at the aforementioned papers and the optimization problem \eqref{intro_obj} itself, we deal very often with the moments of the specific estimator of $\bSigma^{-1}$. More precisely, one has to establish the asymptotic behavior of the weighted trace moments alike $\tr\left[(\bS^{\#}_n(t))^m\bTheta\right]$ with $m=1, 2, 3, \ldots$ for some initial estimator $\bS^{\#}_n(t)$ of the precision matrix and a weighting matrix $\bTheta$ possibly dependent on $\bSigma$. More details can be found in Section \ref{sec:app}, which is concerned with the application of the theoretical results of this paper to the shrinkage estimation techniques.% In this work, we deal with three types of generalized inverses $\bS^{\#}_n(t)$ with a special focus on the Moore-Penrose inverse of the sample covariance matrix.

In this section, we investigate the properties of the weighted sample trace moments of three pseudo-inverses of the sample covariance matrix $\bS_n$ under the high-dimensional asymptotic regime. Namely, we deal with $\text{tr}\left[(\bS_n^{\#}(t))^m\bTheta\right]$ for $m=1,2,\ldots$ where $\bS_n^{\#}(t)$ is one of the pseudo-inverses considered: (i) Moore-Penrose inverse $\bS_n^+$ in Section \ref{sec:main-MP}, (ii) ridge-type inverse $\bS_n^-(t)$ in Section \ref{sec:main-ridge}, and (iii) Moore-Penrose-ridge inverse $\bS_n^\pm(t)$ in Section \ref{sec:main-MP-ridge}. Both the ridge-type inverse and the Moore-Penrose-ridge inverse are functions on a tuning parameter $t$ whose optimal value depends on the application at hand (see Section \ref{sec:shrink-prec} and Section \ref{sec:shrink-gmv} for details). 

Theoretical results are derived by imposing the following three assumptions.
\begin{itemize}
    \item[\textbf{(A1)}] $\bSigma$ is a nonrandom positive definite matrix, which satisfies
\[0 < \inf_p \lambda_{\min}(\Sigma) \leq \sup_p \lambda_{\max}(\Sigma) < \infty.\]
\item[\textbf{(A2)}] The elements of $\bX_n$ have bounded $4+\varepsilon$ moments for some $\varepsilon>0$.
\item[\textbf{(A3)}] $\bTheta$ is symmetric and
\begin{itemize}
    \item[] \hspace{0.3cm}(i) it has finite rank $k$, i.e., $\bTheta=\sum_{i=1}^k\btheta_i\btheta_i^\top$, and $\sup\limits_p\btheta_i^\top \btheta_i<\infty$ for all $i=1,...,k$;
    \item[] \hspace{0.3cm}(ii) or $\lambda_{max}(\bTheta)=o(1)$ as $p \to \infty$, otherwise.
\end{itemize}
\end{itemize}

\vspace{0.1cm}
The first assumption ensures that the smallest and the largest eigenvalues of the population covariance matrix are uniformly bounded in $p$ away from zero and infinity. As such, the only source of the singularity of the sample covariance matrix $\bS_n$ is the lack of data, that is, $n<p$. This is a classical technical assumption imposed in random matrix theory (see, e.g., \cite{pan2014comparison}, \cite{lw20}). Assumption \textbf{(A2)} imposes no distributional assumptions on the data-generating model and presents the classical conditions used in random matrix theory (cf., \cite{bai2010spectral}, \cite{wang2015shrinkage}, \cite{ledoit2021shrinkage}).

\begin{remark}[Discussion on Assumption (A3)]
While the matrix $\bTheta$ can be chosen with some flexibility, its definition inherently includes a normalization factor. For instance, in the case of sample trace moments of a generalized inverse, the choice $\bTheta = \bI_p/p$ leads to all eigenvalues being equal to $1/p$. When $\bTheta$ has finite rank, a bounded Euclidean norm is assumed for each vector $\btheta_i$, $i=1, \ldots, k$. Consequently, if these vectors are not sparse, their entries are expected to be relatively small.
It is important to highlight that the assumption on $\bTheta$ is often of a technical nature. In many applications, expressions involving $\bS_n^{\#}(t)$ appear as ratios (see Section \ref{sec:app}), and adjustments in the definition of $\bTheta$ can be made accordingly. Moreover, Assumption \textbf{(A3)} may be replaced by a slightly stronger but more convenient condition: that $\bTheta$ has bounded trace norm. This alternative assumption requires that
\begin{itemize}
\item[\textbf{(A3$^\prime$)}] $\sup\limits_p \tr\left[(\bTheta^\top\bTheta)^{1/2}\right] < \infty$,
\end{itemize}
and is often easier to verify in practice.
If $\bTheta$ is not symmetric, it should be replaced by its symmetrized version $(\bTheta + \bTheta^\top)/2$. Alternatively, bilinear forms can be addressed by using the identity 
$$\btheta\boldsymbol{\eta}^\top + \boldsymbol{\eta}\btheta^\top = ((\btheta + \boldsymbol{\eta})(\btheta + \boldsymbol{\eta})^\top - (\btheta - \boldsymbol{\eta})(\btheta - \boldsymbol{\eta})^\top)/2$$ 
for any vectors $\btheta$ and $\boldsymbol{\eta}$ in $\mathbb{R}^p$. Therefore, Assumption \textbf{(A3)} is quite general and can either be readily verified or satisfied by normalizing $\bTheta$ via $\tr\left[(\bTheta^\top\bTheta)^{1/2}\right]$. Indeed, if \textbf{(A3)} is not met, then the quantities of interest, $\text{tr}\left[(\bS_n^{\#}(t))^m\bTheta\right]$, either diverge or exhibit persistent fluctuations (e.g., asymptotic normality). The latter case will be explored in future research.
\end{remark}

The typical examples of the weighting matrix $\bTheta$ include several key instances that are crucial in random matrix theory and statistical applications:

\begin{itemize}
    \item Setting $\bTheta$ equal to $\frac{1}{p}\bI_p$ and $m = 1$ leads to the Stieltjes transform restricted to positive real line, which is a concept often associated with classical random matrix theory. This transform is fundamental in understanding the spectral distribution of different types of random matrices (see \cite{bai2010spectral}).
    \item When $\bTheta$ is defined as $\btheta\boldsymbol{\eta}^\top$, it pertains to bilinear or quadratic forms that involve eigenvectors. This approach focuses on the interaction between eigenvalues and eigenvectors.
    \item Choosing $\bTheta$ as $\frac{1}{p}g(\bSigma)$ for some  $g(\cdot)$ corresponds to the Ledoit-Péché functionals, which involve nonlinear shrinkage formulas. These objects are vital for their ability to adjust covariance matrix estimators in the high-dimensional setting (see \cite{ledoitpeche2011}).
\end{itemize}

The significance of exploring these different forms of $\bTheta$ includes the construction of new shrinkage estimators for cases where the ratio of the dimensionality to the sample size exceeds one, which will be elaborated upon later in Section \ref{sec:shrink-prec} and Section \ref{sec:shrink-gmv}. The specification of $\bTheta$ as $\btheta\btheta^\top$ unveils information about the eigenvectors and arises frequently in portfolio analysis (see \cite{bodnarokhrinparolya2023}). Finally, utilizing $\bTheta$ as $\mathbf{e}_i\mathbf{e}_j^\top$, where $\mathbf{e}_i$ is a unit vector with one in the i-th position, helps in deriving the asymptotics of the matrix entries. 
 %This aspect is crucial for understanding the granularity of the matrix structure.
 % Setting $\bTheta$ as $\mathbf{v}\mathbf{v}^\top$, with $\mathbf{v}$ being an eigenvector of $\bSigma$, reveals insights into the angles between the true and sample eigenvectors. This information is vital for assessing the accuracy of eigenvector estimation.

In the formulation of the theoretical results, we use the partial exponential Bell polynomials, which are defined by (see \cite{bell1927partition, bell1934exponential})
\begin{equation}\label{Bell-pol}
    B_{m,k}(x_{1},x_{2}, ... ,x_{m-k+1})=\sum \frac{m!}{j_{1}!j_{2}!... j_{m-k+1}!}     \left(\frac{x_{1}}{1!}\right)^{j_{1}}
\left(\frac{x_{2}}{2!}\right)^{j_{2}}
... \left(\frac{x_{m-k+1}}{(m-k+1)!}\right)^{j_{m-k+1}},
\end{equation} 
where the sum is taken over all sequences $j_1, ... , j_{m-k+1}$ of non-negative integers such that $\sum_{l=1}^{m-k+1}j_l=k$ and $\sum_{l=1}^{m-k+1} l j_l=m$. In practice, the Bell polynomials can be easily computed in the R-package {\color{blue}\textit{kStatistics}}, see also \cite{di2008unifying}.

Further, throughout the paper, we will need a specific function $v(t): \mathbb{R}^+\to\mathbb{R}$, which satisfies the following asymptotic equation
\begin{equation}\label{th2-vt}
\frac{1}{p}\text{tr}\left[\left(v(t)\bSigma+\bI_p\right)^{-1}\right]=\frac{c_n-1+tv(t)}{c_n},
\end{equation}
with $c_n=p/n$. It is easy to verify that there exists the limit of $v(t)$ as $t\to0$ and, consequently, we denote it by $v(0)=\lim\limits_{t\to0}v(t)$. Moreover, from \eqref{th2-vt}, it is straightforward to find that $v(0)$ is a unique solution of the equation
\begin{equation}\label{th1-v0}
    \frac{1}{p}\text{tr}\left[\left(v(0)\bSigma+\bI_p\right)^{-1}\right]=\frac{c_n-1}{c_n} \quad \text{with}\quad c_n=\frac{p}{n}.
\end{equation}
Although the function $v(t)$ is defined as a solution of a nonlinear equation \eqref{th2-vt} and it is not available in closed form, we show that it is a decreasing function in Proposition \ref{th2a} whose proof is given in the supplement.
\begin{proposition}\label{th2a}
%Under the conditions in Theorem \ref{th2}
Let $\bY_n$ fulfill the stochastic representation \eqref{obs}. Then, under Assumptions \textbf{(A1)}-\textbf{(A3)} , $v(t)$ is strictly decreasing for $t \ge 0$ and $c_n >1$.
\end{proposition}

All results in this paper are expressed in terms of the function $v(t)$, its value at zero $v(0)$, and its derivatives $v^{(1)}(t), v^{(2)}(t), \ldots$, along with their respective values at zero, $v^{(1)}(0), v^{(2)}(0), \ldots$, and so on. Furthermore, we demonstrate how these quantities can be consistently estimated in the high-dimensional asymptotic regime where $p/n \to c > 1$ as $n \to \infty$. Accordingly, we now present the consistent estimators of the function $v(t)$, its derivatives, and their values at zero for $m=0, 1, 2, \ldots$ expressed as
\begin{eqnarray}\label{hv0-all1}
\hat{v}^{(m)}(t)&=& (-1)^{m} m! c_n \left(\frac{1}{p}\text{tr}\left[ (\bS_n^{-}(t))^{m+1}\right]-t^{-(m+1)} \frac{c_n-1}{c_n}\right),\\
\hat{v}^{(m)}(0)&=& (-1)^{m} m! c_n \frac{1}{p}\text{tr}\left[ (\bS_n^+)^{m+1}\right],\label{hv0-all}
\end{eqnarray}
with $\hat{v}^{(0)}(t)= \hat{v}(t)$ and $\hat{v}^{(0)}(0)= \hat{v}(0)$. Here, $\bS_n^+$ and $\bS_n^{-}(t)$ are the Moore–Penrose and the ridge-type inverse, defined at the beginning of Sections \ref{sec:main-MP} and \ref{sec:main-ridge}, respectively. The expressions \eqref{hv0-all1} and \eqref{hv0-all} are deduced in Corollaries \ref{cor0} and \ref{cor0Ridge}. 

Throughout the paper, we will assume that the dimension $p$ is a function of the sample size $n$, i.e., $p = p(n)$. All limits are for $n\to\infty$, unless explicitly stated otherwise. At the beginning of the supplement, an abstract {\it sketch of the proof} is presented, which applies to all theorems proved in Section \ref{sec:main}. 

\subsection{Weighted moments of the sample Moore-Penrose inverse}\label{sec:main-MP}

The Moore-Penrose inverse of $\bS_n$ is uniquely defined as the matrix $\bS_n^+$ which fulfills the following four conditions:
\begin{enumerate}
    \item[] (i)~ $\bS_n^+ \bS_n\bS_n^+ =\bS_n^+$, \qquad\qquad (ii) $\bS_n \bS_n^+\bS_n =\bS_n$,
    \item[] (iii) $(\bS_n^+ \bS_n)^\top =\bS_n^+ \bS_n$,
    \qquad~  (iv) $(\bS_n \bS_n^+)^\top =\bS_n\bS_n^+ $.
\end{enumerate}

Theorem \ref{th1} presents the behavior of the weighted sample trace moments of the Moore-Penrose inverse, $\text{tr}( (\bS_n^+)^m\bTheta)$, under the high-dimensional asymptotic regime. 

\begin{theorem}\label{th1}
Let $\bY_n$ fulfill the stochastic representation \eqref{obs}. % with $\bSigma$ being positive definite with the smallest eigenvalue uniformly bounded away from zero. Let
%\begin{enumerate}[(a)]
%    \item $\bTheta$ is of finite rank $k$, i.e., $\bTheta=\sum_{i=1}^k\btheta_i\btheta_i^\top$, and $\btheta_i^\top \btheta_i$, $i=1,...,k$, are uniformly bounded in $p$;
%    \item or  $\lambda_{max}(\bTheta)=o(1)$ as $p \to \infty$, otherwise.
%\end{enumerate}
Then, under Assumptions \textbf{(A1)}-\textbf{(A3)}, it holds for $m=1, 2, \ldots$ that
\begin{equation}\label{th1_eq1}
\left|\emph{tr} \left[ (\bS_n^+)^m\bTheta\right]-s_m(\bTheta)\right| \stackrel{a.s.}{\rightarrow} 0\quad\text{for} \quad p/n \rightarrow c \in (1,\infty) 
\quad \text{as} \quad n \rightarrow \infty,
\end{equation}
where 
\begin{equation}\label{th1-sm}
    s_m(\bTheta)= \sum_{k=1}^m \frac{(-1)^{m+k+1} k!}{m!}d_k(\bTheta)%\text{tr}\left\{\left(v(0)\bSigma+\bI_p\right)^{-1}\left[\bSigma\left(v(0)\bSigma+\bI_p\right)^{-1}\right]^{k}\bTheta\right\} 
    B_{m,k}\left(v^{(1)}(0),...,v^{(m-k+1)}(0)\right),
\end{equation}
with
\begin{eqnarray}\label{th1-dk}
d_k(\bTheta)&=&\emph{tr}\left\{\left(v(0)\bSigma+\bI_p\right)^{-1}\left[\bSigma\left(v(0)\bSigma+\bI_p\right)^{-1}\right]^{k}\bTheta\right\}, \quad k=1,2,..., 
\end{eqnarray}
$v(0)$ the unique solution of the equation \eqref{th1-v0} and its derivatives satisfy
\begin{equation}\label{th1-v0pr}
v^{(1)}(0)=-\frac{1}{\frac{1}{v(0)^2}-c_n  \frac{1}{p}\emph{tr}\left\{\left[\bSigma\left(v(0)\bSigma+\bI_p\right)^{-1}\right]^2\right\}}, 
\end{equation}
$v^{(2)}(0)$,...,$v^{(m)}(0)$ are computed recursively by
\begin{equation}\label{th1-v0pr-m}
v^{(m)}(0)=-v^{(1)}(0)\sum_{k=2}^m (-1)^{k} k! h_{k+1}%\left(\frac{1}{v(0)^{k+1}}-c_n\frac{1}{p}\text{tr}\left\{\left[\bSigma\left(v(0)\bSigma+\bI_p\right)^{-1}\right]^{k+1}\right\}\right)
 B_{m,k}\left(v^{(1)}(0),...,v^{(m-k+1)}(0)\right)%}{\frac{1}{v(0)^{2}}-c_n\frac{1}{p}\text{tr}\left\{\left[\bSigma\left(v(0)\bSigma+\bI_p\right)^{-1}\right]^{2}\right\}}. 
 ~~\text{with}
\end{equation}
\begin{eqnarray}\label{th1-hk}
  h_k=\frac{1}{[v(0)]^{k}}-c_n\frac{1}{p}\emph{tr}\left\{\left[\bSigma\left(v(0)\bSigma+\bI_p\right)^{-1}\right]^{k}\right\}, \quad k=1,2,....  
\end{eqnarray}
\end{theorem}

In Corollary \ref{cor1}, we compute closed-form expressions in case $m=1,2,3,4$.

\begin{corollary}\label{cor1}
Let $\bY_n$ fulfill the stochastic representation \eqref{obs}. Then, under Assumptions \textbf{(A1)}-\textbf{(A3)} we get \eqref{th1_eq1} with
\begin{eqnarray*}
s_1(\bTheta)&=& -v^{(1)}(0) d_1(\bTheta), \quad
s_2(\bTheta)= \frac{1}{2}v^{(2)}(0) d_1(\bTheta)-[v^{(1)}(0)]^2 d_2(\bTheta),\\
s_3(\bTheta)&=&-\frac{1}{6}v^{(3)}(0) d_1(\bTheta)+v^{(1)}(0)v^{(2)}(0) d_2(\bTheta)
-[v^{(1)}(0)]^3 d_3(\bTheta),\\
s_4(\bTheta)&=&\frac{1}{24}v^{(4)}(0) d_1(\bTheta)- \frac{1}{12}\left(4v^{(1)}(0)v^{(3)}(0)+3[v^{(2)}(0)]^2\right) d_2(\bTheta)\\
&&+\frac{3}{2}[v^{(1)}(0)]^2 v^{(2)}(0)  d_3(\bTheta)-[v^{(1)}(0)]^4 d_4(\bTheta),
\end{eqnarray*}
for $p/n \rightarrow c \in (1,\infty)$ as $n \rightarrow \infty$ 
where $v(0)$ is the unique solution of \eqref{th1-v0}, 
\begin{eqnarray*}
v^{(1)}(0)&=&  -h^{-1}_2,\quad
v^{(2)}(0)=-2[v^{(1)}(0)]^3 h_3 ,\quad
v^{(3)}(0)=-6[v^{(1)}(0)]^2 v^{(2)}(0) h_3 +6[v^{(1)}(0)]^4 h_4 ,\\
v^{(4)}(0)&=&
-\left(8 [v^{(1)}(0)]^2 v^{(3)}(0) + 6v^{(1)}(0)[v^{(2)}(0)]^2 \right) h_3 
+36[v^{(1)}(0)]^3 v^{(2)}(0) h_4 
-24[v^{(1)}(0)]^5 h_5 ,
\end{eqnarray*}
and $d_k(\bTheta)$ and $h_k$ are defined in \eqref{th1-dk} and \eqref{th1-hk}, respectively.
\end{corollary}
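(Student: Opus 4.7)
\textbf{Proof proposal for Corollary \ref{cor1}.} The plan is to specialize Theorem \ref{th1} to the cases $m=1,2,3,4$. The only non-trivial ingredient is the computation of the partial exponential Bell polynomials $B_{m,k}(x_1,\ldots,x_{m-k+1})$ for these small values of $m$, after which everything follows by substitution into formula \eqref{th1-sm} (for $s_m(\bTheta)$) and into the recursion \eqref{th1-v0pr-m} (for $v^{(m)}(0)$).

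First, I would enumerate all admissible index tuples $(j_1,\ldots,j_{m-k+1})$ satisfying $\sum_l j_l=k$ and $\sum_l l j_l=m$ appearing in definition \eqref{Bell-pol} and record the values
\begin{align*}
B_{1,1}(x_1)&=x_1,\quad B_{2,1}(x_1,x_2)=x_2,\quad B_{2,2}(x_1)=x_1^2,\\
B_{3,1}&=x_3,\quad B_{3,2}=3x_1 x_2,\quad B_{3,3}=x_1^3,\\
B_{4,1}&=x_4,\quad B_{4,2}=4x_1x_3+3x_2^2,\quad B_{4,3}=6x_1^2 x_2,\quad B_{4,4}=x_1^4.
\end{align*}
Plugging these into \eqref{th1-sm} with $x_l=v^{(l)}(0)$ and accounting for the prefactor $(-1)^{m+k+1}k!/m!$ immediately yields the four stated expressions for $s_1(\bTheta),\ldots,s_4(\bTheta)$; this is a one-line substitution for each $m$.

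Next, for the derivatives $v^{(m)}(0)$ I would apply recursion \eqref{th1-v0pr-m}. The base $v^\prime(0)=-h_2^{-1}$ is obtained directly from \eqref{th1-v0pr} once one recognizes that $\tfrac{1}{v(0)^2}-c_n\tfrac{1}{p}\tr\{[\bSigma(v(0)\bSigma+\bI_p)^{-1}]^2\}=h_2$ in the notation of \eqref{th1-hk}. For $m=2$ only the $k=2$ term in the recursion contributes, giving $v^{\prime\prime}(0)=-v^\prime(0)\cdot 2 h_3\cdot B_{2,2}(v^\prime(0))=-2[v^\prime(0)]^3 h_3$. For $m=3$ and $m=4$, I would substitute the Bell polynomials already computed above and collect terms; the sign bookkeeping coming from $(-1)^k$ in \eqref{th1-v0pr-m} combined with the explicit form of $B_{m,k}$ reproduces the stated formulas for $v^{\prime\prime\prime}(0)$ and $v^{\prime\prime\prime\prime}(0)$.

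Since this is purely an algebraic unpacking of Theorem \ref{th1} and its recursion, there is no genuine analytical obstacle. The only place where a small amount of care is needed is in the computation of $B_{4,2}=4x_1x_3+3x_2^2$, which contains two admissible partitions $(1,0,1)$ and $(0,2,0)$ of $4$ into $2$ parts; verifying the multinomial coefficients $4!/(1!\cdot 0!\cdot 1!)\cdot (1/1!)^1(1/3!)^1=4$ and $4!/(0!\cdot 2!\cdot 0!)\cdot (1/2!)^2=3$ is the main (but entirely routine) calculation. Once $B_{4,2}$ is in hand, the expression for $v^{\prime\prime\prime\prime}(0)$ and hence for $s_4(\bTheta)$ follows directly, completing the proof.
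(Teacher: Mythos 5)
Your proposal is correct and coincides with the paper's own proof: the paper likewise lists the Bell polynomials $B_{m,k}$ for $m\le 4$ (with the same values, including $B_{4,2}=4x_1x_3+3x_2^2$) and then substitutes them into \eqref{th1-sm} and \eqref{th1-v0pr-m}, using \eqref{th1-v0pr} for $v^\prime(0)=-h_2^{-1}$. Your sign bookkeeping and the resulting expressions all check out.
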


The scenario with $m=1$ illustrates that the Moore-Penrose inverse inherently functions as a particular form of regularization. Specifically, it exhibits asymptotic behavior similar/proportional to $$\left(v(0)\bSigma+\bI_p\right)^{-1}\bSigma\left(v(0)\bSigma+\bI_p\right)^{-1}=\frac{1}{v(0)^2}\left(\bSigma+v^{-1}(0)\bI_p\right)^{-1}\bSigma\left(\bSigma+v^{-1}(0)\bI_p\right)^{-1}.$$ This observation prompts the exploration of an alternative type of inverse in Section \ref{sec:main-MP-ridge}.

Corollary \ref{cor0} provides the results for a special case $\bTheta=({1}/{p})\bI_p$ with all eigenvalues equal to $1/p$, Consequently, it fulfills the condition about the weighting matrix $\bTheta$ stated in Assumption \textbf{(A3)}.

\begin{corollary}\label{cor0}
Let $\bY_n$ fulfill the stochastic representation \eqref{obs}.
Then, under Assumptions \textbf{(A1)}-\textbf{(A2)} it holds for $m=1,2,...$ that
\begin{equation}\label{cor0_eq1}
\left|\frac{1}{p}\emph{tr}\left[ (\bS_n^+)^m\right]-\frac{(-1)^{m-1}v^{(m-1)}(0)}{(m-1)! c_n}\right| \stackrel{a.s.}{\rightarrow} 0\quad\text{for} \quad p/n \rightarrow c \in (1,\infty) 
\quad \text{as} \quad n \rightarrow \infty,
\end{equation}
where
%\begin{equation}\label{cor0MPlimits}
%s_{m} \left(\frac{1}{p}\bI_p\right)= \frac{(-1)^{m-1}v^{(m-1)}(0)}{(m-1)! c_n} \quad \text{for} \quad m=1,2,...
%\end{equation}
%and 
$v^{(m)}(0)$ is defined in \eqref{th1-v0}, \eqref{th1-v0pr}, and \eqref{th1-v0pr-m} for $m=0,1,...$ with $v^{(0)}(0)=v(0)$.
\end{corollary}

The results of Corollary \ref{cor0} have a number of important applications. In particular, they allow us to approximate $v(0)$ and $v^{(m)}(0)$, $m=1,2,...$ even though $v(0)$ is defined as a solution of the nonlinear equation \eqref{th1-v0} which can be solved only numerically and the definition of $v^{(m)}(0)$, $m=1,2,...$, depends on $v(0)$. Using the findings of Corollary \ref{cor0}, we get the closed-form expressions of consistent estimators for $v^{(m)}(0)$, $m=0,1,...$ expressed with the formula  \eqref{hv0-all}, namely
\begin{equation*}
\hat{v}^{(m)}(0)= (-1)^{m} m! c_n \frac{1}{p}\text{tr}\left[ (\bS_n^+)^{m+1}\right].
\end{equation*}

In Figure \ref{fig:MP}, the finite-sample performance of the estimators in \eqref{hv0-all} is depicted when $m=0$ and $m=1$ under the assumption of the normal distribution and $t$-distribution for $p/n \in (1,5]$ and $n\in\{100,250,500\}$. We observe that the estimators of $v(0)$ and $v^{(1)}(0)$ converge to their true values already for small sample size $n=100$ when the concentration ratio $c_n$ is larger than 2. When $c_n \in (1,2]$, then a larger sample size is needed. These findings hold independently of the distribution used to generate the elements of $\bX_n$, while the convergence is slower for the $t$-distribution. Finally, we note that the convergence is faster for $v(0)$.

\vspace{-1cm}
\begin{figure}[h!t]
\begin{tabular}{ll}
\hspace{-0.5cm}\includegraphics[width=7cm]{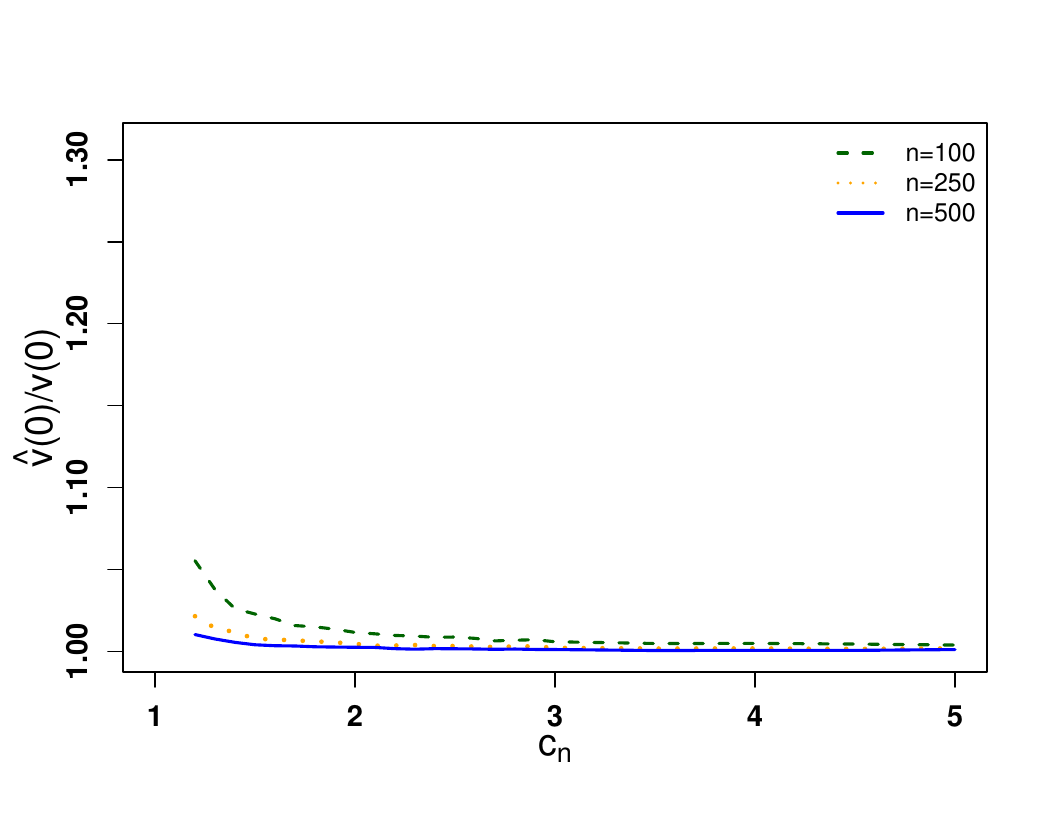}&
\hspace{-0.5cm}\includegraphics[width=7cm]{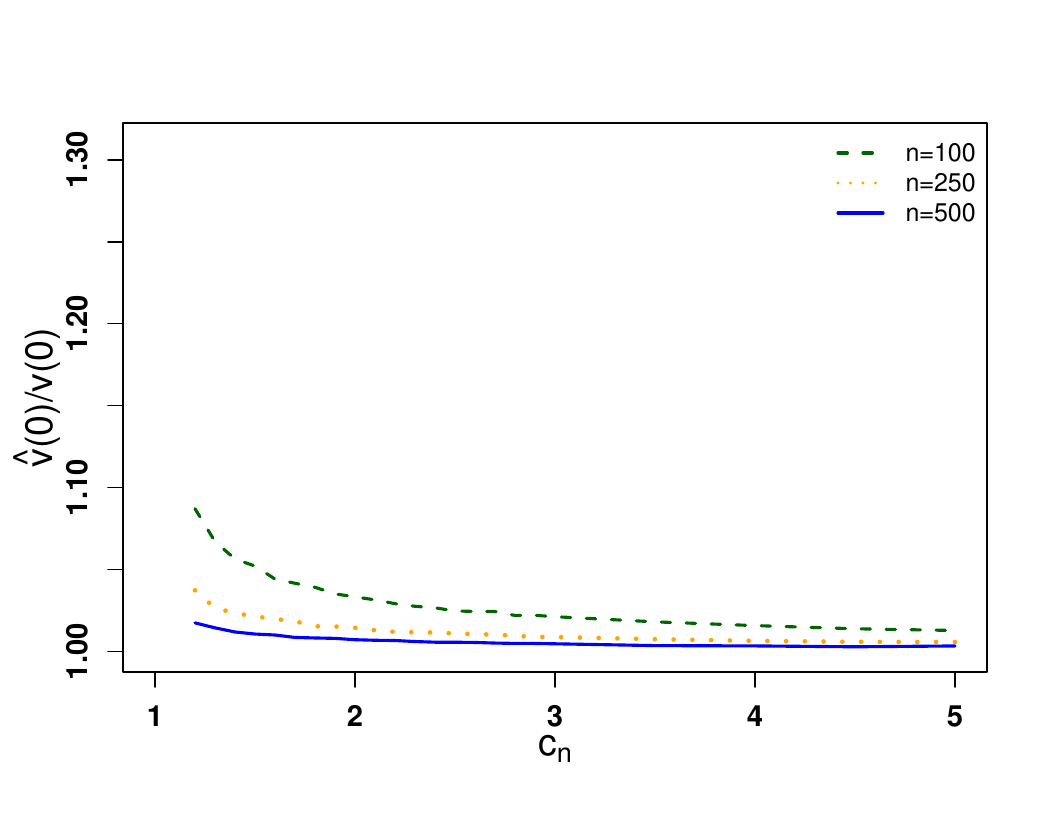}\\[-1.1cm]
\hspace{-0.5cm}\includegraphics[width=7cm]{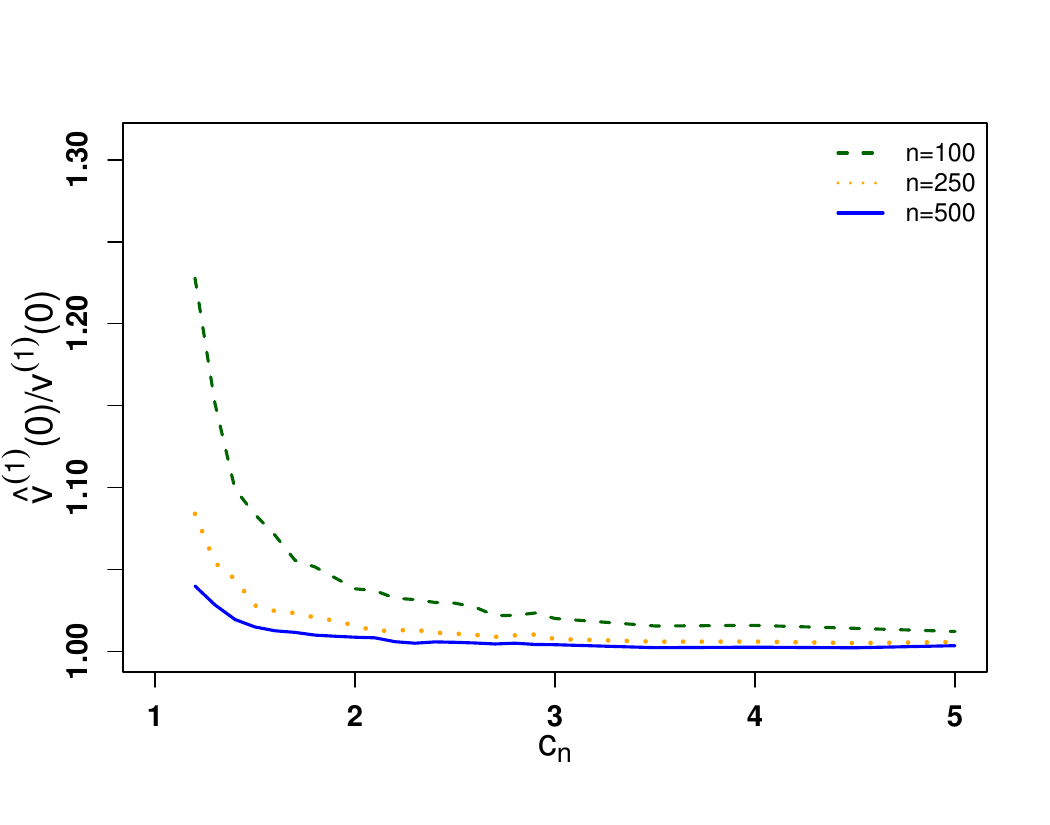}&
\hspace{-0.5cm}\includegraphics[width=7cm]{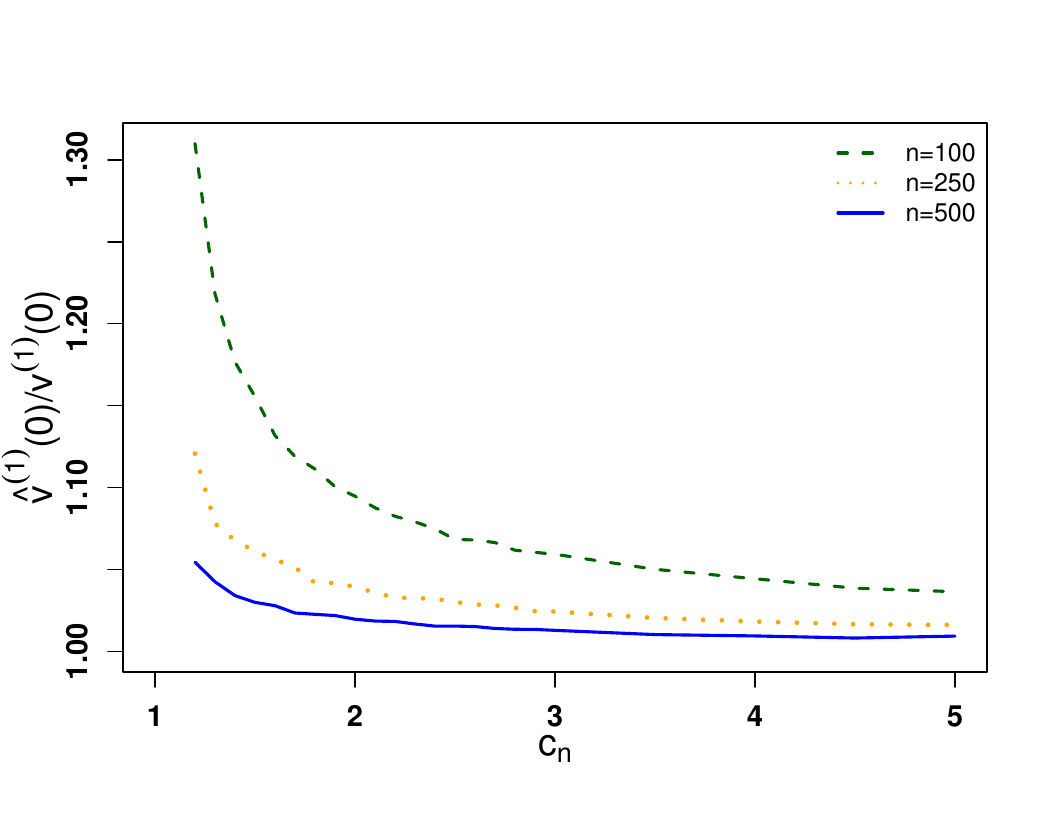}\\[-0.7cm]
\end{tabular}
 \caption{Finite-sample performance of the estimators for $v(0)$ and $v^{(1)}(0)$ when $c_n \in (1,5]$, $n\in\{100,250,500\}$, and the elements of $\bX_n$ drawn from the normal (first column) and scale $t$-distribution (second column).}
\label{fig:MP}
 \end{figure}
%\vspace{-0.5cm}

Next, we present the general results of Theorem \ref{th1} for $\bSigma=\bI_p$.% in Corollary \ref{cor2}.

\begin{corollary}\label{cor2}
Let $\bY_n$ fulfill the stochastic representation \eqref{obs} with $\bSigma=\bI_p$. Then, under Assumptions \textbf{(A2)}-\textbf{(A3)} it holds that
\begin{equation}\label{cor2-eq1}
\left|\emph{tr}( (\bS_n^+)^{m}\bTheta)- \frac{(-1)^{m-1}v^{(m-1)}(0)}{(m-1)! c_n} \emph{tr}(\bTheta)\right| \stackrel{a.s.}{\rightarrow} 0
\end{equation}
for $p/n \rightarrow c \in (1,\infty)$ as $n \rightarrow \infty$ with
{\small\begin{equation}\label{cor2-eq2}
v(0)=\frac{1}{c_n-1}, \quad
v^{(1)}(0)=\frac{-c_n}{(c_n-1)^3}~~\quad\text{and}
\end{equation}
\begin{equation}\label{cor2-eq3}
v^{(m)}(0)= \frac{c_n}{(c_n-1)^3}\sum_{k=2}^m (-1)^{k} k!(c_n-1)^{(k+1)}\left(1-c_n^{-k}\right) B_{m,k}\left(v^{(1)}(0),...,v^{(m-k+1)}(0)\right).
\end{equation}
}
\end{corollary}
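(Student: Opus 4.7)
The plan is to specialize Theorem \ref{th1} and Corollary \ref{cor0} to the case $\bSigma=\bI_p$ by explicitly evaluating each ingredient $v(0)$, $h_k$, and $d_k(\bTheta)$, all of which collapse to elementary closed-form expressions when the population covariance is isotropic. Because everything in the formulation of Theorem \ref{th1} is expressed in terms of traces of rational functions of $\bSigma$, setting $\bSigma=\bI_p$ reduces matrix inverses to multiplications by scalars and the normalized traces to single numbers, so the whole chain of recursive definitions simplifies algebraically.

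First, I would solve equation \eqref{th1-v0} with $\bSigma=\bI_p$: the equation becomes
\begin{equation*}
\frac{1}{v(0)+1}=\frac{c_n-1}{c_n},
\end{equation*}
whose unique solution is $v(0)=1/(c_n-1)$, yielding $v(0)+1=c_n/(c_n-1)$. Next, plugging $\bSigma=\bI_p$ into \eqref{th1-hk} and using this value of $v(0)$, each quantity $h_k$ becomes
\begin{equation*}
h_k=(c_n-1)^{k}-\frac{c_n}{(v(0)+1)^{k}}=(c_n-1)^{k}\bigl(1-c_n^{1-k}\bigr),
\end{equation*}
so in particular $h_2=(c_n-1)^3/c_n$, which via \eqref{th1-v0pr} gives $v^\prime(0)=-c_n/(c_n-1)^3$, establishing \eqref{cor2-eq2}. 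Substituting the closed form of $h_{k+1}=(c_n-1)^{k+1}(1-c_n^{-k})$ together with $-v^\prime(0)=c_n/(c_n-1)^3$ into the recursion \eqref{th1-v0pr-m} produces the stated expression \eqref{cor2-eq3} for $v^{(m)}(0)$ essentially verbatim; the partial Bell polynomials are preserved because their arguments $v^\prime(0),\ldots,v^{(m-k+1)}(0)$ appear unchanged on both sides of the recursion.

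For the limiting quantity itself, I would evaluate $d_k(\bTheta)$ from \eqref{th1-dk} with $\bSigma=\bI_p$:
\begin{equation*}
d_k(\bTheta)=\frac{\tr(\bTheta)}{(v(0)+1)^{k+1}}=\tr(\bTheta)\,d_k\!\left(\tfrac{1}{p}\bI_p\right),
\end{equation*}
which shows that $s_m(\bTheta)=\tr(\bTheta)\cdot s_m(\tfrac{1}{p}\bI_p)$ after factoring $\tr(\bTheta)$ out of every term of \eqref{th1-sm}. Combining this identity with the representation $s_m(\tfrac{1}{p}\bI_p)=(-1)^{m-1}v^{(m-1)}(0)/((m-1)!c_n)$ from Corollary \ref{cor0} and applying the almost-sure convergence of Theorem \ref{th1} immediately gives \eqref{cor2-eq1}. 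No step of this argument is delicate; the main bookkeeping obstacle is simply to verify that the exponents of $(c_n-1)$ and $c_n$ track correctly through the substitution of $h_{k+1}$ into the recursion \eqref{th1-v0pr-m}, which is a straightforward algebraic check.
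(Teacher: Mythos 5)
Your proposal is correct and follows essentially the same route as the paper's own proof: specialize \eqref{th1-v0}--\eqref{th1-hk} to $\bSigma=\bI_p$ to get $v(0)=1/(c_n-1)$, $h_{k+1}=(c_n-1)^{k+1}(1-c_n^{-k})$ and hence \eqref{cor2-eq2}--\eqref{cor2-eq3}, then observe $d_k(\bTheta)=(v(0)+1)^{-(k+1)}\tr(\bTheta)$ so that $s_m(\bTheta)=s_m\left(\tfrac{1}{p}\bI_p\right)\tr(\bTheta)$ and invoke Corollary \ref{cor0}. The algebraic checks you flag (exponents of $c_n-1$ and $c_n$ through the recursion) work out exactly as in the paper.
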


As a special case of Corollary \ref{cor2}, we get
\begin{eqnarray*}
&&\left|\text{tr}( \bS_n^+\bTheta)-\frac{1}{(c_n-1)c_n} \text{tr}\left(\bTheta\right)\right| \stackrel{a.s.}{\rightarrow} 0,\qquad\left|\text{tr}( (\bS_n^+)^2\bTheta)-\frac{1}{(c_n-1)^3}\text{tr}\left(\bTheta\right)\right| \stackrel{a.s.}{\rightarrow} 0,\\
&&\left|\text{tr}( (\bS_n^+)^3\bTheta)-\frac{c_n+1}{(c_n-1)^5}\text{tr}\left(\bTheta\right)\right| \stackrel{a.s.}{\rightarrow} 0,\qquad\left|\text{tr}( (\bS_n^+)^4\bTheta)-\frac{c_n^2+3c_n+1}{(c_n-1)^7}\text{tr}\left(\bTheta\right)\right| \stackrel{a.s.}{\rightarrow} 0,
\end{eqnarray*}
for $p/n \rightarrow c \in (1,\infty)$ as $n \rightarrow \infty$.

The results of Corollary \ref{cor2} extend the previous findings of \cite{cook2011mean} and \cite{imori2020mean}. Under the additional assumption that the elements of the matrix $\bY_n$ are normally distributed, we get that $(n-1)\bS_n$ has a $p$-dimensional singular Wishart distribution with $n-1$ degrees of freedom and identity covariance matrix (see, e.g., \cite{srivastava2003singular}, \cite{bodnar2008properties}). \cite{cook2011mean} derived the exact expression of the mean matrix of the singular Wishart distribution under the assumption that the population covariance matrix is proportional to the identity matrix. They proved that
\[\mathbb{E}\left(
\bS_n^+\right)
=(n-1)\mathbb{E}\left(
((n-1)\bS_n)^+\right)
=\frac{(n-1)^2}{p(p-n)}\bI_p.
\]
Furthermore, it holds that $\dfrac{(n-1)^2}{p(p-n)} \to \dfrac{1}{c(c-1)}$ for $p/n \to c  \in (1,\infty)$ as $n \to \infty$. On the other side, the application of Corollary \ref{cor2} leads to the same result when one defines $\bTheta=\mathbf{e}_i\mathbf{e}_j^\top$, $i,j=1,...,p$, where $\mathbf{e}_i$ is the vector of zeros except the $i$-th element which is one. 

\cite{imori2020mean} extended the results of \cite{cook2011mean} by considering a singular Wishart distribution with an arbitrary covariance matrix and deriving the lower and upper limits of the mean matrix and covariance matrix. The findings of Corollary \ref{cor2} provide further generalization by establishing the limiting behaviour of the elements of the Moore-Penrose inverse of the sample covariance matrix. Moreover, the results of Theorem \ref{th1}, Corollary \ref{cor1}, Corollary \ref{cor0}, and Corollary \ref{cor2} are derived in the general case without the assumption of normality imposed on the elements of $\bY_n$.

\subsection{Weighted moments of the sample ridge-type inverse}\label{sec:main-ridge}

The proof of Theorem \ref{th1} can be adjusted to derive the weighted trace moments of the ridge-type inverse of the sample covariance matrix, expressed as 
\begin{equation}\label{S_n-ridge}
\bS_n^{-}(t)=(\bS_n+t\bI_p)^{-1}  \quad \text{for} \quad t > 0.
\end{equation}
The findings are summarized in Theorem \ref{th2}. In contrast to the statement of Theorem \ref{th1}, the results of Theorem \ref{th2} are derived for $c \in (0,+\infty)$.

\begin{theorem}\label{th2}
Let $\bY_n$ fulfill the stochastic representation \eqref{obs}. Then, under Assumptions \textbf{(A1)}-\textbf{(A3)} for any $t>0$ and $m=0, 1, 2, \ldots$, it holds that  
\begin{equation}\label{th2_eq1}
\left|\emph{tr}((\bS_n^{-}(t))^{m+1}\bTheta)-\tilde{s}_{m+1}\left(t,\bTheta\right)\right| \stackrel{a.s.}{\rightarrow} 0\quad\text{for} \quad p/n \rightarrow c \in (0, +\infty) 
\quad \text{as} \quad n \rightarrow \infty,
\end{equation}
where 
\begin{eqnarray}\label{th2-tsm}
    \tilde{s}_{m+1}\left(t,\bTheta\right)&=&  
 \sum_{l=1}^m t^{-(m-l)-1} \sum_{k=1}^l \frac{(-1)^{l+k} k!}{l!}d_k\left(t,\bTheta\right)B_{l,k}\left(v^{(1)}(t),v^{(2)}(t),...,v^{(l-k+1)}(t)\right)\nonumber\\
&+&t^{-m-1} d_0\left(t,\bTheta\right) \qquad\text{with}
% \text{tr}\left\{\left(v(t)\bSigma+\bI_p\right)^{-1}\left[\bSigma\left(v(t)\bSigma+\bI_p\right)^{-1}\right]^{k}\bTheta\right\}
%&&\times ,
\end{eqnarray}
\begin{eqnarray}\label{th2-dk}
d_k\left(t,\bTheta\right)&=&\emph{tr}\left\{\left(v(t)\bSigma+\bI_p\right)^{-1}\left[\bSigma\left(v(t)\bSigma+\bI_p\right)^{-1}\right]^{k}\bTheta\right\}, \quad k=0,1,2,...,
\end{eqnarray}
$v(t)$ is the unique solution of the equation \eqref{th2-vt} and its derivatives satisfy
\begin{equation}\label{th2-vtpr}
v^{(1)}(t)=-\frac{1}{v(t)^{-2}-c_n  \frac{1}{p}\emph{tr}\left\{\left[\bSigma\left(v(t)\bSigma+\bI_p\right)^{-1}\right]^2\right\}}, 
\end{equation}
$v^{(2)}(t)$,...,$v^{(m)}(t)$ are computed recursively by
\begin{equation}\label{th2-vtpr-m}
v^{(m)}(t)=-v^{(1)}(t) \sum_{k=2}^m (-1)^{k} k!h_{k+1}(t)%\left(\frac{1}{v(t)^{k+1}}-c_n\frac{1}{p}\text{tr}\left\{\left[\bSigma\left(v(t)\bSigma+\bI_p\right)^{-1}\right]^{k+1}\right\}\right) 
B_{m,k}\left(v^{(1)}(t),...,v^{(m-k+1)}(t)\right),\quad\text{with}
\end{equation}
\begin{eqnarray}\label{th2-hk}
  h_k(t)=[v(t)]^{-k}-c_n\frac{1}{p}\emph{tr}\left\{\left[\bSigma\left(v(t)\bSigma+\bI_p\right)^{-1}\right]^{k}\right\}, \quad k=1,2,....
\end{eqnarray}
\end{theorem}
 
%\begin{remark}[On uniform convergence with respect to $t$]
The almost sure convergence stated in Theorem \ref{th2} remains valid and, moreover, it is uniform on the larger domain $\mathbbm{C} \setminus \mathbbm{R}^{-}$. This extension follows from the Weierstrass convergence theorem \cite{ahlfors1953}. Alternatively, the same conclusion can be justified using arguments found in \cite{baisil2004}, \cite{rubio2011spectral}, \cite{rubio2012performance}, or \cite{bodnarokhrinparolya2023}, which rely on Vitali’s theorem concerning the uniform convergence of sequences of uniformly bounded holomorphic functions to a holomorphic limit (see \cite{rudin1987real}, \cite{hille2002analytic}).
%\end{remark}

Corollary \ref{cor3} provides the formulae for $\tilde{s}_{m}\left(t,\bTheta\right)$ for $m=0,1,2,3$.

\begin{corollary}\label{cor3}
Let $\bY_n$ fulfill the stochastic representation \eqref{obs}. Then, under Assumptions \textbf{(A1)}-\textbf{(A3)} for any $t>0$, we get \eqref{th1_eq1} with
\begin{eqnarray*}
\tilde{s}_1\left(t,\bTheta\right)&=& t^{-1} d_0(t,\bTheta), \quad
\tilde{s}_2\left(t,\bTheta\right)= t^{-2} d_0(t,\bTheta) + t^{-1} v^{(1)}(t) d_1(t,\bTheta),\\
\tilde{s}_3\left(t,\bTheta\right)&=& t^{-3} d_0(t,\bTheta) + t^{-2} v^{(1)}(t) d_1(t,\bTheta)+t^{-1}\left\{[v^{(1)}(t)]^2 d_2(t,\bTheta)-\frac{1}{2}v^{(2)}(t) d_1(t,\bTheta)\right\},\\
\tilde{s}_4\left(t,\bTheta\right)&=& t^{-4} d_0(t,\bTheta) + t^{-3} v^{(1)}(t) d_1(t,\bTheta)+t^{-2}\left\{[v^{(1)}(t)]^2 d_2(t,\bTheta)-\frac{1}{2}v^{(2)}(t) d_1(t,\bTheta)\right\}\\
&&+t^{-1}\left\{\frac{1}{6}v^{(3)}(t) d_1(t,\bTheta)-v^{(1)}(t)v^{(2)}(t) d_2(t,\bTheta)+[v^{(1)}(t)]^3 d_3(t,\bTheta)\right\},
\end{eqnarray*}
for $p/n \rightarrow c \in (0,\infty)$ as $n \rightarrow \infty$ where $d_k\left(t,\bTheta\right)$ and $h_k(t)$ are defined in \eqref{th2-dk} and \eqref{th2-hk}, $v(t)$ is the unique solution of \eqref{th2-vt} and
\begin{eqnarray*}
v^{(1)}(t)&=& -h_2(t)^{-1} , \; v^{(2)}(t)=-2[v^{(1)}(t)]^3 h_3(t)
,\; v^{(3)}(t)=-6[v^{(1)}(t)]^2 v^{(2)}(t) h_3(t) +6[v^{(1)}(t)]^4 h_4(t).
\end{eqnarray*}
\end{corollary}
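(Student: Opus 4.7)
The plan is to obtain Corollary \ref{cor3} as a direct specialization of Theorem \ref{th2} to the cases $m\in\{0,1,2,3\}$, using the explicit low-order partial exponential Bell polynomials together with the recursion \eqref{th2-vtpr-m} to get $v^\prime(t),v^{\prime\prime}(t),v^{\prime\prime\prime}(t)$. Nothing beyond Theorem \ref{th2} is required; the argument is pure bookkeeping inside the formula \eqref{th2-tsm}.

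First, I would list the Bell polynomial values needed from the definition \eqref{Bell-pol}:
\begin{equation*}
B_{1,1}(v^\prime)=v^\prime,\quad B_{2,1}(v^\prime,v^{\prime\prime})=v^{\prime\prime},\quad B_{2,2}(v^\prime)=(v^\prime)^2,
\end{equation*}
\begin{equation*}
B_{3,1}(v^\prime,v^{\prime\prime},v^{\prime\prime\prime})=v^{\prime\prime\prime},\quad B_{3,2}(v^\prime,v^{\prime\prime})=3v^\prime v^{\prime\prime},\quad B_{3,3}(v^\prime)=(v^\prime)^3,
\end{equation*}
where the arguments are understood to be evaluated at $t$. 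Then, for each of $m=0,1,2,3$ I would substitute into \eqref{th2-tsm}, keeping the leading term $t^{-(m+1)}d_0(t,\bTheta)$ apart and evaluating the double sum term by term. The case $m=0$ gives immediately $\tilde{s}_1=t^{-1}d_0(t,\bTheta)$. For $m=1$ only the pair $(l,k)=(1,1)$ contributes and yields $\tilde{s}_2=t^{-2}d_0(t,\bTheta)+t^{-1}v^\prime(t)d_1(t,\bTheta)$. For $m=2$ the pairs $(1,1),(2,1),(2,2)$ together with the sign factor $(-1)^{l+k}k!/l!$ reproduce the stated $\tilde{s}_3$, and for $m=3$ the additional $l=3$ block contributes, via $B_{3,1},B_{3,2},B_{3,3}$, exactly the coefficients $\tfrac{1}{6}v^{\prime\prime\prime}(t)$, $-v^\prime(t)v^{\prime\prime}(t)$, and $(v^\prime(t))^3$ against $d_1,d_2,d_3$, giving the expression for $\tilde{s}_4$.

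For the derivatives of $v$, I would read $v^\prime(t)=-h_2(t)^{-1}$ directly from \eqref{th2-vtpr}. Taking $m=2$ in the recursion \eqref{th2-vtpr-m} leaves only the term $k=2$, so
\begin{equation*}
v^{\prime\prime}(t)=-v^\prime(t)\cdot 2! h_3(t) B_{2,2}(v^\prime(t))=-2(v^\prime(t))^3h_3(t).
\end{equation*}
Taking $m=3$ leaves the terms $k=2$ and $k=3$, which expand as
\begin{equation*}
v^{\prime\prime\prime}(t)=-v^\prime(t)\left[2!h_3(t)B_{3,2}(v^\prime(t),v^{\prime\prime}(t))-3!h_4(t)B_{3,3}(v^\prime(t))\right],
\end{equation*}
and substituting the Bell values above delivers $v^{\prime\prime\prime}(t)=-6(v^\prime(t))^2 v^{\prime\prime}(t)h_3(t)+6(v^\prime(t))^4 h_4(t)$.

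The only place where care is needed is the sign/factorial bookkeeping: the factor $(-1)^{l+k}k!/l!$ in the outer sum, the alternating $(-1)^k k!$ in the recursion for $v^{(m)}(t)$, and the combinatorial coefficients internal to the Bell polynomials must line up so that no residual sign is left over. I expect this to be the main (purely mechanical) obstacle, but there is no analytic content beyond what Theorem \ref{th2} already provides.
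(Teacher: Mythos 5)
Your proposal is correct and follows exactly the route the paper takes: specialize Theorem \ref{th2} to $m=0,1,2,3$, plug in the explicit low-order Bell polynomials $B_{1,1},B_{2,1},B_{2,2},B_{3,1},B_{3,2},B_{3,3}$, and read $v^\prime(t)$ from \eqref{th2-vtpr} while extracting $v^{\prime\prime}(t)$ and $v^{\prime\prime\prime}(t)$ from the recursion \eqref{th2-vtpr-m} with $m=2,3$. The sign and factorial bookkeeping you flag works out as you expect (e.g.\ the $l=3$, $k=2$ term carries $(-1)^{5}2!/3!=-1/3$ against $B_{3,2}=3v^\prime v^{\prime\prime}$, giving $-v^\prime v^{\prime\prime}d_2$), so there is nothing to add.
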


It is interesting to note that $\tilde{s}_{m+1}\left(t,\bTheta\right)$ in \eqref{th2-tsm} can be recursively computed by
\begin{equation}\label{tsm-rec}
\tilde{s}_{m+1}\left(t,\bTheta\right)= \frac{1}{t}\tilde{s}_{m}\left(t,\bTheta\right)+\frac{1}{t} \sum_{k=1}^m \frac{(-1)^{m+k} k!}{m!}d_k\left(t,\bTheta\right)B_{m,k}\left(v^{(1)}(t),v^{(2)}(t),...,v^{(m-k+1)}(t)\right).
\end{equation}
Using \eqref{tsm-rec}, we present the results in a special case for $\bTheta=({1}/{p})\bI_p$ in Corollary \ref{cor0Ridge}.

\begin{corollary}\label{cor0Ridge}
Let $\bY_n$ fulfill the stochastic representation \eqref{obs}. Then, under Assumptions \textbf{(A1)}-\textbf{(A2)} for any $t>0$, it holds for $m=0,1,...$ that
\begin{equation*}
\left|\frac{1}{p}\emph{tr}\left[(\bS_n^{-}(t))^{m+1}\right]-t^{-(m+1)}\frac{c_n-1}{c_n}- \frac{(-1)^{m}v^{(m)}(t)}{m! c_n}\right| \stackrel{a.s.}{\rightarrow} 0\;\text{for} \; p/n \rightarrow c \in (0,\infty) 
\, \text{as} \, n \rightarrow \infty,
\end{equation*}
where
%\begin{equation}\label{cor0Ridge_eq1}
%\tilde{s}_{m+1}\left(t,\frac{1}{p}\bI_p\right)=t^{-(m+1)}\frac{c_n-1}{c_n}+ \frac{(-1)^{m}v^{(m)}(t)}{m! c_n} \quad \text{for} %\quad m=0,1,2,...
%\end{equation}
%and 
$v^{(m)}(t)$ is defined in \eqref{th2-vt}, \eqref{th2-vtpr}, and \eqref{th2-vtpr-m} for $m=0,1,...$ with $v^{(0)}(t)=v(t)$.
\end{corollary}

Several interesting results follow from the statement of Corollary \ref{cor0Ridge}. The sample trace moments $\text{tr}( (\bS_n^{-}(t))^{m+1})$ are not defined for $t=0$, while their limiting values can be well approximated by the expressions of the form $at^{-(m+1)} +b(t)$, where the constant $a$ depends on $c_n$ only. Moreover, equations \eqref{th2-vt}, \eqref{th2-vtpr}, and \eqref{th2-vtpr-m} in Theorem \ref{th2} which are used in the computations of $v^{(m)}(t)$, $m=0,1,...$, are well defined at $t=0$ and they coincide with the corresponding expressions \eqref{th1-v0}, \eqref{th1-v0pr}, and \eqref{th1-v0pr-m} derived in Theorem \ref{th1} for the Moore-Penrose inverse. 

Another important application of Corollary \ref{cor0Ridge} provides consistent estimators for $v^{(m)}(t)$, $m=0,1,...$, for $t>0$ despite these functions are not defined in the closed form in Theorem \ref{th2}. Their consistent estimators are given by formula \eqref{hv0-all1}, namely
\begin{equation*}
\hat{v}^{(m)}(t)= (-1)^{m} m! c_n \left(\frac{1}{p}\text{tr}\left[ (\bS_n^{-}(t))^{m+1}\right]
-t^{-(m+1)} \frac{c_n-1}{c_n}\right).
\end{equation*}
 
Figure \ref{fig:Ridge} depicts the estimators of $v(t)$ and $v^{(1)}(t)$, normalized by the corresponding true values for $t \in [0.1,5]$. Two data-generating models are considered, which are the normal distribution and the scaled $t$-distribution with five degrees of freedom. The covariance matrix $\bSigma$ is set following the procedure described in Section \ref{sec:sim}. In the figure, we observe that $\hat{v}(t)$ and $\hat{v}^{(1)}(t)$ converge to $v(t)$ and $v^{(1)}(t)$, respectively, already for $n=100$. The results are similar for all considered values of the concentration ratio $c_n$ and the distributional models used to generate $\bX_n$. It is interesting that for small values of $t$, the convergence is slower. Also, the convergence is faster when data are generated from the normal distribution.

\vspace{-1cm}
\begin{figure}[h!t]
\centering
\begin{tabular}{cc}
\hspace{-0.5cm}\includegraphics[width=7cm]{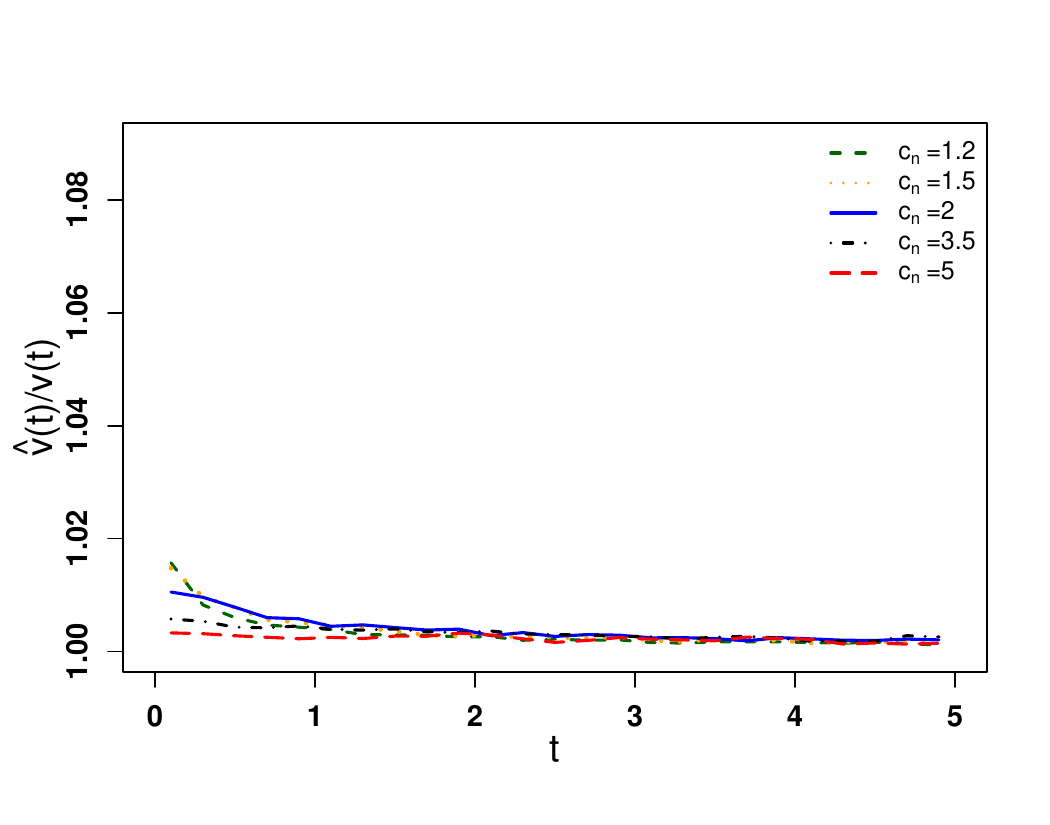}&
\hspace{-0.5cm}\includegraphics[width=7cm]{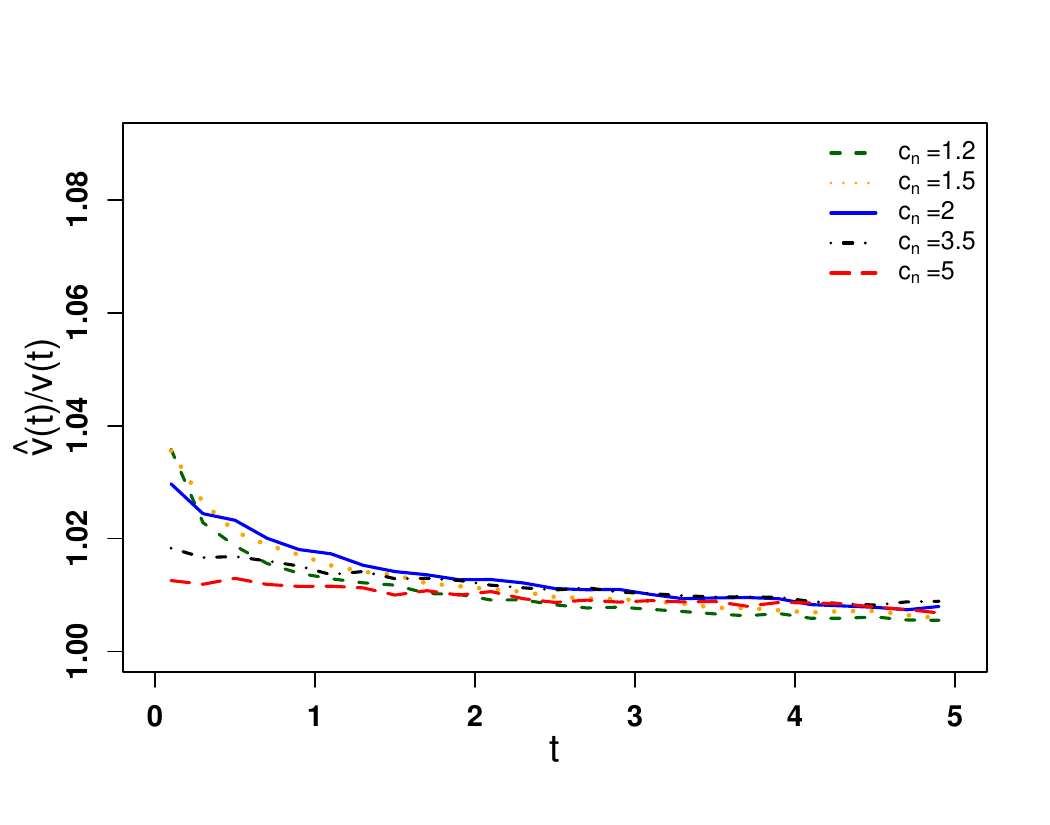}\\[-1.1cm]
\hspace{-0.5cm}\includegraphics[width=7cm]{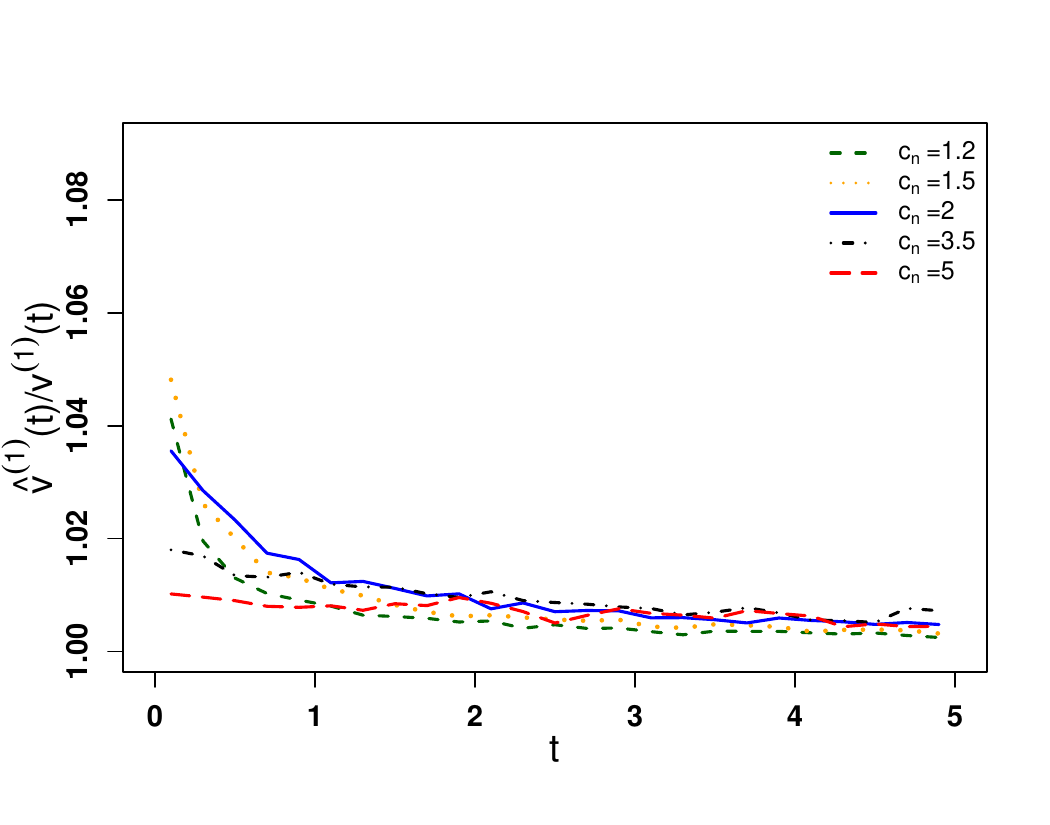}&
\hspace{-0.5cm}\includegraphics[width=7cm]{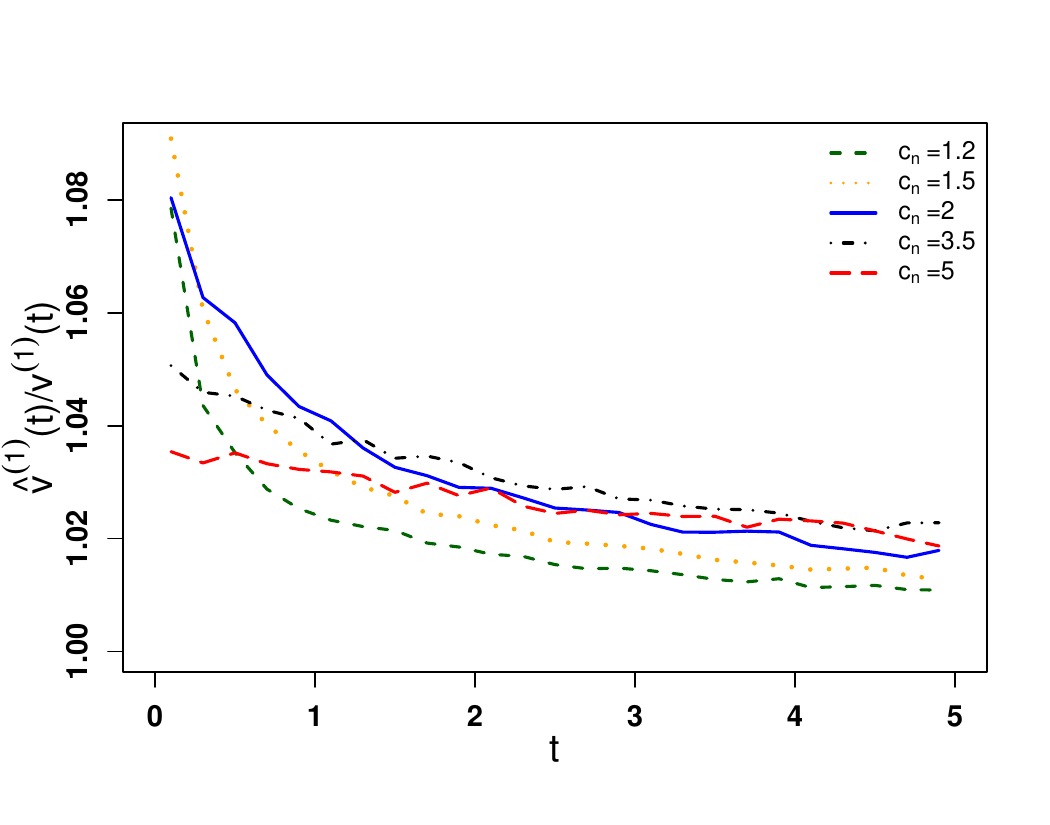}\\[-0.7cm]
\end{tabular}
 \caption{Finite-sample performance of the estimators for $v(t)$ and $v^{(1)}(t)$ when $t \in [0.1,5]$, $c_n \in \{1.2,1.5,2,3.5,5\}$, $n=100$, and the elements of $\bX_n$ drawn from the normal (first column) and scale $t$-distribution (second column).}
\label{fig:Ridge}
 \end{figure}

%In Corollaries \ref{cor3} and \ref{cor3a}, several results are presented in important special cases. Namely, Corollary \ref{cor3} provides the formulae for $\tilde{s}_{m}\left(t,\bTheta\right)$ for $m=1,2,3,4$, while 
Corollary \ref{cor3a} presents the findings under the additional assumption that the true population covariance matrix is the identity matrix.

\begin{corollary}\label{cor3a}
Let $\bY_n$ fulfill the stochastic representation \eqref{obs} with $\bSigma=\bI_p$. Then, under Assumptions \textbf{(A2)}-\textbf{(A3)} for any $t>0$, it holds that
\begin{equation}\label{cor3a-eq1}
\left|\emph{tr}((\bS_n^{-}(t))^{m+1}\bTheta)-\left( t^{-(m+1)}\frac{c_n-1}{c_n}+ \frac{(-1)^{m}v^{(m)}(t)}{m! c_n} \right)\emph{tr}(\bTheta)\right| \stackrel{a.s.}{\rightarrow} 0
\end{equation}
for $p/n \rightarrow c \in (0, +\infty)$ as $n \rightarrow \infty$ with
{\scriptsize
\begin{eqnarray}
&v(t)=\frac{2}{(c_n-1+t)+\sqrt{(c_n-1+t)^2+4t}}, \quad
v^{(1)}(t)=\frac{-1}{v(t)^{-2}-c_n(v(t)+1)^{-2}},\label{cor3a-eq2}\\
&~~v^{(m)}(t)= -v^{(1)}(t) \sum_{k=2}^m (-1)^{k} k!\left(v(t)^{-(k+1)}-c_n(v(t)+1)^{-(k+1)}\right) B_{m,k}\left(v^{(1)}(t),...,v^{(m-k+1)}(t)\right).\label{cor3a-eq3}    
\end{eqnarray}
}
\end{corollary}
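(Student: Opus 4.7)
\textbf{Proof plan for Corollary \ref{cor3a}.} The plan is to reduce the statement to a direct specialization of Theorem \ref{th2} (together with Corollary \ref{cor0Ridge}) to the case $\bSigma=\bI_p$, where all trace quantities involving $\bSigma$ collapse to explicit scalar functions of $v(t)$. First I would substitute $\bSigma=\bI_p$ into the defining equation \eqref{th2-vt}. Since $(v(t)\bI_p+\bI_p)^{-1}=(v(t)+1)^{-1}\bI_p$, equation \eqref{th2-vt} reduces to $1/(v(t)+1)=(c_n-1+tv(t))/c_n$, which clears to the quadratic
\begin{equation*}
t\,v(t)^2+(c_n-1+t)\,v(t)-1=0.
\end{equation*}
Because $v(t)>0$ is the unique admissible solution, I would take the positive root and rationalize it to obtain the form stated in \eqref{cor3a-eq2}.

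Next, I would specialize the auxiliary quantities $h_k(t)$ and $d_k(t,\bTheta)$. Under $\bSigma=\bI_p$ the trace in \eqref{th2-hk} becomes $(v(t)+1)^{-k}$, so $h_k(t)=v(t)^{-k}-c_n(v(t)+1)^{-k}$. Using this inside \eqref{th2-vtpr} yields the claimed closed form for $v^\prime(t)$ in \eqref{cor3a-eq2}, and inserting $h_{k+1}(t)$ into the recursion \eqref{th2-vtpr-m} produces exactly \eqref{cor3a-eq3}. Similarly, the weighted trace in \eqref{th2-dk} collapses to
\begin{equation*}
d_k(t,\bTheta)=(v(t)+1)^{-(k+1)}\,\text{tr}(\bTheta),\qquad k=0,1,2,\ldots.
\end{equation*}
This is the key algebraic factorization: $d_k(t,\bTheta)=d_k\!\left(t,\tfrac{1}{p}\bI_p\right)\text{tr}(\bTheta)$.

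Using this factorization in formula \eqref{th2-tsm} of Theorem \ref{th2}, I can pull $\text{tr}(\bTheta)$ outside every sum and conclude $\tilde s_{m+1}(t,\bTheta)=\tilde s_{m+1}\!\left(t,\tfrac{1}{p}\bI_p\right)\text{tr}(\bTheta)$. Corollary \ref{cor0Ridge} then evaluates the remaining bracket as $t^{-(m+1)}(c_n-1)/c_n+(-1)^m v^{(m)}(t)/(m!\,c_n)$. Plugging this into \eqref{th2_eq1} of Theorem \ref{th2} and using Assumption \textbf{(A3)} to absorb $\text{tr}(\bTheta)$ (which is uniformly bounded in $p$) into the almost-sure negligible remainder yields \eqref{cor3a-eq1}.

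The calculation is essentially routine once the algebraic collapse under $\bSigma=\bI_p$ is in hand; the only point requiring genuine care is the selection of the correct root of the quadratic and the verification that this root is strictly positive for all $t\ge 0$ (ensuring compatibility with the monotonicity in Theorem \ref{th2a}) and that it reproduces, in the limit $t\downarrow 0$, the value $v(0)=1/(c_n-1)$ already obtained in Corollary \ref{cor2}. The rationalization step multiplying numerator and denominator by $(c_n-1+t)+\sqrt{(c_n-1+t)^2+4t}$ makes this limit behaviour and positivity transparent, so I do not expect any deeper obstacle beyond bookkeeping of the Bell-polynomial expansion inherited from Theorem \ref{th2}.
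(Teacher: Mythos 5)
Your proposal is correct and follows essentially the same route as the paper: specialize $d_k(t,\bTheta)=(v(t)+1)^{-(k+1)}\tr(\bTheta)$ so that $\tilde s_{m+1}(t,\bTheta)=\tilde s_{m+1}\left(t,\tfrac{1}{p}\bI_p\right)\tr(\bTheta)$, invoke Corollary \ref{cor0Ridge}, and solve the quadratic $t\,v(t)^2+(c_n-1+t)v(t)-1=0$ arising from \eqref{th2-vt} for the positive root. The extra checks you flag (root selection, positivity, the limit $v(0)=1/(c_n-1)$) are sensible but not needed beyond what the paper records.
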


\subsection{Weighted moments of the sample Moore-Penrose-ridge inverse}\label{sec:main-MP-ridge}

Taking into account the properties of the Moore-Penrose and ridge-type inverses, one can take the advantage of both estimators and build a new inverse for any $t>0$ by
\begin{eqnarray}
   \bS^{\pm}_n(t)=(\bS_n+t\bI_p)^{-1}\bS_n(\bS_n+t\bI_p)^{-1}\,.
\end{eqnarray}
This matrix inherits the properties of the Moore-Penrose inverse for $t\to 0$ because by the spectral decomposition, we have 
\begin{eqnarray*}
\lim\limits_{t\to0}\bS^{\pm}_n(t)&=&\lim\limits_{t\to0}(\bS_n+t\bI_p)^{-1}\bS_n(\bS_n+t\bI_p)^{-1}=\lim\limits_{t\to0}\bU\text{diag}\left\{\frac{d_1}{(d_1+t)^2},\ldots,\frac{d_p}{(d_p+t)^2}\right\}\bU^\top=\bS_n^+,
\end{eqnarray*}
where $\bS_n = \bU \, \text{diag}\left\{d_1, \ldots, d_p\right\} \bU^\top$ is the spectral decomposition of $\bS_n$, with $d_1, d_2, \ldots, d_p$ denoting the eigenvalues of $\bS_n$, and $\bU$ the corresponding matrix of eigenvectors.

On the other side, it is related to the ridge inverse since
\begin{eqnarray}\label{Sn-pm-t}
    \bS^{\pm}_n(t)&=&%(\bS_n+t\bI_p)^{-1}\bS_n(\bS_n+t\bI_p)^{-1}=
    (\bS_n+t\bI_p)^{-1}-t(\bS_n+t\bI_p)^{-2}.%\nonumber\\
    %&=&(\bS_n+t\bI_p)^{-1}+O\left(\frac{1}{t}\right)~~\text{for $t\to\infty$}\,.
\end{eqnarray}
In fact, we have already all of the instruments to find the asymptotic behavior for the weighted sample trace moments of $\bS^{\pm}(t)$. Indeed, using binomial theorem we get
\begin{eqnarray}
   (\bS^{\pm}_n(t))^m&=&\left((\bS_n+t\bI_p)^{-1}-t(\bS_n+t\bI_p)^{-2}\right)^m=\sum\limits_{k=0}^m \binom{m}{k}(\bS_n+t\bI_p)^{-(m-k)}(-1)^{k}t^k(\bS_n+t\bI_p)^{-2k}\nonumber\\
    && \hspace{1cm}=\sum\limits_{k=0}^m (-1)^{k}t^k\binom{m}{k}(\bS_n+t\bI_p)^{-(m+k)}\,.
\end{eqnarray}
Thus, in order to find the limit of $\text{tr}\left( (\bS^{\pm}_n(t))^m\bTheta\right)$ we only need to know the limits of the functionals $\text{tr}\left((\bS_n+t\bI_p)^{-(m+k)}\bTheta\right)$, which have already been derived in Theorem \ref{th2}. As a result, we get the following theorem.

\begin{theorem}\label{th3MPR}
    Let $\bY_n$ fulfill the stochastic representation \eqref{obs}. Then, under Assumptions \textbf{(A1)}-\textbf{(A3)} for any $t>0$ and $m=1,2,\ldots,$ it holds that
\begin{equation*}
\left|\emph{tr}((\bS_n^{\pm}(t))^{m}\bTheta)-\grave{s}_{m}\left(t,\bTheta\right)\right| \stackrel{a.s.}{\rightarrow} 0\quad\text{for} \quad p/n \rightarrow c \in (0, +\infty) 
\quad \text{as} \quad n \rightarrow \infty\quad\text{with}
\end{equation*}
\begin{eqnarray}\label{th2-gsm}
\grave{s}_{m}\left(t,\bTheta\right)&=&  
 \sum\limits_{k=0}^m (-1)^{k}t^k\binom{m}{k} \tilde{s}_{m+k}(t,\bTheta)\,,
\end{eqnarray}
where $\tilde{s}_{m+k}(t,\bTheta)$ can be computed using \eqref{th2-tsm} for $k=0, 1, \ldots, m$ and $m=1,2,\ldots$.
\end{theorem}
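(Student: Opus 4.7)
The plan is to leverage the algebraic identity derived in the text immediately preceding Theorem \ref{th3MPR},
\begin{equation*}
\bS^{\pm}_n(t)=(\bS_n+t\bI)^{-1}-t(\bS_n+t\bI)^{-2},
\end{equation*}
together with the binomial expansion
\begin{equation*}
(\bS^{\pm}_n(t))^m=\sum_{k=0}^m (-1)^{k}t^k\binom{m}{k}(\bS_n+t\bI)^{-(m+k)}.
\end{equation*}
The expansion is valid in the matrix setting because the two summands commute: both are polynomials in $(\bS_n+t\bI)^{-1}$, hence they share the same eigenbasis (the spectral decomposition of $\bS_n$). This reduces the object of interest to a finite linear combination of quantities $\text{tr}((\bS_n+t\bI)^{-(m+k)}\bTheta)=\text{tr}((\bS_n^{-}(t))^{m+k}\bTheta)$ with $k=0,1,\ldots,m$.

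The second step is to invoke Theorem \ref{th2} termwise. Since $t>0$ and $m+k\ge 1$ for every $k\in\{0,1,\ldots,m\}$, Theorem \ref{th2} yields
\begin{equation*}
\bigl|\text{tr}\bigl((\bS_n^{-}(t))^{m+k}\bTheta\bigr)-\tilde{s}_{m+k}(t,\bTheta)\bigr|\stackrel{a.s.}{\rightarrow}0
\end{equation*}
for each individual $k$ as $p/n\to c\in(1,\infty)$. Because the union of finitely many probability-zero exceptional sets is still a null set, on the intersection of the $m+1$ full-probability events all these convergences hold simultaneously. Applying linearity of the trace and the triangle inequality with the fixed weights $(-1)^{k}t^{k}\binom{m}{k}$ then gives
\begin{equation*}
\bigl|\text{tr}((\bS^{\pm}_n(t))^{m}\bTheta)-\grave{s}_{m}(t,\bTheta)\bigr|\le \sum_{k=0}^{m}t^{k}\binom{m}{k}\bigl|\text{tr}((\bS_n^{-}(t))^{m+k}\bTheta)-\tilde{s}_{m+k}(t,\bTheta)\bigr|\stackrel{a.s.}{\rightarrow}0,
\end{equation*}
which is exactly the claim with $\grave{s}_{m}(t,\bTheta)$ defined as in \eqref{th2-gsm}.

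In this problem there is no serious obstacle to overcome: once Theorem \ref{th2} is available, Theorem \ref{th3MPR} is essentially a corollary. The only two details that warrant explicit checking in the write-up are (i) the commutativity required to justify the matrix binomial expansion, and (ii) the fact that Theorem \ref{th2} is stated for all non-negative integer powers, which is needed since $m+k$ ranges up to $2m$. Both are immediate, so the bulk of the proof is bookkeeping of the binomial coefficients to match the definition of $\grave{s}_{m}(t,\bTheta)$.
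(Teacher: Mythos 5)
Your proposal is correct and follows exactly the argument the paper gives in the text immediately preceding Theorem \ref{th3MPR}: the binomial expansion of $(\bS^{\pm}_n(t))^m$ in powers of $(\bS_n+t\bI)^{-1}$, followed by a termwise application of Theorem \ref{th2} to the finitely many summands. The extra details you flag (commutativity of the two commuting summands and the union of finitely many null sets) are correct and merely make explicit what the paper leaves implicit.
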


In Corollary \ref{cor-MPRidge2} we present another expression of $\grave{s}_{m}(t,\bTheta)$ which follows from the recursive computation of $\tilde{s}_{m+k}(t,\bTheta)$ as given in \eqref{tsm-rec}. The proof of Corollary \ref{cor-MPRidge2} is given in the supplement.

\begin{corollary}\label{cor-MPRidge2}
    Let $\bY_n$ fulfill the stochastic representation \eqref{obs}. Then, under Assumptions \textbf{(A1)}-\textbf{(A3)} for any $t>0$, it holds that
\begin{eqnarray}\label{cor-gsm_eq1}
\grave{s}_{m}\left(t,\bTheta\right)&=& -D_m(t,\bTheta) + 
 \sum\limits_{k=2}^m (-1)^{k}\binom{m}{k}
 \sum\limits_{j=1}^{k-1} t^j D_{m+j}(t,\bTheta)\,, ~\text{where}
\end{eqnarray}
\begin{equation}\label{cor-gsm_eq2}
D_m(t,\bTheta)=\sum_{k=1}^m \frac{(-1)^{m+k} k!}{m!}d_k\left(t,\bTheta\right)B_{m,k}\left(v^{(1)}(t),v^{(2)}(t),...,v^{(m-k+1)}(t)\right).
\end{equation}
\end{corollary}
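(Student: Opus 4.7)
The plan is to start from the expression for $\grave{s}_m(t,\bTheta)$ given in Theorem \ref{th3MPR}, iterate the recursion \eqref{tsm-rec} for $\tilde{s}_{m+k}(t,\bTheta)$, swap the order of the resulting double summation, and collapse using the binomial identity $\sum_{k=0}^m (-1)^k \binom{m}{k}=0$.

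First I would unfold the recursion \eqref{tsm-rec}, which in the compact notation of the corollary reads $\tilde{s}_{\ell+1}(t,\bTheta)=t^{-1}\tilde{s}_{\ell}(t,\bTheta)+t^{-1}D_\ell(t,\bTheta)$. A short induction on $k$ then gives the closed-form identity
\begin{equation*}
\tilde{s}_{m+k}(t,\bTheta)=t^{-k}\tilde{s}_{m}(t,\bTheta)+\sum_{j=0}^{k-1} t^{-(k-j)} D_{m+j}(t,\bTheta),\qquad k=0,1,2,\ldots,
\end{equation*}
where the empty sum for $k=0$ is taken to be zero. This is the main algebraic input; everything else is rearrangement.

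Next I would substitute this identity into the formula \eqref{th2-gsm} from Theorem \ref{th3MPR}, obtaining
\begin{equation*}
\grave{s}_{m}(t,\bTheta)=\tilde{s}_m(t,\bTheta)\sum_{k=0}^m(-1)^k\binom{m}{k}+\sum_{k=1}^m(-1)^k\binom{m}{k}\sum_{j=0}^{k-1} t^{j} D_{m+j}(t,\bTheta).
\end{equation*}
Since $m\ge 1$, the first sum vanishes by the binomial identity, eliminating the unknown $\tilde{s}_m(t,\bTheta)$ entirely. I would then split off the $j=0$ contribution from the inner sum: the coefficient of $D_m(t,\bTheta)$ equals $\sum_{k=1}^m(-1)^k\binom{m}{k}=-1$, producing the $-D_m(t,\bTheta)$ term in \eqref{cor-gsm_eq1}. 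The remaining terms with $j\ge 1$ force $k\ge 2$ (since the inner sum is empty for $k=1$), which yields exactly the stated double sum $\sum_{k=2}^m(-1)^k\binom{m}{k}\sum_{j=1}^{k-1} t^j D_{m+j}(t,\bTheta)$.

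This argument is essentially bookkeeping, so I do not anticipate a genuine obstacle; the only delicate point is making sure the index shifts after iterating \eqref{tsm-rec} are correct and that both the $k=0$ and $k=1$ boundary cases are handled (the former producing the vanishing binomial sum, the latter contributing an empty inner sum). Once those are verified, the identity \eqref{cor-gsm_eq1} follows directly from Theorem \ref{th3MPR} and the recursion \eqref{tsm-rec}, with no new probabilistic input required.
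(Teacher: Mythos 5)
Your proposal is correct and follows essentially the same route as the paper's own proof: unfold the recursion \eqref{tsm-rec} to write $\tilde{s}_{m+k}$ in terms of $\tilde{s}_m$ and the $D_{m+j}$, substitute into \eqref{th2-gsm}, and kill the $\tilde{s}_m$ term (and reduce the $j=0$ contribution to $-D_m$) via $\sum_{k=0}^m(-1)^k\binom{m}{k}=0$. The only cosmetic difference is that the paper groups $\tilde{s}_m+D_m$ together before applying the binomial identity, whereas you separate the $j=0$ term afterwards; the bookkeeping is identical.
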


Interestingly, the summands which are unbounded for $\tilde{s}_m(t,\bTheta)$ as $t\to 0$ are all canceled in the limiting behavior of for $\grave{s}_m(t,\bTheta)$. Since all derivatives of $v(t)$ and the functions $d_k(t,\bTheta)$ for $k\ge 1$ are all well-defined and bounded in $t=0$, it holds that the limits of $\grave{s}_m(t,\bTheta)$ are also well defined at zero and, moreover,
\begin{equation}\label{tto0}
\grave{s}_m(0,\bTheta)=-D_m(0,\bTheta)=s_m(\bTheta).    
\end{equation}
Hence, the limiting behavior of $\grave{s}_m(0,\bTheta)$ coincides with the one, obtained in the case of the Moore-Penrose inverse.

In Corollary \ref{cor3MPR}, the results for $m=1$ and $m=2$ are summarized.

\begin{corollary}\label{cor3MPR}
Let $\bY_n$ fulfill the stochastic representation \eqref{obs}. Then, under Assumptions \textbf{(A1)}-\textbf{(A3)} for any $t>0$ we get \eqref{th1_eq1} with
\begin{eqnarray}\label{grave_s1}
\grave{s}_1\left(t,\bTheta\right)&=& -v^{(1)}(t)d_1\left(t,\bTheta\right),\\
\grave{s}_2\left(t,\bTheta\right)&=& -\left\{[v^{(1)}(t)]^2 d_2(t,\bTheta)-\frac{1}{2}v^{(2)}(t) d_1(t,\bTheta)\right\} \nonumber\\
&+&t\left\{\frac{1}{6}v^{(3)}(t) d_1(t,\bTheta)-v^{(1)}(t)v^{(2)}(t) d_2(t,\bTheta)+[v^{(1)}(t)]^3 d_3(t,\bTheta)\right\},\label{grave_s2}
%\grave{s}_3\left(t,\bTheta\right)&=& t^{-3} d_0(t,\bTheta) + t^{-2} v^{(1)}(t) d_1(t,\bTheta)+t^{-1}\left\{[v^{(1)}(t)]^2 d_2(t,\bTheta)-\frac{1}{2}v^{(2)}(t) d_1(t,\bTheta)\right\},\\
%\grave{s}_4\left(t,\bTheta\right)&=& t^{-4} d_0(t,\bTheta) + t^{-3} v^{(1)}(t) d_1(t,\bTheta)+t^{-2}\left\{[v^{(1)}(t)]^2 d_2(t,\bTheta)-\frac{1}{2}v^{(2)}(t) d_1(t,\bTheta)\right\}\\
%&&+t^{-1}\left\{\frac{1}{6}v^{(3)}(t) d_1(t,\bTheta)-v^{(1)}(t)v^{(2)}(t) d_2(t,\bTheta)+[v^{(1)}(t)]^3 d_3(t,\bTheta)\right\},
\end{eqnarray}
for $p/n \rightarrow c \in (0,+\infty)$ as $n \rightarrow \infty$ where $d_k\left(t,\bTheta\right)$ and $h_k(t)$ are defined in \eqref{th2-dk} and \eqref{th2-hk}, and $v^{(m)}(t)$, $m=0,1,...$, are defined in\eqref{th2-vt}, \eqref{th2-vtpr}, and \eqref{th2-vtpr-m} with $v^{(0)}(t)=v(t)$.
\end{corollary}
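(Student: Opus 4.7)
The plan is to specialize the general formula from Theorem \ref{th3MPR} (or, more efficiently, its reformulation in Corollary \ref{cor-MPRidge2}) to $m=1$ and $m=2$, then evaluate the partial Bell polynomials of low order explicitly and collect terms. Since Corollary \ref{cor-MPRidge2} already eliminates the negative powers of $t$ that plague the representation $\grave{s}_m = \sum_{k=0}^m(-1)^k t^k\binom{m}{k}\tilde{s}_{m+k}$, I would route the proof through it.

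First, I would write down, from \eqref{cor-gsm_eq1}, the specialized identities
\[
\grave{s}_1(t,\bTheta) = -D_1(t,\bTheta), \qquad \grave{s}_2(t,\bTheta) = -D_2(t,\bTheta) + t\,D_3(t,\bTheta),
\]
where $D_m(t,\bTheta)$ is defined in \eqref{cor-gsm_eq2}. It then remains to compute $D_1$, $D_2$, and $D_3$ from \eqref{cor-gsm_eq2} by listing the terms indexed by $k=1,\dots,m$ and inserting the elementary values of the partial exponential Bell polynomials, namely $B_{1,1}(x_1)=x_1$, $B_{2,1}(x_1,x_2)=x_2$, $B_{2,2}(x_1)=x_1^2$, $B_{3,1}(x_1,x_2,x_3)=x_3$, $B_{3,2}(x_1,x_2)=3x_1x_2$, and $B_{3,3}(x_1)=x_1^3$. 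Substituting these into \eqref{cor-gsm_eq2} and tracking the sign $(-1)^{m+k}$ together with the factor $k!/m!$ yields the three expressions
\[
D_1 = v'(t)\,d_1,\quad D_2 = (v'(t))^2 d_2 - \tfrac{1}{2}v''(t)\,d_1,\quad D_3 = \tfrac{1}{6}v'''(t)\,d_1 - v'(t)v''(t)\,d_2 + (v'(t))^3 d_3,
\]
and combining them gives exactly \eqref{grave_s1}--\eqref{grave_s2}. The formulas for $v'(t)$, $v''(t)$, $v'''(t)$ stated in the corollary then follow verbatim from Corollary \ref{cor3}.

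As an internal check, I would also verify the identities directly from Theorem \ref{th3MPR}: for $m=1$ the expansion $\grave{s}_1 = \tilde{s}_1 - t\,\tilde{s}_2$ and for $m=2$ the expansion $\grave{s}_2 = \tilde{s}_2 - 2t\,\tilde{s}_3 + t^2\,\tilde{s}_4$, substituting the formulas for $\tilde{s}_m$ from Corollary \ref{cor3}. The coefficients of $t^{-2}$ and $t^{-1}$ in $\grave{s}_2$, and of $t^{-1}$ in $\grave{s}_1$, must cancel; this cancellation is a discrete consequence of the binomial identity $\sum_{k=0}^m(-1)^k\binom{m}{k}=0$ applied to the coefficients that multiply each negative power of $t$ in $\tilde{s}_{m+k}$. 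Verifying this cancellation in the low-order cases confirms the consistency of the two approaches.

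There is no real analytic obstacle here: all inputs (the existence and uniqueness of $v(t)$, the values of $d_k(t,\bTheta)$, the recursion for $v^{(m)}(t)$, and the almost sure convergence itself) are supplied by Theorem \ref{th2}, Corollary \ref{cor3}, and Corollary \ref{cor-MPRidge2}. The only non-routine aspect is bookkeeping of signs in the alternating sums involving $(-1)^{m+k}$, $(-1)^k$, and the factorial prefactors; this is the step I would double-check most carefully, preferably by executing both the Corollary \ref{cor-MPRidge2} route and the direct Theorem \ref{th3MPR} route and matching the outputs.
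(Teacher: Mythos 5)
Your proposal is correct: the paper gives no separate proof of this corollary, and the intended derivation is exactly your route — specialize Corollary \ref{cor-MPRidge2} to $m=1,2$ (giving $\grave{s}_1=-D_1$ and $\grave{s}_2=-D_2+tD_3$) and evaluate $D_1,D_2,D_3$ using the low-order Bell polynomial values already tabulated in the proof of Corollary \ref{cor1}. Your sign and factorial bookkeeping checks out, so nothing is missing.
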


Next, we present the results for two other important special cases $\bTheta=(1/p)\bI_p$ and $\bSigma=\bI_p$ in Corollary \ref{cor0MPRidge} and Corollary \ref{cor3aMPR}, respectively. The proof of Corollary \ref{cor0MPRidge} is given in the supplement.

\begin{corollary}\label{cor0MPRidge}
    Let $\bY_n$ fulfill the stochastic representation \eqref{obs}. Then, under Assumptions \textbf{(A1)}-\textbf{(A2)} for any $t>0$, it holds for $m=1,2,...$ that
\begin{equation*}
\left|\frac{1}{p}\emph{tr}\left[(\bS_n^{\pm}(t))^{m}\right]-\frac{(-1)^{m-1}}{c_n}\sum\limits_{k=0}^m \binom{m}{k}\frac{v^{(m+k-1)}(t)}{(m+k-1)!}t^k\right| \stackrel{a.s.}{\rightarrow} 0\;\text{for} \; p/n \rightarrow c \in (0, +\infty) 
\; \text{as} \; n \rightarrow \infty,%~~\text{where}
\end{equation*}
%\begin{equation}\label{cor0Ridge_eq1MPR}
%\grave{s}_{m}\left(t,\frac{1}{p}\bI_p\right)=  \frac{(-1)^{m-1}}{c_n}\sum\limits_{k=0}^m \binom{m}{k}\frac{v^{(m+k-1)}(t)}{(m+k-1)!}t^k \quad \text{for} \quad m=1,2,...
%\end{equation}
%and
where $v^{(m)}(t)$ is defined in \eqref{th2-vt}, \eqref{th2-vtpr}, and \eqref{th2-vtpr-m} for $m=0,1,...$ with $v^{(0)}(t)=v(t)$.
\end{corollary}

\begin{corollary}\label{cor3aMPR}
Let $\bY_n$ fulfill the stochastic representation \eqref{obs} with $\bSigma=\bI_p$. %Let
%\begin{enumerate}[(a)]
   % \item $\bTheta$ is of finite rank $k$, i.e., $\bTheta=\sum_{i=1}^k\btheta_i\btheta_i^\top$, and $\btheta_i^\top \btheta_i$, $i=1,...,k$, are uniformly bounded in $p$;
   % \item or  $\lambda_{max}(\bTheta)=o(1)$ as $p \to \infty$.
%\end{enumerate}
Then, under Assumptions \textbf{(A2)}-\textbf{(A3)} for any $t>0$, it holds that
\begin{equation}\label{cor3a-eq1MPR}
\left|\emph{tr}((\bS_n^{\pm}(t))^{m+1}\bTheta)-\frac{(-1)^{m-1}}{c_n}\sum\limits_{k=0}^m \binom{m}{k}\frac{v^{(m+k-1)}(t)}{(m+k-1)!}t^k\emph{tr}(\bTheta)\right| \stackrel{a.s.}{\rightarrow} 0
\end{equation}
for $p/n \rightarrow c \in (0, +\infty)$ as $n \rightarrow \infty$ with $v(t)$ and $v^{(m)}(t)$ defined in \eqref{cor3a-eq2} and \eqref{cor3a-eq3}.
\end{corollary}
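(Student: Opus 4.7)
\noindent\textbf{Proof proposal for Corollary \ref{cor3aMPR}.}

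The plan is to deduce this result as a direct specialization of Theorem \ref{th3MPR} (equivalently, Corollary \ref{cor0MPRidge} for the scalar case and Corollary \ref{cor-MPRidge2} for the weighted case) combined with the closed-form expressions for $v(t)$ and its derivatives that were already established in Corollary \ref{cor3a}. Since the representation \eqref{obs} together with $\bSigma=\bI_p$ puts us inside the hypotheses of Theorem \ref{th3MPR}, almost-sure convergence of $\text{tr}((\bS_n^{\pm}(t))^{m+1}\bTheta)$ to $\grave{s}_{m+1}(t,\bTheta)$ is automatic, and the only remaining task is to show that $\grave{s}_{m+1}(t,\bTheta)$ reduces to the explicit scalar multiple of $\text{tr}(\bTheta)$ claimed in \eqref{cor3a-eq1MPR}.

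\emph{Step 1: Factorization of $d_k(t,\bTheta)$ when $\bSigma=\bI_p$.} Substituting $\bSigma=\bI_p$ into \eqref{th2-dk} gives
\[
d_k(t,\bTheta)=(v(t)+1)^{-(k+1)}\,\text{tr}(\bTheta),\qquad k=0,1,2,\ldots,
\]
so $d_k(t,\bTheta)$ is just $\text{tr}(\bTheta)$ times the value $d_k(t,\tfrac{1}{p}\bI_p)\cdot p$ appearing in the isotropic case of Corollary \ref{cor0MPRidge}. In particular, every $d_k(t,\bTheta)$ depends on $\bTheta$ only through its trace.

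\emph{Step 2: Scalar/tensor separation of $\grave{s}_{m}(t,\bTheta)$.} Because the Bell polynomial contribution in Theorem \ref{th3MPR} and in Corollary \ref{cor-MPRidge2} is built entirely from the derivatives $v^{(j)}(t)$ and from the functionals $d_k(t,\bTheta)$, and because Step 1 shows that $d_k(t,\bTheta)=\text{tr}(\bTheta)\,\alpha_k(v(t))$ for the scalar $\alpha_k(v)=(v+1)^{-(k+1)}$, the entire expression $\grave{s}_{m}(t,\bTheta)$ factors as
\[
\grave{s}_{m}(t,\bTheta)=\text{tr}(\bTheta)\cdot \grave{s}_{m}\!\left(t,\tfrac{1}{p}\bI_p\right)
\]
in the regime $\bSigma=\bI_p$. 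This is the key algebraic simplification; it is immediate from the linearity of $d_k(\cdot,\bTheta)$ in $\bTheta$ together with the observation that in the isotropic case the $\bTheta$-dependence is a single overall factor $\text{tr}(\bTheta)$.

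\emph{Step 3: Insert the explicit form of $v(t)$ and its derivatives.} Substituting the closed-form $\grave{s}_{m+1}(t,\tfrac{1}{p}\bI_p)$ provided by Corollary \ref{cor0MPRidge}, namely
\[
\grave{s}_{m+1}\!\left(t,\tfrac{1}{p}\bI_p\right)=\frac{(-1)^{m-1}}{c_n}\sum_{k=0}^{m}\binom{m}{k}\frac{v^{(m+k-1)}(t)}{(m+k-1)!}\,t^{k},
\]
and using the fact that under $\bSigma=\bI_p$ the function $v(t)$ and its derivatives admit the explicit representations \eqref{cor3a-eq2}--\eqref{cor3a-eq3} (since the defining equations \eqref{th2-vt}--\eqref{th2-vtpr-m} collapse accordingly), yields exactly the right-hand side of \eqref{cor3a-eq1MPR}. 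Combining with the a.s. convergence of $\text{tr}((\bS_n^{\pm}(t))^{m+1}\bTheta)-\grave{s}_{m+1}(t,\bTheta)$ to zero from Theorem \ref{th3MPR} then concludes the proof.

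The only mildly delicate point is the bookkeeping in Step 2: one must verify that the scalar factorization $d_k(t,\bTheta)=\text{tr}(\bTheta)\,\alpha_k(v(t))$ propagates cleanly through the double sum in Corollary \ref{cor-MPRidge2} without disturbing the Bell-polynomial structure. This is routine because the Bell polynomials act only on the derivatives $v^{(j)}(t)$, which are independent of $\bTheta$, so $\text{tr}(\bTheta)$ factors out term by term. No additional analytic estimates are required beyond those already used in Theorems \ref{th2} and \ref{th3MPR}.
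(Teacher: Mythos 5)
Your proposal is correct and matches the paper's (implicit) argument exactly: the paper proves the analogous ridge-case Corollary \ref{cor3a} by precisely your Steps 1--2 (when $\bSigma=\bI_p$, $d_k(t,\bTheta)=(v(t)+1)^{-(k+1)}\tr(\bTheta)$, so the limit factors as $\tr(\bTheta)$ times the isotropic coefficient obtained from the $\bTheta=\tfrac{1}{p}\bI_p$ corollary), and Corollary \ref{cor3aMPR} is just the same factorization fed through Corollary \ref{cor0MPRidge}. Note only that the label $\grave{s}_{m+1}$ in your Step 3 actually carries the formula for $\grave{s}_{m}(t,\tfrac{1}{p}\bI_p)$ from Corollary \ref{cor0MPRidge}, but this index mismatch between the exponent $m+1$ on the left-hand side and the $\grave{s}_m$ coefficient on the right is already present in the paper's own statement of the corollary, so it is not a defect of your argument.
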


The results of Section \ref{sec:main} show that the three generalized inverses have different limiting behavior. This is easily visualized with the findings presented in Corollary \ref{cor2}, Corollary \ref{cor3a}, and Corollary \ref{cor3aMPR} when $\bSigma=\bI_p$. Namely, it holds that
\begin{eqnarray*}
&&\left|\text{tr}( \bS_n^+\bTheta)-\frac{1}{(c_n-1)c_n} \text{tr}\left(\bTheta\right)\right| \stackrel{a.s.}{\rightarrow} 0, \\
&&\left|\text{tr}(\bS_n^-(t)\bTheta)-\left(\frac{v(t)}{c_n}+\frac{1}{t}\frac{c_n-1}{c_n}\right)\text{tr}\left(\bTheta\right)\right| \stackrel{a.s.}{\rightarrow} 0,\\
&&\left|\text{tr}(\bS_n^\pm(t)\bTheta)-\left(\frac{v(t)}{c_n}+t\frac{v^{(1)}(t)}{c_n}\right)\text{tr}\left(\bTheta\right)\right| \stackrel{a.s.}{\rightarrow} 0
\end{eqnarray*}
for $p/n \rightarrow c \in (1, +\infty)$ as $n \rightarrow \infty$ where
\[v(t)=\frac{2}{(c_n-1+t)+\sqrt{(c_n-1+t)^2+4t}}, \quad
v^{(1)}(t)=\frac{-1}{v(t)^{-2}-c_n(v(t)+1)^{-2}}.\]
%with $v(0)=(c_n-1)^{-1}$ and $v^{(1)}(0)=-c_n(c_n-1)^{-3}$.
Interestingly, all three generalized inverses consistently estimate non-diagonal elements of the precision matrix elementwise, which follows directly by setting
\begin{equation}\label{bTheta-cov}
\bTheta=\frac{1}{2}(\mathbf{e}_i\mathbf{e}_j^\top+\mathbf{e}_j\mathbf{e}_i^\top)
=\frac{1}{4}\left((\mathbf{e}_i+\mathbf{e}_j)(\mathbf{e}_i+\mathbf{e}_j)^\top
-(\mathbf{e}_i-\mathbf{e}_j)(\mathbf{e}_i-\mathbf{e}_j)^\top\right),
\end{equation}
where $1\le i<j\le p$ and $\mathbf{e}_i$ is a unit vector with one in the i-th position. 

%\begin{remark}[On the case $t\to0$]
Another notable observation concerns the case $t \to 0$. When $p>n$, then the application of \eqref{cor2-eq2} shows that the Moore-Penrose inverse and the Moore-Penrose-ridge inverse exhibit the same limiting behavior, while the ridge-type inverse does not. This is not surprising, as it follows directly from their definitions: $\lim\limits_{t \to 0} \bS^{\pm}_n(t) = \bS^{+}_n$, while $\lim\limits_{t \to 0} \bS^{-}_n(t) = \infty$. The last two equalities also hold for $\bSigma \neq \bI_p$ following \eqref{tto0}. Thus, the Moore-Penrose-ridge estimator $\bS^{\pm}_n(t)$ effectively approximates the Moore-Penrose inverse for small values of $t$. This feature is particularly useful when only mild regularization is needed. For instance, when the concentration ratio $c_n = p/n$ is close to one, a regime in which the Moore-Penrose inverse $\bS^+_n$ is known to be numerically unstable. This insight supports the practical relevance of the Moore-Penrose-ridge inverse $\bS^{\pm}_n$ in scenarios where $p$ and $n$ are nearly equal.

For $p<n$, the sample covariance matrix $\bS_n$ is nonsingular. In this case, the application of Theorem S.2.1 %\ref{th3-ridge}
and Corollary S.2.4 %\ref{cor5a-inverse} 
from the supplement (\cite{BP2025reviving-S}), and \eqref{Sn-pm-t} yields for $\bSigma=\bI_p$ that
\begin{eqnarray*}
&&\left|\text{tr}( \bS_n^{-1}\bTheta)-\frac{1}{1-c_n} \text{tr}\left(\bTheta\right)\right| \stackrel{a.s.}{\rightarrow} 0, \quad\left|\text{tr}(\bS_n^-(t)\bTheta)-\frac{1}{t+w(t)}\text{tr}\left(\bTheta\right)\right| \stackrel{a.s.}{\rightarrow} 0,\\
&&\left|\text{tr}(\bS_n^\pm(t)\bTheta)-\left(\frac{1}{t+w(t)}-t\frac{w^{(1)}(t)+1}{(t+w(t))^2}\right)\text{tr}\left(\bTheta\right)\right| \stackrel{a.s.}{\rightarrow} 0
\end{eqnarray*}
for $p/n \rightarrow c \in (0, 1)$ as $n \rightarrow \infty$ where
\[w(t)=\frac{(1-c_n-t)+\sqrt{(1-c_n-t)^2+4t}}{2}, \quad
w^{(1)}(t)=\frac{c_nw(t)}{(t+w(t))^{2}+tc_n},\]
with $w(0)=1-c_n$ and $w^{(1)}(0)=c_n/(1-c_n)$. Consequently, in contrast to the case $p>n$, all three inverses exhibit the same limiting behavior. Furthermore, the application of \eqref{bTheta-cov} yields that all three inverses consistently estimate non-diagonal elements of the precision matrix elementwise, similarly to the case $p>n$. Finally, the comparison of formulae after Corollary \ref{cor2} and at the end of Section S.2 %\ref{sec:main-sample_prec} 
in the supplement (\cite{BP2025reviving-S}) leads to the conclusion that $\text{tr}((\mathbf{S}^{+}_n)^m\mathbf{\Theta})$ and $\text{tr}((\mathbf{S}^{-1}_n)^m\mathbf{\Theta})$ have the same asymptotic behavior up to the sign of $c_n-1$ when $m=2,3,4$. 

%\end{remark}

\section{Applications}\label{sec:app}
%\subsection{Shrinkage estimator of the high-dimensional precision matrix}\label{sec:applic}
The derived theoretical results have a lot of applications in high-dimensional statistics. Maybe, the most obvious one would be the approximation of the weighted functionals $\tr\left(f(\bS_n^{\#}(t))\bTheta\right)$ by Taylor expansion up to a specific order $m$, assuming that the test function $f$ is smooth enough and $\bS_n^{\#}(t)$ denotes an estimator of the precision matrix.  The most prominent examples, however, are the construction of fully data-driven shrinkage estimators for the precision matrix  and for the weights of the global minimum variance portfolio in the case of a singular sample covariance matrix. Later on, we choose $\bS_n^{\#}(t)$ to be one of the three generalized inverses considered in Section \ref{sec:main}, i.e., $\bS_n^{\#}(t) \in 
\{\bS_n^+,\bS_n^-(t),\bS_n^\pm(t)\}$.

\subsection{Shrinkage estimator of the high-dimensional precision matrix}\label{sec:shrink-prec}
Following \cite{BodnarGuptaParolya2016} the general linear shrinkage estimator for the precision matrix is given by
 \begin{equation}\label{gse}
\widehat{\boldsymbol{\Pi}}_{GSE}=\alpha_n\bS_n^{\#}(t)+\beta_n\boldsymbol{\Pi}_0\,,
\end{equation}
where $\boldsymbol{\Pi}_0$ is a target matrix, which typically is chosen to be equal to the identity matrix when no \emph{a priori} information about the precision matrix $\bSigma^{-1}$ is available. Potentially, the estimator \eqref{gse} is nonlinear since all the three pseudo-inverses of $\bS_n$ are nonlinear functions of $\bSigma$ and also they are nonlinear functions of the eigenvalues of $\bS_n$. For example, the Moore-Penrose inverse inverts the non-zero eigenvalues and keeps zero eigenvalues untouched. A similar structure as \eqref{gse} obeys the non-linear quadratic shrinkage estimator of \cite{lwQIS2020} with specific $\alpha_n$, $\beta_n$ and the data-driven equivariant (with same eigenvectors as $\bS_n$) target $\boldsymbol{\Pi}_0$, which come from the nonlinear Ledoit-P\'ech\'e shrinkage formula \cite[see][]{ledoitpeche2011}. A proper shrinkage of $\bS_n^{\#}(t)$ could result in a better estimator of $\bSigma^{-1}$. Moreover, it is interesting how it compares to the well-established nonlinear shrinkage technique. The choice of $\boldsymbol{\Pi}_0$ is free to some extent but we will concentrate us on $\boldsymbol{\Pi}_0=\bI_p$ in simulations to have a fair comparison with common rotation-equivariant estimators \cite[see, e.g.,][]{lw2004, lw12, bodnar2014strong}.

The optimal shrinkage intensities $\alpha^{*}_n$ and $\beta^{*}_n$ are determined by minimizing the Frobenius (quadratic) loss\footnote{This loss function is a bit different than given in \cite{BodnarGuptaParolya2016}. The reason for that is the discussion in Section 5 of \cite{lwQIS2020} about the singular case, i.e., $\bSigma^{-1}$ should be avoided in the loss function because of potential numerical instabilities.} for a given nonrandom target matrix $\boldsymbol{\Pi}_0$ given by \cite[see][]{haff1979, KrishGupta1989, YangBerger1994, wang2015shrinkage, BodnarGuptaParolya2016} 

\begin{equation}\label{risk}
 L^2_{F;n}=||\widehat{\boldsymbol{\Pi}}_{GSE}\bSigma-\bI_p||_F^2\,.
\end{equation}
The solution is summarized in Theorem \ref{lem_prec_loss}.

\begin{theorem}\label{lem_prec_loss}
For any fixed value of $t>0$, the loss function $L^2_{F;n}$ is minimized at
{\small
\begin{eqnarray}\label{beta-gen}
&&\alpha_n^*(\bS_n^{\#}(t))=
\dfrac{\emph{tr}(\bS_n^{\#}(t)\bSigma)-\emph{tr}\left(\dfrac{\bSigma\boldsymbol{\Pi}_0}{||\bSigma\boldsymbol{\Pi}_0||_F}\right)  \emph{tr}\left(\bS_n^{\#}(t)\dfrac{{\bSigma^2}\boldsymbol{\Pi}_0}{||\bSigma\boldsymbol{\Pi}_0||_F}\right)}{||\bS_n^{\#}(t)\bSigma||^2_F
  -\Bigl[\emph{tr}\left(\bS_n^{\#}(t)\dfrac{{\bSigma^2}\boldsymbol{\Pi}_0}{||\bSigma\boldsymbol{\Pi}_0||_F}\right)\Bigr]^2}
 \end{eqnarray}
and
\begin{eqnarray}\label{alfa-gen} 
&& \beta_n^*(\bS_n^{\#}(t))
=\dfrac{\emph{tr}\left(\dfrac{\bSigma\boldsymbol{\Pi}_0}{||\bSigma\boldsymbol{\Pi}_0||_F}\right)||\bS_n^{\#}(t)\bSigma||^2_F
 -\emph{tr}(\bS_n^{\#}(t)\bSigma)\emph{tr}\left(\bS_n^{\#}(t)\dfrac{\bSigma^2\boldsymbol{\Pi}_0}{||\bSigma\boldsymbol{\Pi}_0||_F}\right)}{||\bS_n^{\#}(t)\bSigma||^2_F
 -\Bigl[\emph{tr}\left(\bS_n^{\#}(t)\dfrac{\bSigma^2\boldsymbol{\Pi}_0}{||\bSigma\boldsymbol{\Pi}_0||_F}\right)\Bigr]^2}||\bSigma\boldsymbol{\Pi}_0||^{-1}_F\,.
 \end{eqnarray}
}
Additionally, when $\bS_n^{\#}(t) \in \{\bS_n^{-}(t),\bS_n^{\pm}(t)\}$ the optimal value of $t$ is found by maximizing
\begin{eqnarray}\label{risk-L-F2}
L^2_{F;n,2}(\bS_n^{\#}(t))
&=& \frac{\left[\emph{tr}(\bS_n^{\#}(t)\bSigma)
- \emph{tr}\left(\bS_n^{\#}(t)\dfrac{{\bSigma^2}\boldsymbol{\Pi}_0}{||\bSigma\boldsymbol{\Pi}_0||_F}\right)\emph{tr}\left(\dfrac{\bSigma\boldsymbol{\Pi}_0}{||\bSigma\boldsymbol{\Pi}_0||_F}\right)\right]^2
}{||\bS_n^{\#}(t)\bSigma||^2_F - \Bigl[\emph{tr}\left(\bS_n^{\#}(t)\dfrac{{\bSigma^2}\boldsymbol{\Pi}_0}{||\bSigma\boldsymbol{\Pi}_0||_F}\right)\Bigr]^2} .
\end{eqnarray}
\end{theorem}

The proof of Theorem \ref{lem_prec_loss} is given in the supplement. It is important to note that after dividing by $p$ the numerator and denominator in the formulas for $\alpha^*_{n}(\bS_n^{\#}(t))$, $\beta^*_{n}(\bS_n^{\#}(t))$ and $L^2_{F;n,2}(\bS_n^{\#}(t))$, no additional assumption on $\boldsymbol{\Pi}_0$ is needed since all ratios in \eqref{beta-gen}, \eqref{alfa-gen} and \eqref{risk-L-F2} are self-normalized. For example, by taking $$\bTheta=\dfrac{\bSigma\boldsymbol{\Pi}_0}{\sqrt{p}||\bSigma\boldsymbol{\Pi}_0||_F}$$ in Corollary \ref{cor1} and using Jensen's inequality, we get that $\tr\left(\bTheta\right)\leq 1$. Similarly, for $\bTheta=\frac{1}{p}\bSigma^2$ it holds that
$ \frac{1}{p}\tr(\bSigma^2)\leq \frac{p \lambda^2_{\max}(\bSigma)}{p}= \lambda^2_{\max}(\bSigma)<\infty.$
In the same way, it can be shown that  $\bTheta=\frac{1}{p}\bSigma
\quad \text{and}\quad \bTheta=\dfrac{\bSigma^2\boldsymbol{\Pi}_0}{\sqrt{p}||\bSigma\boldsymbol{\Pi}_0||_F}$ possess bounded trace norms. The latter properties verify Assumption \textbf{(A3)}.

Since $\alpha_n^*(\bS_n^{\#}(t))$, $\beta_n^*(\bS_n^{\#}(t))$ and $L^2_{F;n,2}(\bS_n^{\#}(t))$ depend on the unknown population covariance matrix $\bSigma$, they cannot be computed in practice and thus we refer to $\alpha_n^*(\bS_n^{\#}(t))$ and $\beta_n^*(\bS_n^{\#}(t))$ as the \emph{oracle} estimators. In the high-dimensional singular setting, i.e., $p>n$, \cite{BodnarGuptaParolya2016} found the almost sure asymptotic equivalents for $\alpha_n^*(\bS_n^+)$ and  $\beta_n^*(\bS_n^+)$ and estimate them consistently only in the case of $\bSigma=\sigma^2\bI_p$. The results of Section \ref{sec:main} allow us to establish the limiting behaviour of $\alpha_n^*(\bS_n^{\#}(t))$ and $\beta_n^*(\bS_n^{\#}(t))$, and to estimate consistently their limiting values $\alpha^*$ and  $\beta^*$ in a very general case. It is enough to find the exact limits of $\frac{1}{p}||\bS_n^{\#}\bSigma||^2_F%=\frac{1}{p}\tr\left(\bS_n^{\#}\bSigma(\bS_n^{\#}\bSigma)^\top\right)
=\frac{1}{p}\tr\left([\bS_n^{\#}(t)]^2\bSigma^2\right)$, $\frac{1}{p}\text{tr}(\bS_n^{\#}(t)\bSigma)$ and $\frac{1}{p}\text{tr}\left(\bS_n^{\#}(t)\dfrac{{\bSigma^2}\boldsymbol{\Pi}_0}{||\bSigma\boldsymbol{\Pi}_0||_F}\right)$ by using Corollary \ref{cor1}, Corollary \ref{cor3} and Corollary \ref{cor3MPR},
%$\text{tr}\left(\bS^{+}_n\dfrac{\bSigma_n\boldsymbol{\Pi}_0}{\sqrt{p}||\bSigma_n\boldsymbol{\Pi}_0||_F}\right)$
and to estimate them consistently, together with some other quantities which depend on the population covariance matrix, not on its inverse. Thus, it is straightforward to determine the almost sure equivalents of the optimal shrinkage intensities. We present the results in the following subsections for all three pseudo-inverses.% considered in Section \ref{sec:main}.

\subsubsection{Results for the Moore-Penrose inverse}\label{sec:sh-prec-MP}

In the case of the Moore-Penrose inverse, the optimization problem and the resulting optimal shrinkage intensities do not depend on the tuning parameter $t$. This fact simplifies considerably the construction of the optimal shrinkage estimator. Let $\alpha_{MP;n}^*=\alpha_n^*(\bS_n^{+})$ and $\beta_{MP;n}^*=\beta_n^*(\bS_n^{+})$. The asymptotic deterministic equivalents to $\alpha_{MP;n}^*$ and $\beta_{MP;n}^*$ are derived in Theorem S.5.1 %\ref{th:sh-prec-MP-oracle} 
from the supplementary material (see \cite{BP2025reviving-S}) by using the results presented in Corollary \ref{cor1}.

As a result, consistent estimators $\hat{\alpha}_{MP}^*$ and $\hat{\beta}_{MP}^*$ are obtained, which are summarized in Theorem \ref{th5}. Interestingly, all formulas are given in the closed form and, as such, no numerical computation is needed to construct consistent estimators for $\alpha_{MP}^*$ and $\beta_{MP}^*$. 

\begin{theorem}\label{th5} 
Let $\bY_n$ fulfill the stochastic representation \eqref{obs}. Then, under Assumptions \textbf{(A1)}-\textbf{(A2)}, consistent estimators for $\alpha_{MP}^*$ and $\beta_{MP}^*$ are given by
{\footnotesize
\begin{eqnarray}\label{halp_prec-opt}
  \hat{\alpha}_{MP}^*&=& \dfrac{\hat{d}_1\left(\frac{1}{p}\bSigma\right) \hat{q}_2\left(\frac{1}{p}\boldsymbol{\Pi}_0^2\right) -\hat{d}_1\left(0,\frac{1}{p}\bSigma^2\boldsymbol{\Pi}_0\right) \hat{q}_1\left(\frac{1}{p}\boldsymbol{\Pi}_0\right)}
{-\dfrac{1}{\hat{h}_2}\left(\hat{d}_2\left(\frac{1}{p}\bSigma^2\right)-\hat{d}_1\left(\frac{1}{p}\bSigma^2\right)\dfrac{\hat{h}_3}{\hat{h}_2}\right)\hat{q}_2\left(\frac{1}{p}\boldsymbol{\Pi}_0^2\right)-\dfrac{1}{\hat{h}_2}\hat{d}_1^2\left(0,\frac{1}{p}\bSigma^2\boldsymbol{\Pi}_0\right)},\\[0.3cm]
%%%
\hat{\beta}_{MP}^*&=&\!\!\dfrac{\left(\hat{d}_2\left(\frac{1}{p}\bSigma^2\right)-\hat{d}_1\left(\frac{1}{p}\bSigma^2\right)\dfrac{\hat{h}_3}{\hat{h}_2}\right)\hat{q}_1\left(\frac{1}{p}\boldsymbol{\Pi}_0\right)
+\hat{d}_1\left(\frac{1}{p}\bSigma\right)\hat{d}_1\left(0,\frac{1}{p}\bSigma^2\boldsymbol{\Pi}_0\right)}
{\left(\hat{d}_2\left(\frac{1}{p}\bSigma^2\right)-\hat{d}_1\left(\frac{1}{p}\bSigma^2\right)\dfrac{\hat{h}_3}{\hat{h}_2}\right)\hat{q}_2\left(\frac{1}{p}\boldsymbol{\Pi}_0^2\right)+\hat{d}_1^2\left(0,\frac{1}{p}\bSigma^2\boldsymbol{\Pi}_0\right)}
\,, \label{hbet_prec-opt}
  \end{eqnarray}
}
with
{\scriptsize
\begin{eqnarray}
\hat{d}_1\left(\frac{1}{p}\bSigma\right)&=&\frac{1}{\hat{v}(0)}\left(\frac{1}{c_n \hat{v}(0)}-\hat{d}_1\left(\frac{1}{p}\bI_p\right)\right), \label{th5-eq1}
\\
\hat{d}_1\left(\frac{1}{p}\bSigma^2\right)&=& \frac{1}{[\hat{v}(0)]^2}\left(\frac{1}{p}\tr[\bS_n]+\hat{d}_1\left(\frac{1}{p}\bI_p\right)-\frac{2}{c_n \hat{v}(0)}\right),\label{th5-eq2}\\
%\hat{d}_1\left(t_0,\frac{1}{p}\bSigma\boldsymbol{\Pi}_0\right)&=&\frac{1}{[\hat{v}(0)]^2}\left(\frac{1}{p}\tr[\boldsymbol{\Pi}_0]-\hat{d}_0\left(t_0,\frac{1}{p}\boldsymbol{\Pi}_0\right)\right)-\frac{1}{\hat{v}(0)}\hat{d}_1\left(t_0,\frac{1}{p}\boldsymbol{\Pi}_0\right),\label{th5-eq3}\\
\hat{d}_1\left(0,\frac{1}{p}\bSigma^2\boldsymbol{\Pi}_0\right)&=&
\frac{1}{[\hat{v}(0)]^2}\left(\frac{1}{p}\tr[\boldsymbol{\bS_n\Pi}_0]+\hat{d}_1\left(\frac{1}{p}\boldsymbol{\Pi}_0\right)\right)-\frac{2}{[\hat{v}(0)]^3}\left(\frac{1}{p}\tr[\boldsymbol{\Pi}_0]-
\hat{d}_0\left(0,\frac{1}{p}\boldsymbol{\Pi}_0\right)\right),\label{th5-eq4}\\
\hat{d}_2\left(\frac{1}{p}\bSigma^2\right)&=& \frac{1}{\hat{v}(0)} \hat{d}_1\left(\frac{1}{p}\bSigma^2\right) - \frac{1}{[\hat{v}(0)]^2} \left(\hat{d}_1\left(\frac{1}{p}\bSigma\right)-\hat{d}_2\left(\frac{1}{p}\bI_p\right)\right),\label{th5-eq5}
\end{eqnarray}
}
where $\hat{v}(0)$, $\hat{h}_2$, $\hat{h}_3$, $\hat{d}_1\left(\frac{1}{p}\bI_p\right)$, $\hat{d}_1\left(\frac{1}{p}\boldsymbol{\Pi}_0\right)$, $\hat{d}_2\left(\frac{1}{p}\bI_p\right)$, $\hat{d}_0\left(0,\frac{1}{p}\boldsymbol{\Pi}_0\right)$, $\hat{q}_1\left(\frac{1}{p}\boldsymbol{\Pi}_0\right)$ and $\hat{q}_2\left(\frac{1}{p}\boldsymbol{\Pi}_0^2\right)$ are given in \eqref{hv0-all} and in (S.38), (S.39), (S.40), (S.41), (S.42), (S.45), (S.47), and (S.48) in the supplement (\cite{BP2025reviving-S}), 
%\eqref{hh_2}, \eqref{hh_3}, \eqref{hd_1}, \eqref{hd_1Pi0}, \eqref{hd_2}, \eqref{hd0t}, \eqref{hq_1} and \eqref{hq_2}, 
respectively.
\end{theorem}

The proof of Theorem \ref{th5} is given in the supplementary material. If $\boldsymbol{\Pi}_0=\bI_p$, then $d_0\left(0,\frac{1}{p}\bI_p\right)=\frac{c_n-1}{c_n}$ and the results of Theorem \ref{th5} can be significantly simplified. %We leave this as an exercise for the reader. 
Using Theorem \ref{th5}, a bona fide shrinkage estimator for the precision matrix is deduced as
 \begin{equation}\label{BF-prec}
\widehat{\boldsymbol{\Pi}}_{MP;BF}=\hat{\alpha}_{MP}^*\bS_n^{+}+\hat{\beta}_{MP}^*\boldsymbol{\Pi}_0.
\end{equation}

\subsubsection{Results for the ridge-type inverse}\label{sec:sh-prec-Ridge}
Similarly, a shrinkage estimator for the precision matrix can be constructed by using the ridge-type inverse of the sample covariance matrix. First, we consider the case when the tuning parameter $t$ is fixed to some preselected value $t_0$ and then extend the results by minimizing the loss function \eqref{risk} additionally with respect to the tuning parameter $t$. In the former case the first two summands in \eqref{risk} are considered and the solution is given by
\begin{equation*}
\alpha_{R;n}^*(t_0)=\alpha_n^*(\bS_n^{-}(t_0))
\quad \text{and} \quad
\beta_{R;n}^*(t_0)=\beta_n^*(\bS_n^{-}(t_0)),
\end{equation*}
while in the latter case $L^2_{R;n,2}(t)=L^2_{F;n,2}(\bS_n^{-}(t))$ is additionally maximized with respect to the tuning parameter $t$, which is then used in the determination of the optimal shrinkage intensities. 

% \begin{equation}\label{R-gse}
%\widehat{\boldsymbol{\Pi}}_{R;GSE}=\alpha_{R;n}(\bS_n +t_0\bI_p)^{-1}+\beta_{R;n}\boldsymbol{\Pi}_0\quad \text{for} \quad t_0>0,
%\end{equation}
%with the optimal shrinkage intensities given by
%\begin{equation}\label{alfa-R_prec} \alpha_{R;n}^*=\dfrac{\text{tr}((\bS_n +t_0\bI_p)^{-1}\bSigma)-\text{tr}\left(\dfrac{\bSigma\boldsymbol{\Pi}_0}{||\bSigma\boldsymbol{\Pi}_0||_F}\right)\text{tr}\left((\bS_n +t_0\bI_p)^{-1}\dfrac{\colg{\bSigma^2}\boldsymbol{\Pi}_0}{||\bSigma\boldsymbol{\Pi}_0||_F}\right)}{||(\bS_n +t_0\bI_p)^{-1}\bSigma||^2_F-\Bigl[\text{tr}\left((\bS_n +t_0\bI_p)^{-1}\dfrac{\colg{\bSigma^2}\boldsymbol{\Pi}_0}{||\bSigma\boldsymbol{\Pi}_0||_F}\right)\Bigr]^2} \,,
% \end{equation}
 %\begin{equation}\label{beta-R_prec} \beta_{R;n}^*=\dfrac{\text{tr}\left(\dfrac{\bSigma\boldsymbol{\Pi}_0}{||\bSigma\boldsymbol{\Pi}_0||^2_F}\right)||(\bS_n +t_0\bI_p)^{-1}\bSigma||^2_F  -\text{tr}((\bS_n +t_0\bI_p)^{-1}\bSigma)\text{tr}\left((\bS_n +t_0\bI_p)^{-1}\dfrac{\colg{\bSigma^2}\boldsymbol{\Pi}_0}{||\bSigma\boldsymbol{\Pi}_0||^2_F}\right)}{||(\bS_n +t_0\bI_p)^{-1}\bSigma||^2_F  -\Bigl[\text{tr}\left((\bS_n +t_0\bI_p)^{-1}\dfrac{\colg{\bSigma^2}\boldsymbol{\Pi}_0}%{||\bSigma\boldsymbol{\Pi}_0||_F}\right)\Bigr]^2}\,.
% \end{equation}

In Theorem S.6.1 from \cite{BP2025reviving-S}, the deterministic equivalents of $\alpha_{R;n}^*(t_0)$ and $\beta_{R;n}^*(t_0)$ in the high-dimensional setting are presented by using Corollary \ref{cor3}, while their consistent estimators are given in Theorem \ref{th_prec_R-est}. Moreover, Theorem S.6.2 from the supplementary material (\cite{BP2025reviving-S}) presents the deterministic asymptotic equivalent $L^2_{R;2}$ to $L^2_{R;n,2}(t)$ by applying the results of Corollary \ref{cor3}, which is  consistently estimated in the second statement of Theorem \ref{th_prec_R-est} (see, the formula \eqref{L_prec-opt-R-hat}).
\begin{theorem}\label{th_prec_R-est} Let $\bY_n$ fulfill the stochastic representation \eqref{obs}. Then, under Assumptions \textbf{(A1)}-\textbf{(A2)} for any $t_0>0$, consistent estimators for $\alpha_R^*(t_0)$ and $\beta_R^*(t_0)$ are given by
{\scriptsize
\begin{eqnarray}
\label{alp_prec-opt-R-hat}
&\hat{\alpha}_R^*(t_0)= \dfrac{\hat{d}_0\left(t_0,\frac{1}{p}\bSigma\right)\hat{q}_2\left(\frac{1}{p}\boldsymbol{\Pi}_0^2\right) - \hat{d}_0\left(t_0,\frac{1}{p}\bSigma^2\boldsymbol{\Pi}_0\right) \hat{q}_1\left(\frac{1}{p}\boldsymbol{\Pi}_0\right)}{\left(t_0^{-1}\hat{d}_0\left(t_0,\frac{1}{p}\bSigma^2\right)+\hat{v}^{(1)}(t_0)\hat{d}_1\left(t_0,\frac{1}{p}\bSigma^2\right) \right)\hat{q}_2\left(\frac{1}{p}\boldsymbol{\Pi}_0^2\right)
  -t_0^{-1}\hat{d}^2_0\left(t_0,\frac{1}{p}\bSigma^2\boldsymbol{\Pi}_0\right) },\\[0.2cm]
&\hat{\beta}_R^*(t_0)=\dfrac{\left(t_0^{-1}\hat{d}_0\left(t_0,\frac{1}{p}\bSigma^2\right)+\hat{v}^{(1)}(t_0)\hat{d}_1\left(t_0,\frac{1}{p}\bSigma^2\right)\right)
\hat{q}_1\left(\frac{1}{p}\boldsymbol{\Pi}_0\right)
-t_0^{-1}\hat{d}_0\left(t_0,\frac{1}{p}\bSigma\right)\hat{d}_0\left(t_0,\frac{1}{p}\bSigma^2\boldsymbol{\Pi}_0\right)}
{\left(t_0^{-1}\hat{d}_0\left(t_0,\frac{1}{p}\bSigma^2\right)+\hat{v}^{(1)}(t_0)\hat{d}_1\left(t_0,\frac{1}{p}\bSigma^2\right) \right)\hat{q}_2\left(\frac{1}{p}\boldsymbol{\Pi}_0^2\right)
  -t_0^{-1}\hat{d}^2_0\left(t_0,\frac{1}{p}\bSigma^2\boldsymbol{\Pi}_0\right) }\,, \label{bet_prec-opt-R-hat}
  \end{eqnarray}
}
with
{\small
\begin{eqnarray}
\hat{d}_0\left(t_0,\frac{1}{p}\bSigma\right) &=& \frac{1}{c_n \hat{v}(t_0)}-\frac{t_0}{c_n},\label{hd0-Sigma}\\
\hat{d}_0\left(t_0,\frac{1}{p}\bSigma^2\right) &=&\frac{1}{\hat{v}(t_0)}\left( \frac{1}{p}\emph{tr}\left[\bS_n\right]-\frac{1}{c_n\hat{v}(t_0)}+\frac{t_0}{c_n}\right), \label{hd0-Sigma2}\\
\hat{d}_0\left(t_0,\frac{1}{p}\bSigma^2\boldsymbol{\Pi}_0\right) 
&=&\frac{1}{\hat{v}(t_0)} \frac{1}{p}\emph{tr}\left[\bS_n\boldsymbol{\Pi}_0\right]-\frac{1}{[\hat{v}(t_0)]^2}\left(\frac{1}{p}\emph{tr}\left[\boldsymbol{\Pi}_0\right]
-\hat{d}_0\left(t_0,\frac{1}{p}\boldsymbol{\Pi}_0\right)\right),\label{hd0-Sigma2Pi}\\
\hat{d}_1\left(t_0,\frac{1}{p}\bSigma^2\right) &=&\frac{1}{[\hat{v}(t_0)]^2}\left(\frac{1}{p}\emph{tr}\left[\bS_n\right]+ \hat{d}_1\left(t_0,\frac{1}{p}\bI_p\right)-\frac{2}{c_n\hat{v}(t_0)}+\frac{2t_0}{c_n}\right),\label{hd1-Sigma2}
\end{eqnarray}
}
where $\hat{v}(t_0)$, $\hat{v}^{(1)}(t_0)$, $\hat{d}_0\left(t_0,\frac{1}{p}\boldsymbol{\Pi}_0\right)$, $\hat{d}_1\left(t_0,\frac{1}{p}\bI_p\right)$,
$\hat{q}_1\left(\frac{1}{p}\boldsymbol{\Pi}_0\right)$ and $\hat{q}_2\left(\frac{1}{p}\boldsymbol{\Pi}_0^2\right)$ are given in \eqref{hv0-all1} and in (S.45), (S.46), (S.47), and (S.48) from the supplement (\cite{BP2025reviving-S}), %\eqref{hd0t}, \eqref{hd1t}, \eqref{hq_1} and \eqref{hq_2}, 
respectively. 

Moreover, for any $t>0$, it holds that
\begin{eqnarray*}\frac{1}{p}\left|\hat{L}^2_{R;2}(t) - L^2_{R;2}(t)\right|\stackrel{a.s.}{\rightarrow} 0,~~
   \text{for} \quad p/n \rightarrow c \in (1,\infty) 
~ \text{as} ~~ n \rightarrow \infty \quad\text{with}
   \end{eqnarray*}
{\footnotesize
\begin{eqnarray}
\label{L_prec-opt-R-hat}
\hat{L}^2_{R;2}(t) &=&\frac{1}{\hat{q}_2\left(\dfrac{1}{p}\boldsymbol{\Pi}_0^2\right)}\frac{\left[\hat{d}_0\left(t,\dfrac{1}{p}\bSigma\right)\hat{q}_2\left(\dfrac{1}{p}\boldsymbol{\Pi}_0^2\right)
- \hat{d}_0\left(t,\dfrac{1}{p}\bSigma^2\boldsymbol{\Pi}_0\right) \hat{q}_1\left(\frac{1}{p}\boldsymbol{\Pi}_0\right)\right]^2
}{\left(\hat{d}_0\left(t,\frac{1}{p}\bSigma^2\right)+t\hat{v}^{(1)}(t)\hat{d}_1\left(t,\frac{1}{p}\bSigma^2\right) \right)\hat{q}_2\left(\frac{1}{p}\boldsymbol{\Pi}_0^2\right)
  -\hat{d}^2_0\left(t,\frac{1}{p}\bSigma^2\boldsymbol{\Pi}_0\right) },
\end{eqnarray}
}
where $\hat{v}^{(1)}(t)$, 
$\hat{q}_1\left(\frac{1}{p}\boldsymbol{\Pi}_0\right)$, $\hat{q}_2\left(\frac{1}{p}\boldsymbol{\Pi}_0^2\right)$, $\hat{d}_0\left(t,\dfrac{1}{p}\bSigma\right)$, $\hat{d}_0\left(t,\dfrac{1}{p}\bSigma^2\right)$, $\hat{d}_0\left(t,\dfrac{1}{p}\bSigma^2\boldsymbol{\Pi}_0\right)$ and $\hat{d}_1\left(t,\dfrac{1}{p}\bSigma^2\right)$ are given in \eqref{hv0-all1}, \eqref{hd0-Sigma}, \eqref{hd0-Sigma2}, \eqref{hd0-Sigma2Pi}, and \eqref{hd1-Sigma2}, and in (S.47) and (S.48) from \cite{BP2025reviving-S}. %\eqref{hq_1} and \eqref{hq_2}, respectively.
\end{theorem}

The proofs of both statements of Theorem \ref{th_prec_R-est} follow from the theoretical results derived in Section \ref{sec:main-ridge} and the consistent estimators are presented in Section S.4 of the supplement (\cite{BP2025reviving-S}). The bona fide shrinkage estimator for the precision matrix under the application of the ridge-type inverse is given by
 \begin{equation}\label{BF-prec-R}
\widehat{\boldsymbol{\Pi}}_{R;BF}(t_0)=\hat{\alpha}_R^*(t_0)\bS_n^{-}(t_0)+\hat{\beta}_R^*(t_0)\boldsymbol{\Pi}_0 \quad \text{for some $t_0>0$.}
\end{equation}
It is noted that, here we have additionally to optimize the function \eqref{risk} for the tuning parameter $t$, or, equivalently due to Theorem \ref{lem_prec_loss}, maximize the function $L^2_{F;n,2}(\bS_n^{-}(t))$ from \eqref{risk-L-F2}. %Let $L^2_{R;n,2}(t)=L^2_{F;n,2}(\bS_n^{-}(t))$.
 The results of the second statement of Theorem \ref{th_prec_R-est} are very important from the practical viewpoint. In particular, they allow us to find the optimal value of the tuning parameter $t$ by maximizing $\hat{L}^2_{R;2}(t)$, which can be easily computed, instead of $L^2_{R;n,2}(t)$ that depends on the unknown population covariance matrix. Both results of Theorem \ref{th_prec_R-est} ensure that the tuning parameter found by maximizing $\hat{L}^2_{R;2}(t)$ will be closed when the original loss function is optimized. Furthermore, since $\hat{L}^2_{R;2}(t)$ is available in the closed form, no cross-validation is needed which by its definition reduces the sample size. 

Let $t^*=\argmax_{t>0} \hat{L}^2_{R;2}(t)$. Then, the bona-fide optimal shrinkage estimator of the precision matrix with the ridge-type estimator is expressed as
 \begin{equation}\label{BF-prec-R-L}
\widehat{\boldsymbol{\Pi}}_{R;BF}(t^*)=\hat{\alpha}_R^*(t^*)\bS_n^{-}(t^*)+\hat{\beta}_R^*(t^*)\boldsymbol{\Pi}_0 ,
\end{equation}
where $\hat{\alpha}_R^*(t^*)$ and $\hat{\beta}_R^*(t^*)$ are given in \eqref{alp_prec-opt-R-hat} and \eqref{bet_prec-opt-R-hat}, respectively. Finally, we note that for practical purposes one can consider to replace $t$ by $u$ with $t=\tan (u)$ and to maximize $\hat{L}^2_{R;2}(\tan (u))$ over the finite interval $(0,\pi/2)$.

\subsubsection{Results for the Moore-Penrose-ridge inverse}\label{sec:sh-prec-MP-Ridge}

When the shrinkage estimator of the precision matrix is based on the Moore-Penrose-ridge inverse, we proceed in the same way as in the case of the ridge inverse. First, we investigate the limiting behavior of the optimal shrinkage intensities 
\begin{equation*}
\alpha_{MPR;n}^*(t_0)=\alpha_n^*(\bS_n^{\pm}(t_0))
\quad \text{and} \quad
\beta_{MPR;n}^*(t_0)=\beta_n^*(\bS_n^{\pm}(t_0)),
\end{equation*}
for a fixed value of the tuning parameter $t_0$ and then, present the procedure for how the optimal value of the tuning parameter should be chosen.  Since this procedure mimics the one derived for the ridge inverse the results are moved to Section S.7 in \cite{BP2025reviving-S} (see Theorem S.7.1, Theorem S.7.2, Theorem S.7.3, and Theorem S.7.4 for details).

\subsection{Shrinkage estimator for minimum variance portfolio}\label{sec:shrink-gmv}
{ Following \cite{bodnar2018estimation}, the general linear shrinkage estimator for the global minimum variance (GMV) portfolio is constructed by
 \begin{equation}\label{gsegmv}
\widehat{\mathbf{w}}_{GSE}=\alpha_n  \mathbf{w}_{\bS_n^{\#}(t)}+(1-\alpha_n)\mathbf{b}\,,
\end{equation}
where $\mathbf{b}$ is a target portfolio, i.e., $\mathbf{b}^\top\bOne=1$, which typically is chosen to be equal to the naive portfolio $\frac{\bOne}{p}$ when no \emph{a priori} information about the true weights of the GMV portfolio
\begin{equation}\label{trueGMV}
    \mathbf{w}_{GMV}=\frac{\bSigma^{-1}\bOne}{\bOne^\top\bSigma^{-1}\bOne}
\end{equation} is available. Let $\bS_n^{\#}(t)$ denote an estimator of the precision matrix. Then, the traditional (sample) estimator of $\mathbf{w}_{GMV}$ is given by
\begin{eqnarray}
  \mathbf{w}_{\bS_n^{\#}(t)}=\frac{\bS_n^{\#}(t)\bOne}{\bOne^\top\bS_n^{\#}(t)\bOne}
\end{eqnarray}
is obtained by replacing $\bSigma^{-1}$ with $\bS_n^{\#}(t)$ in \eqref{trueGMV}. %Recall that $\bS_n^{\#}(t) \in \{\bS_n^+,\bS_n^-(t),\bS_n^\pm(t)\}$. 

Due to \cite{bodnar2018estimation} and \cite{frahm2010}, the shrinkage estimator of the form \eqref{gsegmv} dominates the sample estimator $ \mathbf{w}_{\bS_n^{\#}(t)}$ for $c<1$ and $\bS_n^{\#}(t)=\bS_n^{-1}$. Moreover, the estimator proposed by \cite{bodnar2018estimation} is proved to be asymptotically dominant over \cite{frahm2010} in case $c<1$ and becomes superior if $c$ gets closer to one (see \cite{bodnaroutofsample2023}). 
Nevertheless, only the case $c<1$ was efficiently covered in the aforementioned works. In case $c>1$, \cite{bodnar2018estimation} provided the approximate formulae for the shrinkage intensities in the case of Moore-Penrose inverse. However, those formulae appear to be accurate only if $c<2$ and close to $1$, otherwise they could be far away from the optimal ones. The theoretical results of Section \ref{sec:main} provide us tools to deal with this problem efficiently.

%Thus, using the theoretical results of the previous sections we generalize the previous works in three directions: firstly, consistent estimators for the optimal shrinkage intensity $\alpha_n$ can be found in case of Moore-Penrose inverse; secondly, those results can be generalized to other inverses like ridge inverse and Moore-Penrose-ridge; and third, by ridge-type inverses incl. Moore-Penrose-ridge, we present the way to estimate consistently the optimal value of the ridge parameter $t$. 

The aim is to estimate the shrinkage intensity $\alpha_n$ and the ridge parameter $t$ from the following normalized optimization problem (see, e.g., \cite{BPTJMLR2024})
    \begin{equation}\label{normalization}
    \min_{\alpha_n, t} \frac{\widehat{\mathbf{w}}_{GSE}^\top~\bSigma ~\widehat{\mathbf{w}}_{GSE}}{\bb^\top \bSigma \bb}.
\end{equation}
In the numerator, we have the usual performance measure, the so-called out-of-sample variance  defined by $\widehat{\mathbf{w}}_{GSE}^\top\bSigma \widehat{\mathbf{w}}_{GSE}$ for the portfolio with weights $\widehat{\mathbf{w}}_{GSE}$, and the denominator $\bb^\top \bSigma \bb$ is merely for technical reasons, i.e., it is a normalization, which makes the loss bounded for increasing portfolio dimension $p$. The following theorem gives the optimal values of $\alpha_n$ and $t$.

\begin{theorem}\label{lem_gmv_loss}
  For given $t$, the out-of-sample variance \eqref{normalization} is minimized with respect to $\alpha$ at
\begin{equation}\label{alp_ni_star-lambda}
    \alpha_n^*(t) = 
    \frac{
        \bb^\top \bSigma \left(\bb -  \mathbf{w}_{\bS_n^{\#}(t)}\right)
    }{
        \left(\bb -  \mathbf{w}_{\bS_n^{\#}(t)}\right)^\top \bSigma\left(\bb- \mathbf{w}_{\bS_n^{\#}(t)}\right)
    },
\end{equation}
while the optimal $t$ is found by maximizing 
\begin{equation}\label{Li2-lambda}
    L_{n;2}(t)=\frac{1}{\bb^\top\bSigma \bb}
        \frac{\left(\bb^\top \bSigma \left(\bb- \mathbf{w}_{\bS_n^{\#}(t)}\right)\right)^2}
        {\left(\bb- \mathbf{w}_{\bS_n^{\#}(t)}\right)^\top \bSigma\left(\bb- \mathbf{w}_{\bS_n^{\#}(t)}\right)}.
\end{equation}  
\end{theorem}

The proof of Theorem \ref{lem_gmv_loss} is given in the supplementary material. Similarly as before, to find the value $t^*$ together with $\alpha_n^*(t)$, which minimizes the loss function, we proceed in three steps. First, we find the deterministic equivalent to $L_{n;2}(t)$, and estimate it consistently in the second step. Finally, we minimize the obtained consistent estimator in the last step. This procedure has to be done for the  generalized inverses $\bS_n^{\#}(t)\in \{\bS_n^-(t),\bS_n^\pm(t)\}$ except $\bS_n^{\#}(t)=\bS_n^+$ because the latter one is independent of the tuning parameter $t$. Thus, it is enough to find the asymptotic equivalent\footnote{Note that the asymptotic shrinkage intensity $\alpha^*$ clearly depends on $n$, due to the dependence of both $\bb$ and $\bSigma$ on $p = p(n)$. However, this dependence was intentionally omitted in the notation to avoid overwhelming the reader and to keep the presentation of the results as clear and simple as possible.}, i.e., $\alpha^*$, for the optimal shrinkage intensity $\alpha^*_n(t)\equiv \alpha^*_n $ and to estimate it consistently. The results regarding the asymptotic equivalent $\alpha^*$ in case of the Moore-Penrose inverse $\bS_n^+$ are summarized in Theorem S.8.1 in \cite{BP2025reviving-S}, while Theorem \ref{MV_GMV_bona} provides the bona-fide estimator, i.e., a consistent estimator of $\alpha^*$.

%After examining the loss function $L_{n;2}(t)$ and $\alpha_n^*(t)$ closer one can see that the quantities of interest are $\boldsymbol{\theta}^\top\mathbf{w}_{\bS_n^{\#}(t)}$ and $\mathbf{w}_{\bS_n^{\#}(t)}^\top\bSigma\mathbf{w}_{\bS_n^{\#}(t)}$ for a specific choice of vector $\boldsymbol{\theta}$. 

%Similarly to the previous subsection, the application of the derived theoretical results yields  presented in Theorem \ref{MV_GMV_bona}.

\begin{theorem}\label{MV_GMV_bona}
     Let $\bY_n$ possess the stochastic representation as in \eqref{obs}. Then, under the assumption \textbf{(A1)} and \textbf{(A2)}, a consistent estimator of $\alpha^*$ is given by
\begin{equation}\label{alpha-bon}
\hat{\alpha}^*=
  \frac{p\bb^\top\bS_n\bb-\frac{\hat{d}_1\left(\bOne\bb^\top\bSigma  \right)}{\hat{d}_1\left(\frac{\bOne\bOne^\top}{p}\right)}}{p\bb^\top\bS_n \bb-2\frac{\hat{d}_1\left(\bOne\bb^\top\bSigma \right)}{\hat{d}_1\left( \frac{\bOne\bOne^\top}{p}\right)}+\frac{\hat{d}_3\left(\frac{\bOne\bOne^\top}{p}\right)}{\hat{d}_1^2\left(\frac{\bOne\bOne^\top}{p}\right)}},
\end{equation}
where
\begin{eqnarray}
 \hat{d}_1(\bOne\bb^\top\bSigma)   &=& \frac{1}{\hat{v}(0)}\left[\frac{1}{\hat{v}(0)}\left(1-\hat{d}_0(0,\bOne\bb^\top) \right) - \hat{d}_1(\bOne\bb^\top) \right]\,
 %\\
% \hat{d}_0(0,\bb\bOne^\top)&=& 1-\tr(\bY_n(\bY^\top_n\bY_n)^{-1}\bY_n\bb\bOne^\top)\\
% \hat{d}_1\left(\frac{\bOne\bOne^\top}{p}\right)&=& 
\end{eqnarray}
with $\hat{v}(0)$, $\hat{d}_1\left( \frac{\bOne\bOne^\top}{p}\right)$, $\hat{d}_1\left( \bOne\bb^\top\right)$, $d_3(\frac{\bOne\bOne^\top}{p})$, and $\hat{d}_0(0, \bOne\bb^\top)$ given in \eqref{hv0-all} and in (S.41), (S.44),
and (S.45) %\eqref{hd_1Pi0}, \eqref{hd_3}, and \eqref{hd0t} 
from the supplement (\cite{BP2025reviving-S}), respectively.
\end{theorem}

In the same way as in Section \ref{sec:shrink-prec}, the shrinkage estimators for the GMV portfolio weights can be constructed by using the ridge-type inverse and the Moore-Penrose-ridge inverse. As mentioned above, one should consider first the case when the tuning parameter $t$ is fixed to some preselected value $t_0$ and then extend the results by minimizing the loss function \eqref{Li2-lambda} additionally with respect to the tuning parameter $t$. The case of the ridge inverse has already been covered recently in \cite{BPTJMLR2024}. The case of Moore-Penrose-ridge inverse can be done similarly, however with very tedious computations. %The simulations show that its performance is comparable to Moore-Penrose and Ridge estimators in case $c$ is further away from one. Thus, the advantage of the MP-Ridge estimator is expected to be more pronounced for $c\approx 1$. In Figure ... we provide a short simulation to check the performance of the new estimator for the GMV portfolio using Moore-Penrose inverse.

}

%\subsection{Estimating the solution of %Marchenko-Pastur equation and nonlinear %shrinkage}
%\input{nonlinear.tex}

\section{Finite sample performance}\label{sec:sim}

In this section, we study the finite-sample performance of the considered estimators via simulations. Two stochastic models will be considered. In the first scenario, we assume that the elements of the matrix $\bX_n$ are standard normally distributed, while they are assumed to be scaled $t$-distributed with $5$ degrees of freedom in the second scenario. The scale factor in the case of the $t$-distribution is set to $\sqrt{3/5}$ to ensure that the variances of the elements of $\bX_n$ are all one. Finally, the mean vector $\bmu$ is taken to be zero. The eigenvectors of the population covariance matrix are drawn from the Haar distribution (see, e.g., \cite{muirhead1990}), while its eigenvalues are chosen in the following way: $20\%$ of eigenvalues are equal to one, 40\% of eigenvalues equal to three, and the rest equal to ten. Finally, we set $n\in\{100,250\}$ and $c \in (1,5]$. In the case of the ridge estimator and the Moore-Penrose-ridge estimator, we consider $t \in [0.1,10]$. Within the simulation study, we compare the performance of the introduced shrinkage estimators with the existing benchmark approaches. The figures with the computation times are provided in the supplement.

\subsection{Results for the estimation of precision matrix}\label{sec:sim_prec}
In this section, we compare the derived shrinkage approaches for the estimation of the precision matrix (see Section \ref{sec:shrink-prec}) with the Moore-Penrose inverse and several benchmark methods, which are used in the statistical literature. These approaches are the \textbf{empirical Bayes} estimator of \cite{kubokawa2008estimation}, the \textbf{optimal ridge} estimator of \cite{wang2015shrinkage} and the \textbf{inverse nonlinear (NL) shrinkage} estimator of \cite{lw20}. More details about the considered benchmark approaches can be found in Section S.9.1 of the supplement (\cite{BP2025reviving-S}). Besides that, we consider the {\bf oracle nonlinear shrinkage} estimator, which is also introduced in Section S.9.1 of the supplement. Although the latter estimator cannot be computed in practice since it depends on the unobservable population covariance matrix, it will be used in the simulation study as the global benchmark strategy. The target matrix is set to the identity matrix, i.e., $\boldsymbol{\Pi}_0=\bI_p$. In fact, the linear shrinkage estimator \eqref{gse} becomes purely a function of the eigenvalues of the sample covariance $\bS_n$ for the considered target matrix and, thus, the oracle nonlinear shrinkage estimator is the best one can do in that specific situation.

Next, we compare the three shrinkage approaches suggested in Section \ref{sec:shrink-prec} with the considered benchmark methods. As a performance measure, we use the percentage relative improvement in average loss (PRIAL) defined by
 \begin{equation}\label{PRIAL}
 \text{PRIAL}(\widehat{\boldsymbol{\Pi}})=\left(1-\dfrac{\mathbbm{E}||\widehat{\boldsymbol{\Pi}}\bSigma-\bI_p||^2_F}{\mathbbm{E}||\bS_n^+\bSigma-\bI_p||_F^2}\right)\cdot100\%\,,
 \end{equation}
where $\widehat{\boldsymbol{\Pi}}$ is an estimator of the precision matrix $\bSigma^{-1}$. By definition, the PRIAL provides the percentage improvement of each strategy in comparison to the one based on the Moore-Penrose inverse. Larger values of the PRIAL indicate better performance. Moreover, it holds that PRIAL($\bSigma^{-1}$)$=100\%$.

Figure \ref{fig:shrinkage-prec} depicts the values of the PRIAL, computed for the introduced three shrinkage approaches and for the two benchmarks: the optimal ridge estimator and the oracle nonlinear shrinkage estimator. The results of the empirical Bayes estimator and of the inverse nonlinear shrinkage estimator are considerably worse and are available in Section S.9.2 of the supplement (see Figure S.1 in \cite{BP2025reviving-S}).

In Figure \ref{fig:shrinkage-prec}, we observe that the ridge shrinkage and the optimal ridge estimators outperform the other competitors with the ridge shrinkage estimator having a slightly better performance. This observation holds for the two considered models of the data-generating process and for the selected values of $n$ and $c_n$. The Moore-Penrose shrinkage and the Moore-Penrose ridge shrinkage estimators are ranked in the third and fourth places with the Moore-Penrose ridge shrinkage estimator performing better in almost all cases with the exception of large values of $c_n$ when $n=250$. These two estimators are followed by the empirical Bayes ridge which performs better than the inverse nonlinear shrinkage estimator (see Figure S.1 in \cite{BP2025reviving-S}). However, it has to be noted that the considered inverse nonlinear shrinkage estimator was derived under the other loss function, which explains the considerable differences between the oracle and bona fide nonlinear shrinkage estimators. Finally, it is remarkable that both the ridge shrinkage and the optimal ridge attain the global benchmark, which is the oracle nonlinear shrinkage estimator and, as such, they can be considered optimal for the chosen data-generating models. Although the Moore-Penrose shrinkage estimator performs slightly worse than the shrinkage approaches based on the ridge estimators, it is present in closed-form and requires no numerical approximation in practice. This fact is illustrated in Figure S.2 of \cite{BP2025reviving-S}, which compares the computational time of the estimators. Among them, the MP shrinkage estimator is the fastest, second only to the Moore-Penrose inverse itself.

\vspace{-1cm}
\begin{figure}[h!t]
\centering
\begin{tabular}{cc}
\hspace{-0.5cm}\includegraphics[width=7cm]{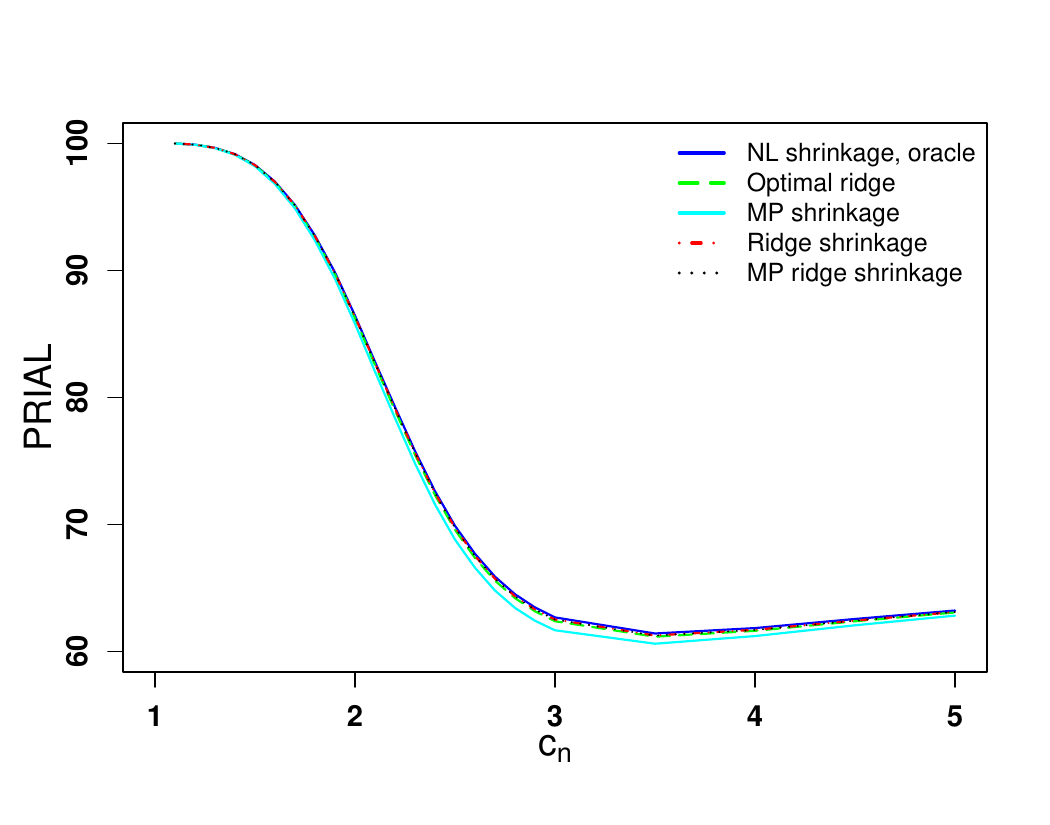}&
\hspace{-0.5cm}\includegraphics[width=7cm]{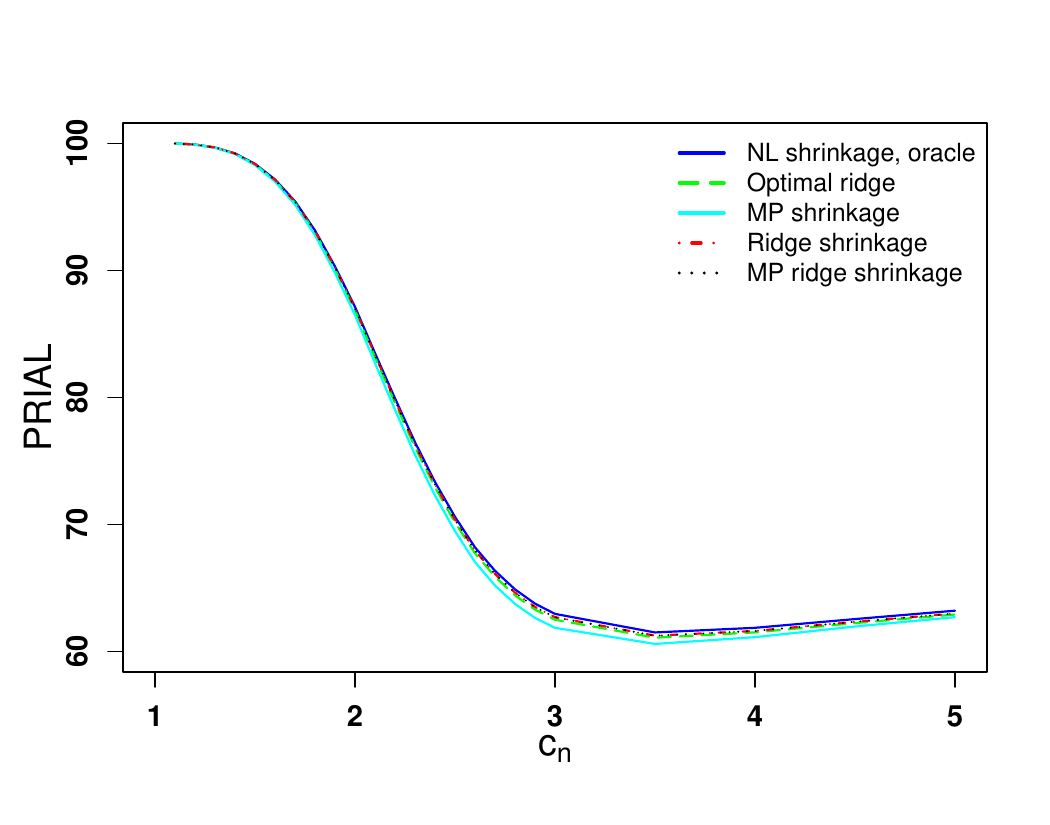}\\[-1.2cm]
\hspace{-0.5cm}\includegraphics[width=7cm]{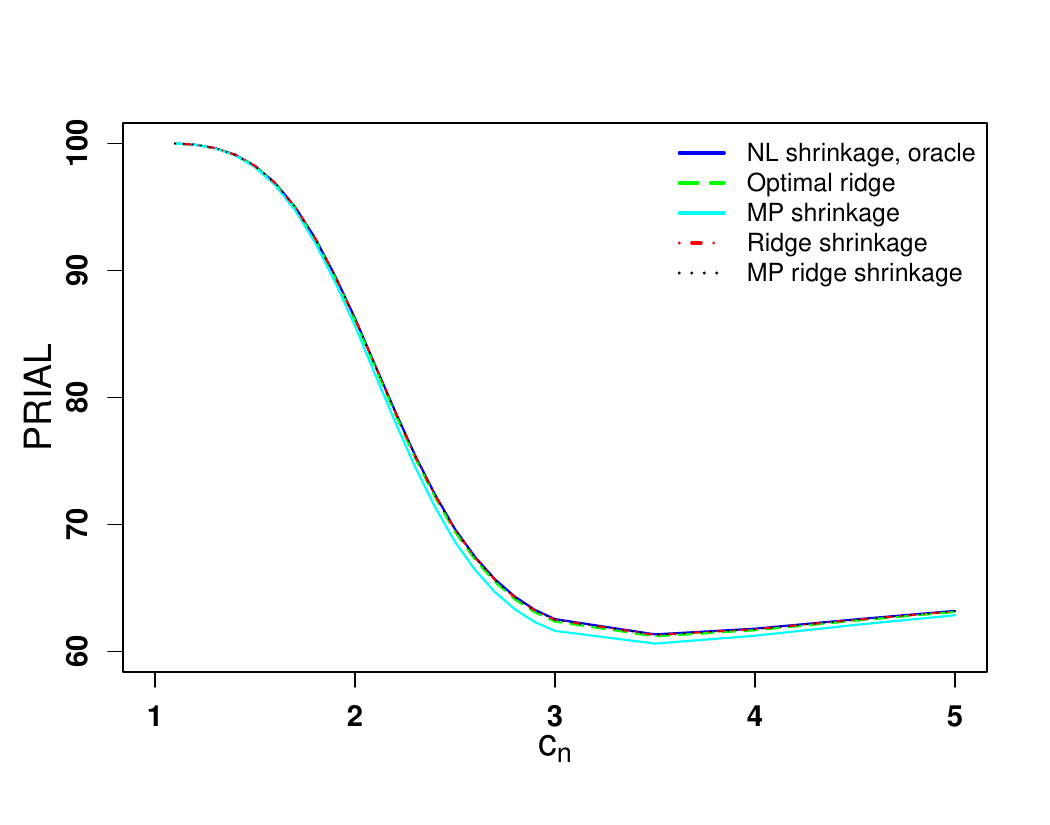}&
\hspace{-0.5cm}\includegraphics[width=7cm]{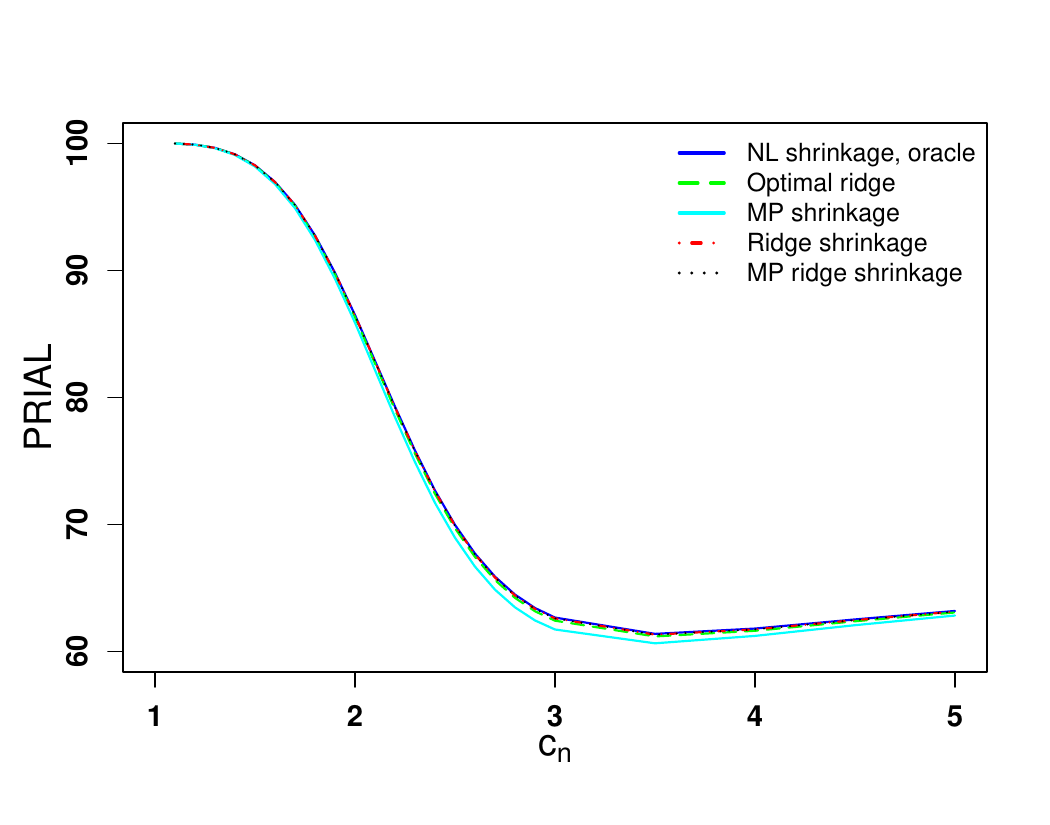}\\[-0.8cm]
\end{tabular}
 \caption{PRIAL for $c_n \in (1,5]$, $n=100$ (first row) and $n=250$ (second row) when the elements of $\bX_n$ are drawn from the normal distribution (first column) and scale $t$-distribution (second column).}
\label{fig:shrinkage-prec}
 \end{figure}
 
\subsection{Results for the estimation of the global minimum variance portfolio}\label{sec:sim_gmv}
In this section, we compare the derived shrinkage approaches for the GMV portfolio for estimating the GMV portfolio (see Section \ref{sec:shrink-gmv}) with the \textbf{traditional} sample estimator based on the Moore-Penrose inverse, the \textbf{reflexive inverse estimator} introduced in \cite{bodnar2018estimation}, and the {\bf double shrinkage estimator} or {\bf ridge shrinkage estimator} recently derived in \cite{BPTJMLR2024}. All benchmarks are described in Section S.9.3 of the supplement (\cite{BP2025reviving-S}). In the simulation study, the target portfolio $\bb$ is set to the equally weighted portfolio.

As a performance measure, we use the relative out-of-sample variance (rOSV) defined as 
\begin{equation}\label{rosvar} \text{rOSV}(\widehat{\mathbf{w}}) = \frac{\widehat{\mathbf{w}}^\top\bSigma\widehat{\mathbf{w}}}{V_{GMV}}-1 ,
\end{equation}
where $\widehat{\mathbf{w}}$ is any estimator of the GMV portfolio, and $V_{GMV}$ is the variance of the true (population) GMV portfolio given by $V_{GMV}=\frac{1}{\bOne^\top\bSigma^{-1}\bOne}$. By definition, rOSV is always greater than zero and equals zero only if $\widehat{\mathbf{w}} = {\mathbf{w}}_{GMV}$ given by \eqref{trueGMV}. Therefore, a smaller rOSV indicates better performance of the portfolio estimator.

Figure \ref{fig:shrinkage-gmv} presents the rOSV values computed for the proposed Moore-Penrose portfolio shrinkage approach and the two benchmarks: the  double shrinkage estimator and the reflexive inverse estimator. Results for the traditional estimator, which are available in Section S.9.4 (see Figure S.4 in \cite{BP2025reviving-S}), are omitted here due to its poor performance.

\begin{figure}[h!t]
\centering
\begin{tabular}{cc}
\hspace{-0.5cm}\includegraphics[width=7cm]{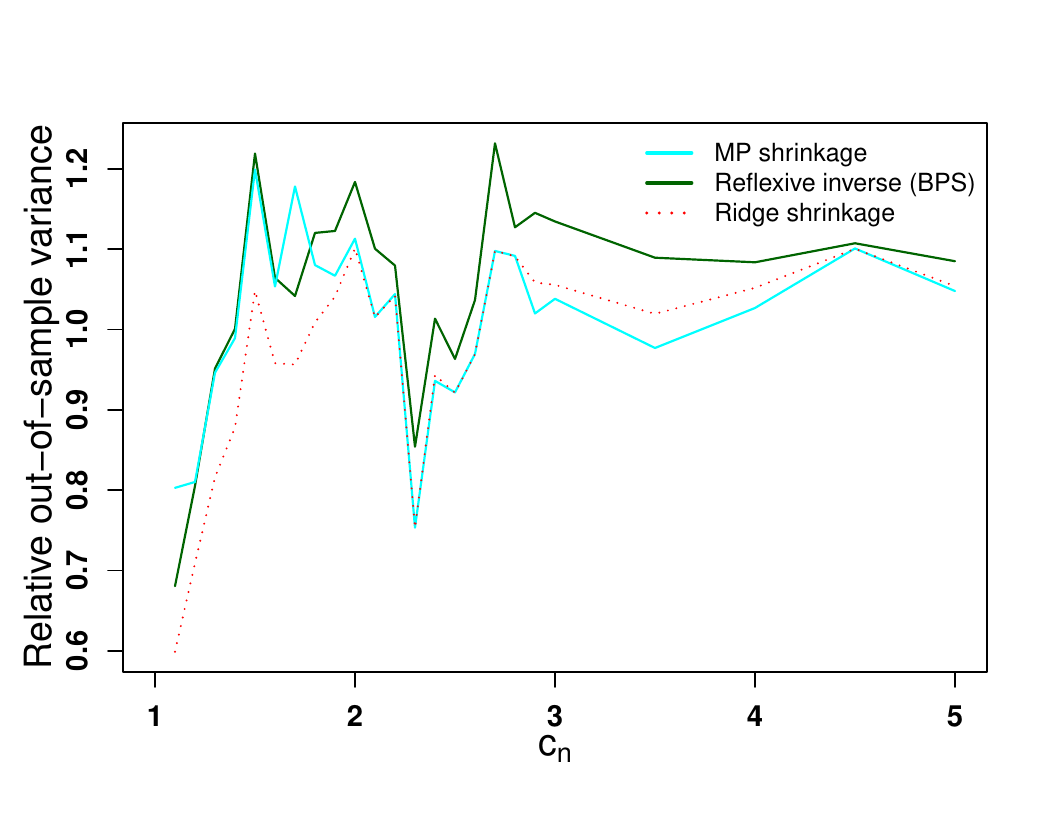}&
\hspace{-0.5cm}\includegraphics[width=7cm]{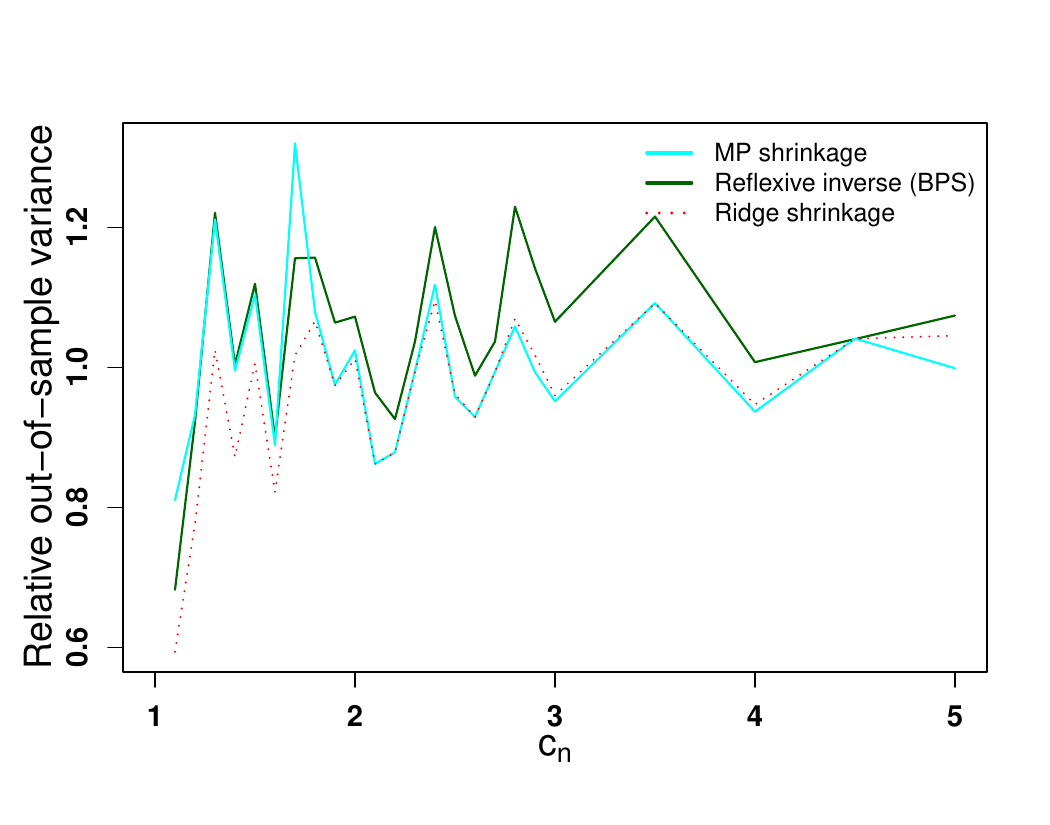}\\[-1.3cm]
\hspace{-0.5cm}\includegraphics[width=7cm]{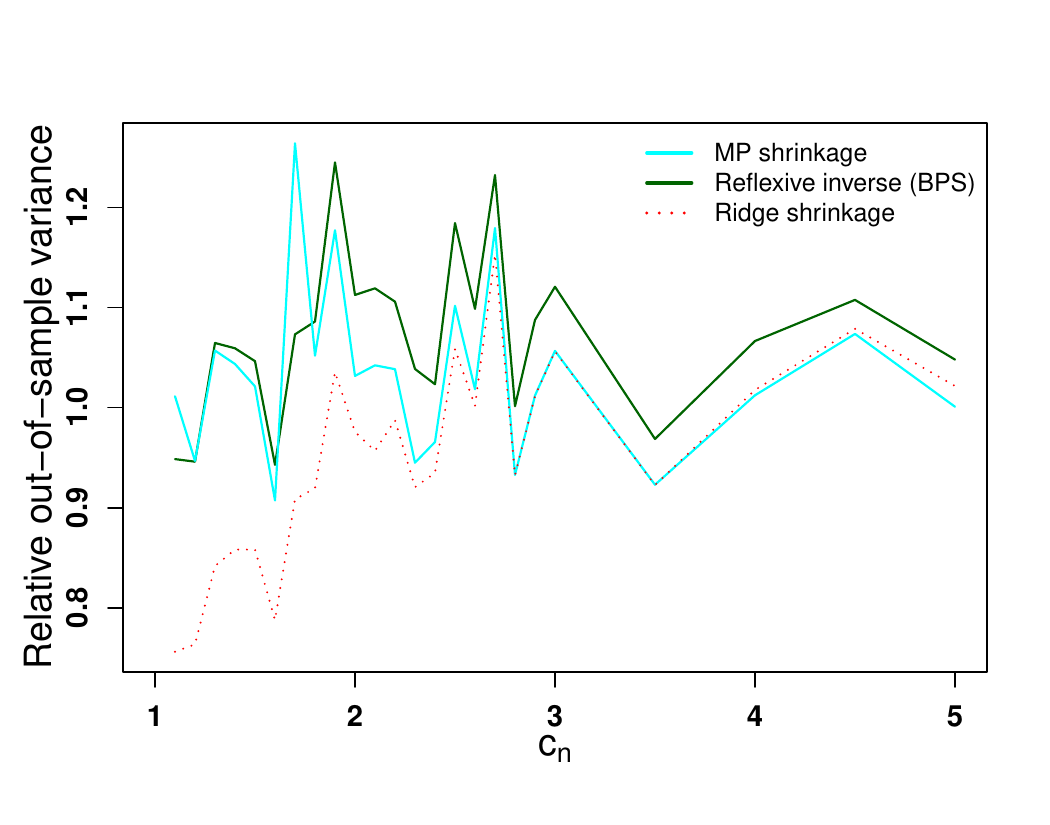}&
\hspace{-0.5cm}\includegraphics[width=7cm]{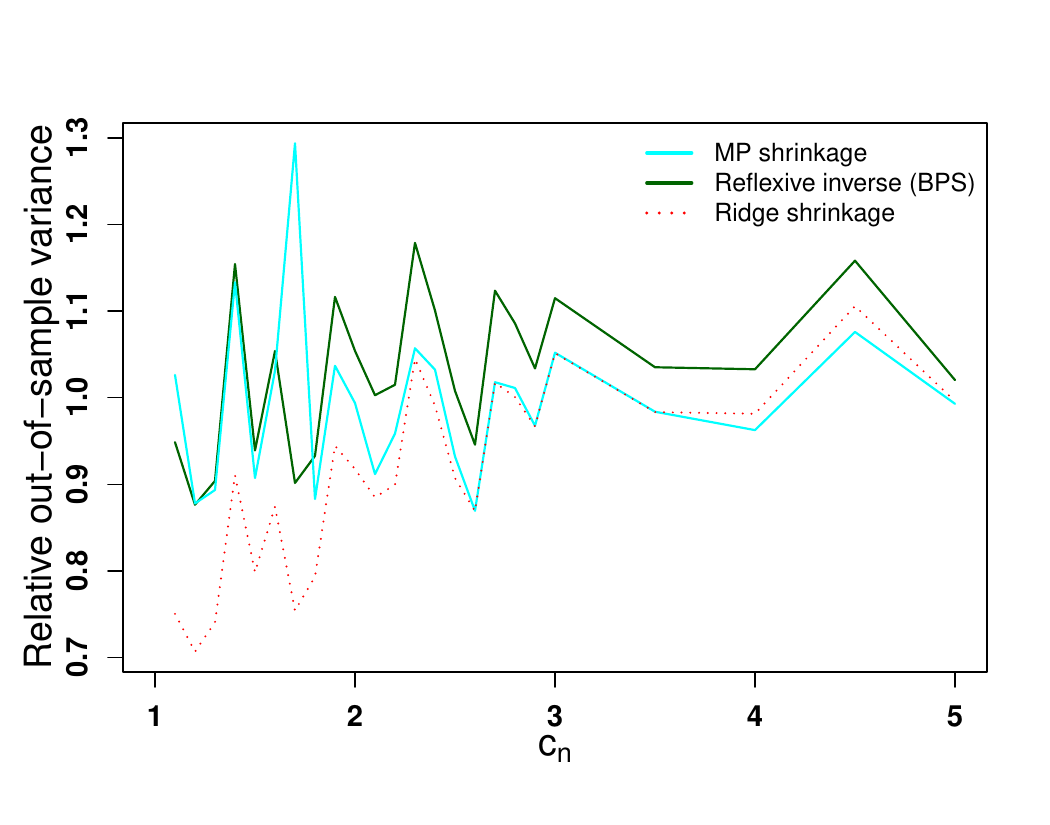}\\[-0.9cm]
\end{tabular}
 \caption{rOSV for $c_n \in (1,5]$, $n=100$ (first row) and $n=250$ (second row) when the elements of $\bX_n$ are drawn from the normal distribution (first column) and scale $t$-distribution (second column).}
\label{fig:shrinkage-gmv}
 \end{figure}
From Figure \ref{fig:shrinkage-gmv}, we observe that all approaches exhibit similar behavior, with the proposed MP shrinkage and double (ridge) shrinkage performing slightly better as the ratio $c=p/n$ increases. The reflexive inverse estimator, as shown in \cite{bodnar2018estimation}, serves as a good approximation to the Moore-Penrose inverse only when $c$ is less than two, a result that is supported here. Additionally, the recently proposed double shrinkage approach by \cite{BPTJMLR2024} outperforms other strategies for smaller values of the concentration ratio $c$.

It is worth noting, however, that the proposed MP shrinkage approach avoids the need for numerical optimization of the penalty parameter, unlike the double shrinkage strategy, which requires a quite involved nonlinear optimization routine. The computational advantages of the MP shrinkage estimator are depicted in Figure S.5 of \cite{BP2025reviving-S}.

%\section{Summary}\label{sec:sum}

% Although the Moore-Penrose inverse, the ridge inverse, and the Moore-Penrose ridge inverse of the sample covariance matrix are widely used in practice to estimate the precision matrix, their properties were derived under very restrictive assumptions in the literature, namely by assuming that a random sample is taken from the multivariate normal distribution and the true population covariance matrix is proportional to the diagonal matrix. When the population covariance matrix is not proportional to the diagonal matrix, then the first two moments of the Moore-Penrose inverse are characterized by some inequality constraints, derived under the assumption of normality.

% We contribute to the existing literature by deriving the analytical expressions of the asymptotic behavior of the weighted sample trace moments of the three considered generalized inverse matrices. The results are obtained under the high-dimensional regime, i.e., when both the dimension of the data-generating model and the sample size tend to infinity without imposing any specific distributional assumption on the data-generating model. The obtained results are used to deduce improved estimators for the precision matrix and the weights of the global minimum variance portfolio. Within the extensive simulation study, it is documented that the new approaches outperform the existing methods.

\section*{Acknowledgement}
The authors would like to thank Professor Enno Mammen, Professor Hans-Georg M\"{u}ller, the Associate Editor, and the two anonymous Reviewers for their constructive comments that improved the quality of this paper. We gratefully acknowledge the comments from the participants at the conference 'International Conference on Matrix Analysis and its Applications' (Bendlewo) 2023,  'Computational and Methodological Statistics' (CMStatistics) 2024 (Berlin), 'Stochastic models, Statistics and their Applications' (SMSA) 2024 (Delft), the 11th Tartu Conference on Multivariate Statistics (Tartu), the 11th World Congress in Probability and Statistics (Bochum) and the 2nd Joint Conference on Statistics and Data Science in China. The authors are also thankful to Prof. Holger Dette, Prof. Raymond Kan, Prof. Olivier Ledoit, Prof. Mark Podolskij, Dr. habil. Maryna Prus, Prof. Dietrich von Rosen, and Prof. Jianfeng Yao for fruitful discussions, and Dr. Alexis Derumigny for creating and developing the R package \texttt{UniversalShrink}, used extensively in this work. Taras Bodnar was partly supported by the Swedish Research Council (VR) via the grant 2025-04704 {\it ''Reviving generalized inverses: High-dimensional statistical analysis with insufficient data''}.

\begin{frontmatter}
   \title{Supplement to ``Reviving pseudo-inverses:\\ Asymptotic properties of large dimensional Moore-Penrose and Ridge-type inverses with applications''}
 \runtitle{Supplement: Asymptotic properties of large inverses}
 \runauthor{T. Bodnar and N. Parolya}

 \begin{aug}
%% %%%%%%%%%%%%%%%%%%%%%%%%%%%%%%%%%%%%%%%%%%%%%%%
%% %% Only one address is permitted per author. %%
%% %% Only division, organization and e-mail is %%
%% %% included in the address.                  %%
%% %% Additional information can be included in %%
%% %% the Acknowledgments section if necessary. %%
%% %% ORCID can be inserted by command:         %%
%% %% \orcid{0000-0000-0000-0000}               %%
%% %%%%%%%%%%%%%%%%%%%%%%%%%%%%%%%%%%%%%%%%%%%%%%%
\author[A]{\fnms{Taras}~\snm{Bodnar}\ead[label=e1]{taras.bodnar@liu.se}}
  % \author[B]{\fnms{Second}~\snm{Author}\ead[label=e2]{second@somewhere.com}\orcid{0000-0000-0000-0000}}
  \and
 \author[B]{\fnms{Nestor}~\snm{Parolya}\ead[label=e2]{n.parolya@tudelft.nl}}
%% %%%%%%%%%%%%%%%%%%%%%%%%%%%%%%%%%%%%%%%%%%%%%%
%% %% Addresses                                %%
%% %%%%%%%%%%%%%%%%%%%%%%%%%%%%%%%%%%%%%%%%%%%%%%
 \address[A]{{Department of Management and Engineering, Link\"{o}ping University, SE-581 83 Link\"{o}ping, Sweden\printead[presep={,\ }]{e1}}}
 \address[B]{Department of Applied Mathematics, Delft University of Technology, Mekelweg 4, 2628 CD Delft, The Netherlands\printead[presep={,\ }]{e2}}
 \end{aug}

 \end{frontmatter}

%%%%%%%%%%%%%%%%%%%%%%%%%%%%%%%%%%%%%%%%%%%%%%%%%%%%%%%%%%%%%%%%%%%%%%%%%%%%%%%%%%%%%%%%%%%

\makeatletter
 \renewcommand*\l@subsection{\@dottedtocline{2}{1.8em}{3.2em}}
\makeatother

\tableofcontents
%%%%%%%%%%%%%%%%%%%%%%%%%%%%%%%%%%%%%%%%%%%%%%%%%%%%%%%%%%%%%%%%%%%%%%%%%%%%%%%%%%%%%%%%%
%\section*{} \label{sec_supp}
\def\theequation{S.\arabic{equation}}
\def\thefigure{S.\arabic{figure}}
\def\thesection{S.\arabic{section}}
\def\thesubsection{S.\arabic{section}.\arabic{subsection}}
\setcounter{equation}{0}

\section{Proofs of the main theorems and corollaries}

Throughout the supplement, we use the notation $A_n \stackrel{d.a.s.}{\rightarrow} B_n$ to denote the convergence in difference almost surely, that is, $A_n - B_n \stackrel{a.s.}{\rightarrow} 0$. All limits are for $n\to\infty$ unless explicitly stated otherwise. An abstract {\it sketch of the proof}, which applies to all theorems proved in Section 2 of the paper, can be summarized in the following way:
\begin{description}
    \item[\bf Step 1: Centering.] In the first step, we show that the asymptotic behaviour of the weighted trace moments of the Moore-Penrose inverse (and other inverses) with centering by the sample mean is the same as without centering. This is achieved by rewriting the sample covariance matrix in terms of a 'new' observation matrix $\tbY_n = \bY_n\bH_n$, where $\bH_n$ is a specific orthogonal matrix of rank $n-1$.
    
    \item[\bf Step 2: Finding a nonconventional moment generating function.] In the second step, we use the well-known Woodbury matrix identity (matrix inversion lemma, see, e.g., \cite{hornjohn1985}) to derive a nonconvential moment generating function for the weighted traces of the considered generalized inverses.
    
    \item[\bf Step 3: Rewriting the nonconventional moment generating function.] Since the dependence on the true covariance matrix $\bSigma$ is complex, we rewrite the moment generating function in a form that facilitates its high-dimensional analysis. This is done by algebraic manipulation of the weighted traces and by 'pushing' the dependence on $\bSigma$ into the weighting matrix $\bTheta$, thereby reducing the problem to a classical setup with an isotropic case, i.e., $\bSigma = \bI_p$.
    
    \item[\bf Step 4: Generalized Marchenko-Pastur theorem.] With the nonconventional moment generating function in a form where the dependence on $\bSigma$ is absorbed by the weighting matrix, we apply the generalized Marchenko-Pastur result (see \cite{rubio2011spectral} and Lemma 6 in \cite{BPTJMLR2024}) to obtain the asymptotic behaviour of the nonconventional moment generating function for $p/n \to c > 1$ as $p, n \to \infty$. A similar approach is used for the case $c < 1$, although the function must be represented differently to avoid singularity at zero.
    
    \item[\bf Step 5: Fa\'a di Bruno formula.] The partial derivatives of the limiting nonconventional moment generating function and its uniform convergence yield all the moments. This step involves applying the Fa\'a di Bruno formula (see Section 1.2 in \cite{krantz2002primer}), which introduces the partial exponential Bell polynomials from (4).
    
    \item[\bf Step 6: Playing with Bell polynomials.] As the general formula for the asymptotic moments is complicated, we exploit properties of the partial exponential Bell polynomials to derive the final closed-form expressions. These polynomials are essential to obtain explicit results.
\end{description}

%%%%%%%%%%%%%%%%%%%%%%%%%%%%%%%%%%%%%%%%%%%%%%%%%%%%%%%%%%%%%%%%%%%%%%%%%%%%%%%%%%%%%%%%%%%%%%%%%%%%
\subsection{Proof of Proposition 2.1}
\begin{proof}
Let $t_1 <t_2$. The application of (5) yields
\begin{equation}\label{app-th2a}
\frac{1}{p}\text{tr}\left[\left(v(t_1)\bSigma+\bI_p\right)^{-1}\right]-\frac{1}{p}\text{tr}\left[\left(v(t_2)\bSigma+\bI_p\right)^{-1}\right]=\frac{t_1v(t_1)-t_2v(t_2)}{c_n}.
\end{equation}

Assume that the statement of the theorem is wrong for the considered $t_1$ and $t_2$, i.e., $v(t_1) \le v(t_2)$. Then, the left hand-side of \eqref{app-th2a} is nonnegative. On the other side, the right hand-side of \eqref{app-th2a} is nonnegative only if
\[\frac{t_1}{t_2} \ge \frac{v(t_2)}{v(t_1)},\]
which contradicts that $v(t_1) \le v(t_2)$. The theorem is proved. 
\end{proof}

\subsection{Proof of Theorem 2.1}
\begin{proof}%[:]

Let $\bJ_n=\bI_n-\frac{1}{n}\bi_n \bi_n^\top$. Then, $\bJ_n$ is a projection matrix of rank $n-1$. As such, all $n-1$ nonzero eigenvalues of $\bJ_n$ are equal to one, and its singular value decomposition is expressed as
\begin{equation*}
  \bJ_n=\bH_n\bH_n^\top,  
\end{equation*}
where $\bH_n$ is a $n\times (n-1)$ orthogonal matrix, i.e., $\bH_n^\top\bH_n=\bI_{n-1}$. Then,
\begin{equation}\label{bS-bH}
\bS_n=\frac{1}{n} \bY_n \bJ_n \bY_n^\top
= \frac{1}{n} \tbY_n  \tbY_n^\top
\quad \text{with} \quad
\tbY_n=\bY_n\bH_n.
\end{equation}
Moreover, since $\bH_n$ is orthogonal and the rank of $\bY_n$ is equal to $n$, the application of Sylvester's rank inequality leads to the conclusion that the rank of $\tbY_n$ is $n-1$ (see Section 4.3.3 in \cite{lutkepohl1997handbook}). Hence, we get
\begin{equation}\label{lem1_tbS}
\bS_n^+=\left(\frac{1}{n}\tbY_n\tbY_n^\top\right)^+
=\frac{1}{\sqrt{n}}\tbY_n\left(\frac{1}{n}\tbY_n^\top\tbY_n\right)^{-2}
\frac{1}{\sqrt{n}}\tbY_n^\top
\end{equation}
and, similarly,
\begin{equation}\label{lem1_tbS2}
(\bS_n^+)^m=\frac{1}{\sqrt{n}}\tbY_n\left(\frac{1}{n}\tbY_n^\top\tbY_n\right)^{-(m+1)}\frac{1}{\sqrt{n}}\tbY_n^\top
\quad \text{for} \quad m=2,3,...
\end{equation}

It holds that
\begin{eqnarray*}
&& \text{tr}\left[\frac{1}{\sqrt{n}}\tbY_n\left(\frac{1}{n}\tbY_n^\top\tbY_n\right)^{-(m+1)}\frac{1}{\sqrt{n}}\tbY_n^\top\bTheta\right]\\
&=&\frac{(-1)^m}{m!}\left.\dfrac{\partial^m}{\partial t^m}\text{tr}\left[\frac{1}{\sqrt{n}}\tbY_n\left(\frac{1}{n}\tbY_n^\top\tbY_n+t\bI_n\right)^{-1}\frac{1}{\sqrt{n}}\tbY_n^\top\bTheta\right]\right|_{t=0} 
\end{eqnarray*}
for $m=1,2,...$.

The application of the Woodbury formula (matrix inversion lemma, see, e.g., \cite{hornjohn1985}), leads to
\begin{eqnarray*}
&&
\frac{1}{\sqrt{n}}\tbY_n\left(\frac{1}{n}\tbY_n^\top\tbY_n+t\bI_n\right)^{-1}\frac{1}{\sqrt{n}}\tbY_n^\top=\bI_p-t\left(\frac{1}{n}\tbY_n\tbY_n^\top+t\bI_p\right)^{-1}\\
&=&
\bI_p-t\bSigma^{-1/2}\left(\frac{1}{n}\bX_n\bX_n^\top+t\bSigma^{-1}-\bbx_n\bbx_n^\top\right)^{-1}\bSigma^{-1/2},
\end{eqnarray*}
with $\bbx_n=\frac{1}{n}\bX_n \bOne_n$ where we use that
\begin{equation*}
\frac{1}{n}\tbY_n\tbY_n^\top=\bS_n
=\frac{1}{n}\bSigma^{1/2}\bX_n\bX_n^\top \bSigma^{1/2}-\bSigma^{1/2}\bbx_n\bbx_n^\top \bSigma^{1/2}.
\end{equation*}

Hence,
\begin{eqnarray*}
&&\text{tr}((\bS_n^+)^{m}\bTheta)= \frac{(-1)^{m+1}}{m!}\left.\dfrac{\partial^m}{\partial t^m}\,t\,\text{tr}\left[\left(\frac{1}{n}\bX_n\bX_n^\top+t\bSigma^{-1}\right)^{-1}
\bSigma^{-1/2}\bTheta\bSigma^{-1/2}\right]\right|_{t=0}\\
&+&\frac{(-1)^{m+1}}{m!}\left.\dfrac{\partial^m}{\partial t^m}t\frac{\bbx_n^\top\left(\frac{1}{n}\bX_n\bX_n^\top+t\bSigma^{-1}\right)^{-1} \bSigma^{-1/2}\bTheta\bSigma^{-1/2} \left(\frac{1}{n}\bX_n\bX_n^\top+t\bSigma^{-1}\right)^{-1}\bbx_n}{1-\bbx_n^\top\left(\frac{1}{n}\bX_n\bX_n^\top+t\bSigma^{-1}\right)^{-1}\bbx_n}
\right|_{t=0}\,.
\end{eqnarray*}

Following \citet[Eq. (2.28)]{pan2014comparison}, the quantity
\begin{align*}
    \frac{1}{1 - \bbx_n^\top\left(\frac{1}{n}\bX_n\bX_n^\top+t\bSigma^{-1}\right)^{-1}\bbx_n}
=    \frac{1
        }{1 - \bar\bx_n^\top\bSigma^{1/2}  \left(\frac{1}{n}\bSigma^{1/2}\bX_n\bX_n^\top\bSigma^{1/2} +t\bI_p\right)^{-1} \bSigma^{1/2}\bar\bx_n
        }
\end{align*}
is uniformly bounded in $t$. Moreover, we have that
\begin{equation}\label{eqn:mean_stieltjes_conv_1}
\left |\bxi^\top\left(\frac{1}{n}\bSigma^{1/2}\bX_n\bX_n^\top\bSigma^{1/2}+ t\bI_p\right)^{-1}\bSigma^{1/2}\bar\bx_n
\right|\stackrel{a.s.}{\longrightarrow} 0
\end{equation}
following \citet[p. 673]{pan2014comparison} where $\bxi^\top\bxi$ is assumed to be uniformly bounded in $p$. As a result, if $\bTheta=\sum_{i=1}^k \btheta_i\btheta_i^\top$ with finite $k$, then 
\begin{eqnarray*}
&&\bbx_n^\top\left(\frac{1}{n}\bX_n\bX_n^\top+t\bSigma^{-1}\right)^{-1} \bSigma^{-1/2}\bTheta\bSigma^{-1/2} \left(\frac{1}{n}\bX_n\bX_n^\top+t\bSigma^{-1}\right)^{-1}\bbx_n\\
&=&\sum_{i=1}^k \left(\btheta_i^\top\bSigma^{-1/2} \left(\frac{1}{n}\bX_n\bX_n^\top+t\bSigma^{-1}\right)^{-1}\bbx_n\right)^2\\
&=&\sum_{i=1}^k \left(\btheta_i^\top \left(\frac{1}{n}\bSigma^{1/2}\bX_n\bX_n^\top\bSigma^{1/2}+t\bI_p\right)^{-1}\bSigma^{1/2}\bbx_n\right)^2 \stackrel{a.s.}{\longrightarrow} 0.
\end{eqnarray*}
%uniformly in $t$.

If the rank of $\bTheta$ is not finite and we assume that $\lambda_{max}(\bTheta)=o(1)$, then we get
\begin{eqnarray*}
&& t \bbx_n^\top\left(\frac{1}{n}\bX_n\bX_n^\top+t\bSigma^{-1}\right)^{-1} \bSigma^{-1/2}\bTheta\bSigma^{-1/2} \left(\frac{1}{n}\bX_n\bX_n^\top+t\bSigma^{-1}\right)^{-1}\bbx_n\\
&\le&\lambda_{max}\left(t\left(\frac{1}{n}\bX_n\bX_n^\top+t\bSigma^{-1}\right)^{-1/2} \bSigma^{-1/2}\bTheta\bSigma^{-1/2} \left(\frac{1}{n}\bX_n\bX_n^\top+t\bSigma^{-1}\right)^{-1/2}\right)\\
&\times&
\bar\bx_n^\top  \left(\frac{1}{n}\bX_n\bX_n^\top +t\bSigma^{-1}\right)^{-1} \bar\bx_n\\
&\le&\lambda_{max}\left(\bTheta\right)
\lambda_{max}\left(t\bSigma^{-1}\left(\frac{1}{n}\bX_n\bX_n^\top+t\bSigma^{-1}\right)^{-1}\right)\bar\bx_n^\top  \left(\frac{1}{n}\bX_n\bX_n^\top +t\bSigma^{-1}\right)^{-1} \bar\bx_n\\
&\le& \lambda_{max}\left(\bTheta\right)
\bar\bx_n^\top  \left(\frac{1}{n}\bX_n\bX_n^\top +t\bSigma^{-1}\right)^{-1} \bar\bx_n
\stackrel{a.s.}{\longrightarrow} 0.
\end{eqnarray*}
%uniformly in $t$.

Thus,
\begin{eqnarray*}
\text{tr}((\bS_n^+)^m\bTheta)&=& \frac{(-1)^{m+1}}{m!}\left.\dfrac{\partial^m}{\partial t^m}\,t\,\text{tr}\left[\left(\frac{1}{n}\bX_n\bX_n^\top+t\bSigma^{-1}\right)^{-1}
\bSigma^{-1/2}\bTheta\bSigma^{-1/2}\right]\right|_{t=0}\,.
\end{eqnarray*}

Lemma 6 in \cite{BPTJMLR2024} yields

\begin{eqnarray}\label{mainlimit}
  &&  \left|\text{tr}\left[\left(\frac{1}{n}\bX_n\bX_n^\top+t\bSigma^{-1}\right)^{-1}
\bSigma^{-1/2}\bTheta\bSigma^{-1/2}\right]\right.\nonumber\\
&& \left.-\text{tr}\left[\left(\tilde{v}(t) \bI_p + t\bSigma^{-1}\right)^{-1}
\bSigma^{-1/2}\bTheta\bSigma^{-1/2}\right]
\right| \stackrel{a.s.}{\rightarrow} 0, 
\end{eqnarray}
for $p/n \rightarrow c \in [1,\infty)$ as $n \to \infty$ where $\tilde{v}(t)$ satisfies the following equation
\[\frac{1}{\tilde{v}(t)}-1=c_n \frac{1}{p}\text{tr}\left[\bSigma\left(\tilde{v}(t)\bSigma+t\bI_p\right)^{-1}\right]\]
with $c_n=p/n$.

Let  $v(t)=\tilde{v}(t)/t$. Then, $v(t)$ is the solution of the following equation
\begin{equation}\label{app-vt}
\frac{1}{v(t)}-t=c_n \frac{1}{p}\text{tr}\left[\bSigma\left(v(t)\bSigma+\bI_p\right)^{-1}\right],
\end{equation}
which can be rewritten as
\begin{equation*}
1-tv(t)=c_n \frac{1}{p}\text{tr}\left[v(t)\bSigma\left(v(t)\bSigma+\bI_p\right)^{-1}\right]=c_n-c_n\frac{1}{p}\text{tr}\left[\left(v(t)\bSigma+\bI_p\right)^{-1}\right],
\end{equation*}

As a result, $v(0)$ solves the equation
\[\frac{1}{p}\text{tr}\left[\left(v(0)\bSigma+\bI_p\right)^{-1}\right]=\frac{c_n-1}{c_n}.\]
The left hand-side of the equation is a monotonically decreasing function in $v(0)$ taking the values from one to zero, while the right hand-side is equal to a number smaller than one. As a result, there is a unique solution of this equation, which is bounded for the fixed value of $c_n$.

Let 
\begin{equation}
f_1(v)=\frac{1}{v}
\quad \text{and} \quad
f_2(v;\bA)=\frac{1}{p}\text{tr}\left[\left(v\bI_p+\bSigma^{-1}\right)^{-1}\bA\right].
\end{equation}
for a matrix $\bA$. Then, the $k$-th order derivatives of these two functions are expressed as
\begin{equation*}
f_1^{(k)}(v)=(-1)^k k!\frac{1}{v^{k+1}}
\end{equation*}
and
\begin{eqnarray*}
f_2^{(k)}(v;\bA)&=&\frac{(-1)^k k!}{p}\text{tr}\left[\left(v\bI_p+\bSigma^{-1}\right)^{-(k+1)}\bA\right]\\
&=&\frac{(-1)^k k!}{p}\text{tr}\left\{\left(v\bSigma+\bI_p\right)^{-1}\left[\bSigma\left(v\bSigma+\bI_p\right)^{-1}\right]^{k}\bSigma^{1/2}\bA\bSigma^{1/2}\right\}
\end{eqnarray*}
for $k=1,2,...$. Then, the application of the Fa\`{a} di Bruno formula (see Section 1.3 in \cite{krantz2002primer}) leads to
\begin{eqnarray*}
&&\text{tr}((\bS_n^+)^m\bTheta)\stackrel{d.a.s.}{\longrightarrow} \frac{(-1)^{m+1}}{m!}\left.\dfrac{\partial^m}{\partial t^m}\,t\,\text{tr}\left[\left(\tilde{v}(t) \bI_p + t\bSigma^{-1}\right)^{-1}
\bSigma^{-1/2}\bTheta\bSigma^{-1/2}\right]\right|_{t=0}\nonumber\\
&=&\frac{(-1)^{m+1}}{m!}\left.\dfrac{\partial^m}{\partial t^m} \text{tr}\left[\left(v(t) \bI_p + \bSigma^{-1}\right)^{-1}
\bSigma^{-1/2}\bTheta\bSigma^{-1/2}\right]\right|_{t=0}\nonumber\\
&=& \frac{(-1)^{m+1}}{m!}
\sum_{k=1}^m f_2^{(k)}(v(0);\bSigma^{-1/2}\bTheta\bSigma^{-1/2}) B_{m,k}\left(v^{(1)}(0),v^{(2)}(0),...,v^{(m-k+1)}(0)\right)\\
&=&  \sum_{k=1}^m \frac{(-1)^{m+k+1} k!}{m!p}\text{tr}\left\{\left(v(0)\bSigma+\bI_p\right)^{-1}\left[\bSigma\left(v(0)\bSigma+\bI_p\right)^{-1}\right]^{k}\bTheta\right\} B_{m,k}\left(v^{(1)}(0),v^{(2)}(0),...,v^{(m-k+1)}(0)\right)
\end{eqnarray*}
for $p/n\rightarrow c \in (1,\infty)$ as $n\rightarrow\infty$ where the symbol $B_{m,k}\left(v_1,v_2,...,v_{m-k+1}\right)$
denotes the partial exponential Bell polynomials defined in Section 2.

To compute $v^{(1)}(0)$, $v^{(2)}(0)$,...,$v^{(m-k+1)}(0)$, we rewrite \eqref{app-vt} as
\begin{equation}\label{app-vt2}
\frac{1}{v(t)}-t=c_n \frac{1}{p}\text{tr}\left[\left(v(t)\bI_p+\bSigma^{-1}\right)^{-1}\right].
\end{equation}
Hence,
\begin{equation*}
-\frac{v^{(1)}(t)}{v(t)^2}-1=-c_n v^{(1)}(t) \frac{1}{p}\text{tr}\left[\left(v(t)\bI_p+\bSigma^{-1}\right)^{-2}\right]
=-c_n v^{(1)}(t) \frac{1}{p}\text{tr}\left\{\left[\bSigma\left(v(t)\bSigma+\bI_p\right)^{-1}\right]^2\right\},
\end{equation*}
from which we get
\begin{eqnarray*}
v^{(1)}(0)&=&
-\frac{1}{\frac{1}{v(0)^2}-c_n  \frac{1}{p}\text{tr}\left\{\left[\bSigma\left(v(0)\bSigma+\bI_p\right)^{-1}\right]^2\right\}}
\end{eqnarray*}

For the computation of the derivative of $v(\cdot)$ of the order larger than one, we apply the Fa\`{a} di Bruno formula to both the sides of \eqref{app-vt2}. This yields
\begin{eqnarray*}
&&\sum_{k=1}^m f_1^{(k)}(v(t)) B_{m,k}\left(v^{(1)}(t),...,v^{(m-k+1)}(t)\right)=c_n\sum_{k=1}^m f_2^{(k)}(v(t);\bI_p) B_{m,k}\left(v^{(1)}(t),...,v^{(m-k+1)}(t)\right)
\end{eqnarray*}
or
\begin{eqnarray*}
&&\sum_{k=1}^m \frac{(-1)^{k} k!}{v(t)^{k+1}} B_{m,k}\left(v^{(1)}(t),...,v^{(m-k+1)}(t)\right)\\
&=& c_n \sum_{k=1}^m \frac{(-1)^{k} k!}{p}\text{tr}\left\{\left[\bSigma\left(v(t)\bSigma+\bI_p\right)^{-1}\right]^{k+1}\right\} B_{m,k}\left(v^{(1)}(t),...,v^{(m-k+1)}(t)\right)
\end{eqnarray*}

Using that $B_{m,1}\left(v^{(1)}(t),v^{(2)}(t),...,v^{(m)}(t)\right)=v^{(m)}(t)$, we finally get
\begin{eqnarray*}
v^{(m)}(0)=\frac{ \sum_{k=2}^m (-1)^{k} k!\left(\frac{1}{v(0)^{k+1}}-c_n\frac{1}{p}\text{tr}\left\{\left[\bSigma\left(v(0)\bSigma+\bI_p\right)^{-1}\right]^{k+1}\right\}\right) B_{m,k}\left(v^{(1)}(0),...,v^{(m-k+1)}(0)\right)}{\frac{1}{v(0)^{2}}-c_n\frac{1}{p}\text{tr}\left\{\left[\bSigma\left(v(0)\bSigma+\bI_p\right)^{-1}\right]^{2}\right\}}
\end{eqnarray*}
\end{proof}

\subsection{Proof of Corollary 2.1}
\begin{proof}%[:]
%{\color{red} From \eqref{th1-v0} we have that
%\begin{eqnarray*}
%   \frac{c_n}{p}\tr\left( (v(0)\bSigma+\bI_p)^{-1} \right)=c_n-%1 
%\end{eqnarray*}
%and, thus, using the definition of $h_1$ we get 
%\begin{eqnarray*}
%h_1&=& \frac{1}{v(0)}-\frac{c_n}{p}\tr\left(\bSigma(v(0)\bSigma+\bI_p)^{-1} \right) = \frac{1}{v(0)}\left(1- c_n\left[1-\frac{1}{p}\tr\left([v(0)\bSigma+\bI_p]^{-1}\right)\right] \right)\\
%&=& \frac{1}{v(0)} (1-c_n(1-c_n+1))=\frac{(c_n-1)^2}{v(0)} \,.
%\end{eqnarray*}
%Regarding $v^{(1)}(0)$ we use %\eqref{th1-v0pr} and immediately %get $-(v^{(1)}(0))^{-1}=h_2$.}

From the properties of the partial exponential Bell polynomials we get that
\begin{eqnarray*}
B_{1,1}\left(v^{(1)}(0)\right)&=&v^{(1)}(0),\\
B_{2,1}\left(v^{(1)}(0),v^{(2)}(0)\right)&=&v^{(2)}(0),\\
B_{3,1}\left(v^{(1)}(0),v^{(2)}(0),v^{(3)}(0)\right)&=&v^{(3)}(0),\\
B_{4,1}\left(v^{(1)}(0),v^{(2)}(0),v^{(3)}(0),v^{(4)}(0)\right)&=&v^{(4)}(0),\\
B_{2,2}\left(v^{(1)}(0)\right)&=& [v^{(1)}(0)]^2,\\
B_{3,2}\left(v^{(1)}(0),v^{(2)}(0)\right)&=& 3v^{(1)}(0) v^{(2)}(0),\\
B_{3,3}\left(v^{(1)}(0)\right)&=& [v^{(1)}(0)]^3,\\
B_{4,2}\left(v^{(1)}(0),v^{(2)}(0),v^{(3)}(0)\right)&=& 4 v^{(1)}(0) v^{(3)}(0) + 3[v^{(2)}(0)]^2 ,\\
B_{4,3}\left(v^{(1)}(0),v^{(2)}(0)\right)&=& 6[v^{(1)}(0)]^2 v^{(2)}(0),\\
B_{4,4}\left(v^{(1)}(0)\right)&=& [v^{(1)}(0)]^4.
\end{eqnarray*}
The substitution of the above formulas for the partial exponential Bell polynomials to (10) and (13) and the application of (12) leads to the statement of the corollary.
\end{proof}

\subsection{Proof of Corollary 2.2}
\begin{proof}%[:]
Let 
\begin{equation*}
g_k=\frac{1}{p}\text{tr}\left\{\left[\bSigma\left(v(0)\bSigma+\bI_p\right)^{-1}\right]^{k}\right\}.
\end{equation*}

Then,
\[d_k\left(\frac{1}{p}\bI_p\right)=\frac{1}{p}\text{tr}\left\{\left(v(0)\bSigma+\bI_p\right)^{-1}\left[\bSigma\left(v(0)\bSigma+\bI_p\right)^{-1}\right]^{k}\bI_p\right\}
=g_k-v(0)g_{k+1}\]
and, consequently,
\begin{eqnarray*}
s_m\left(\frac{1}{p}\bI_p\right)&=& \sum_{k=1}^m \frac{(-1)^{m+k+1} k!}{m!}\left(g_k-v(0)g_{k+1}\right) B_{m,k}\left(v^{(1)}(0),...,v^{(m-k+1)}(0)\right)\\
&=& \frac{(-1)^{m}}{m!}\left(g_1-v(0)g_{2}\right) B_{m,1}\left(v^{(1)}(0),...,v^{(m)}(0)\right)\\
&+&\sum_{k=2}^m \frac{(-1)^{m+k+1} k!}{m!}\left(g_k-v(0)g_{k+1}\right) B_{m,k}\left(v^{(1)}(0),...,v^{(m-k+1)}(0)\right).
\end{eqnarray*}

The application of $B_{m,1}\left(v^{(1)}(0),...,v^{(m)}(0)\right)=v^{(m)}(0)$ and (13) yields
\begin{eqnarray*}
&&s_m\left(\frac{1}{p}\bI_p\right)= \frac{(-1)^{m+1}}{m!}\sum_{k=2}^m (-1)^{k} k!\left(g_1-v(0)g_{2}\right)v^{(1)}(0) h_{k+1} B_{m,k}\left(v^{(1)}(0),...,v^{(m-k+1)}(0)\right)\\
&+&
\frac{(-1)^{m+1}}{m!}\sum_{k=2}^m (-1)^{k} k!\left(g_k-v(0)g_{k+1}\right) B_{m,k}\left(v^{(1)}(0),...,v^{(m-k+1)}(0)\right)\\
&=&\frac{(-1)^{m+1}}{m!}\sum_{k=2}^m (-1)^{k} k!\left[\left(g_1-v(0)g_{2}\right)v^{(1)}(0) h_{k+1}+g_k-v(0)g_{k+1}\right] B_{m,k}\left(v^{(1)}(0),...,v^{(m-k+1)}(0)\right).
\end{eqnarray*}

Using 
\begin{eqnarray*}
 && \left(g_1-v(0)g_{2}\right)v^{(1)}(0) h_{k+1}+g_k-v(0)g_{k+1}\\
&=& \left(\frac{1}{p}\text{tr}\left[\bSigma\left(v(0)\bSigma+\bI_p\right)^{-1}\right]-v(0)g_{2}\right)\frac{-v(0)^2}{1-c_nv(0)^2 g_2} h_{k+1}+g_k-v(0)g_{k+1}
 \\
&=& \left(\frac{1}{v(0)}\left(1-\frac{1}{p}\text{tr}\left[\left(v(0)\bSigma+\bI_p\right)^{-1}\right]\right)-v(0)g_{2}\right)\frac{-v(0)^2}{1-c_nv(0)^2 g_2} h_{k+1}+g_k-v(0)g_{k+1}\\
&=& \left(\frac{1}{v(0)}\left(1-\frac{c_n}{c_n-1}\right)-v(0)g_{2}\right)\frac{-v(0)^2}{1-c_nv(0)^2 g_2} h_{k+1}+g_k-v(0)g_{k+1}\\
%&=& \frac{1}{c_nv(0)}\left(1-c_nv(0)^2 g_{2}\right)\frac{-v(0)^2}{1-c_nv(0)^2 g_2} h_{k+1}+g_k-v(0)g_{k+1}\\   
&=& \frac{-v(0)}{c_n} h_{k+1}+g_k-v(0)g_{k+1}=\frac{-h_k}{c_n}\\   
\end{eqnarray*}
and the equality (see Equation (1.4) in \cite{cvijovic2011new})
\begin{equation*}
 k!B_{m,k}\left(v^{(1)}(0),...,v^{(m-k+1)}(0)\right) = (k-1)! \sum_{r=1}^{m-1} \frac{m!}{r!(m-r)!} v^{(r)}(0) B_{m-r,k-1}\left(v^{(1)}(0),...,v^{(m-r-k+2)}(0)\right),
\end{equation*}
where $B_{i,k-1}(.)=0$ for $i<k-1$, we get
\begin{eqnarray*}
s_m\left(\frac{1}{p}\bI_p\right)&=& \frac{(-1)^{m}}{m!c_n}\sum_{k=2}^m (-1)^{k} h_k (k-1)! \sum_{r=1}^{m-1} \frac{m!}{r!(m-r)!}v^{(r)}(0) B_{m-r,k-1}\left(v^{(1)}(0),...,v^{(m-r-k)}(0)\right)\\
&=& \frac{(-1)^{m}}{m!c_n} \sum_{r=1}^{m-1} \frac{m!}{r!(m-r)!}v^{(r)}(0) \sum_{k=2}^{m-r+1} (-1)^{k} h_k (k-1)!B_{m-r,k-1}\left(v^{(1)}(0),...,v^{(m-r-k+2)}(0)\right)\\
&=& \frac{(-1)^{m}}{m!c_n} \frac{m!}{(m-1)!}v^{(m-1)}(0) h_2 v^{(1)}(0)+
\frac{(-1)^{m}}{m!c_n} \sum_{r=1}^{m-2} \frac{m!}{r!(m-r)!}v^{(r)}(0)\Bigg( h_2 v^{(m-r)}(0)\\
 &-&\sum_{k=3}^{m-r+1} (-1)^{k-1} h_k (k-1)!B_{m-r,k-1}\left(v^{(1)}(0),...,v^{(m-r-k+2)}(0)\right)\Bigg)\\
&=& \frac{(-1)^{m-1}}{(m-1)!c_n}v^{(m-1)}(0) ,
\end{eqnarray*}
where we used that $v^{(1)}(0)=-h_2^{-1}$ and
\[\sum_{k=3}^{m-r+1} (-1)^{k-1} h_k (k-1)!B_{m-r,k-1}\left(v^{(1)}(0),...,v^{(m-r-k+2)}(0)\right)\Bigg)=-\frac{v^{(m-r)}(0)}{v^{(1)}(0)}.\]
\end{proof}

\subsection{Proof of Corollary 2.3}
\begin{proof}%[:]
If $\bSigma=\bI_p$, then 
\[d_k\left(\bTheta\right)=\left(v(0)+1\right)^{-(k+1)}\text{tr}(\bTheta), \quad k=1,2,....\]
and $s_{m}(\bTheta)=a_{m}\text{tr}(\bTheta)$ for $m=1,2,...$ where $a_{m}$ does not depend on $\bTheta$. As such, $a_{m}$ can be computed by setting $\bTheta=\frac{1}{p} \bI_p$ and using the findings of Corollary 2.2. Thus,
\[s_{m}\left(\bTheta\right)=s_{m}\left(\frac{1}{p}\bI_p\right)\text{tr}(\bTheta)=\frac{(-1)^{m-1}v^{(m-1)}(0)}{(m-1)! c_n}\text{tr}(\bTheta),\]
where it holds from (6) that
\[ \frac{1}{v(0)+1}=\frac{c_n-1}{c_n}\]
and, hence,
\begin{equation}\label{cor-v0}
v(0)=\frac{1}{c_n-1}
\quad \text{and} \quad
v^{(1)}(0)=
-\frac{c_n}{(c_n-1)^3},
\end{equation}
where the second equality follows from (12). 

Finally, from (13) we get
\begin{eqnarray*}
v^{(m)}(0)&=& -v^{(1)}(0) \sum_{k=2}^m (-1)^{k} k!\left(v(0)^{-(k+1)}-c_n(v(0)+1)^{-(k+1)}\right) B_{m,k}\left(v^{(1)}(0),...,v^{(m-k+1)}(0)\right)\\
&=& \frac{c_n}{(c_n-1)^3}\sum_{k=2}^m (-1)^{k} k!(c_n-1)^{(k+1)}\left(1-c_n^{-k}\right) B_{m,k}\left(v^{(1)}(0),...,v^{(m-k+1)}(0)\right) .
\end{eqnarray*}
\end{proof}

%%%%%%%%%%%%%%%%%%%%%%%%%%%%%%%%%%%%%%%%%%%%%%%%%%%%%%%%%%%%%%%%%%%%%%%%%%%%%%%%%%%%%%%%%%%%%%%%%%%%
\subsection{Proof of Theorem 2.2}
\begin{proof}%[:]
Following the proof of Theorem 2.1, we get
\begin{eqnarray*}
\text{tr}((\bS_n+t\bI_p)^{-(m+1)}\bTheta)&=& \frac{(-1)^m}{m!}\dfrac{\partial^m}{\partial t^m}\,\text{tr}\left[\left(\frac{1}{n}\bX_n\bX_n^\top+t\bSigma^{-1}\right)^{-1}
\bSigma^{-1/2}\bTheta\bSigma^{-1/2}\right] \,,
\end{eqnarray*}
while the application of Lemma 6 in \cite{BPTJMLR2024} leads to
\begin{eqnarray*}%\label{mainlimit-th2}
    \left|\text{tr}\left[\left(\frac{1}{n}\bX_n\bX_n^\top+t\bSigma^{-1}\right)^{-1}
\bSigma^{-1/2}\bTheta\bSigma^{-1/2}\right]
-\frac{1}{t}\text{tr}\left[\left(v(t) \bI_p + \bSigma^{-1}\right)^{-1}
\bSigma^{-1/2}\bTheta\bSigma^{-1/2}\right]
\right| \stackrel{a.s.}{\rightarrow} 0,
\end{eqnarray*}
for $p/n \rightarrow c \in (0,\infty)$ as $n \to \infty$ where $v(t)$ is the solution of \eqref{app-vt}. Hence, 
\begin{eqnarray*}
&&\text{tr}((\bS_n+t\bI_p)^{-(m+1)}\bTheta)\stackrel{d.a.s.}{\longrightarrow}\frac{(-1)^m}{m!}\dfrac{\partial^m}{\partial t^m}
\frac{1}{t}\text{tr}\left[\left(v(t) \bI_p + \bSigma^{-1}\right)^{-1} \bSigma^{-1/2}\bTheta\bSigma^{-1/2}\right]\nonumber\\
&=&\frac{(-1)^m}{m!}\dfrac{\partial^{m}{1}/{t} }{\partial t^{m}} \text{tr}\left[\left(v(t) \bI_p + \bSigma^{-1}\right)^{-1} \bSigma^{-1/2}\bTheta\bSigma^{-1/2}\right]\\
&&+\frac{(-1)^m}{m!} \sum_{l=1}^m \frac{m!}{l!(m-l)!}\dfrac{\partial^{m-l}}{\partial t^{m-l}}\frac{1}{t}
\dfrac{\partial^l}{\partial t^l} \text{tr}\left[\left(v(t) \bI_p + \bSigma^{-1}\right)^{-1} \bSigma^{-1/2}\bTheta\bSigma^{-1/2}\right]
\nonumber\\
&=&  t^{-m-1} \text{tr}\left[\left(v(t) \bI_p + \bSigma^{-1}\right)^{-1} \bSigma^{-1/2}\bTheta\bSigma^{-1/2}\right]\\
&&+\frac{(-1)^{m}}{m!}  \sum_{l=1}^m \frac{m!}{l!(m-l)!} (-1)^{m-l} (m-l)! t^{-(m-l)-1}
\sum_{k=1}^l f_2^{(k)}(v(t);\bSigma^{-1/2}\bTheta\bSigma^{-1/2}) \nonumber\\ 
&&\times B_{l,k}\left(v^{(1)}(t),v^{(2)}(t),...,v^{(l-k+1)}(t)\right)\\
&=&  t^{-m-1} \text{tr}\left[\left(v(t)\bSigma+ \bI_p \right)^{-1} \bTheta\right]\\
&&+ \sum_{l=1}^m t^{-(m-l)-1} \sum_{k=1}^l \frac{(-1)^{l+k} k!}{l!}\text{tr}\left\{\left(v(t)\bSigma+\bI_p\right)^{-1}\left[\bSigma\left(v(t)\bSigma+\bI_p\right)^{-1}\right]^{k}\bTheta\right\} \nonumber\\
&&\times B_{l,k}\left(v^{(1)}(t),v^{(2)}(t),...,v^{(l-k+1)}(t)\right)
\end{eqnarray*}
for $p/n\rightarrow c \in (0,\infty)$ as $n\rightarrow\infty$. Furthermore, $v(t)$ is the solution of \eqref{app-vt} which can be rewritten as (5), 
\begin{eqnarray*}
v^{(1)}(t)&=&
-\frac{1}{\frac{1}{v(t)^2}-c_n  \frac{1}{p}\text{tr}\left\{\left[\bSigma\left(v(t)\bSigma+\bI_p\right)^{-1}\right]^2\right\}},
\end{eqnarray*}
and
\begin{eqnarray*}
v^{(m)}(t)=\frac{ \sum_{k=2}^m (-1)^{k} k!\left(\frac{1}{v(t)^{k+1}}-c_n\frac{1}{p}\text{tr}\left\{\left[\bSigma\left(v(t)\bSigma+\bI_p\right)^{-1}\right]^{k+1}\right\}\right) B_{m,k}\left(v^{(1)}(t),...,v^{(m-k+1)}(t)\right)}{\frac{1}{v(t)^{2}}-c_n\frac{1}{p}\text{tr}\left\{\left[\bSigma\left(v(t)\bSigma+\bI_p\right)^{-1}\right]^{2}\right\}}.
\end{eqnarray*}
\end{proof}
%%%%%%%%%%%%%%%%%%%%%%%%%%%%%%%%%%%%%%%%%%%%%%%%%%

\subsection{Proof of 2.4}
\begin{proof}
The application of Theorem 2.2 leads to
\begin{eqnarray*}
&&\tilde{s}_1(t,\bTheta)= t^{-1} d_0(t,\bTheta),\\
&&\tilde{s}_2(t,\bTheta)= t^{-2} d_0(t,\bTheta) + t^{-1} d_1(t,\bTheta) B_{1,1}\left(v^{(1)}(t)\right),\\
&&\tilde{s}_3(t,\bTheta)= t^{-3} d_0(t,\bTheta) + t^{-2}  d_1(t,\bTheta)B_{1,1}\left(v^{(1)}(t)\right)\\
&&+t^{-1}\left\{-\frac{1}{2}v^{(2)}(t) d_1(t,\bTheta)B_{2,1}\left(v^{(1)}(t),v^{(2)}(t)\right)+d_2(t,\bTheta)B_{2,2}\left(v^{(1)}(t)\right)\right\},\\
&&\tilde{s}_4(t,\bTheta)= t^{-4} d_0(t,\bTheta) + t^{-3}  d_1(t,\bTheta)B_{3,1}\left(v^{(1)}(t)\right)\\
&&+t^{-2}\left\{-\frac{1}{2} d_1(t,\bTheta) B_{2,1}\left(v^{(1)}(t),v^{(2)}(t)\right)+ d_2(t,\bTheta) B_{2,2}\left(v^{(1)}(t)\right)\right\}\\
&&+t^{-1}\left\{\frac{1}{6} d_1(t,\bTheta) B_{3,1}\left(v^{(1)}(t),v^{(2)}(t)v^{(3)}(t)\right)-\frac{1}{3} d_2(t,\bTheta)B_{3,2}\left(v^{(1)}(t),v^{(2)}(t)\right)+d_3(t,\bTheta)B_{3,3}\left(v^{(1)}(t)\right)\right\}.
\end{eqnarray*}
The rest of the proof follows by using the formulas  for the partial exponential Bell polynomials as given in the proof of Corollary 2.1.    
\end{proof}

\subsection{Proof of Corollary 2.5}
\begin{proof}
The proof of Corollary 2.5 is done by using the methods of mathematical induction. For $m=0$, we get
\[\tilde{s}_{1}\left(t,\frac{1}{p}\bI_p\right)=t^{-1}d_k\left(t,\frac{1}{p}\bI_p\right)=t^{-1} \frac{c_n-1}{c_n}+\frac{v(t)}{c_n},\]
where the last equality follows from (5) and (22).

Next, we assume that the statement of the corollary is true for $\tilde{s}_{1}\left(t,\frac{1}{p}\bI_p\right), ... \tilde{s}_{m}\left(t,\frac{1}{p}\bI_p\right)$ and prove it for $\tilde{s}_{m+1}\left(t,\frac{1}{p}\bI_p\right)$. Let 
\begin{equation*}
g_k(t)=\frac{1}{p}\text{tr}\left\{\left[\bSigma\left(v(t)\bSigma+\bI_p\right)^{-1}\right]^{k}\right\}.
\end{equation*}
Then,
\[d_k\left(t,\frac{1}{p}\bI_p\right)=\frac{1}{p}\text{tr}\left\{\left(v(t)\bSigma+\bI_p\right)^{-1}\left[\bSigma\left(v(t)\bSigma+\bI_p\right)^{-1}\right]^{k}\bI_p\right\}
=g_k(t)-v(t)g_{k+1}(t)\]
and $h_k(t)=v(t)^{-k}-c_n g_k(t)$. Moreover, from the definition of $v^{(1)}(t)$ we have
\[v^{(1)}(t) h_2(t)=-1\]
and
\begin{equation*}
v^{(1)}(t) (g_1(t)-v(t)g_{2}(t))= \frac{-1}{v(t)^{-2}-c_n g_2(t)} \left(\frac{1-tv(t)}{c_n v(t)} - v(t)g_{2}(t)\right) =-\frac{v(t)}{c_n}-\frac{tv^{(1)}(t)}{c_n}.
%&=& -\frac{v(t)}{c_n}+\frac{t}{c_n h_2(t)}
\end{equation*}

Using the recursion (26) and the assumption of the induction, we get
\begin{eqnarray*}
\tilde{s}_{m+1}\left(t,\frac{1}{p}\bI_p\right)&=& t^{-(m+1)}\frac{c_n-1}{c_n} +t^{-1} \frac{(-1)^{m-1}v^{(m-1)}(t)}{(m-1)! c_n}\\
&+&t^{-1}\Bigg(\frac{(-1)^{m-1}}{m!}\left(g_1(t)-v(t)g_{2}(t)\right) B_{m,1}\left(v^{(1)}(t),...,v^{(m)}(t)\right)\\
&+&\sum_{k=2}^m \frac{(-1)^{m+k} k!}{m!}\left(g_k(t)-v(t)g_{k+1}(t)\right) B_{m,k}\left(v^{(1)}(t),...,v^{(m-k+1)}(t)\right)\Bigg)\\
&=&  t^{-(m+1)}\frac{c_n-1}{c_n} +t^{-1} \frac{(-1)^{m-1}v^{(m-1)}(t)}{(m-1)! c_n}\\
&+&\frac{(-1)^{m}}{m!t}\sum_{k=2}^m (-1)^{k} k!\left[\left(g_1(t)-v(t)g_2(t)\right)v^{(1)}(t) h_{k+1}(t)+g_k(t)-v(t)g_{k+1}(t)\right]\\
&& \times B_{m,k}\left(v^{(1)}(t),...,v^{(m-k+1)}(t)\right),
\end{eqnarray*}
following the proof of Corollary 2.2 where 
\begin{eqnarray*}
 && \left(g_1(t)-v(t)g_2(t)\right)v^{(1)}(t) h_{k+1}(t)+g_k(t)-v(t)g_{k+1}(t)\\
&=& \left(-\frac{v(t)}{c_n}-\frac{tv^{(1)}(t)}{c_n}\right) h_{k+1}(t)+g_k(t)-v(t)g_{k+1}(t)=-\frac{h_k(t)}{c_n}-\frac{tv^{(1)}(t)}{c_n} h_{k+1}(t).
\end{eqnarray*}

Hence,
\begin{eqnarray*}
\tilde{s}_{m+1}\left(t,\frac{1}{p}\bI_p\right)&=&  t^{-(m+1)}\frac{c_n-1}{c_n} +t^{-1} \frac{(-1)^{m-1}v^{(m-1)}(t)}{(m-1)! c_n}\\
&+&t^{-1}\frac{(-1)^{m-1}}{m! c_n}\sum_{k=2}^m (-1)^{k} k! h_k(t) B_{m,k}\left(v^{(1)}(t),...,v^{(m-k+1)}(t)\right)\\
&+&\frac{(-1)^{m}}{m!c_n} (-1)v^{(1)}(t) \sum_{k=2}^m (-1)^{k} k! h_{k+1}(t) B_{m,k}\left(v^{(1)}(t),...,v^{(m-k+1)}(t)\right)\\
&=&t^{-(m+1)}\frac{c_n-1}{c_n} +t^{-1} \frac{(-1)^{m}v^{(m)}(t)}{m! c_n},
\end{eqnarray*}
following the end of the proof of Corollary 2.2.
\end{proof}

%%%%%%%%%%%%%%%%%%%%%%%%%%%%%%%%%%%%%%%%%%%%%%%%%%%%%%%%%%%%%%%%%%%%%%%%%%%%%%%%%%%%%%%%%%%%%%%%%%%%
\subsection{Proof of Corollary 2.6}
\begin{proof}
If $\bSigma=\bI_p$, then
\[d_k\left(t,\bTheta\right)=\left(v(t)+1\right)^{-(k+1)}\text{tr}(\bTheta), \quad k=0,1,2,....\]
As a result, $\tilde{s}_{m+1}(t,\bTheta)=a_{m+1}(t)\text{tr}(\bTheta)$ for $m=0,1,...$ where $a_{m+1}(t)$, $m=0,1,...$, does not depend on $\bTheta$. Consequently, $a_{m+1}(t)$, $m=0,1,...$, can be computed by setting $\bTheta=\frac{1}{p} \bI_p$ and using the results of Corollary 2.5. It leads to
\[\tilde{s}_{m+1}\left(t,\bTheta\right)=\tilde{s}_{m+1}\left(t,\frac{1}{p}\bI_p\right)\text{tr}(\bTheta)=\left( t^{-(m+1)}\frac{c_n-1}{c_n}+ \frac{(-1)^{m}v^{(m)}(t)}{(m)! c_n} \right)\text{tr}(\bTheta),\]
where $v(t)$ is the solution of
\[\frac{1}{v(t)+1}=\frac{c_n-1+tv(t)}{c_n}\]
and, hence,
\[v(t)=\frac{-(c_n-1+t)+\sqrt{(c_n-1+t)^2+4t}}{2t}=\frac{2}{(c_n-1+t)+\sqrt{(c_n-1+t)^2+4t}}.\]

Moreover, it holds from (23) and (24) that
\[v^{(1)}(t)=\frac{-1}{v(t)^{-2}-c_n(v(t)+1)^{-2}}\]
and
\[v^{(m)}(t)= -v^{(1)}(t) \sum_{k=2}^m (-1)^{k} k!\left(v(t)^{-(k+1)}-c_n(v(t)+1)^{-(k+1)}\right) B_{m,k}\left(v^{(1)}(t),...,v^{(m-k+1)}(t)\right).\]
\end{proof}

%%%%%%%%%%%%%%%%%%%%%%%%%%%%%%%%%%%%%%%%%%%%%%%%%%%%%%%%%%%%%%%%%%%%%%%%%%%%%%%%%%%%%%%%%%%%%%%%%%%%
\subsection{Proof of Corollary 2.7}
\begin{proof}
From the definition of $D_m(t,\bTheta)$ and the recursion (26), we get
\[\tilde{s}_{m+1}\left(t,\bTheta\right)=\frac{1}{t}\tilde{s}_{m}\left(t,\bTheta\right)+\frac{1}{t}D_m(t,\bTheta)\]
and, similarly,
\[\tilde{s}_{m+k}\left(t,\bTheta\right)=t^{-k}\tilde{s}_{m}\left(t,\bTheta\right)+t^{-k}D_m(t,\bTheta)+\sum_{j=1}^{k-1}t^{-(k-j)}D_{m+j}(t,\bTheta) ~~\text{for $k \ge 2$}.\]

Hence,
\begin{eqnarray*}
&&\grave{s}_{m}\left(t,\bTheta\right)= \tilde{s}_{m}\left(t,\bTheta\right) +  \sum\limits_{k=1}^m (-1)^{k} t^k\binom{m}{k}\left(t^{-k}\tilde{s}_{m}\left(t,\bTheta\right)+t^{-k}D_m(t,\bTheta)\right)\\
 &+&
 \sum\limits_{k=2}^m (-1)^{k}t^k\binom{m}{k}
 \sum\limits_{j=1}^{k-1} t^{-(k-j)} D_{m+j}(t,\bTheta)\\
&=& \left(\tilde{s}_{m}\left(t,\bTheta\right)+D_m(t,\bTheta)\right)\sum\limits_{k=0}^m (-1)^{k} \binom{m}{k} -D_m(t,\bTheta) +\sum\limits_{k=2}^m (-1)^{k}\binom{m}{k}
 \sum\limits_{j=1}^{k-1} t^{j} D_{m+j}(t,\bTheta)\,.
\end{eqnarray*}
Since $\sum\limits_{k=0}^m (-1)^{k}\binom{m}{k}=(1-1)^m=0$, we get the statement of the corollary.
\end{proof}

\subsection{Proof of Corollary 2.9}
\begin{proof}
The application of Corollary 2.5 for $m=1, 2, ...$ leads to
  \begin{eqnarray*}
      \grave{s}_{m}\left(t,\frac{1}{p}\bI_p\right)&=& \sum\limits_{k=0}^m (-1)^{k}t^k\binom{m}{k}t^{-(m+k)}\frac{c_n-1}{c_n}+ \sum\limits_{k=0}^m (-1)^{k}t^k\binom{m}{k}\frac{(-1)^{m+k-1}v^{(m+k-1)}(t)}{(m+k-1)! c_n}\\
      &=& \frac{c_n-1}{c_n}t^{-m}\underbrace{\sum\limits_{k=0}^m (-1)^{k}\binom{m}{k}}_{(1-1)^m=0}+ \frac{(-1)^{m-1}}{c_n}\sum\limits_{k=0}^m \binom{m}{k}\frac{v^{(m+k-1)}(t)}{(m+k-1)!}t^k\,
  \end{eqnarray*}
and the statement of the corollary follows.
\end{proof}
%%%%%%%%%%%%%%%%%%%%%%%%%%%%%%%%%%%%%%%%%%%%%%%%%%%%%%%%%%%%%%%%%%%%%%%%%%%%%%%%%%%%%%%%%%%%%%%%%%%%

\subsection{Proof of Theorem 3.1}
\begin{proof} The loss function $L^2_{F;n}$ from (42) can be rewritten in the following way
{\small
\begin{eqnarray}\label{risk2}
L^2_{F;n}&=&||\widehat{\boldsymbol{\Pi}}_{GSE}\bSigma-\bI_p||_F^2=\tr\left[(\widehat{\boldsymbol{\Pi}}_{GSE}\bSigma-\bI_p)(\widehat{\boldsymbol{\Pi}}_{GSE}\bSigma-\bI_p)^\top\right] \nonumber\\
%&=&||\bI_p||^2_F+||\widehat{\boldsymbol{\Pi}}_{GSE}\bSigma||^2_F-2\text{tr}\left(\widehat{\boldsymbol{\Pi}}_{GSE}\bSigma\right)\\
%&& \hspace{-1.8cm}=||\bI_p||^2_F+\ta_n^2||\bS_n^{\#}(t)\bSigma||^2_F+2\ta_n\tb_n\text{tr}(\bS_n^{\#}(t)\bSigma^2\boldsymbol{\Pi}_0)+\tb_n^2||\boldsymbol{\Pi}_0\bSigma||^2_F-2\ta_n\text{tr}(\bS_n^{\#}(t)\bSigma)-2\tb_n\text{tr}(\bSigma\boldsymbol{\Pi}_0)\nonumber\\
&=& ||\bI_p||^2_F+ ||\bS_n^{\#}(t)\bSigma||^2_F\left(\ta_n+\frac{\tb_n \text{tr}(\bS_n^{\#}(t)\bSigma^2\boldsymbol{\Pi}_0) - \text{tr}(\bS_n^{\#}(t)\bSigma)}{||\bS_n^{\#}(t)\bSigma||^2_F} \right)^2  \nonumber\\
&+&\frac{||\bS_n^{\#}(t)\bSigma||^2_F||\boldsymbol{\Pi}_0\bSigma||^2_F - \left(\text{tr}(\bS_n^{\#}(t)\bSigma^2\boldsymbol{\Pi}_0)\right)^2}{||\bS_n^{\#}(t)\bSigma||^2_F }\nonumber\\
&\times&\left(\tb_n +\frac{\text{tr}(\bS_n^{\#}(t)\bSigma^2\boldsymbol{\Pi}_0)\text{tr}(\bS_n^{\#}(t)\bSigma) - ||\bS_n^{\#}(t)\bSigma||^2_F \text{tr}(\bSigma\boldsymbol{\Pi}_0) }{||\bS_n^{\#}(t)\bSigma||^2_F||\boldsymbol{\Pi}_0\bSigma||^2_F - \left(\text{tr}(\bS_n^{\#}(t)\bSigma^2\boldsymbol{\Pi}_0)\right)^2} \right)^2 \nonumber\\
&-&\left( \frac{[\text{tr}(\bS_n^{\#}(t)\bSigma)]^2}{||\bS_n^{\#}(t)\bSigma||^2_F}+\frac{\left[\text{tr}(\bS_n^{\#}(t)\bSigma^2\boldsymbol{\Pi}_0)\text{tr}(\bS_n^{\#}(t)\bSigma) - ||\bS_n^{\#}(t)\bSigma||^2_F \text{tr}(\bSigma\boldsymbol{\Pi}_0)\right]^2}{||\bS_n^{\#}(t)\bSigma||^2_F\left[||\bS_n^{\#}(t)\bSigma||^2_F||\boldsymbol{\Pi}_0\bSigma||^2_F - \left(\text{tr}(\bS_n^{\#}(t)\bSigma^2\boldsymbol{\Pi}_0)\right)^2\right]} \right) \,.
\end{eqnarray}
}
Since $||\bS_n^{\#}(t)\bSigma||^2_F >0$ and $||\bS_n^{\#}(t)\bSigma||^2_F||\boldsymbol{\Pi}_0\bSigma||^2_F - \left(\text{tr}(\bS_n^{\#}(t)\bSigma^2\boldsymbol{\Pi}_0)\right)^2\ge0$ with equality to zero if and only if $\bS_n^{\#}(t)=\boldsymbol{\Pi}_0$, then, for a fixed value of the tuning parameter $t$, the loss function $L^2_{F;n}$ is minimized at
{\small
\begin{eqnarray}\label{beta-gen_1}
&&\beta_n^*(\bS_n^{\#}(t))=
\frac{||\bS_n^{\#}(t)\bSigma||^2_F \text{tr}(\bSigma\boldsymbol{\Pi}_0)-\text{tr}(\bS_n^{\#}(t)\bSigma^2\boldsymbol{\Pi}_0)\text{tr}(\bS_n^{\#}(t)\bSigma)  }{||\bS_n^{\#}(t)\bSigma||^2_F||\boldsymbol{\Pi}_0\bSigma||^2_F - \left(\text{tr}(\bS_n^{\#}(t)\bSigma^2\boldsymbol{\Pi}_0)\right)^2}
\nonumber\\
&=&\dfrac{\text{tr}\left(\dfrac{\bSigma\boldsymbol{\Pi}_0}{||\bSigma\boldsymbol{\Pi}_0||_F}\right)||\bS_n^{\#}(t)\bSigma||^2_F
 -\text{tr}(\bS_n^{\#}(t)\bSigma)\text{tr}\left(\bS_n^{\#}(t)\dfrac{\bSigma^2\boldsymbol{\Pi}_0}{||\bSigma\boldsymbol{\Pi}_0||_F}\right)}{||\bS_n^{\#}(t)\bSigma||^2_F
 -\Bigl[\text{tr}\left(\bS_n^{\#}(t)\dfrac{\bSigma^2\boldsymbol{\Pi}_0}{||\bSigma\boldsymbol{\Pi}_0||_F}\right)\Bigr]^2}||\bSigma\boldsymbol{\Pi}_0||^{-1}_F\,.
 \end{eqnarray}
and
\begin{eqnarray}\label{alfa-gen_1} 
&&\alpha_n^*(\bS_n^{\#}(t))=%\frac{\text{tr}(\bS_n^{\#}(t)\bSigma)-\tb_n \text{tr}(\bS_n^{\#}(t)\bSigma^2\boldsymbol{\Pi}_0) }{||\bS_n^{\#}(t)\bSigma||^2_F} \nonumber\\
%\dfrac{\text{tr}(\bS^{+}_n\bSigma^{-1}_n)||\boldsymbol{\Pi}_0||^2_F-\text{tr}(\bSigma^{-1}_n\boldsymbol{\Pi}_0)\text{tr}(\bS^{+}_n\boldsymbol{\Pi}_0)}{||\bS^{+}_n||^2_F||\boldsymbol{\Pi}_0||^2_F-\bigl(\text{tr}(\bS^{+}_n\boldsymbol{\Pi}_0)\bigr)^2}=
%&=&  
\dfrac{\text{tr}(\bS_n^{\#}(t)\bSigma)-\text{tr}\left(\dfrac{\bSigma\boldsymbol{\Pi}_0}{||\bSigma\boldsymbol{\Pi}_0||_F}\right)  \text{tr}\left(\bS_n^{\#}(t)\dfrac{{\bSigma^2}\boldsymbol{\Pi}_0}{||\bSigma\boldsymbol{\Pi}_0||_F}\right)}{||\bS_n^{\#}(t)\bSigma||^2_F
  -\Bigl[\text{tr}\left(\bS_n^{\#}(t)\dfrac{{\bSigma^2}\boldsymbol{\Pi}_0}{||\bSigma\boldsymbol{\Pi}_0||_F}\right)\Bigr]^2} \,,
 \end{eqnarray}
}
When $\bS_n^{\#}(t) \in \{\bS_n^{-}(t),\bS_n^{\pm}(t)\}$, the last summand in \eqref{risk2} should be minimized with respect to the tuning parameter $t$. The last row in \eqref{risk2} can be written by
{\footnotesize
\begin{eqnarray*}
&&\frac{[\text{tr}(\bS_n^{\#}(t)\bSigma)]^2}{||\bS_n^{\#}(t)\bSigma||^2_F}+\frac{[\text{tr}(\bS_n^{\#}(t)\bSigma^2\boldsymbol{\Pi}_0)\text{tr}(\bS_n^{\#}(t)\bSigma) - ||\bS_n^{\#}(t)\bSigma||^2_F \text{tr}(\bSigma\boldsymbol{\Pi}_0)]^2}{||\bS_n^{\#}(t)\bSigma||^2_F\left[||\bS_n^{\#}(t)\bSigma||^2_F||\boldsymbol{\Pi}_0\bSigma||^2_F - \left(\text{tr}(\bS_n^{\#}(t)\bSigma^2\boldsymbol{\Pi}_0)\right)^2\right]}
\nonumber\\ 
&=&\frac{\left[\text{tr}(\bS_n^{\#}(t)\bSigma)]^2||\boldsymbol{\Pi}_0\bSigma||^2_F
- 2\text{tr}(\bS_n^{\#}(t)\bSigma^2\boldsymbol{\Pi}_0)\text{tr}(\bS_n^{\#}(t)\bSigma) \text{tr}(\bSigma\boldsymbol{\Pi}_0)
+||\bS_n^{\#}(t)\bSigma||^2_F [\text{tr}(\bSigma\boldsymbol{\Pi}_0)\right]^2}{||\bS_n^{\#}(t)\bSigma||^2_F||\boldsymbol{\Pi}_0\bSigma||^2_F - \left(\text{tr}(\bS_n^{\#}(t)\bSigma^2\boldsymbol{\Pi}_0)\right)^2}.\nonumber\\
%&=&\frac{[\text{tr}(\bS_n^{\#}(t)\bSigma)]^2- 2\text{tr}\left(\bS_n^{\#}(t)\dfrac{{\bSigma^2}\boldsymbol{\Pi}_0}{||\bSigma\boldsymbol{\Pi}_0||_F}\right)\text{tr}(\bS_n^{\#}(t)\bSigma) \text{tr}\left(\dfrac{\bSigma\boldsymbol{\Pi}_0}{||\bSigma\boldsymbol{\Pi}_0||_F}\right)+||\bS_n^{\#}(t)\bSigma||^2_F \left[\text{tr}\left(\dfrac{\bSigma\boldsymbol{\Pi}_0}{||\bSigma\boldsymbol{\Pi}_0||_F}\right)\right]^2}{||\bS_n^{\#}(t)\bSigma||^2_F - \Bigl[\text{tr}\left(\bS_n^{\#}(t)\dfrac{{\bSigma^2}\boldsymbol{\Pi}_0}{||\bSigma\boldsymbol{\Pi}_0||_F}\right)\Bigr]^2}\nonumber\\
&=&L^2_{F;n,2}(\bS_n^{\#}(t))+\left[\text{tr}\left(\dfrac{\bSigma\boldsymbol{\Pi}_0}{||\bSigma\boldsymbol{\Pi}_0||_F}\right)\right]^2
\end{eqnarray*}
with
\begin{eqnarray}\label{risk-L-F2_1}
L^2_{F;n,2}(\bS_n^{\#}(t))
&=& \frac{\left[\text{tr}(\bS_n^{\#}(t)\bSigma)
- \text{tr}\left(\bS_n^{\#}(t)\dfrac{{\bSigma^2}\boldsymbol{\Pi}_0}{||\bSigma\boldsymbol{\Pi}_0||_F}\right)\text{tr}\left(\dfrac{\bSigma\boldsymbol{\Pi}_0}{||\bSigma\boldsymbol{\Pi}_0||_F}\right)\right]^2
}{||\bS_n^{\#}(t)\bSigma||^2_F - \Bigl[\text{tr}\left(\bS_n^{\#}(t)\dfrac{{\bSigma^2}\boldsymbol{\Pi}_0}{||\bSigma\boldsymbol{\Pi}_0||_F}\right)\Bigr]^2} .
\end{eqnarray}
}
Then the optimal value of $t$ is found by maximizing $L^2_{F;n,2}(\bS_n^{\#}(t))$. This finishes the proof of the theorem.
\end{proof}

\subsection{Proof of Theorem 3.4}
\begin{proof}
The out-of-sample variance, or the loss function $L(\alpha, t)=\widehat{\mathbf{w}}_{GSE}^\top\bSigma \widehat{\mathbf{w}}_{GSE}$, can be simplified to
\begin{align}\label{Li-lambda}
L(\alpha, t)
    = & \left(\alpha    \mathbf{w}_{\bS_n^{\#}(t)} + (1-\alpha) \bb \right)^\top\bSigma
    \left(\alpha     \mathbf{w}_{\bS_n^{\#}(t)} + (1-\alpha) \bb \right) \nonumber \\
    %= & 
    %\psi^2\hbw_{S; \lambda}^\top\bSigma\hbw_{S; \lambda} 
    %+ 2(1-\psi)\psi \hbw_{S;\lambda}^\top\bSigma \bb 
    %+ (1-\psi)^2 \bb^\top \bSigma \bb \nonumber \\
    %= & \psi^2\hbw_{S; \lambda}^\top\bSigma\hbw_{S; \lambda} 
    %+ 2\psi \hbw_{S;\lambda}^\top\bSigma \bb  - 2\psi^2 \hbw_{S;\lambda}^\top\bSigma \bb 
    %+ \bb^\top \bSigma \bb - 2\psi \bb^\top \bSigma \bb + \psi^2 \bb^\top \bSigma \bb \nonumber \\
    %= & \psi^2 (\bb - \hbw_{S;\lambda})^\top \bSigma (\bb - \hbw_{S;\lambda})
    %-  2\psi \bb^\top \bSigma (\bb - \hbw_{S;\lambda}) \nonumber \\
    %& + \frac{\left(\bb^\top \bSigma \left(\bb-\hbw_{S;\lambda}\right)\right)^2}
    %{\left(\bb-\hbw_{S;\lambda}\right)^\top \bSigma\left(\bb-\hbw_{S;\lambda}\right)} 
    %- \frac{\left(\bb^\top \bSigma \left(\bb-\hbw_{S;\lambda}\right)\right)^2}
    %{\left(\bb-\hbw_{S;\lambda}\right)^\top \bSigma\left(\bb-\hbw_{S;\lambda}\right)} + \bb^\top \bSigma \bb  \nonumber\\
    = & 
    \left(\bb- \mathbf{w}_{\bS_n^{\#}(t)}\right)^\top \bSigma\left(\bb- \mathbf{w}_{\bS_n^{\#}(t)}\right)  
    \left(
        \alpha- \frac{\bb^\top \bSigma \left(\bb- \mathbf{w}_{\bS_n^{\#}(t)}\right)} {\left(\bb- \mathbf{w}_{\bS_n^{\#}(t)}\right)^\top \bSigma\left(\bb- \mathbf{w}_{\bS_n^{\#}(t)}\right)}
    \right)^2 
    \nonumber\\
    &-\frac{\left(\bb^\top \bSigma \left(\bb- \mathbf{w}_{\bS_n^{\#}(t)}\right)\right)^2}
    {\left(\bb- \mathbf{w}_{\bS_n^{\#}(t)}\right)^\top \bSigma\left(\bb- \mathbf{w}_{\bS_n^{\#}(t)}\right)}
    +\bb^\top\bSigma \bb.
 \end{align}
Now, the result of Theorem 3.4 follows immediately.
\end{proof}
%%%%%%%%%%%%%
\section{Weighted moments of the ordinary inverse of the sample covariance matrix}\label{sec:main-sample_prec}

In this section, we provide the results for the weighted trace moments of the ordinary inverse of the sample covariance matrix when $p<n$. The derivation is based on the asymptotic properties of the ridge-type estimator of the inverse covariance matrix expressed as
\begin{equation*}
\bS_n^{-}(t)=(\bS_n+t\bI_p)^{-1} \quad \text{for} \quad t \ge 0,
\end{equation*}
by considering the special case of $t=0$ when $c_n=p/n<1$. In this case, the sample covariance matrix $\bS_n$ is non-singular and its ordinary inverse is a well-defined matrix. However, the results of Theorem 2.2 cannot be directly used when $p<n$ and $t=0$. An alternative representation of the asymptotic behavior of $\text{tr}( (\bS_n^{-}(t))^{m+1}\bTheta)$ is deduced in Theorem \ref{th3-ridge}.

\begin{theorem}\label{th3-ridge}
Let $\bY_n$ fulfill the stochastic representation (1). Then, under Assumptions \textbf{(A1)}-\textbf{(A3)}, it holds that
\begin{equation}\label{th3_eq1-ridge}
\left|\emph{tr}( (\bS_n^{-}(t))^{m+1}\bTheta)-\tilde{r}_{m+1}(t,\bTheta)\right| \stackrel{a.s.}{\rightarrow} 0\quad\text{for} \quad p/n \rightarrow c \in (0, 1) 
\quad \text{as} \quad n \rightarrow \infty
\end{equation}
where 
\begin{eqnarray}\label{th3-trm-1}
 &&\tilde{r}_{1}\left(t,\bTheta\right)=\frac{1}{w(t)}\emph{tr}\left[\left(\tilde{w}(t) \bI_p + \bSigma\right)^{-1}\bTheta\right],\\
    \tilde{r}_{m+1}\left(t,\bTheta\right)&=& \frac{(-1)^m}{m!}\tilde{d}_0\left(t,\bTheta\right)\sum_{k=1}^m \frac{(-1)^{k} k!}{w(t)^{k+1}} B_{m,k}\left(w^{(1)}(t),...,w^{(m-k+1)}(t)\right)\nonumber\\
&&+\frac{(-1)^m}{m!}\frac{1}{w(t)} \sum_{k=1}^m (-1)^{k} k!\tilde{d}_k\left(t,\bTheta\right)B_{m,k}\left(\tilde{w}^{(1)}(t),...,\tilde{w}^{(m-k+1)}(t)\right)
\nonumber\\
&&+\frac{(-1)^m}{m!}  \sum_{l=1}^{m-1} \frac{m!}{l!(m-l)!}\sum_{k_1=1}^{m-l} \frac{(-1)^{k_1} k_1!}{w(t)^{k_1+1}} B_{m-l,k_1}\left(w^{(1)}(t),...,w^{(m-l-k_1+1)}(t)\right) \nonumber\\
&& \qquad \qquad \times \sum_{k_2=1}^l (-1)^{k_2} k_2!\tilde{d}_{k_2}\left(t,\bTheta\right) B_{l,k_2}\left(\tilde{w}^{(1)}(t),...,\tilde{w}^{(l-k_2+1)}(t)\right),\label{th3-trm}
\end{eqnarray}
with
\begin{eqnarray}\label{th3-dk-ridge}
\tilde{d}_k\left(t,\bTheta\right)&=&\emph{tr}\left[\left(\tilde{w}(t)\bI_p+\bSigma\right)^{-(k+1)}\bTheta\right], \quad k=0,1,2,...,
\\
\tilde{w}(t)&=&\frac{t}{w(t)}, \quad \tilde{w}^{(1)}(t)=\frac{1}{w(t)}-t\frac{w^{(1)}(t)}{[w(t)]^2} ,\label{th3-wtpr}\\
\tilde{w}^{(m)}(t)&=&t \sum_{k=1}^m \frac{(-1)^{k} k!}{w(t)^{k+1}} B_{m,k}\left(w^{(1)}(t),...,w^{(m-k+1)}(t)\right) \label{th3-wtpr-m}\\
&&+ m \sum_{k=1}^{m-1} \frac{(-1)^{k} k!}{w(t)^{k+1}} B_{m-1,k}\left(w^{(1)}(t),...,w^{(m-k)}(t)\right),\nonumber
\end{eqnarray}
for $m=2,3,...$ where $w(t)$ is the unique solution of 
\begin{equation}\label{th3-wt} 
1-w(t)=c_n\frac{1}{p}\emph{tr}\left[\left(\tilde{w}(t)\bI_p+\bSigma\right)^{-1}\bSigma\right]
\end{equation}
with its derivatives satisfy
\begin{equation}\label{th3-wpr}
w^{(1)}(t)=\frac{c_n\tilde{d}_1\left(t,\frac{1}{p}\bSigma\right)w(t)}{w(t)^2+t c_n \tilde{d}_1\left(t,\frac{1}{p}\bSigma\right)}, 
\end{equation}
and $w^{(m)}(0)$ for $m=2,3,...$ is computed recursively by
\begin{eqnarray}\label{th3-wpr-m}
w^{(m)}(t)&=& \frac{w(t)^2 c_n}{w(t)^2+t c_n \tilde{d}_1\left(t,\frac{1}{p}\bSigma\right)} \Bigg(\tilde{d}_1\left(t,\frac{1}{p}\bSigma\right) t \sum_{k=2}^m \frac{(-1)^{k} k!}{w(t)^{k+1}} B_{m,k}\left(w^{(1)}(t),...,w^{(m-k+1)}(t)\right)\nonumber\\
&&+\tilde{d}_1\left(t,\frac{1}{p}\bSigma\right) m \sum_{k=1}^{m-1} \frac{(-1)^{k} k!}{w(t)^{k+1}} B_{m-1,k}\left(w^{(1)}(t),...,w^{(m-k)}(t)\right)\nonumber\\
&&-
\sum_{k=2}^m (-1)^{k} k! \tilde{d}_k\left(t,\frac{1}{p}\bSigma\right) B_{m,k}\left(\tilde{w}^{(1)}(t),...,\tilde{w}^{(m-k+1)}(t)\right)
\Bigg).
\end{eqnarray}
\end{theorem}

%%%%%%%%%%%%%%%%%%%%%%%%%%%%%%%%%%%%%%%%%%%%%
\begin{proof}[Proof of Theorem \ref{th3-ridge}:]
Following the proof of Theorem 2.1, we get
\begin{eqnarray*}
\text{tr}((\bS_n+t\bI_p)^{-(m+1)}\bTheta)&=& \frac{(-1)^m}{m!}\dfrac{\partial^m}{\partial t^m}\,\text{tr}\left[\left(\frac{1}{n}\bX_n\bX_n^\top+t\bSigma^{-1}\right)^{-1}
\bSigma^{-1/2}\bTheta\bSigma^{-1/2}\right] \,,
\end{eqnarray*}
where the application of Lemma 6 in \cite{BPTJMLR2024} yields
\begin{eqnarray*}%\label{mainlimit-th2}
    \left|\text{tr}\left[\left(\frac{1}{n}\bX_n\bX_n^\top+t\bSigma^{-1}\right)^{-1}\bSigma^{-1/2}\bTheta\bSigma^{-1/2}\right]
- \frac{1}{w(t)}\text{tr}\left[\left(\frac{t}{w(t)} \bI_p + \bSigma\right)^{-1}\bTheta\right]\right| \stackrel{a.s.}{\rightarrow} 0,
\end{eqnarray*}
for $p/n \rightarrow c \in (0,1)$ as $n \to \infty$ with $w(t)$ being the solution of \eqref{th3-wt}. 

Let $\tilde{w}(t)=t/w(t)$. Then, it holds that 
\begin{eqnarray*}
&&\text{tr}((\bS_n+t\bI_p)^{-(m+1)}\bTheta)\stackrel{d.a.s.}{\longrightarrow}\frac{(-1)^m}{m!}\dfrac{\partial^m}{\partial t^m}
\frac{1}{w(t)}\text{tr}\left[\left(\tilde{w}(t) \bI_p + \bSigma\right)^{-1}
\bTheta\right]\nonumber\\
&=& \frac{(-1)^m}{m!}\dfrac{\partial^m}{\partial t^m} \left(\frac{1}{w(t)}\right)\text{tr}\left[\left(\tilde{w}(t) \bI_p + \bSigma\right)^{-1}\bTheta\right]+\frac{(-1)^m}{m!}\frac{1}{w(t)}\dfrac{\partial^m}{\partial t^m}\left(\text{tr}\left[\left(\tilde{w}(t) \bI_p + \bSigma\right)^{-1}\bTheta\right]\right)\nonumber\\
&&+\frac{(-1)^m}{m!}  \sum_{l=1}^{m-1} \frac{m!}{l!(m-l)!}\dfrac{\partial^{m-l}}{\partial t^{m-l}}\frac{1}{w(t)}
\dfrac{\partial^l}{\partial t^l} \text{tr}\left[\left(\tilde{w}(t) \bI_p + \bSigma\right)^{-1}\bTheta\right] \nonumber\\
%%%%
&=& \frac{(-1)^m}{m!}\text{tr}\left[\left(\tilde{w}(t) \bI_p + \bSigma\right)^{-1}\bTheta\right]\sum_{k=1}^m \frac{(-1)^{k} k!}{w(t)^{k+1}} B_{m,k}\left(w^{(1)}(t),...,w^{(m-k+1)}(t)\right)\nonumber\\
&&+\frac{(-1)^m}{m!}\frac{1}{w(t)} \sum_{k=1}^m (-1)^{k} k!\text{tr}\left[\left(\tilde{w}(t) \bI_p + \bSigma\right)^{-(k+1)}\bTheta\right] B_{m,k}\left(\tilde{w}^{(1)}(t),...,\tilde{w}^{(m-k+1)}(t)\right)
\nonumber\\
&&+\frac{(-1)^m}{m!}  \sum_{l=1}^{m-1} \frac{m!}{l!(m-l)!}\sum_{k_1=1}^{m-l} \frac{(-1)^{k_1} k_1!}{w(t)^{k_1+1}} B_{m-l,k_1}\left(w^{(1)}(t),...,w^{(m-l-k_1+1)}(t)\right) \nonumber\\
&& \qquad \qquad \times \sum_{k_2=1}^l (-1)^{k_2} k_2!\text{tr}\left[\left(\tilde{w}(t) \bI_p + \bSigma\right)^{-(k_2+1)}\bTheta\right] B_{l,k_2}\left(\tilde{w}^{(1)}(t),...,\tilde{w}^{(l-k_2+1)}(t)\right),
\end{eqnarray*}
where
\begin{eqnarray*}
\tilde{w}^{(1)}(t)&=&\frac{1}{w(t)}-t\frac{w^{(1)}(t)}{[w(t)]^2}, \\ 
\tilde{w}^{(m)}(t)&=&t \dfrac{\partial^m}{\partial t^m} \left(\frac{1}{w(t)}\right) + m \dfrac{\partial^{m-1}}{\partial t^{m-1}} \left(\frac{1}{w(t)}\right)\\
&& \hspace{-2cm}= t \sum_{k=1}^m \frac{(-1)^{k} k!}{w(t)^{k+1}} B_{m,k}\left(w^{(1)}(t),...,w^{(m-k+1)}(t)\right) + m \sum_{k=1}^{m-1} \frac{(-1)^{k} k!}{w(t)^{k+1}} B_{m-1,k}\left(w^{(1)}(t),...,w^{(m-k)}(t)\right).%\\
%&=& (-1)^{m} m! t \frac{[w^{(1)}(t)]^m}{w(t)^{m+1}} +\sum_{k=1}^{m-1} \frac{(-1)^{k} k!}{w(t)^{k+1}} \left(t  B_{m,k}\left(w^{(1)}(t),...,w^{(m-k+1)}(t)\right)+m  B_{m-1,k}\left(w^{(1)}(t),...,w^{(m-k)}(t)\right) \right).
\end{eqnarray*}

Finally, \eqref{th3-wt} yields
\begin{eqnarray*}
-w^{(1)}(t)&=& -c_n\frac{1}{p}\text{tr}\left[\left(\tilde{w}(t)\bI_p+\bSigma\right)^{-2}\bSigma\right]\left(\frac{1}{w(t)}-t\frac{w^{(1)}(t)}{[w(t)]^2}\right),\\
-w^{(m)}(t)&=& c_n \sum_{k=1}^m \frac{(-1)^{k} k!}{p}\text{tr}\left[\left(\tilde{w}(t) \bI_p+\bSigma\right)^{-(k+1)}\bSigma\right] B_{m,k}\left(\tilde{w}^{(1)}(t),...,\tilde{w}^{(m-k+1)}(t)\right)\\
&=&  c_n \sum_{k=2}^m \frac{(-1)^{k} k!}{p}\text{tr}\left[\left( \tilde{w}(t) \bI_p+\bSigma\right)^{-(k+1)}\bSigma\right] B_{m,k}\left(\tilde{w}^{(1)}(t),...,\tilde{w}^{(m-k+1)}(t)\right)\\
&&- c_n  \frac{1}{p}\text{tr}\left[\left( \tilde{w}(t) \bI_p+\bSigma\right)^{-2}\bSigma\right] \tilde{w}^{(m)}(t) \\
&=& c_n \sum_{k=2}^m \frac{(-1)^{k} k!}{p}\text{tr}\left[\left(\tilde{w}(t) \bI_p+\bSigma\right)^{-(k+1)}\bSigma\right] B_{m,k}\left(\tilde{w}^{(1)}(t),...,\tilde{w}^{(m-k+1)}(t)\right)\\
&&- c_n  \frac{1}{p}\text{tr}\left[\left( \tilde{w}(t) \bI_p+\bSigma\right)^{-2}\bSigma\right]  t \sum_{k=2}^m \frac{(-1)^{k} k!}{w(t)^{k+1}} B_{m,k}\left(w^{(1)}(t),...,w^{(m-k+1)}(t)\right)\\
&& - c_n  \frac{1}{p}\text{tr}\left[\left( \tilde{w}(t) \bI_p+\bSigma\right)^{-2}\bSigma\right] m \sum_{k=1}^{m-1} \frac{(-1)^{k} k!}{w(t)^{k+1}} B_{m-1,k}\left(w^{(1)}(t),...,w^{(m-k)}(t)\right)\\
&&+c_n  \frac{1}{p}\text{tr}\left[\left( \tilde{w}(t) \bI_p+\bSigma\right)^{-2}\bSigma\right] \frac{t}{w(t)^2} w^{(m)}(t),
\end{eqnarray*}
from which we get \eqref{th3-wtpr} and \eqref{th3-wtpr-m}. The theorem is proved.
\end{proof}

In Corollary \ref{cor5-inverse}, the results are presented for the special case $t=0$, i.e., for the ordinary inverse of the sample covariance matrix.

\begin{corollary}\label{cor5-inverse}
Let $\bY_n$ fulfill the stochastic representation (1). Then, under Assumptions \textbf{(A1)}-\textbf{(A3)}, it holds that
\begin{equation}\label{cor-th3_eq1-ridge}
\left|\emph{tr}( \bS_n^{-(m+1)}\bTheta)-\tilde{r}_{m+1}(\bTheta)\right| \stackrel{a.s.}{\rightarrow} 0\quad\text{for} \quad p/n \rightarrow c \in (0, 1) 
\quad \text{as} \quad n \rightarrow \infty
\end{equation}
where 
\begin{eqnarray}\label{cor-th3_eq2-ridge}
&&\hspace{0.7cm}\tilde{r}_{1}\left(\bTheta\right)=\frac{1}{1-c_n}\emph{tr}\left[\bSigma^{-1}\bTheta\right],\\
&&\hspace{0.7cm}\tilde{r}_{m+1}\left(\bTheta\right)=
\frac{(-1)^{m+1}}{(m+1)!} 
\sum_{k=1}^{m+1} (-1)^{k} k! \tilde{d}_{k-1}\left(\Theta\right) 
 B_{m+1,k}\left(\tilde{w}^{(1)}(0),...,\tilde{w}^{(m+1-k+1)}(0)\right),\label{cor-th3_eq3-ridge}
\end{eqnarray}
with
\begin{eqnarray}\label{app-tilde-d}
&&\hspace{0.9cm}\tilde{d}_k\left(\bTheta\right)=\emph{tr}\left[\bSigma^{-(k+1)}\bTheta\right], \quad k=0,1,2,...,
\\
&&\hspace{0.9cm}\tilde{w}^{(1)}(0)=\frac{1}{1-c_n},
\tilde{w}^{(m)}(0)= m \sum_{k=1}^{m-1} \frac{(-1)^{k} k!}{(1-c_n)^{k+1}} B_{m-1,k}\left(w^{(1)}(0),...,w^{(m-k)}(0)\right), \label{app-tilde-w}
\end{eqnarray}
for $m=2,3,...$ where $w(0)=1-c_n$,
\begin{equation}\label{cor-th3-wpr}
w^{(1)}(0)=\frac{c_n}{1-c_n}\tilde{d}_1\left(\frac{1}{p}\bSigma\right), 
\end{equation}
and $w^{(m)}(0)$ for $m=2,3,...$ is computed recursively by
\begin{eqnarray}\label{cor-th3-wpr-m}
w^{(m)}(0)&=& -c_n \sum_{k=1}^m (-1)^{k} k! \tilde{d}_k\left(\frac{1}{p}\bSigma\right) B_{m,k}\left(\tilde{w}^{(1)}(0),...,\tilde{w}^{(m-k+1)}(0)\right).
\end{eqnarray}
\end{corollary}

\begin{proof}[Proof of Corollary \ref{cor5-inverse}:]
The statement of the corollary follows from Theorem \ref{th3-ridge} by noting that $w(0)=1-c_n$, $\tilde{w}(0)=0$, $w^{(1)}(0)=\dfrac{c_n}{1-c_n}\tilde{d}_1\left(\frac{1}{p}\bSigma\right)$, $\tilde{w}^{(1)}(0)=\dfrac{1}{1-c_n}$,
\[\tilde{w}^{(m)}(0)= m \sum_{k=1}^{m-1} \frac{(-1)^{k} k!}{(1-c_n)^{k+1}} B_{m-1,k}\left(w^{(1)}(0),...,w^{(m-k)}(0)\right),\]
and
\begin{eqnarray*}
w^{(m)}(0)&=& c_n \Bigg(\tilde{d}_1\left(\frac{1}{p}\bSigma\right) m \sum_{k=1}^{m-1} \frac{(-1)^{k} k!}{(1-c_n)^{k+1}} B_{m-1,k}\left(w^{(1)}(0),...,w^{(m-k)}(0)\right)\nonumber\\
&&-
\sum_{k=2}^m (-1)^{k} k! \tilde{d}_k\left(\frac{1}{p}\bSigma\right) B_{m,k}\left(\tilde{w}^{(1)}(0),...,\tilde{w}^{(m-k+1)}(0)\right)
\Bigg)\\
&=&-c_n\Bigg(-\tilde{d}_k\left(\frac{1}{p}\bSigma\right) \tilde{w}^{(m)}(0) + \sum_{k=2}^m (-1)^{k} k! \tilde{d}_k\left(\frac{1}{p}\bSigma\right) B_{m,k}\left(\tilde{w}^{(1)}(0),...,\tilde{w}^{(m-k+1)}(0)\right)
\Bigg)\\
&=& -c_n \sum_{k=1}^m (-1)^{k} k! \tilde{d}_k\left(\frac{1}{p}\bSigma\right) B_{m,k}\left(\tilde{w}^{(1)}(0),...,\tilde{w}^{(m-k+1)}(0)\right),
\end{eqnarray*}
where the equality $\tilde{w}^{(m)}(0) = B_{m,1}\left(\tilde{w}^{(1)}(0),...,\tilde{w}^{(m)}(0)\right)$ is used.

Similarly, we get
\begin{eqnarray*}
&&\tilde{r}_{m+1}\left(\bTheta\right)= \frac{(-1)^m}{m!}\tilde{d}_0\left(\bTheta\right)\sum_{k=1}^m \frac{(-1)^{k} k!}{(1-c_n)^{k+1}} B_{m,k}\left(w^{(1)}(0),...,w^{(m-k+1)}(0)\right)\nonumber\\
&&+\frac{(-1)^m}{m!}\frac{1}{1-c_n} \sum_{k=1}^m (-1)^{k} k!\tilde{d}_k\left(\bTheta\right)B_{m,k}\left(\tilde{w}^{(1)}(0),...,\tilde{w}^{(m-k+1)}(0)\right)
\nonumber\\
&&+\frac{(-1)^m}{m!}  \sum_{l=1}^{m-1} \frac{m!}{l!(m-l)!}\sum_{k_1=1}^{m-l} \frac{(-1)^{k_1} k_1!}{(1-c_n)^{k_1+1}} B_{m-l,k_1}\left(w^{(1)}(0),...,w^{(m-l-k_1+1)}(0)\right) \nonumber\\
&& \qquad \qquad \times \sum_{k_2=1}^l (-1)^{k_2} k_2!\tilde{d}_{k_2}\left(\bTheta\right) B_{l,k_2}\left(\tilde{w}^{(1)}(0),...,\tilde{w}^{(l-k_2+1)}(0)\right)\\
&&=
\frac{(-1)^m}{m!}\tilde{d}_0\left(\bTheta\right)\frac{\tilde{w}^{(m+1)}(0)}{m+1}+\frac{(-1)^m}{m!}\frac{1}{1-c_n} \sum_{k=1}^m (-1)^{k} k!\tilde{d}_k\left(\bTheta\right)B_{m,k}\left(\tilde{w}^{(1)}(0),...,\tilde{w}^{(m-k+1)}(0)\right)
\nonumber\\
&&+\frac{(-1)^m}{m!}  \sum_{l=1}^{m-1} \frac{m!}{l!(m-l)!}\frac{\tilde{w}^{(m-l+1)}(0)}{m-l+1} \sum_{k_2=1}^l (-1)^{k_2} k_2!\tilde{d}_{k_2}\left(\bTheta\right) B_{l,k_2}\left(\tilde{w}^{(1)}(0),...,\tilde{w}^{(l-k_2+1)}(0)\right)\\
&=&\frac{(-1)^m\tilde{w}^{(m+1)}(0)}{(m+1)!}\tilde{d}_0\left(\bTheta\right)\\
&+&\frac{(-1)^m}{(m+1)!}
\sum_{l=1}^{m} \frac{(m+1)!}{l!(m+1-l)!}\tilde{w}^{(m-l+1)}(0) \sum_{k_2=1}^l (-1)^{k_2} k_2!\tilde{d}_{k_2}\left(\bTheta\right) B_{l,k_2}\left(\tilde{w}^{(1)}(0),...,\tilde{w}^{(l-k_2+1)}(0)\right)\\
&=&\frac{(-1)^m \tilde{w}^{(m+1)}(0)}{(m+1)!}\tilde{d}_0\left(\bTheta\right)\\
&+&\frac{(-1)^m}{(m+1)!} 
\sum_{k_2=1}^m (-1)^{k_2} \tilde{d}_{k_2}\left(\bTheta\right) 
k_2!\sum_{l=k_2}^{m} \frac{(m+1)!}{l!(m+1-l)!}\tilde{w}^{(m-l+1)}(0)B_{l,k_2}\left(\tilde{w}^{(1)}(0),...,\tilde{w}^{(l-k_2+1)}(0)\right).
\end{eqnarray*}
%where the last equality follows from $\tilde{w}^{(1)}(0)=\dfrac{1}{1-c_n}$.

From Equation (1.4) in \cite{cvijovic2011new} and the fact that $B_{i,k_2}(.)=0$ for $i<k_2$, we get
\begin{eqnarray*}
&&k_2!\sum_{l=k_2}^{m} \frac{(m+1)!}{l!(m+1-l)!}\tilde{w}^{(m-l+1)}(0)B_{l,k_2}\left(\tilde{w}^{(1)}(0),...,\tilde{w}^{(l-k_2+1)}(0)\right)\\
&=&k_2!\sum_{r=1}^{m+1-k_2} \frac{(m+1)!}{r!(m+1-r)!}\tilde{w}^{(r)}(0)B_{m+1-r,k_2}\left(\tilde{w}^{(1)}(0),...,\tilde{w}^{(m+1-r-k_2+1)}(0)\right)\\
&=&(k_2+1)! B_{m+1,k_2+1}\left(\tilde{w}^{(1)}(0),...,\tilde{w}^{(m-k_2+1)}(0)\right).
\end{eqnarray*}
%following Equation (1.4) in \cite{cvijovic2011new} and the fact that $B_{i,k_2}(.)=0$ for $i<k_2$.
%\begin{equation*}
% k!B_{m,k}\left(\tilde{w}^{(1)}(0),...,\tilde{w}^{(m-k+1)}(0)\right) = (k-1)! \sum_{r=1}^{m-1} \frac{m!}{r!(m-r)!} \tilde{w}^{(r)}(0) B_{m-r,k-1}\left(\tilde{w}^{(1)}(0),...,\tilde{w}^{(m-r-k+2)}(0)\right),
%\end{equation*}
%where $B_{i,k-1}(.)=0$ for $i<k-1$

Hence,
\begin{eqnarray*}
&&\tilde{r}_{m+1}\left(\bTheta\right)=
\frac{(-1)^m \tilde{w}^{(m+1)}(0)}{(m+1)!}\tilde{d}_0\left(\Theta\right)\\
&-&\frac{(-1)^m}{(m+1)!} 
\sum_{k_2=1}^m (-1)^{k_2+1} \tilde{d}_{k_2}\left(\Theta\right) 
(k_2+1)! B_{m+1,k_2+1}\left(\tilde{w}^{(1)}(0),...,\tilde{w}^{(m-k_2+1)}(0)\right)\\
&=&\frac{(-1)^{m+1}}{(m+1)!} 
\sum_{k_2=0}^m (-1)^{k_2+1} \tilde{d}_{k_2}\left(\Theta\right) 
(k_2+1)! B_{m+1,k_2+1}\left(\tilde{w}^{(1)}(0),...,\tilde{w}^{(m-k_2+1)}(0)\right)\\
&=&\frac{(-1)^{m+1}}{(m+1)!} 
\sum_{k=1}^{m+1} (-1)^{k} k! \tilde{d}_{k-1}\left(\Theta\right) 
 B_{m+1,k}\left(\tilde{w}^{(1)}(0),...,\tilde{w}^{(m+1-k+1)}(0)\right),
\end{eqnarray*}
which completes the proof of the corollary.
\end{proof}

In Corollary \ref{cor5-ridge}, the results for $m=0,1,2,3$ are summarized.
%%%%%%%%%%%%%%%%%%%%%%%%%%%%%%%%%%%%%%%%%%%%%
\begin{corollary}\label{cor5-ridge}
Let $\bY_n$ fulfill the stochastic representation (1). Then, under Assumptions \textbf{(A1)}-\textbf{(A3)}, we get \eqref{cor5-inverse} with
\begin{eqnarray*}
\tilde{r}_{1}\left(\bTheta\right)&=& \frac{1}{1-c_n}\emph{tr}\left[ \bSigma^{-1}\bTheta\right],\\
\tilde{r}_{2}\left(\bTheta\right)&=& \frac{c_n}{(1-c_n)^3}\emph{tr}\left[ \bSigma^{-1}\bTheta\right]\frac{1}{p}\emph{tr}\left[ \bSigma^{-1}\right]+\frac{1}{(1-c_n)^2}\emph{tr}\left[ \bSigma^{-2}\bTheta\right],\\
\tilde{r}_{3}\left(\bTheta\right)&=& 
\frac{\tilde{w}^{(3)}(0)}{6}\emph{tr}\left[ \bSigma^{-1}\bTheta\right]-\frac{\tilde{w}^{(2)}(0)}{1-c_n}\emph{tr}\left[ \bSigma^{-2}\bTheta\right]+\frac{1}{(1-c_n)^3}\emph{tr}\left[ \bSigma^{-3}\bTheta\right],\\
\tilde{r}_{4}\left(\bTheta\right)&=& -\frac{\tilde{w}^{(4)}(0)}{24}\emph{tr}\left[ \bSigma^{-1}\bTheta\right]
+\left(\frac{\tilde{w}^{(3)}(0)}{3(1-c_n)}+\frac{[\tilde{w}^{(2)}(0)]^2}{4}\right)\emph{tr}\left[ \bSigma^{-2}\bTheta\right]\\
&-&\frac{3\tilde{w}^{(2)}(0)}{2(1-c_n)^2}\emph{tr}\left[ \bSigma^{-3}\bTheta\right]
+\frac{1}{(1-c_n)^4}\emph{tr}\left[ \bSigma^{-4}\bTheta\right],
\end{eqnarray*}
where 
\begin{eqnarray*}
\tilde{w}^{(2)}(0)&=& -\frac{2}{(1-c_n)^{2}} w^{(1)}(0),\\
\tilde{w}^{(3)}(0)&=& -3 \left(
\frac{1}{(1-c_n)^{2}} w^{(2)}(0)-
\frac{2}{(1-c_n)^{3}} [w^{(1)}(0)]^2
\right),\\
\tilde{w}^{(4)}(0)&=& -4
\left(
\frac{1}{(1-c_n)^{2}} w^{(3)}(0)
-\frac{6}{(1-c_n)^{3}} w^{(1)}(0)w^{(2)}(0)
+\frac{ 6}{(1-c_n)^{4}} [w^{(1)}(0)]^3\right),\\
\end{eqnarray*}
with
\begin{eqnarray*}
w^{(1)}(0)&=&\frac{c_n}{1-c_n}\frac{1}{p}\emph{tr}\left[ \bSigma^{-1}\right],\\
w^{(2)}(0)&=& c_n \left(
 \tilde{w}^{(2)}(0)\frac{1}{p}\emph{tr}\left[ \bSigma^{-1}\right]
-2  \frac{1}{(1-c_n)^2}\frac{1}{p}\emph{tr}\left[ \bSigma^{-2}\right]\right)
,\\
w^{(3)}(0)&=& c_n \left(
\tilde{w}^{(3)}(0) \frac{1}{p}\emph{tr}\left[ \bSigma^{-1}\right]
-6\frac{\tilde{w}^{(2)}(0) }{1-c_n}\frac{1}{p}\emph{tr}\left[ \bSigma^{-2}\right]
+6\frac{1}{(1-c_n)^3}\frac{1}{p}\emph{tr}\left[ \bSigma^{-3}\right]\right).
\end{eqnarray*}
\end{corollary}

%%%%%%%%%%%%%%%%%%%%%%%%%%%%%%%%%%%%%%%%%%%%%
\begin{proof}[Proof of Corollary \ref{cor5-ridge}:]
The expression for $m=0$ is already present in \eqref{cor-th3_eq2-ridge} of Corollary \ref{cor5-inverse}. The application of \eqref{cor-th3_eq3-ridge} for $m=1,2,3$ yields 
\begin{eqnarray*}
\tilde{r}_{2}\left(\bTheta\right)&=&-\frac{1}{2}\tilde{d}_0\left(\bTheta\right)\tilde{w}^{(2)}(0)+\tilde{d}_1\left(\bTheta\right)[\tilde{w}^{(1)}(0)]^2\\
\tilde{r}_{3}\left(\bTheta\right)&=& \frac{1}{6}\tilde{d}_0\left(\bTheta\right)\tilde{w}^{(3)}(0)-\tilde{d}_1\left(\bTheta\right)\tilde{w}^{(1)}(0)\tilde{w}^{(2)}(0)+\tilde{d}_2\left(\bTheta\right)[\tilde{w}^{(1)}(0)]^3
\\
\tilde{r}_{4}\left(\bTheta\right)&=& -\frac{1}{24}\tilde{d}_0\left(\bTheta\right)\tilde{w}^{(4)}(0)+\frac{1}{12}\tilde{d}_1\left(\bTheta\right)(4\tilde{w}^{(1)}(0)\tilde{w}^{(3)}(0)+3[\tilde{w}^{(2)}(0)]^2)\\
&&-\frac{3}{2}\tilde{d}_2\left(\bTheta\right)[\tilde{w}^{(1)}(0)]^2\tilde{w}^{(2)}(0)
+\tilde{d}_3\left(\bTheta\right)[\tilde{w}^{(1)}(0)]^4,
\end{eqnarray*}
where $\tilde{w}^{(1)}(0)=\dfrac{1}{1-c_n}$ and $\tilde{w}^{(2)}(0)$, $\tilde{w}^{(3)}(0)$, and $\tilde{w}^{(4)}(0)$ are provided in the statement of the corollary.
\end{proof}

The formulas for $\tilde{r}_1(\bTheta)$ and $\tilde{r}_2(\bTheta)$ in Corollary \ref{cor5-ridge} coincide with the previous findings derived in \cite{BodnarGuptaParolya2016}. In Corollary \ref{cor0S-inverse}, we present the results derived for $\bTheta=\frac{1}{p}\bI_p$.

\begin{corollary}\label{cor0S-inverse}
Let $\bY_n$ fulfill the stochastic representation (1) and let $\bTheta=\frac{1}{p}\bI_p$. Then, under Assumptions \textbf{(A1)}-\textbf{(A2)}, it holds that
\begin{equation*}
\left|\frac{1}{p}\emph{tr}\left[ \bS_n^{-(m+1)}\right]-\frac{(-1)^{m}w^{(m+1)}(0)}{(m+1)! c_n}\right| \stackrel{a.s.}{\rightarrow} 0\quad\text{for} \quad p/n \rightarrow c \in (0,1) 
\quad \text{as} \quad n \rightarrow \infty,
\end{equation*}
where $w^{(m+1)}(0)$, $m=0,1,...$, are defined in \eqref{cor-th3-wpr-m}.
\end{corollary}

%%%%%%%%%%%%%%%%%%%%%%%%%%%%%%%%%%%%%%%%%%%%%
\begin{proof}[Proof of Corollary \ref{cor0S-inverse}:]
For $m=0$, we get from Corollary \ref{cor5-inverse} that
\begin{eqnarray*}
\tilde{r}_{1}\left(\dfrac{1}{p}\bI_p\right)=\frac{1}{1-c_n}\tilde{d}_0\left(\frac{1}{p}\bI_p\right)=\frac{1}{c_n}\frac{c_n}{1-c_n}\tilde{d}_1\left(\frac{1}{p}\bSigma\right)=\frac{w^{(1)}(0)}{c_n}.
\end{eqnarray*}

The application of $\tilde{d}_k\left(\frac{1}{p}\bI_p\right) =\tilde{d}_{k+1}\left(\frac{1}{p}\bSigma\right)$ for $m=1,2,...$ and Corollary \ref{cor5-inverse} yield
\begin{eqnarray*}
\tilde{r}_{m+1}\left(\dfrac{1}{p}\bI_p\right)
&=&\frac{(-1)^{m+1}}{(m+1)!} 
\sum_{k=1}^{m+1} (-1)^{k} k! \tilde{d}_{k}\left(\dfrac{1}{p}\bSigma\right) 
 B_{m+1,k}\left(\tilde{w}^{(1)}(0),...,\tilde{w}^{(m+1-k+1)}(0)\right)\\
 &=&\frac{(-1)^m}{(m+1)!c_n}w^{(m+1)}(0),
\end{eqnarray*}
where the second equality follows from \eqref{cor-th3-wpr-m}.
%w^{(m)}(0)&=& -c_n \sum_{k=1}^m (-1)^{k} k! \tilde{d}_k\left(\frac{1}{p}\bSigma\right) B_{m,k}\left(\tilde{w}^{(1)}(0),...,\tilde{w}^{(m-k+1)}(0)\right).
\end{proof}

\begin{corollary}\label{cor5a-inverse}
Let $\bY_n$ fulfill the stochastic representation (1) with $\bSigma=\bI_p$. Then, under Assumptions \textbf{(A2)}-\textbf{(A3)}, it holds that
\begin{equation}\label{cor5a-eq1-inverse}
\left|\emph{tr}( \bS_n^{-(m+1)}\bTheta)- \frac{(-1)^{m}w^{(m+1)}(0)}{(m+1)! c_n}
\emph{tr}\left(\bTheta\right)\right| \stackrel{a.s.}{\rightarrow} 0
\end{equation}
for $p/n \rightarrow c \in (0,1)$ as $n \rightarrow \infty$, where $w^{(m+1)}(0)$, $m=0,1,...$, are given in \eqref{cor-th3-wpr-m} with $\bSigma=\bI_p$.
\end{corollary}

\begin{proof}[Proof of Corollary \ref{cor5a-inverse}:]
The result follows from Corollary \ref{cor5-inverse} by noting that $d_{k}(\bTheta)=\text{tr}\left[\bTheta\right]$ for $k=0,1,...$ when $\bSigma=\bI_p$.

\end{proof}

As a special case of Corollary \ref{cor5a-inverse}, we get
\begin{eqnarray*}
&\left|\text{tr}( \bS_n^{-1}\bTheta)- \frac{1}{1-c_n}\text{tr}\left(\bTheta\right)\right| \stackrel{a.s.}{\rightarrow} 0, 
&\left|\text{tr}( \bS_n^{-2}\bTheta)-\frac{1}{(1-c_n)^3}\text{tr}\left(\bTheta\right)\right| \stackrel{a.s.}{\rightarrow}0,\\
&\left|\text{tr}(\bS_n^{-3}\bTheta)-\frac{1+c_n}{(1-c_n)^5}\text{tr}\left(\bTheta\right)\right| \stackrel{a.s.}{\rightarrow} 0,\quad
&\left|\text{tr}(\bS_n^{-4}\bTheta)-\frac{1+3c_n+c_n^2}{(1-c_n)^7}\text{tr}\left(\bTheta\right)\right| \stackrel{a.s.}{\rightarrow}0,
\end{eqnarray*}
for $p/n \rightarrow c \in (0,1)$ as $n \rightarrow \infty$. 

%%%%%%%%%%%%%
\section{Weighted moments of the sample covariance matrix}\label{sec:main-sample_cov}

In this section, we present the auxiliary results for the weighted trace moments of the sample covariance matrix needed for the estimation of some quantities from Section 3. The results of this section are of the independent interest and are established in both cases $p/n>1$ and $p/n<1$.

\begin{theorem}\label{th3}
Let $\bY_n$ fulfill the stochastic representation (1). Then, under Assumptions \textbf{(A1)}-\textbf{(A3)}, it holds that
\begin{equation}\label{th3_eq1}
\left|\emph{tr}( \bS_n^{m}\bTheta)-\breve{s}_m(\bTheta)\right| \stackrel{a.s.}{\rightarrow} 0\quad\text{for} \quad p/n \rightarrow c \in (0,\infty) 
\quad \text{as} \quad n \rightarrow \infty
\end{equation}
where 
\begin{eqnarray}\label{th3-bsm}
    \breve{s}_m(\bTheta)&=&  \sum_{k=1}^m \frac{(-1)^{m+k} k!}{m!}\emph{tr}\left\{\bSigma^{k}\bTheta\right\}  B_{m,k}\left(u^{(1)}(0),u^{(2)}(0),...,u^{(m-k+1)}(0)\right),
\end{eqnarray}
$u^{(1)}(0)=1$ and $u^{(2)}(0)$,...,$u^{(m)}(0)$ are computed recursively by
\begin{equation}\label{th3-u0pr-m}
u^{(m)}(0)= m c_n  \sum_{k=1}^{m-1} \frac{(-1)^{k} k!}{p}\emph{tr}\left\{\bSigma^k\right\} B_{m-1,k}\left(u^{(1)}(0),...,u^{(m-k)}(0)\right).
\end{equation}
\end{theorem}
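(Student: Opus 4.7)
\textbf{Proof plan for Theorem~\ref{th3}.} The plan is to encode $\text{tr}(\bS_n^m \bTheta)$ as the $m$-th derivative at $z=0$ of an analytic resolvent, replace that resolvent by its deterministic equivalent, and then expand via Fa\`a di Bruno's formula, which naturally delivers the partial exponential Bell polynomials. Under Assumptions \textbf{(A1)}-\textbf{(A2)}, $\|\bS_n\|$ is almost surely bounded uniformly in $n$, so on a fixed complex neighborhood $U$ of $0$ the function
\[
\phi_n(z) := \text{tr}\bigl((\bI + z\bS_n)^{-1}\bTheta\bigr) = \sum_{m=0}^{\infty} (-z)^{m} \text{tr}(\bS_n^m \bTheta)
\]
is almost surely holomorphic on $U$ for all large $n$, and $\text{tr}(\bS_n^m\bTheta) = (-1)^m \phi_n^{(m)}(0)/m!$.

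Because $\phi_n(z) = z^{-1}\text{tr}((\bS_n + z^{-1}\bI)^{-1}\bTheta)$ for $z \neq 0$, one may import a deterministic equivalent from Theorem~\ref{th2} (with $t=1/z$ and $m=0$) in the regime $c \in (1,\infty)$, while for $c \in [0,1]$ the same deterministic equivalent follows from classical Marchenko-Pastur/Silverstein theory for the sample covariance matrix. In every case, setting $u(z) := v(1/z)$, one obtains
\[
\phi_n(z) - \phi(z) \stackrel{a.s.}{\rightarrow} 0, \qquad \phi(z) := \text{tr}\bigl((\bI + u(z)\bSigma)^{-1}\bTheta\bigr),
\]
where the implicit function theorem guarantees that $u$ is analytic near $0$ with $u(0)=0$, determined by
\[
u(z) = z\!\left(1 - c_n\, u(z)\cdot \frac{1}{p}\text{tr}\bigl(\bSigma(u(z)\bSigma + \bI)^{-1}\bigr)\right).
\]
The a.s.\ uniform boundedness of $\phi_n$ on any closed disc inside $U$ lets Vitali's theorem promote this pointwise convergence to uniform convergence on compacts, so Cauchy's integral formula gives $\phi_n^{(m)}(0) \stackrel{a.s.}{\rightarrow} \phi^{(m)}(0)$.

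It then remains to compute $\phi^{(m)}(0)$. Writing $\phi(z) = f(u(z))$ with $f(u) := \text{tr}((\bI + u\bSigma)^{-1}\bTheta)$, the expansion $(\bI + u\bSigma)^{-1} = \sum_{k\ge 0}(-u)^k\bSigma^k$ gives $f^{(k)}(0) = (-1)^k k!\,\text{tr}(\bSigma^k\bTheta)$, so Fa\`a di Bruno's formula yields
\[
\phi^{(m)}(0) = \sum_{k=1}^{m}(-1)^k k!\,\text{tr}(\bSigma^k\bTheta)\, B_{m,k}\bigl(u'(0),\ldots,u^{(m-k+1)}(0)\bigr),
\]
which, multiplied by $(-1)^m/m!$, is exactly \eqref{th3-bsm}. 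The recursion \eqref{th3-u0pr-m} is obtained by plugging the Taylor series of $u$ into the implicit equation, using the Bell-polynomial identity $u(z)^j = \sum_{m\ge j}(j!/m!)\,B_{m,j}(u'(0),\ldots,u^{(m-j+1)}(0))\, z^m$ together with the geometric expansion $\frac{1}{p}\text{tr}(\bSigma(u\bSigma + \bI)^{-1}) = \sum_{j\ge 0}(-u)^j\, \frac{1}{p}\text{tr}(\bSigma^{j+1})$, and matching coefficients of $z^{m+1}$: the linear coefficient forces $u'(0)=1$ and the higher coefficients produce \eqref{th3-u0pr-m}. The principal technical obstacle is the passage from the pointwise a.s.\ deterministic equivalent to a statement strong enough to justify termwise differentiation at $z=0$; this is resolved by the Vitali/Montel argument above, whose applicability rests on the a.s.\ uniform bound on $\|\bS_n\|$, and hence on the radius of analyticity of $\phi_n$, guaranteed by Assumptions \textbf{(A1)}-\textbf{(A2)}.
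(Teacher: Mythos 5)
Your proposal is correct and follows essentially the same route as the paper: the generating-function identity $\text{tr}(\bS_n^m\bTheta)=\frac{(-1)^m}{m!}\left.\frac{\partial^m}{\partial t^m}\text{tr}\left[(t\bS_n+\bI_p)^{-1}\bTheta\right]\right|_{t=0}$, the deterministic equivalent of the resolvent governed by the same implicit equation for $u$ (with $u(0)=0$, $u'(0)=1$), and Fa\`a di Bruno's formula yielding the Bell-polynomial expansion and the recursion for $u^{(m)}(0)$. The only difference is in the justification of differentiating the limit at $t=0$: you use a Vitali/Cauchy argument, while the paper relies on the uniformity in $t$ supplied by the deterministic-equivalent lemma it cites; both are valid.
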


\begin{proof}[Proof of Theorem \ref{th3}:]
Following the proof of Theorem 3.1 in \cite{bodnar2014strong}, we get the following identity
\begin{eqnarray*}
\text{tr}(\bS_n^{m}\bTheta)&=& \frac{(-1)^m}{m!}\left.\dfrac{\partial^m}{\partial t^m}\,\text{tr}\left[\left(t\bS_n+\bI_p\right)^{-1}
\bTheta\right] \right|_{t=0}
\end{eqnarray*}
The proof of Theorem 2.1 and the application of Lemma 6 in \cite{BPTJMLR2024} yields
\begin{eqnarray*}
\text{tr}(\bS_n^m\bTheta)&=&
\frac{(-1)^m}{m!}\left.\dfrac{\partial^m}{\partial t^m}\,\text{tr}\left[\left(\frac{t}{n}\bX_n\bX_n^\top+\bSigma^{-1}\right)^{-1}
\bSigma^{-1/2}\bTheta\bSigma^{-1/2}\right] \right|_{t=0}\\
&\stackrel{a.s.}{\longrightarrow}&
\frac{(-1)^m}{m!}\left.\dfrac{\partial^m}{\partial t^m}\,\text{tr}\left[\left(u(t)\bI_p+\bSigma^{-1}\right)^{-1}
\bSigma^{-1/2}\bTheta\bSigma^{-1/2}\right] \right|_{t=0}
\,,
\end{eqnarray*}
for $p/n \rightarrow c \in [0,\infty)$ as $n \to \infty$ where $u(t)$ solves the following equation 
\begin{equation}\label{ut-main}
t-u(t)=c_n t-c_n t \frac{1}{p}\text{tr}\left[\left(u(t)\bI_p+\bSigma^{-1}\right)^{-1}\bSigma^{-1}\right].
\end{equation}

From the proof of Theorem 2.1, we get
\begin{eqnarray*}
\text{tr}(\bS_n^m\bTheta)
&\stackrel{d.a.s.}{\longrightarrow}&
\sum_{k=1}^m \frac{(-1)^{m+k} k!}{m!}\text{tr}\left\{\left(u(0)\bSigma+\bI_p\right)^{-1}\left[\bSigma\left(u(0)\bSigma+\bI_p\right)^{-1}\right]^{k}\bTheta\right\} \\
&& \times B_{m,k}\left(u^{(1)}(0),u^{(2)}(0),...,u^{(m-k+1)}(0)\right)
\,,
\end{eqnarray*}
for $p/n \rightarrow c \in [0,\infty)$ as $n \to \infty$. Furthermore, the application of \eqref{ut-main} yields $u(0)=0$ and $u^{(1)}(0)=1$. Furthermore, it holds that
\begin{eqnarray*}
u^{(m)}(t)&=&c_n t \dfrac{\partial^m}{\partial t^m} \frac{1}{p}\text{tr}\left[\left(u(t)\bI_p+\bSigma^{-1}\right)^{-1}\bSigma^{-1}\right]+ m c_n \dfrac{\partial^{m-1}}{\partial t^{m-1}} \frac{1}{p}\text{tr}\left[\left(u(t)\bI_p+\bSigma^{-1}\right)^{-1}\bSigma^{-1}\right]
\end{eqnarray*}
and, hence,
\begin{eqnarray*}
u^{(m)}(0)&=& m c_n  \sum_{k=1}^{m-1} \frac{(-1)^{k} k!}{p}\text{tr}\left\{\left[\bSigma\left(u(0)\bSigma+\bI_p\right)^{-1}\right]^{k+1}\bSigma^{-1}\right\} B_{m-1,k}\left(u^{(1)}(0),...,u^{(m-k)}(0)\right).
\end{eqnarray*}
\end{proof}

\begin{corollary}\label{cor5}
Let $\bY_n$ fulfill the stochastic representation (1). Then,  under Assumptions \textbf{(A1)}-\textbf{(A3)}, we get \eqref{th3_eq1} with
\begin{eqnarray*}
\breve{s}_1(\bTheta)&=& \emph{tr}(\bSigma\bTheta),\\
\breve{s}_2(\bTheta)&=& c_n \frac{1}{p}\emph{tr}(\bSigma)\emph{tr}(\bSigma\bTheta)+\emph{tr}(\bSigma^2\bTheta),\\
\breve{s}_3(\bTheta)&=& c_n\left\{c_n \left[\frac{1}{p}\emph{tr}(\bSigma)\right]^2 + \frac{1}{p}\emph{tr}(\bSigma^2)\right\}\emph{tr}(\bSigma\bTheta)+2c_n\frac{1}{p}\emph{tr}(\bSigma)\emph{tr}(\bSigma^2\bTheta)+\emph{tr}(\bSigma^3\bTheta),\\
\breve{s}_4(\bTheta)&=& c_n\left\{c_n^2\left[\frac{1}{p}\emph{tr}(\bSigma)\right]^3+3c_n\frac{1}{p}\emph{tr}(\bSigma)\frac{1}{p}\emph{tr}(\bSigma^2)+\frac{1}{p}\emph{tr}(\bSigma^3)\right\}\emph{tr}(\bSigma\bTheta)\\
&&+
c_n\left\{3c_n \left[\frac{1}{p}\emph{tr}(\bSigma)\right]^2 + 2\frac{1}{p}\emph{tr}(\bSigma^2)\right\}\emph{tr}(\bSigma^2\bTheta)+3c_n\frac{1}{p}\emph{tr}(\bSigma)\emph{tr}(\bSigma^3\bTheta)+\emph{tr}(\bSigma^4\bTheta),
\end{eqnarray*}
for $p/n \rightarrow c \in [0,\infty)$ as $n \rightarrow \infty$.
\end{corollary}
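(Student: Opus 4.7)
\textbf{Proof proposal for Corollary \ref{cor5}.}

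The plan is to apply Theorem \ref{th3} directly with $m=1,2,3,4$ and carry out the bookkeeping. All the ingredients are already on the table: the formula \eqref{th3-bsm} for $\breve{s}_m(\bTheta)$, the recursion \eqref{th3-u0pr-m} for $u^{(m)}(0)$ with the initial value $u^\prime(0)=1$, and the standard explicit forms of the partial exponential Bell polynomials $B_{m,k}$ for small $m,k$.

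First, I would tabulate the derivatives $u^{(m)}(0)$ for $m\le 4$. Using $u^\prime(0)=1$ and \eqref{th3-u0pr-m}, one obtains $u^{\prime\prime}(0)=-2 c_n \,\tfrac{1}{p}\tr(\bSigma)$; then $u^{\prime\prime\prime}(0)$ is a linear combination of $\tfrac{1}{p}\tr(\bSigma)$ and $\tfrac{1}{p}\tr(\bSigma^2)$ coming from the two terms of the inner sum in \eqref{th3-u0pr-m}, evaluated with $B_{2,1}(u^\prime(0),u^{\prime\prime}(0))=u^{\prime\prime}(0)$ and $B_{2,2}(u^\prime(0))=[u^\prime(0)]^2$; similarly $u^{\prime\prime\prime\prime}(0)$ is obtained from the Bell polynomials $B_{3,1}$, $B_{3,2}$, $B_{3,3}$ evaluated at the previously computed derivatives. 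This is entirely mechanical once the table $B_{m,k}$ for $m\le 4$ is written down.

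Next, I would substitute the resulting $u^{(j)}(0)$ into \eqref{th3-bsm}. For each $m\in\{1,2,3,4\}$ the outer sum contains $m$ terms, involving the traces $\tr(\bSigma^k\bTheta)$ for $k=1,\ldots,m$ weighted by $(-1)^{m+k}k!/m!$ and by $B_{m,k}(u^\prime(0),\ldots,u^{(m-k+1)}(0))$. Using the closed-form expressions for the Bell polynomials (for instance $B_{m,m}(x_1)=x_1^m$, $B_{m,1}(x_1,\ldots,x_m)=x_m$, and the standard two-term expressions for $B_{3,2}$ and $B_{4,2},B_{4,3}$), one collects terms by the factor $\tr(\bSigma^k\bTheta)$. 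The coefficient of $\tr(\bSigma^k\bTheta)$ is then a polynomial in $c_n$ and in the normalized traces $\tfrac{1}{p}\tr(\bSigma^j)$, and after simplification it matches the expressions given in the statement.

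I do not anticipate any genuine technical obstacle: the almost sure convergence statement is inherited verbatim from Theorem \ref{th3}, and the assumption \textbf{(A1)} ensures that all the normalized traces $\tfrac{1}{p}\tr(\bSigma^j)$ remain $O(1)$, so no boundedness issue arises during the substitution. The main source of error is purely combinatorial bookkeeping in evaluating $B_{4,2}(u^\prime(0),u^{\prime\prime}(0),u^{\prime\prime\prime}(0))=3[u^{\prime\prime}(0)]^2+4 u^\prime(0) u^{\prime\prime\prime}(0)$ (divided by the factorial weights) and similar expressions, which I would verify case by case before collecting terms into the final form displayed for $\breve{s}_4(\bTheta)$.
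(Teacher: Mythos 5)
Your proposal follows exactly the paper's own route: apply Theorem \ref{th3} for $m=1,\dots,4$, compute $u^{\prime}(0)=1$, $u^{\prime\prime}(0)=-2c_n\frac{1}{p}\tr(\bSigma)$, $u^{\prime\prime\prime}(0)$, $u^{\prime\prime\prime\prime}(0)$ from the recursion \eqref{th3-u0pr-m} using the explicit small-order Bell polynomials, and substitute into \eqref{th3-bsm}, collecting coefficients of $\tr(\bSigma^k\bTheta)$. The bookkeeping you describe (including $B_{4,2}=4u^{\prime}(0)u^{\prime\prime\prime}(0)+3[u^{\prime\prime}(0)]^2$) matches the paper's computation, so the proposal is correct and essentially identical to the published proof.
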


%%%%%%%%%%%%%%%%%%%%%%%%%%%%%%%%%%%%%%%%%%%%%
\begin{proof}[Proof of Corollary \ref{cor5}:]
The results of Theorem \ref{th3} and the proof of Corollary 2.1 yield
 \begin{eqnarray*}
\breve{s}_1(\bTheta)&=& \text{tr}(\bSigma\bTheta)B_{1,1}\left(u^{(1)}(0)\right)=\text{tr}(\bSigma\bTheta)u^{(1)}(0),\\
\breve{s}_2(\bTheta)&=& -\frac{1}{2}\text{tr}(\bSigma\bTheta) B_{2,1}\left(u^{(1)}(0),u^{(2)}(0)\right)+\text{tr}(\bSigma^2\bTheta) B_{2,2}\left(u^{(1)}(0)\right)\\
&=& -\frac{1}{2}u^{(2)}(0)\text{tr}(\bSigma\bTheta) +\text{tr}(\bSigma^2\bTheta) [u^{(1)}(0)]^2,\\
\breve{s}_3(\bTheta)&=& \frac{1}{6}\text{tr}(\bSigma\bTheta)B_{3,1}\left(u^{(1)}(0),u^{(2)}(0),u^{(3)}(0)\right)-\frac{1}{3}\text{tr}(\bSigma^2\bTheta)B_{3,2}\left(u^{(1)}(0),u^{(2)}(0)\right)+\text{tr}(\bSigma^3\bTheta)B_{3,3}\left(u^{(1)}(0)\right)\\&&=\frac{1}{6}\text{tr}(\bSigma\bTheta)u^{(3)}(0)-\text{tr}(\bSigma^2\bTheta)u^{(1)}(0)u^{(2)}(0)+\text{tr}(\bSigma^3\bTheta)[u^{(1)}(0)]^3,\\
\breve{s}_4(\bTheta)&=& -\frac{1}{24}\text{tr}(\bSigma\bTheta) B_{4,1}\left(u^{(1)}(0),u^{(2)}(0),u^{(3)}(0),u^{(4)}(0)\right)+\frac{1}{12}
\text{tr}(\bSigma^2\bTheta)B_{4,2}\left(u^{(1)}(0),u^{(2)}(0),u^{(3)}(0)\right)\\
&&-\frac{1}{4}\text{tr}(\bSigma^3\bTheta)B_{4,3}\left(u^{(1)}(0),u^{(2)}(0)\right)+\text{tr}(\bSigma^4\bTheta)B_{4,4}\left(u^{(1)}(0)\right)\\&&=-\frac{1}{24}\text{tr}(\bSigma\bTheta) u^{(4)}(0)+\frac{1}{12}
\text{tr}(\bSigma^2\bTheta)[4u^{(1)}(0)u^{(3)}(0)+3[u^{(2)}(0)]^2]\\
&&-\frac{3}{2}\text{tr}(\bSigma^3\bTheta)[u^{(1)}(0)]^2u^{(2)}(0)+\text{tr}(\bSigma^4\bTheta)[u^{(1)}(0)]^4.
\end{eqnarray*}   

Furthermore, we get from Theorem \ref{th3} that \begin{eqnarray*}  
u^{(1)}(0)&=&1, \\
u^{(2)}(0)&=&-2c_n\frac{1}{p}\text{tr}(\bSigma) B_{1,1}\left(u^{(1)}(0)\right)= -2c_n\frac{1}{p}\text{tr}(\bSigma),\\
u^{(3)}(0)&=&3c_n\left\{-\frac{1}{p}\text{tr}(\bSigma)B_{2,1}\left(u^{(1)}(0),u^{(2)}(0)\right)+2\frac{1}{p}\text{tr}(\bSigma^2)B_{2,2}\left(u^{(1)}(0)\right)\right\}\\
&=&3c_n\left\{-\frac{1}{p}\text{tr}(\bSigma)u^{(2)}(0)+2\frac{1}{p}\text{tr}(\bSigma^2)[u^{(1)}(0)]^2\right\}
=6c_n\left\{c_n\left[\frac{1}{p}\text{tr}(\bSigma)\right]^2+
\frac{1}{p}\text{tr}(\bSigma^2)
\right\},\\
u^{(4)}(0)&=&4c_n\Bigg\{-\frac{1}{p}\text{tr}(\bSigma)B_{3,1}\left(u^{(1)}(0),u^{(2)}(0),u^{(3)}(0)\right)+2\frac{1}{p}\text{tr}(\bSigma^2)B_{3,2}\left(u^{(1)}(0),u^{(2)}(0)\right)\\
&&+6\frac{1}{p}\text{tr}(\bSigma^3)B_{3,3}\left(u^{(1)}(0)\right)\Bigg\}\\
&=&4c_n\Bigg\{-\frac{1}{p}\text{tr}(\bSigma)u^{(3)}(0)+6\frac{1}{p}\text{tr}(\bSigma^2)u^{(1)}(0)u^{(2)}(0)+6\frac{1}{p}\text{tr}(\bSigma^3)[u^{(1)}(0)]^3)\Bigg\}\\
&=&-24c_n\left\{c_n^2\left[\frac{1}{p}\text{tr}(\bSigma)\right]^3+3c_n\frac{1}{p}\text{tr}(\bSigma)\frac{1}{p}\text{tr}(\bSigma^2)+\frac{1}{p}\text{tr}(\bSigma^3)\right\}.
\end{eqnarray*}  
Substituting the expressions of $u^{(1)}(0)$, $u^{(2)}(0)$, $u^{(3)}(0)$, and $u^{(4)}(0)$ into the formulas obtained for $\breve{s}_1$,...,$\breve{s}_4$ leads to the statement of the corollary.
\end{proof}
%%%%%%%%%%%%%%%%%%%%%%%%%%%%%%%%%%%%%%%%%%%%%

In Corollary \ref{cor0S} we present the results derived for $\bTheta=\frac{1}{p}\bI_p$.

\begin{corollary}\label{cor0S}
Let $\bY_n$ fulfill the stochastic representation (1). Then, under Assumptions \textbf{(A1)}-\textbf{(A2)}, it holds for $m=1,2,...$ that
\begin{equation*}
\left|\frac{1}{p}\emph{tr}\left[ \bS_n^{m}\right]-\frac{(-1)^{m}u^{(m+1)}(0)}{(m+1)! c_n}\right| \stackrel{a.s.}{\rightarrow} 0\quad\text{for} \quad p/n \rightarrow c \in (0,\infty) 
\quad \text{as} \quad n \rightarrow \infty,
\end{equation*}
where
%\begin{equation}\label{cor0S_eq1}
%\breve{s}_{m+1}\left(\frac{1}{p}\bI_p\right)=\frac{(-1)^{m}u^{(m+1)}(0)}{(m+1)! c_n} \quad \text{for} \quad 
%\end{equation}
%and 
$u^{m}(0)$, $m=1,...$, are defined in \eqref{th3-u0pr-m}.
\end{corollary}

%%%%%%%%%%%%%%%%%%%%%%%%%%%%%%%%%%%%%%%%%%%%%
\begin{proof}[Proof of Corollary \ref{cor0S}:]
From \eqref{th3-u0pr-m} we have that
\begin{eqnarray*}
\frac{u^{(m+1)}(0)}{(m+1)c_n}&=& \sum_{k=1}^{m} \frac{(-1)^{k} k!}{p}\text{tr}(\bSigma^{k}) B_{m,k}\left(u^{(1)}(0),...,u^{(m-k+1)}(0)\right),
\end{eqnarray*}
which together with \eqref{th3-bsm} computed for $\bTheta=\frac{1}{p}\bI_p$ completes the proof of the theorem.
\end{proof}

\begin{corollary}\label{cor5a}
Let $\bY_n$ fulfill the stochastic representation (1) with $\bSigma=\bI_p$. Then, under Assumptions \textbf{(A2)}-\textbf{(A3)}, it holds that
\begin{equation}\label{cor5a-eq1}
\left|\emph{tr}( \bS_n^{m}\bTheta)- \frac{(-1)^{m}u^{(m+1)}(0)}{(m+1)! c_n}\emph{tr}\left\{\bTheta\right\}\right| \stackrel{a.s.}{\rightarrow} 0
\end{equation}
for $p/n \rightarrow c \in (0,\infty)$ as $n \rightarrow \infty$, where $u^{(m+1)}(0)$, $m=1,...$, are given in \eqref{th3-u0pr-m} with $\bSigma=\bI_p$.
\end{corollary}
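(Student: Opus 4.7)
The plan is a direct specialization of Theorem~\ref{th3} to $\bSigma = \bI_p$, followed by a single rewriting of the defining recursion~\eqref{th3-u0pr-m}. Since $\bSigma = \bI_p$ trivially satisfies Assumption~\textbf{(A1)} (its spectral norm equals $1$), Theorem~\ref{th3} applies without change and yields $|\text{tr}(\bS_n^m \bTheta) - \breve{s}_m(\bTheta)| \stackrel{a.s.}{\rightarrow} 0$; the task reduces to identifying the constant $\breve{s}_m(\bTheta)$ with the closed form stated in \eqref{cor5a-eq1}.

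The first observation is that $\bSigma^k = \bI_p$ for every $k \ge 1$, so every trace $\text{tr}\{\bSigma^k \bTheta\}$ appearing in the defining sum \eqref{th3-bsm} collapses to $\text{tr}(\bTheta)$ and can be pulled out, giving
\[
\breve{s}_m(\bTheta) \;=\; \text{tr}(\bTheta) \sum_{k=1}^m \frac{(-1)^{m+k} k!}{m!}\, B_{m,k}\bigl(u^\prime(0),\ldots,u^{(m-k+1)}(0)\bigr).
\]
Next, I would exploit that $\frac{1}{p}\text{tr}\{\bSigma^k\} = 1$ for every $k$, so the recursion \eqref{th3-u0pr-m} collapses to $u^{(m)}(0) = m c_n \sum_{k=1}^{m-1} (-1)^k k!\, B_{m-1,k}\bigl(u^\prime(0),\ldots,u^{(m-k)}(0)\bigr)$. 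Shifting the index $m \mapsto m+1$ and dividing by $(m+1) c_n$ yields the key identity
\[
\sum_{k=1}^m (-1)^k k!\, B_{m,k}\bigl(u^\prime(0),\ldots,u^{(m-k+1)}(0)\bigr) \;=\; \frac{u^{(m+1)}(0)}{(m+1)\, c_n}.
\]
Multiplying both sides by $(-1)^m/m!$ and substituting back into the expression for $\breve{s}_m(\bTheta)$ produces $\breve{s}_m(\bTheta) = \frac{(-1)^m u^{(m+1)}(0)}{(m+1)!\, c_n}\, \text{tr}(\bTheta)$, which is precisely \eqref{cor5a-eq1}.

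I do not anticipate any genuine obstacle: the entire argument is a specialization of the already-established Theorem~\ref{th3} together with a one-line manipulation of its companion recursion. The only minor point worth recording is that \textbf{(A1)} need not be listed among the hypotheses, since it is automatic for $\bSigma = \bI_p$, matching the phrasing of the corollary. For a quick sanity check one may verify the formula at $m=1$: then $u^{\prime\prime}(0) = -2 c_n$ by the simplified recursion, giving $(-1)\, u^{\prime\prime}(0)/(2 c_n) = 1$, so the limit is $\text{tr}(\bTheta) = \text{tr}(\bSigma \bTheta)$, in agreement with the first line of Corollary~\ref{cor5}.
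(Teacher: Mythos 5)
Your proof is correct and follows essentially the same route as the paper: the paper's (very terse) proof records only the factorization $\breve{s}_m(\bTheta)=\frac{(-1)^m\tr(\bTheta)}{m!}\sum_{k=1}^m(-1)^k k!\,B_{m,k}(u'(0),\ldots,u^{(m-k+1)}(0))$ and leaves implicit the identity $\sum_{k=1}^m(-1)^k k!\,B_{m,k}(\cdot)=u^{(m+1)}(0)/((m+1)c_n)$, which is exactly the shifted recursion you derive (and which also appears explicitly in the proof of Corollary~\ref{cor0S}). Your sanity check at $m=1$ is consistent with Corollary~\ref{cor5}.
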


\begin{proof}[Proof of Corollary \ref{cor5a}:]
If $\bSigma=\bI_p$, then
\begin{eqnarray*}
\breve{s}_m(\bTheta) =  \frac{(-1)^{m} \text{tr}\left\{\bTheta\right\}}{m!} \sum_{k=1}^m (-1)^{k}k!B_{m,k}\left(u^{(1)}(0),u^{(2)}(0),...,u^{(m-k+1)}(0)\right),
\end{eqnarray*}
which together with \eqref{th3-u0pr-m} completes the proof of the corollary.
\end{proof}

As a special case of Corollary \ref{cor5a}, we get
\begin{eqnarray*}
&\left|\text{tr}( \bS_n\bTheta)-\text{tr}\left(\bTheta\right)\right| \stackrel{a.s.}{\rightarrow} 0,
&\left|\text{tr}( \bS_n^2\bTheta)-(c_n+1)\text{tr}\left(\bTheta\right)\right| \stackrel{a.s.}{\rightarrow} 0,\\
&\left|\text{tr}(\bS_n^3\bTheta)-(c_n^2+3c_n+1)\text{tr}\left(\bTheta\right)\right| \stackrel{a.s.}{\rightarrow}0,\quad
&\left|\text{tr}(\bS_n^4\bTheta)-(c_n^3+6c_n^2+6c_n+1)\text{tr}\left(\bTheta\right)\right| \stackrel{a.s.}{\rightarrow} 0,
\end{eqnarray*}
for $p/n \rightarrow c \in (0,\infty)$ as $n \rightarrow \infty$. 

\section{Consistent estimators for $h_m$ and $d_m(.,.)$}\label{estimation_full}
In this section we provide consistent estimators of all unknown quantities needed in Section 3. Indeed, the findings of Corollary 2.2 provide the closed-form formulae of consistent estimators for $v^{(m)}(0)$, $m=0,1,...$, as summarized in (8). Using these results together with Corollary 2.1, we get consistent estimators for $h_m$, $d_m\left(\frac{1}{p}\bI_p\right)$, and, more generally, $d_m\left(\bTheta\right)$ for $m=1,2, \ldots$. In particular, consistent estimators for $h_2$, $h_3$, $d_1\left(\frac{1}{p}\bI_p\right)$, $d_2\left(\frac{1}{p}\bI_p\right)$, $d_1\left(\bTheta\right)$, $d_2\left(\bTheta\right)$, and $d_3\left(\bTheta\right)$ are given by
{\footnotesize
%\noindent
%\begin{minipage}[c]{.5\textwidth}
%\begin{equation}
%\label{hh_2} \hat{h}_2 = -\frac{1}{\hat{v}^{(1)}(0)}= \frac{1}{c_n \frac{1}{p}\text{tr}\left[ (\bS_n^+)^{2}\right]}, 
%\end{equation}
%\begin{equation}
 %  \label{hh_3} \hat{h}_3 = \frac{-\hat{v}^{(2)}(0)}{2[\hat{v}^{(1)}(0)]^3}=  \frac{\frac{1}{p}\text{tr}\left[ (\bS_n^+)^{3}\right]}  {c_n^2 \left\{\frac{1}{p}\text{tr}\left[ (\bS_n^+)^{2}\right]\right\}^3}, 
%\end{equation}
%\end{minipage}%
%\begin{minipage}[c]{.6\textwidth}
%\begin{equation}
 %   \label{hd_1} \hat{d}_1\left(\frac{1}{p}\bI_p\right) =- \frac{\hat{s}_1\left(\frac{1}{p}\bI_p\right)}{\hat{v}^{(1)}(0)} = \frac{\frac{1}{p}\text{tr}\left[\bS_n^+\right]}{c_n \frac{1}{p}\text{tr}\left[ (\bS_n^+)^{2}\right]},
%\end{equation}
%\begin{equation}
% \label{hd_1Pi0} \hat{d}_1\left(\bTheta\right) =- \frac{\hat{s}_1\left(\bTheta\right)}{\hat{v}^{(1)}(0)} = \frac{\text{tr}\left[\bS_n^+\bTheta\right]}{c_n \frac{1}{p}\text{tr}\left[ (\bS_n^+)^{2}\right]},
%\end{equation}
%\end{minipage}
%\begin{minipage}[c]{\textwidth}
\begin{eqnarray}
   &&\hat{h}_2 = -\frac{1}{\hat{v}^{(1)}(0)}= \frac{1}{c_n \frac{1}{p}\text{tr}\left[ (\bS_n^+)^{2}\right]},\label{hh_2}\\
  &&\hat{h}_3 = \frac{-\hat{v}^{(2)}(0)}{2[\hat{v}^{(1)}(0)]^3}=  \frac{\frac{1}{p}\text{tr}\left[ (\bS_n^+)^{3}\right]}
   {c_n^2 \left\{\frac{1}{p}\text{tr}\left[ (\bS_n^+)^{2}\right]\right\}^3}, \label{hh_3}\\
   &&\hat{d}_1\left(\frac{1}{p}\bI_p\right) =- \frac{\hat{s}_1\left(\frac{1}{p}\bI_p\right)}{\hat{v}^{(1)}(0)} = \frac{\frac{1}{p}\text{tr}\left[\bS_n^+\right]}{c_n \frac{1}{p}\text{tr}\left[ (\bS_n^+)^{2}\right]},  \label{hd_1}\\
   &&\hat{d}_1\left(\bTheta\right) =- \frac{\hat{s}_1\left(\bTheta\right)}{\hat{v}^{(1)}(0)} = \frac{\text{tr}\left[\bS_n^+\bTheta\right]}{c_n \frac{1}{p}\text{tr}\left[ (\bS_n^+)^{2}\right]},  \label{hd_1Pi0}\\
 && \hat{d}_2\left(\frac{1}{p}\bI_p\right) =\!\!\frac{\frac{1}{2}\hat{v}^{(2)}(0) \hat{d}_1\left(\frac{1}{p}\bI_p\right)-\hat{s}_2\left(\frac{1}{p}\bI_p\right) }{[\hat{v}^{(1)}(0)]^2} =
   \frac{\frac{1}{p}\text{tr}\left[\bS_n^+\right] \frac{1}{p}\text{tr}\left[ (\bS_n^+)^{3}\right] -\left\{\frac{1}{p}\text{tr}\left[ (\bS_n^+)^{2}\right]\right\}^2}
  {c_n^2 \left\{\frac{1}{p}\text{tr}\left[ (\bS_n^+)^{2}\right]\right\}^3}  \label{hd_2} \\
&&  \hat{d}_2\left(\bTheta\right)= \frac{\frac{1}{2} \hat{v}''(0)\hat{d}_1\left(\bTheta\right)-\hat{s}_2(\bTheta)}{[\hat{v}'(0)]^2}= \frac{\text{tr}\left[\bS_n^+\bTheta\right] \frac{1}{p}\text{tr}\left[ (\bS_n^+)^{3}\right]-\frac{1}{p}\text{tr}\left[ (\bS_n^+)^{2}\right]\tr\left[(\bS_n^+)^{2}\bTheta\right]}{c_n^2 \left\{\frac{1}{p}\text{tr}\left[ (\bS_n^+)^{2}\right]\right\}^3} \label{hd_22}\\
 && \label{hd_3}  \hat{d}_3\left(\bTheta\right)= \frac{\hat{v}'(0)\hat{v}''(0)\hat{d}_2(\bTheta)-\frac{1}{6}\hat{v}'''(0)\hat{d}_1(\bTheta)-\hat{s}_3(\bTheta)}{[\hat{v}'(0)]^3}\\
   &=& \frac{\tr\left((\bS_n^+)^{3}\bTheta\right)}{c^3_n\left\{\frac{1}{p}\text{tr}\left[ (\bS_n^+)^{2}\right]\right\}^3}+\frac{2\left(\frac{1}{p}\text{tr}\left[ (\bS_n^+)^{3}\right]\right)^2\text{tr}\left[\bS_n^+\bTheta\right] }{c^3_n \left\{\frac{1}{p}\text{tr}\left[ (\bS_n^+)^{2}\right]\right\}^5 }-\frac{\tr\left[(\bS_n^+)^{2}\bTheta\right]+\frac{1}{p}\text{tr}\left[ (\bS_n^+)^{4}\right]\text{tr}\left[ \bS_n^+\bTheta\right]}{c^3_n \left\{\frac{1}{p}\text{tr}\left[ (\bS_n^+)^{2}\right]\right\}^4 }
 \nonumber \,.
\end{eqnarray} 
%\end{minipage}
}

%\vspace{0.5cm}

 Similarly, the findings of Theorem 2.2 and Theorem \ref{th3} lead to the consistent estimators of $d_0(t,\bTheta)$, $d_1(t,\bTheta)$, $q_1(\bTheta)=\text{tr}\left[\bSigma\bTheta\right]$ and $q_2(\bTheta)=\text{tr}\left[\bSigma^2\bTheta\right]$ expressed as 
\begin{eqnarray}
\hat{d}_0(t,\bTheta)&=& \left\{
\begin{array}{cc}
  t~ \text{tr}\left[(\bS_n+t\bI_p)^{-1}\bTheta\right] \quad  &  \text{for} \quad t>0 \\
   \text{tr}\left[(\bI_p-\bS_n\bS_n^+)\bTheta\right]& \text{for} \quad t=0
\end{array}
\right.
  ,\label{hd0t}\\
\hat{d}_1(t,\bTheta)&=&  -\frac{t \text{tr}\left[(\bS_n+t\bI_p)^{-2}\bTheta\right]-t^{-1}\hat{d}_0(t,\bTheta)}{c_n \left(\frac{1}{p}\text{tr}\left[ (\bS_n+t\bI_p)^{-2}\right]
-t^{-2} \frac{c_n-1}{c_n}\right)} \quad \text{for} \quad t>0   ,\label{hd1t}\\
\hat{q}_1(\bTheta)&=&  \text{tr}\left[\bS_n\bTheta\right],  \label{hq_1}\\
\hat{q}_2(\bTheta)&=&  \text{tr}\left[\bS_n^2\bTheta\right] -c_n\frac{1}{p}\text{tr}\left[\bS_n\right]\text{tr}\left[\bS_n\bTheta\right].
\label{hq_2}
\end{eqnarray}
Note that the expression of $\hat{d}_0(0,\bTheta)$ is obtained using continuity at $t=0$ of $d_0$ and $\hat{d}_0$ together with the Woodbary matrix identity in the following way
\begin{eqnarray*}
    \hat{d}_0(0,\bTheta)&=&\lim\limits_{t\to0} t~ \text{tr}\left[(\bS_n+t\bI_p)^{-1}\bTheta\right]= \lim\limits_{t\to0} t~ \text{tr}\left[ 
    \frac{1}{t}\bTheta-t\frac{\bY_n}{\sqrt{n}}\left(\frac{1}{n}\bY_n^\top\bY_n+t\bI_n\right)^{-1}\frac{\bY^\top_n}{\sqrt{n}} \bTheta\right]\\
    &=& \tr(\bTheta) - \lim\limits_{t\to0} \text{tr}\left[\frac{\bY_n}{\sqrt{n}}\left(\frac{1}{n}\bY^\top_n\bY_n+t\bI_n\right)^{-1}\frac{\bY_n^\top}{\sqrt{n}} \bTheta\right]\\
    &=& \text{tr}(\bTheta) -  \text{tr}\left[\bY_n(\bY^\top_n\bY_n)^{-1}\bY_n^\top \bTheta\right]=\text{tr}(\bTheta) -  \text{tr}\left[\bS_n\bS_n^+\bTheta\right]\\
    &=& \text{tr}\left[(\bI_p-\bS_n\bS_n^+)\bTheta\right]\,,
\end{eqnarray*}
since the $n\times n$ companion matrix $\bY^\top_n\bY_n$ is non-singular for $p>n$.

\section{Shrinkage estimation of the precision matrix with the Moore-Penrose inverse}\label{sec:prec_MP-supp}
\subsection{Oracle estimator}
The proof of the following theorem follows directly from Theorem 2.1 and Corollary 2.1. 

\begin{theorem}\label{th:sh-prec-MP-oracle} 
Let $\bY_n$ fulfill the stochastic representation (1). Then, under Assumptions \textbf{(A1)}-\textbf{(A2)}, it holds that
   \begin{eqnarray*}
          \left|\alpha_{MP;n}^* - \alpha_{MP}^*\right|\stackrel{a.s.}{\rightarrow} 0,~~\text{and}~~\left|\beta_{MP;n}^* - \beta_{MP}^*\right|\stackrel{a.s.}{\rightarrow} 0~~
   \text{for} \quad p/n \rightarrow c \in (1,\infty) 
~ \text{as} ~~ n \rightarrow \infty
   \end{eqnarray*}
with
{\footnotesize
\begin{eqnarray}\label{alp_prec-opt}
  \alpha_{MP}^*&=& \dfrac{d_1(\bSigma) -d_1\left(\dfrac{\bSigma^2\boldsymbol{\Pi}_0}{||\bSigma\boldsymbol{\Pi}_0||_F}\right) \tr\left(\dfrac{\bSigma\boldsymbol{\Pi}_0}{||\bSigma\boldsymbol{\Pi}_0||_F}\right)}{-\dfrac{1}{h_2}\left(d_2(\bSigma^2)-d_1(\bSigma^2)\dfrac{h_3}{h_2}\right)
  -\dfrac{1}{h_2} d^2_1\left(\dfrac{\bSigma^2\boldsymbol{\Pi}_0}{||\bSigma\boldsymbol{\Pi}_0||_F}\right) },\\[0.3cm]  \beta_{MP}^*&=&\dfrac{\left(d_2(\bSigma^2)-d_1(\bSigma^2)\dfrac{h_3}{h_2}\right)\tr\left(\dfrac{\bSigma\boldsymbol{\Pi}_0}{||\bSigma\boldsymbol{\Pi}_0||_F}\right)+d_1(\bSigma)d_1\left(  \dfrac{\bSigma^2\boldsymbol{\Pi}_0}{||\bSigma\boldsymbol{\Pi}_0||_F} \right)}{\left(d_2(\bSigma^2)-d_1(\bSigma^2)\dfrac{h_3}{h_2}\right)+d^2_1\left(\dfrac{\bSigma^2\boldsymbol{\Pi}_0}{||\bSigma\boldsymbol{\Pi}_0||_F}\right) }||\bSigma\boldsymbol{\Pi}_0||^{-1}_F\,, \label{bet_prec-opt}
  \end{eqnarray}
}
where $d_i$ and $h_j$ are given in (11) and (14) for $i=1,2$ and $j=2,3$, respectively.
  % \begin{eqnarray*}
  %   d_1\left(\dfrac{\boldsymbol{\bSigma_n\Pi}_0}{||\bSigma_n\boldsymbol{\Pi}_0||_F}\right) &=& \frac{1}{v(0)}\Bigl[\tr\left((v(0)\bSigma_n+\bI_p)^{-1}\dfrac{\bSigma_n\boldsymbol{\Pi}_0}{||\bSigma_n\boldsymbol{\Pi}_0||_F}\right)-\tr\left((v(0)\bSigma+\bI)^{-2}\dfrac{\bSigma_n\boldsymbol{\Pi}_0}{||\bSigma_n\boldsymbol{\Pi}_0||_F}\right)\Bigr],   \\[0.3cm]
  %  d_1(\bSigma_n)   &=& \frac{1}{v^2(0)}\tr\left\{\bI_p-2\left(v(0)\bSigma+\bI_p\right)^{-1}+\left(v(0)\bSigma+\bI_p\right)^{-2}\right\},\\
  %  d_1(\bI_p) &=& \frac{1}{v(0)}\Bigl[\tr\left((v(0)\bSigma+\bI)^{-1}\right)-\tr\left((v(0)\bSigma+\bI)^{-2}\right)\Bigr],\\
  %  d_2(\bI_p)&=& \frac{1}{v^2(0)}\Bigl[\tr\left((v(0)\bSigma+\bI_p)^{-1}\right)-2\tr\left((v(0)\bSigma+\bI_p)^{-2}\right)+\tr\left((v(0)\bSigma+\bI)^{-3}\right)\Bigr],\\
  %  h_2 &=& \frac{1}{v^2(0)}\left(3(1-c_n) -\frac{c_n}{p} \tr\left((v(0)\bSigma+\bI_p)^{-3}\right)\right),\\
  %  h_3 &=& \frac{1}{v^3(0)}\left(4(1- c_n)  +\frac{c_n}{p}\tr\left(3(v(0)\bSigma+\bI_p)^{-2}-(v(0)\bSigma+\bI_p)^{-3}   \right)\right)\,.
  % \end{eqnarray*}
\end{theorem}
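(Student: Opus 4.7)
The plan is to express $\alpha^{*}_{MP;n}$ and $\beta^{*}_{MP;n}$ as ratios of quantities of the form $\tfrac{1}{p}\tr\bigl((\bS_n^+)^k\bTheta\bigr)$ and then substitute the deterministic equivalents supplied by Corollary~\ref{cor1} for $m=1,2$. Starting from \eqref{alfa-gen}--\eqref{beta-gen}, I divide both numerator and denominator by $p$; as observed right after \eqref{risk-L-F2}, this leaves the ratios invariant and makes every summand self-normalized. The only random ingredients that then remain are $\tfrac{1}{p}\tr(\bS_n^+\bSigma)$, $\tfrac{1}{p}\|\bS_n^+\bSigma\|_F^2=\tfrac{1}{p}\tr\bigl((\bS_n^+)^2\bSigma^2\bigr)$, and $\tfrac{1}{p}\tr\bigl(\bS_n^+\bSigma^2\boldsymbol{\Pi}_0/\|\bSigma\boldsymbol{\Pi}_0\|_F\bigr)$, with everything else deterministic.

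To the first two random traces I apply Corollary~\ref{cor1} with $\bTheta=\bSigma/p$ and $\bTheta=\bSigma^2/p$ respectively; under (A1) each satisfies $\lambda_{\max}(\bTheta)=o(1)$, so part (ii) of (A3) holds. For the third I take $\bTheta=\bSigma^2\boldsymbol{\Pi}_0/(p\|\bSigma\boldsymbol{\Pi}_0\|_F)$; the Jensen-type argument displayed in the main text right below \eqref{risk-L-F2} yields $\tr(\bTheta)\le 1$ uniformly in $p$, which is the simpler sufficient form of (A3) flagged just after that assumption.

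Using $v^{\prime}(0)=-1/h_2$ and $v^{\prime\prime}(0)=2h_3/h_2^3$ from Corollary~\ref{cor1}, the equivalents reduce to $s_1(\bTheta)=d_1(\bTheta)/h_2$ and $s_2(\bTheta)=-\bigl(d_2(\bTheta)-(h_3/h_2)d_1(\bTheta)\bigr)/h_2^2$. Because $d_k$ is linear in $\bTheta$, the extra $1/p$ extracted above cancels against the matching $p$ in the ratio, which is why \eqref{alp_prec-opt}--\eqref{bet_prec-opt} can display $d_k(\bSigma)$, $d_k(\bSigma^2)$ and $d_k\bigl(\bSigma^2\boldsymbol{\Pi}_0/\|\bSigma\boldsymbol{\Pi}_0\|_F\bigr)$ without any explicit normalization. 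Substituting the limits into the rescaled \eqref{alfa-gen} and multiplying both numerator and denominator by $h_2$ produces \eqref{alp_prec-opt}; the analogous substitution in \eqref{beta-gen}, after multiplying top and bottom by $h_2^2$ and pulling out the common factor $\|\bSigma\boldsymbol{\Pi}_0\|_F$, gives \eqref{bet_prec-opt}. Almost-sure convergence of the ratios is inherited from that of the individual traces via the continuous mapping theorem.

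The main obstacle is showing that the limiting denominator stays bounded away from zero so that the ratio is well defined. This amounts to strictness of the Cauchy--Schwarz inequality $\|\bS_n^+\bSigma\|_F^2\|\boldsymbol{\Pi}_0\bSigma\|_F^2\ge[\tr(\bS_n^+\bSigma^2\boldsymbol{\Pi}_0)]^2$, which the authors note right after \eqref{risk} is strict unless $\bS_n^+\propto\boldsymbol{\Pi}_0$; this strictness must be transferred to the deterministic equivalent, which holds under (A1) whenever $\boldsymbol{\Pi}_0$ is not asymptotically aligned with the resolvent-type limit of $\bS_n^+$ (in particular, for $\boldsymbol{\Pi}_0=\bI$ and $\bSigma\neq c\bI$). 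A secondary bookkeeping issue is tracking the factor $1/p$ when moving between the normalized $\bTheta$ used to verify (A3) and the unnormalized $\bSigma$, $\bSigma^2$ appearing in the final formulas, but linearity of $d_k$ makes this transparent.
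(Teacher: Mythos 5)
Your proposal is correct and follows essentially the same route the paper takes: the paper derives Theorem \ref{th:sh-prec-MP-oracle} by identifying the three self-normalized trace functionals in \eqref{alfa-gen}--\eqref{beta-gen}, applying Corollary \ref{cor1} with $\bTheta\in\{\bSigma/p,\ \bSigma^2/p,\ \bSigma^2\boldsymbol{\Pi}_0/(p\|\bSigma\boldsymbol{\Pi}_0\|_F)\}$ (whose admissibility under (A3) is checked exactly as you describe, in the discussion preceding Section \ref{sec:sh-prec-MP}), and substituting $s_1(\bTheta)=d_1(\bTheta)/h_2$ and $s_2(\bTheta)=-h_2^{-2}\bigl(d_2(\bTheta)-(h_3/h_2)d_1(\bTheta)\bigr)$. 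Your algebra reproducing \eqref{alp_prec-opt}--\eqref{bet_prec-opt} is right, and your added remark on the denominator staying bounded away from zero is a point the paper leaves implicit rather than a deviation.
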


\subsection{Bona-fide estimator: Proof of Theorem 3.2}
\begin{proof}
    With some tedious but straightforward computations, we get the following equalities
\begin{eqnarray*}
d_1\left(\frac{1}{p}\bSigma\right)&=&\frac{1}{v(0)}\left(\frac{1}{c_n v(0)}-d_1\left(\frac{1}{p}\bI_p\right)\right),\\
d_1\left(\frac{1}{p}\bSigma^2\right)&=& \frac{1}{[v(0)]^2}\left(\frac{1}{p}\tr[\bSigma]+d_1\left(\frac{1}{p}\bI_p\right)-\frac{2}{c_n v(0)}\right),\\
%d_1\left(\frac{1}{p}\bSigma\boldsymbol{\Pi}_0\right)&=&\frac{1}{[v(0)]^2}\left(\frac{1}{p}\tr[\boldsymbol{\Pi}_0]-d_0\left(0,\frac{1}{p}\boldsymbol{\Pi}_0\right)\right)-\frac{1}{v(0)}d_1\left(0,\frac{1}{p}\boldsymbol{\Pi}_0\right),\\
d_1\left(\frac{1}{p}\bSigma^2\boldsymbol{\Pi}_0\right)&=&
\frac{1}{[v(0)]^2}\left(\frac{1}{p}\tr[\boldsymbol{\bSigma\Pi}_0]+d_1\left(\frac{1}{p}\boldsymbol{\Pi}_0\right)\right)-\frac{2}{[v(0)]^3}\left(\frac{1}{p}\tr[\boldsymbol{\Pi}_0]-
d_0\left(0,\frac{1}{p}\boldsymbol{\Pi}_0\right)\right),\\
d_2\left(\frac{1}{p}\bSigma^2\right)&=& \frac{1}{v(0)} d_1\left(\frac{1}{p}\bSigma^2\right) - \frac{1}{[v(0)]^2} \left(d_1\left(\frac{1}{p}\bSigma\right)-d_2\left(\frac{1}{p}\bI_p\right)\right) ,
\end{eqnarray*}
by using that
$\bSigma_n(v(0)\bSigma_n+\bI)^{-1}=\frac{1}{v(0)}\left(\bI_p -  (v(0)\bSigma_n+\bI_p)^{-1}\right).$
Those simplifications show how these quantities can be consistently estimated using the results presented in Section \ref{estimation_full}. 
\end{proof}

\section{Shrinkage estimation of the precision matrix with the ridge inverse}\label{sec:prec_r-supp}
\subsection{Oracle estimators}
The proofs of the following theorems follow directly from Theorem 2.2 and Corollary 2.4. 
\begin{theorem}\label{th_prec_R} Let $\bY_n$ fulfill the stochastic representation (1). Then, under Assumptions \textbf{(A1)}-\textbf{(A2)} for any $t_0>0$, it holds that
   \begin{eqnarray*}
          \left|\alpha_{R;n}^*(t_0) - \alpha_R^*(t_0)\right|\stackrel{a.s.}{\rightarrow} 0,~~\text{and}~~\left|\beta_{R;n}^*(t_0) - \beta_R^*(t_0)\right|\stackrel{a.s.}{\rightarrow} 0~~
   \text{for} \quad p/n \rightarrow c \in (0,\infty) 
~ \text{as} ~~ n \rightarrow \infty
   \end{eqnarray*}
with
{\footnotesize
\begin{eqnarray*}\label{alp_prec-opt-R}
  \alpha_R^*(t_0)&=& \dfrac{t_0^{-1}d_0(t_0,\bSigma) -t_0^{-1}d_0\left(t_0,\dfrac{\bSigma^2\boldsymbol{\Pi}_0}{||\bSigma\boldsymbol{\Pi}_0||_F}\right) \tr\left(\dfrac{\bSigma\boldsymbol{\Pi}_0}{||\bSigma\boldsymbol{\Pi}_0||_F}\right)}{t_0^{-2}d_0\left(t_0,\bSigma^2\right)+t_0^{-1}v^{(1)}(t_0)d_1\left(t_0,\bSigma^2\right)
  -t_0^{-2}d^2_0\left(t_0,\dfrac{\bSigma^2\boldsymbol{\Pi}_0}{||\bSigma\boldsymbol{\Pi}_0||_F}\right) },\\[0.3cm]
\beta_R^*(t_0)&=&\dfrac{\left(t_0^{-2}d_0\left(t_0,\bSigma^2\right)+t_0^{-1}v^{(1)}(t_0)d_1\left(t_0,\bSigma^2\right)\right)\tr\left(\dfrac{\bSigma\boldsymbol{\Pi}_0}{||\bSigma\boldsymbol{\Pi}_0||_F}\right)-t_0^{-2}d_0(t_0,\bSigma)d_0\left(t_0,\dfrac{\bSigma^2\boldsymbol{\Pi}_0}{||\bSigma\boldsymbol{\Pi}_0||_F} \right)}{t_0^{-2}d_0\left(t_0,\bSigma^2\right)+t_0^{-1}v^{(1)}(t_0)d_1\left(t_0,\bSigma^2\right)
  -t_0^{-2}d^2_0\left(t_0,\dfrac{\bSigma^2\boldsymbol{\Pi}_0}{||\bSigma\boldsymbol{\Pi}_0||_F}\right) }||\bSigma\boldsymbol{\Pi}_0||^{-1}_F\,, \label{bet_prec-opt-R}
  \end{eqnarray*}
}
where $d_i(t_0,.)$ and $v^{(1)}(t_0)$ are given in (22) and (23) for $i=1,2$.
\end{theorem}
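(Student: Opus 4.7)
The plan is to express $\alpha_{R;n}^*(t_0)=\alpha_n^*(\bS_n^-(t_0))$ and $\beta_{R;n}^*(t_0)=\beta_n^*(\bS_n^-(t_0))$ in \eqref{alfa-gen} and \eqref{beta-gen} as continuous rational functions of a handful of weighted trace moments of $\bS_n^-(t_0)$, apply Corollary \ref{cor3} to each of those traces individually, and then pass to the limit.

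The first step is to divide the numerator and denominator of \eqref{alfa-gen} and \eqref{beta-gen} by $p$ so that every random factor becomes a normalized trace of the form $\tr(\bS_n^-(t_0)^{m}\bTheta)$ whose weighting matrix $\bTheta$ obeys Assumption \textbf{(A3)}-(ii). The three random inputs then become
\begin{equation*}
\tr\bigl(\bS_n^-(t_0)\,\tfrac{1}{p}\bSigma\bigr),\qquad \tr\bigl(\bS_n^-(t_0)\,\tfrac{\bSigma^2\boldsymbol{\Pi}_0}{\sqrt{p}\,||\bSigma\boldsymbol{\Pi}_0||_F}\bigr),\qquad \tr\bigl((\bS_n^-(t_0))^{2}\,\tfrac{1}{p}\bSigma^2\bigr),
\end{equation*}
while the deterministic factor $\tr(\bSigma\boldsymbol{\Pi}_0/||\bSigma\boldsymbol{\Pi}_0||_F)$ needs no further normalization. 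The discussion following \eqref{risk-L-F2} already verifies that each of these weighting matrices obeys \textbf{(A3)}-(ii), since $||\bSigma||$ and $||\boldsymbol{\Pi}_0||$ are bounded while the normalization supplies a vanishing $\lambda_{\max}$.

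The second step is to invoke Corollary \ref{cor3} with $m=1$ for the first two traces, giving the asymptotic equivalent $t_0^{-1}d_0(t_0,\bTheta)$, and with $m=2$ for the third trace, giving $t_0^{-2}d_0(t_0,\bTheta)+t_0^{-1}v^\prime(t_0)d_1(t_0,\bTheta)$. Exploiting linearity of $d_k(t_0,\cdot)$ in $\bTheta$, I would pull the normalizing factors $1/p$ and $1/(\sqrt{p}\,||\bSigma\boldsymbol{\Pi}_0||_F)$ out of the deterministic equivalents, recovering the unnormalized quantities $d_0(t_0,\bSigma)$, $d_0(t_0,\bSigma^2\boldsymbol{\Pi}_0/||\bSigma\boldsymbol{\Pi}_0||_F)$, $d_0(t_0,\bSigma^2)$ and $d_1(t_0,\bSigma^2)$ that enter the target formulas for $\alpha_R^*(t_0)$ and $\beta_R^*(t_0)$.

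Substituting these deterministic equivalents back into the self-normalized versions of \eqref{alfa-gen} and \eqref{beta-gen} and using that almost sure convergence is preserved under continuous rational maps yields both assertions of the theorem. The main technical obstacle is ensuring that the common denominator stays uniformly bounded away from zero almost surely: namely that the deterministic equivalent
\begin{equation*}
t_0^{-2}d_0(t_0,\bSigma^2)+t_0^{-1}v^\prime(t_0)d_1(t_0,\bSigma^2)-t_0^{-2}d_0^{2}\bigl(t_0,\bSigma^2\boldsymbol{\Pi}_0/||\bSigma\boldsymbol{\Pi}_0||_F\bigr)
\end{equation*}
of $\tfrac{1}{p}\bigl(||\bS_n^-(t_0)\bSigma||_F^{2}-[\tr(\bS_n^-(t_0)\bSigma^2\boldsymbol{\Pi}_0/||\bSigma\boldsymbol{\Pi}_0||_F)]^{2}\bigr)$ is strictly positive for all large $n$. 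This in turn follows from a strict Cauchy–Schwarz inequality that relies on positive definiteness of $\bSigma$ from \textbf{(A1)} together with the fact that $\bS_n^-(t_0)\bSigma$ is not asymptotically proportional to $\boldsymbol{\Pi}_0/||\boldsymbol{\Pi}_0||_F$; once this non-degeneracy is in hand, the continuous mapping argument closes the proof.
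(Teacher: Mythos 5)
Your proposal is correct and follows essentially the same route as the paper, which itself only remarks that Theorem \ref{th_prec_R} "follows from the theoretical results derived in Section \ref{sec:main-ridge}": normalize the traces in \eqref{alfa-gen} and \eqref{beta-gen} so the weighting matrices (e.g.\ $\frac{1}{p}\bSigma$, $\frac{1}{p}\bSigma^2$, $\bSigma^2\boldsymbol{\Pi}_0/(\sqrt{p}\,||\bSigma\boldsymbol{\Pi}_0||_F)$) satisfy Assumption \textbf{(A3)}, apply Corollary \ref{cor3} with $m=1$ and $m=2$, and conclude by continuity, with the denominator's non-degeneracy coming from the Cauchy--Schwarz argument already noted after \eqref{risk}. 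The only small slip is the claim that the deterministic factor $\tr(\bSigma\boldsymbol{\Pi}_0/||\bSigma\boldsymbol{\Pi}_0||_F)$ "needs no further normalization" — it is of order $\sqrt{p}$ and absorbs the remaining $1/\sqrt{p}$ from the product term — but this is bookkeeping and does not affect the argument.
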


\begin{theorem}\label{th_prec_R-L} Let $\bY_n$ fulfill the stochastic representation (1). Then, under Assumptions \textbf{(A1)}-\textbf{(A2)} for any $t>0$, it holds that
   \begin{eqnarray*}\frac{1}{p}\left|L^2_{R;n,2}(t) - L^2_{R;2}(t)\right|\stackrel{a.s.}{\rightarrow} 0,~~
   \text{for} \quad p/n \rightarrow c \in (0,\infty) 
~ \text{as} ~~ n \rightarrow \infty\qquad\text{with}
   \end{eqnarray*}
\begin{eqnarray*}\label{L_prec-opt-R}
L^2_{R;2}(t) &=&\frac{\left[d_0\left(t,\bSigma\right)
- d_0\left(t,\dfrac{\bSigma^2\boldsymbol{\Pi}_0}{||\bSigma\boldsymbol{\Pi}_0||_F}\right)\emph{tr}\left(\dfrac{\bSigma\boldsymbol{\Pi}_0}{||\bSigma\boldsymbol{\Pi}_0||_F}\right)\right]^2
}{d_0\left(t,\bSigma^2\right)+tv^{(1)}(t)d_1\left(t,\bSigma^2\right)
  -d^2_0\left(t,\dfrac{\bSigma^2\boldsymbol{\Pi}_0}{||\bSigma\boldsymbol{\Pi}_0||_F}\right) },
  \end{eqnarray*}
where $d_i(t,.)$ and $v^{(1)}(t)$ are given in (22) and (23) for $i=1,2$.
\end{theorem}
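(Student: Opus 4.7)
The plan is to reduce the convergence of $\tfrac{1}{p}L^2_{R;n,2}(t)$ to the joint almost-sure convergence of a handful of weighted trace moments of $\bS_n^{-}(t)$, each supplied by Corollary~\ref{cor3}, and then to invoke the continuous mapping theorem on the resulting rational function of the limits. Starting from \eqref{risk-L-F2} with $\bS_n^{\#}(t)=\bS_n^{-}(t)$, I would divide numerator and denominator by the appropriate powers of $p$ so as to rewrite
\begin{equation*}
\tfrac{1}{p}L^2_{R;n,2}(t)=\frac{\Bigl[\tfrac{1}{p}\tr\!\bigl(\bS_n^{-}(t)\bSigma\bigr)-\tfrac{1}{p}\tr\!\bigl(\bS_n^{-}(t)\bSigma^2\boldsymbol{\Pi}_0\bigr)\,\dfrac{\tr(\bSigma\boldsymbol{\Pi}_0)}{||\bSigma\boldsymbol{\Pi}_0||_F^2}\Bigr]^2}{\tfrac{1}{p}\tr\!\bigl((\bS_n^{-}(t))^2\bSigma^2\bigr)-\Bigl[\tfrac{1}{p}\tr\!\bigl(\bS_n^{-}(t)\bSigma^2\boldsymbol{\Pi}_0\bigr)\Bigr]^2\big/\bigl(\tfrac{1}{p}||\bSigma\boldsymbol{\Pi}_0||_F^2\bigr)}.
\end{equation*}
Each of the three random traces on the right is a weighted sample moment of $\bS_n^{-}(t)$ with weighting matrix drawn from $\{\tfrac{1}{p}\bSigma,\,\tfrac{1}{p}\bSigma^2\boldsymbol{\Pi}_0,\,\tfrac{1}{p}\bSigma^2\}$, which is exactly the setting of Theorem~\ref{th2}.

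Next, I would apply Corollary~\ref{cor3} with $m=1$ to obtain the almost-sure equivalent $t^{-1}d_0(t,\cdot)$ for the first and second traces, and with $m=2$ to obtain the equivalent $t^{-2}d_0(t,\bSigma^2)+t^{-1}v^{\prime}(t)d_1(t,\bSigma^2)$ for the third. Using the linearity of $d_0$ and $d_1$ in the weighting matrix, together with the identifications $d_0(t,\bSigma^2\boldsymbol{\Pi}_0)/||\bSigma\boldsymbol{\Pi}_0||_F=d_0(t,\bTheta_2)$ and $\tr(\bSigma\boldsymbol{\Pi}_0)/||\bSigma\boldsymbol{\Pi}_0||_F=\tr(\bTheta_1)$, the factors of $t^{-1}$ and $t^{-2}$ cancel between numerator and denominator and the resulting deterministic limit matches $\tfrac{1}{p}L^2_{R;2}(t)$ exactly.

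Since the rational function above is continuous wherever its denominator does not vanish, the conclusion follows from the continuous mapping theorem after unionising the finitely many exceptional null sets on which the individual traces fail to converge. Nonnegativity of the denominator is immediate from the Cauchy--Schwarz inequality in the Frobenius inner product: with $B=\bSigma\boldsymbol{\Pi}_0/||\bSigma\boldsymbol{\Pi}_0||_F$ one has $||B||_F=1$ and $[\tr(\bS_n^{-}(t)\bSigma B)]^2\le||\bS_n^{-}(t)\bSigma||_F^2$. Strict positivity of the limit reduces to showing that the deterministic expression $d_0(t,\bSigma^2)+tv^{\prime}(t)d_1(t,\bSigma^2)-d_0^2(t,\bTheta_2)$ stays bounded away from zero, which is a strict Cauchy--Schwarz statement ensured by Assumption~\textbf{(A1)}.

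The main obstacle is the verification of Assumption~\textbf{(A3)} for $\bTheta=\tfrac{1}{p}\bSigma^2\boldsymbol{\Pi}_0$, which is generically non-symmetric. I would handle this by symmetrising inside the trace, $\tr(\bS_n^{-}(t)\bSigma^2\boldsymbol{\Pi}_0)=\tr\!\bigl(\bS_n^{-}(t)\cdot\tfrac{1}{2}(\bSigma^2\boldsymbol{\Pi}_0+\boldsymbol{\Pi}_0\bSigma^2)\bigr)$, so that the effective symmetric weighting matrix has largest eigenvalue at most $\tfrac{1}{p}\lambda_{\max}(\bSigma)^2\lambda_{\max}(\boldsymbol{\Pi}_0)=O(1/p)=o(1)$, and Assumption~\textbf{(A3)(ii)} then applies. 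The weightings $\tfrac{1}{p}\bSigma$ and $\tfrac{1}{p}\bSigma^2$ are symmetric and satisfy \textbf{(A3)(ii)} directly; beyond this point the argument is essentially algebraic bookkeeping.
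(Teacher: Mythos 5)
Your proposal is correct and follows essentially the same route as the paper, which gives no separate appendix proof for this theorem but simply invokes Corollary \ref{cor3} on the traces appearing in \eqref{risk-L-F2} and passes to the limit in the resulting rational function, exactly as you do. The only point worth flagging is your verification of Assumption \textbf{(A3)} for the weight $\tfrac{1}{p}\bSigma^2\boldsymbol{\Pi}_0$, which implicitly requires $\lambda_{\max}(\boldsymbol{\Pi}_0)$ to be bounded; the paper instead normalizes by $\sqrt{p}\,||\bSigma\boldsymbol{\Pi}_0||_F$ and checks the bounded trace-norm form of \textbf{(A3)} via Cauchy--Schwarz, which avoids any extra condition on $\boldsymbol{\Pi}_0$.
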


%Moreover, the assumption on $\bSigma$ is sufficient to keep the numerator and denominator of $\alpha^*_n$ bounded because in light of the trace inequality we get
% \begin{eqnarray*}
%    \tr\left(\dfrac{\bSigma_n\boldsymbol{\Pi}_0}{\sqrt{p}||\bSigma_n\boldsymbol{\Pi}_0||_F}\right) &\leq& \frac{\lambda_{\max}(\bSigma_n)}{\lambda_{\min}(\bSigma_n)} \tr\left( \dfrac{\boldsymbol{\Pi}_0}{\sqrt{p}||\boldsymbol{\Pi}_0||_F}\right) \leq \frac{\lambda_{\max}(\bSigma_n)}{\lambda_{\min}(\bSigma_n)}<\infty,\\
%    \frac{1}{p}\tr(\bSigma^2_n)&\leq& \frac{p \lambda^2_{\max}(\bSigma_n)}{p}= \lambda^2_{\max}(\bSigma_n)<\infty\,.
% \end{eqnarray*}

%The only exception is $d_0\left(0,\frac{1}{p}\boldsymbol{\Pi}_0\right)$, which cannot be consistently estimated by using the derived results. However, since this quantity is a continuous function, we first approximate it by $d_0\left(t_0,\frac{1}{p}\boldsymbol{\Pi}_0\right)$ for small $t_0$ and then estimate the latter quantities consistently by using \eqref{hd0t}. 

\subsection{Bona-fide estimators: Proof of Theorem 3.3}
\begin{proof}%[:]
It holds that
\begin{eqnarray*}
d_0\left(t_0,\frac{1}{p}\bSigma\right) &=&  \frac{1}{p}\text{tr}\left[\left(v(t_0)\bSigma+\bI_p\right)^{-1}\bSigma\right]=\frac{1}{c_nv(t_0)}-\frac{t_0}{c_n},\\
d_0\left(t_0,\frac{1}{p}\bSigma^2\right) &=& \frac{1}{p}\text{tr}\left[\left(v(t_0)\bSigma+\bI_p\right)^{-1}\bSigma^2\right]=\frac{1}{v(t_0)}\left( \frac{1}{p}\text{tr}\left[\bSigma\right]-\frac{1}{c_nv(t_0)}+\frac{t_0}{c_n}\right),\\
d_0\left(t_0,\frac{1}{p}\bSigma^2\boldsymbol{\Pi}_0\right) &=& \frac{1}{p}\text{tr}\left[\left(v(t_0)\bSigma+\bI_p\right)^{-1}\bSigma^2\boldsymbol{\Pi}_0\right]\\
&=&\frac{1}{v(t_0)}\left( \frac{1}{p}\text{tr}\left[\bSigma\boldsymbol{\Pi}_0\right]-\frac{1}{p}\text{tr}\left[\left(v(t_0)\bSigma+\bI_p\right)^{-1}\bSigma\boldsymbol{\Pi}_0\right]\right)\\
&=&\frac{1}{v(t_0)} \frac{1}{p}\text{tr}\left[\bSigma\boldsymbol{\Pi}_0\right]-\frac{1}{[v(t_0)]^2}\left(\frac{1}{p}\text{tr}\left[\boldsymbol{\Pi}_0\right]-d_0\left(t_0,\frac{1}{p}\boldsymbol{\Pi}_0\right)\right),\\
d_1\left(t_0,\frac{1}{p}\bSigma^2\right) &=& \frac{1}{p}\text{tr}\left\{\left(v(t_0)\bSigma+\bI_p\right)^{-1}\bSigma\left(v(t_0)\bSigma+\bI_p\right)^{-1}\bSigma^2\right\}\\
&&\hspace{-3.5cm}=\frac{1}{v(t_0)}\left( \frac{1}{p}\text{tr}\left[\left(v(t_0)\bSigma+\bI_p\right)^{-1}\bSigma^2\right]
-\frac{1}{p}\text{tr}\left[\left(v(t_0)\bSigma+\bI_p\right)^{-1}\bSigma\left(v(t_0)\bSigma+\bI_p\right)^{-1}\bSigma\right]\right)\\
&&\hspace{-3.5cm}=\frac{1}{[v(t_0)]^2}\left(\frac{1}{p}\text{tr}\left[\bSigma\right]+ d_1\left(t_0,\frac{1}{p}\bI_p\right)-\frac{2}{c_nv(t_0)}+\frac{2t_0}{c_n}\right),
\end{eqnarray*}
where $v(t_0)$, $v^{(1)}(t_0)$, $d_0\left(t_0,\frac{1}{p}\boldsymbol{\Pi}_0\right)$, $d_1\left(t_0,\frac{1}{p}\bI_p\right)$,
$q_1\left(\frac{1}{p}\boldsymbol{\Pi}_0\right)$, $q_2\left(\frac{1}{p}\boldsymbol{\Pi}_0\right)$ and $q_2\left(\frac{1}{p}\boldsymbol{\Pi}_0^2\right)$ are consistently estimated as in (7), \eqref{hd0t}, \eqref{hd1t}, \eqref{hq_1} and \eqref{hq_2}, respectively.
\end{proof}

\section{Shrinkage estimation of the precision matrix with the Moore-Penrose-ridge inverse}\label{sec:prec_MPR}

\subsection{Oracle estimators}
The application of Corollary 2.8 yields

\begin{theorem}\label{th_prec_MPR} Let $\bY_n$ fulfill the stochastic representation (1). Then, under Assumptions \textbf{(A1)}-\textbf{(A2)} for any $t_0>0$, it holds that
   \begin{eqnarray*}
          \left|\alpha_{MPR;n}^*(t_0) - \alpha_{MPR}^*(t_0)\right|\stackrel{a.s.}{\rightarrow} 0,~~\text{and}~~\left|\beta_{MPR;n}^*(t_0) - \beta_{MPR}^*(t_0)\right|\stackrel{a.s.}{\rightarrow} 0,
   \end{eqnarray*}
for $p/n \rightarrow c \in (0,\infty)$ as $n \rightarrow \infty$ with
{\footnotesize
\begin{eqnarray*}
  \alpha_{MPR}^*(t_0)&=& \dfrac{-v^{(1)}(t_0)d_1(t_0,\bSigma) +v^{(1)}(t_0)d_1\left(t_0,\dfrac{\bSigma^2\boldsymbol{\Pi}_0}{||\bSigma\boldsymbol{\Pi}_0||_F}\right) \tr\left(\dfrac{\bSigma\boldsymbol{\Pi}_0}{||\bSigma\boldsymbol{\Pi}_0||_F}\right)}{\grave{s}_2(t_0,\bSigma^2)
  -[v^{(1)}(t_0)]^{2}d^2_1\left(t_0,\dfrac{\bSigma^2\boldsymbol{\Pi}_0}{||\bSigma\boldsymbol{\Pi}_0||_F}\right) },\\[0.2cm]
\beta_{MPR}^*(t_0)&=&\dfrac{\grave{s}_2(t_0,\bSigma^2)\tr\left(\dfrac{\bSigma\boldsymbol{\Pi}_0}{||\bSigma\boldsymbol{\Pi}_0||_F}\right)-[v^{(1)}(t_0)]^{2}d_1(t_0,\bSigma)d_1\left(t_0,\dfrac{\bSigma^2\boldsymbol{\Pi}_0}{||\bSigma\boldsymbol{\Pi}_0||_F} \right)}{\grave{s}_2(t_0,\bSigma^2)
  -[v^{(1)}(t_0)]^{2}d^2_1\left(t_0,\dfrac{\bSigma^2\boldsymbol{\Pi}_0}{||\bSigma\boldsymbol{\Pi}_0||_F}\right) }||\bSigma\boldsymbol{\Pi}_0||^{-1}_F\,, 
  \end{eqnarray*}
}
where $d_1(t_0,.)$, $v^{(1)}(t_0)$ and $\grave{s}_2(t_0,.)$ are given in (22), (23), and (38), respectively. 
\end{theorem}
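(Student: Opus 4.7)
The plan is to recognize that $\alpha_{MPR;n}^*(t_0)$ and $\beta_{MPR;n}^*(t_0)$ are, by \eqref{alfa-gen} and \eqref{beta-gen} specialized to $\bS_n^\#(t_0)=\bS_n^{\pm}(t_0)$, rational functions of three random trace functionals, namely $\text{tr}(\bS_n^{\pm}(t_0)\bSigma)$, $\text{tr}(\bS_n^{\pm}(t_0)\bSigma^2\boldsymbol{\Pi}_0)$ and $||\bS_n^{\pm}(t_0)\bSigma||^2_F=\text{tr}((\bS_n^{\pm}(t_0))^2\bSigma^2)$, together with the non-random quantities $\text{tr}(\bSigma\boldsymbol{\Pi}_0)$ and $||\bSigma\boldsymbol{\Pi}_0||_F^{2}=\text{tr}(\bSigma^2\boldsymbol{\Pi}_0^2)$. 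Following the self-normalization strategy described in the paragraph after \eqref{risk-L-F2}, I would first divide both numerator and denominator of \eqref{alfa-gen} and \eqref{beta-gen} by $p$, so that all six ingredients become $O(1)$ quantities and the appropriately normalized weighting matrices $\bSigma/p$, $\bSigma^2\boldsymbol{\Pi}_0/(\sqrt{p}\,||\bSigma\boldsymbol{\Pi}_0||_F)$ and $\bSigma^2/p$ satisfy Assumption \textbf{(A3)} under \textbf{(A1)} (either via $\lambda_{\max}(\bTheta)=O(1/p)$ or via the bounded-trace-norm argument already given for the first of these matrices).

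Next I would apply Corollary \ref{cor3MPR} term by term. Taking $\bTheta=\bSigma/p$ in \eqref{grave_s1} yields the almost-sure asymptotic equivalent $-v^\prime(t_0)d_1(t_0,\bSigma)$ for $\tfrac{1}{p}\text{tr}(\bS_n^{\pm}(t_0)\bSigma)$; taking $\bTheta=\bSigma^2\boldsymbol{\Pi}_0/(p\,||\bSigma\boldsymbol{\Pi}_0||_F)$ yields $-v^\prime(t_0)d_1\bigl(t_0,\bSigma^2\boldsymbol{\Pi}_0/||\bSigma\boldsymbol{\Pi}_0||_F\bigr)$ for $\tfrac{1}{p||\bSigma\boldsymbol{\Pi}_0||_F}\text{tr}(\bS_n^{\pm}(t_0)\bSigma^2\boldsymbol{\Pi}_0)$; and taking $\bTheta=\bSigma^2/p$ in \eqref{grave_s2} yields $\grave{s}_2(t_0,\bSigma^2)$ for $\tfrac{1}{p}||\bS_n^{\pm}(t_0)\bSigma||_F^2$. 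Substituting these three deterministic equivalents into the self-normalized versions of \eqref{alfa-gen} and \eqref{beta-gen} reproduces exactly the stated expressions for $\alpha_{MPR}^*(t_0)$ and $\beta_{MPR}^*(t_0)$.

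The last step is to invoke the continuous mapping theorem to transport the joint almost-sure convergence of numerator and denominator to almost-sure convergence of the ratios. This is the step that requires care: it is legitimate only if the limiting denominator
\[
\grave{s}_2(t_0,\bSigma^2)-[v^\prime(t_0)]^2 d_1^2\Bigl(t_0,\tfrac{\bSigma^2\boldsymbol{\Pi}_0}{||\bSigma\boldsymbol{\Pi}_0||_F}\Bigr)
\]
is bounded away from zero along the sequence $p/n\to c$. Structurally this quantity is the limit of $\tfrac{1}{p||\bSigma\boldsymbol{\Pi}_0||_F^2}\bigl(||\bS_n^{\pm}(t_0)\bSigma||_F^{2}\,||\boldsymbol{\Pi}_0\bSigma||_F^{2}-[\text{tr}(\bS_n^{\pm}(t_0)\bSigma^2\boldsymbol{\Pi}_0)]^{2}\bigr)$, which is non-negative by the Cauchy-Schwarz inequality applied to the positive semidefinite bilinear form $(\bA,\bB)\mapsto\text{tr}(\bS_n^{\pm}(t_0)\bA\bSigma\bB)$, and is strictly positive asymptotically because under \textbf{(A1)} the matrix $\bS_n^{\pm}(t_0)$ cannot become proportional to $\boldsymbol{\Pi}_0$ in the limit.

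The main obstacle is precisely this non-degeneracy of the limiting denominator; once this is in place the remainder of the argument is a direct substitution exercise and the verification of Assumption \textbf{(A3)} for each normalized weighting matrix, both of which were essentially prepared in the paragraph following \eqref{risk-L-F2}.
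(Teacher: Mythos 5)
Your proposal is correct and follows exactly the route the paper intends: the paper's own justification for Theorem \ref{th_prec_MPR} is the single remark ``The application of Corollary \ref{cor3MPR} yields,'' i.e.\ substituting the deterministic equivalents $\grave{s}_1(t_0,\cdot)$ and $\grave{s}_2(t_0,\cdot)$ for the three trace functionals in the self-normalized versions of \eqref{alfa-gen} and \eqref{beta-gen}, with Assumption \textbf{(A3)} for the normalized weighting matrices checked as in the paragraph after \eqref{risk-L-F2}. Your additional attention to the non-degeneracy of the limiting denominator (via Cauchy--Schwarz and the non-proportionality of $\bS_n^{\pm}(t_0)$ to $\boldsymbol{\Pi}_0$) matches, and slightly sharpens, the paper's brief assertion that $||\bS_n^{\#}(t)\bSigma||^2_F||\boldsymbol{\Pi}_0\bSigma||^2_F - \bigl(\text{tr}(\bS_n^{\#}(t)\bSigma^2\boldsymbol{\Pi}_0)\bigr)^2\ge 0$ with equality only in a degenerate case.
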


The optimal value of the tuning parameter $t$ should be chosen by maximizing $L^2_{MPR;n,2}(t)=L^2_{F;n,2}(\bS_n^{\pm}(t))$. Since the latter depends on the unknown population covariance matrix $\bSigma$, we first derive its asymptotic deterministic equivalent in
Theorem \ref{th_prec_MPR-L} by using Corollary 2.8.

\begin{theorem}\label{th_prec_MPR-L} Let $\bY_n$ fulfill the stochastic representation (1). Then, under Assumptions \textbf{(A1)}-\textbf{(A2)} for any $t>0$, it holds that
   \begin{eqnarray*}\frac{1}{p}\left|L^2_{MPR;n,2}(t) - L^2_{MPR;2}(t)\right|\stackrel{a.s.}{\rightarrow} 0,~~
   \text{for} \quad p/n \rightarrow c \in (0,\infty) 
~ \text{as} ~~ n \rightarrow \infty\quad\text{with}
   \end{eqnarray*}
   
{\small
\begin{eqnarray*}\label{L_prec-opt-MPR}
L^2_{MPR;2}(t) &=&\frac{\left[-v^{(1)}(t)d_1(t,\bSigma) +v^{(1)}(t)d_1\left(t,\dfrac{\bSigma^2\boldsymbol{\Pi}_0}{||\bSigma\boldsymbol{\Pi}_0||_F}\right)\emph{tr}\left(\dfrac{\bSigma\boldsymbol{\Pi}_0}{||\bSigma\boldsymbol{\Pi}_0||_F}\right)\right]^2
}{\grave{s}_2(t,\bSigma^2)
  -[v^{(1)}(t)]^2d^2_1\left(t,\dfrac{\bSigma^2\boldsymbol{\Pi}_0}{||\bSigma\boldsymbol{\Pi}_0||_F}\right) },
  \end{eqnarray*}}
where $d_1(t,.)$, $v^{(1)}(t)$ and $\grave{s}_2(t,.)$ are given in (22), (23), and (38), respectively.
\end{theorem}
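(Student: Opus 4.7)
The plan is to express $L^2_{MPR;n,2}(t) = L^2_{F;n,2}(\bS_n^{\pm}(t))$ from \eqref{risk-L-F2} as a ratio built from three random weighted traces,
\[
\tilde A_n = \tr(\bS_n^{\pm}(t)\bSigma),\qquad \tilde B_n = \tr\!\Big(\bS_n^{\pm}(t)\tfrac{\bSigma^2\boldsymbol{\Pi}_0}{||\bSigma\boldsymbol{\Pi}_0||_F}\Big),\qquad \tilde D_n = \tr([\bS_n^{\pm}(t)]^2\bSigma^2),
\]
together with the nonrandom constant $\tilde C = \tr(\bSigma\boldsymbol{\Pi}_0/||\bSigma\boldsymbol{\Pi}_0||_F)$, and then to apply Corollary \ref{cor3MPR} term by term. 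To align with Assumption \textbf{(A3)}, the corollary would be invoked on the rescaled weights $\bSigma/p$, $\bSigma^2\boldsymbol{\Pi}_0/(\sqrt p\,||\bSigma\boldsymbol{\Pi}_0||_F)$ and $\bSigma^2/p$, each of which has bounded trace norm (and hence $o(1)$ spectral norm) under \textbf{(A1)} and the bounded-trace-norm properties collected at the end of Section \ref{sec:applic}. Using linearity of $\grave s_m(t,\cdot)$ and $d_k(t,\cdot)$ in the weighting matrix, this yields the almost-sure expansions $\tilde A_n = -v'(t)d_1(t,\bSigma) + o(p)$, $\tilde B_n = -v'(t)d_1(t,\bSigma^2\boldsymbol{\Pi}_0/||\bSigma\boldsymbol{\Pi}_0||_F) + o(\sqrt p)$ and $\tilde D_n = \grave s_2(t,\bSigma^2) + o(p)$.

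Next I would plug these expansions into the ratio in \eqref{risk-L-F2}. Because $\tilde C = O(\sqrt p)$ by the Jensen-inequality argument recalled in Section \ref{sec:applic} and $d_1(t,\bSigma^2\boldsymbol{\Pi}_0/||\bSigma\boldsymbol{\Pi}_0||_F) = O(\sqrt p)$ by the same bounded-trace-norm arguments, the product $\tilde B_n\tilde C$ matches its deterministic counterpart up to an $o(p)$ error, and squaring gives $(\tilde A_n - \tilde B_n\tilde C)^2 = N(t)^2 + o(p^2)$, where $N(t) = O(p)$ is the numerator of $L^2_{MPR;2}(t)$. The analogous expansion $\tilde D_n - \tilde B_n^2 = D(t) + o(p)$, with $D(t)$ the denominator of $L^2_{MPR;2}(t)$, holds because the cross term in the square of $\tilde B_n$ is also $O(\sqrt p)\,o(\sqrt p) = o(p)$. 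The standard algebraic identity for the difference of two ratios would then convert these componentwise expansions into $L^2_{MPR;n,2}(t) - L^2_{MPR;2}(t) = o(p)$ almost surely, which is exactly the stated convergence after dividing by $p$.

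The main obstacle is to justify that $D(t)$ is of exact order $p$, i.e.\ $\liminf_{p\to\infty} D(t)/p > 0$. Cauchy--Schwarz for the Frobenius inner product immediately gives $\tilde D_n \ge \tilde B_n^2$ almost surely and hence $D(t) \ge 0$ in the limit, but one must rule out the degenerate case in which the asymptotic equivalent of $\bS_n^{\pm}(t)\bSigma$ becomes Frobenius-collinear with $\bSigma\boldsymbol{\Pi}_0$. This non-degeneracy is the same condition that makes the oracle minimisation of \eqref{risk} with $\bS_n^{\#}=\bS_n^{\pm}(t)$ well-posed; under it, the continuous-mapping step outlined above would deliver the claimed almost-sure convergence $\tfrac{1}{p}\,|L^2_{MPR;n,2}(t) - L^2_{MPR;2}(t)| \to 0$.
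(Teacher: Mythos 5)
Your proposal is correct and follows essentially the same route as the paper, which proves this theorem simply by applying Corollary \ref{cor3MPR} to the three weighted traces $\tr(\bS_n^{\pm}(t)\bSigma)$, $\tr\bigl(\bS_n^{\pm}(t)\bSigma^2\boldsymbol{\Pi}_0/||\bSigma\boldsymbol{\Pi}_0||_F\bigr)$ and $||\bS_n^{\pm}(t)\bSigma||_F^2$ appearing in \eqref{risk-L-F2} and then passing to the limit in the ratio. Your extra care about the $1/p$ and $1/\sqrt{p}$ normalizations and about the strict positivity of the limiting denominator matches the self-normalization and non-degeneracy remarks the paper makes after \eqref{risk} and at the end of the introduction to Section \ref{sec:applic}, so there is no gap relative to the paper's own treatment.
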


\subsection{Bona-fide estimators}
Consistent estimators of $\alpha_{MPR}^*(t_0)$ and $\beta_{MPR}^*(t_0)$ are given in Theorem \ref{th_prec_MPR-est}, while Theorem \ref{th_prec_MPR-L-est} provides a consistent estimator of $L^2_{MPR;2}(t)$.

\begin{theorem}\label{th_prec_MPR-est} Let $\bY_n$ fulfill the stochastic representation (1). Then, under Assumptions \textbf{(A1)}-\textbf{(A2)} for any $t>0$, consistent estimators for $\alpha_{MPR}^*(t_0)$ and $\beta_{MPR}^*(t_0)$ are given by
{\footnotesize
\begin{eqnarray}
\label{alp_prec-opt-MPR-hat}
\hat{\alpha}_{MPR}^*(t_0)&=& \dfrac{-\hat{v}^{(1)}(t_0)\hat{d}_1\left(t_0,\frac{1}{p}\bSigma\right)\hat{q}_2\left(\frac{1}{p}\boldsymbol{\Pi}_0^2\right) +\hat{v}^{(1)}(t_0) \hat{d}_1\left(t_0,\frac{1}{p}\bSigma^2\boldsymbol{\Pi}_0\right) \hat{q}_1\left(\frac{1}{p}\boldsymbol{\Pi}_0\right)}{\hat{\grave{s}}_2\left(t_0,\frac{1}{p}\bSigma^2\right) \hat{q}_2\left(\frac{1}{p}\boldsymbol{\Pi}_0^2\right)
  -[\hat{v}^{(1)}(t_0)]^2\hat{d}^2_1\left(t_0,\frac{1}{p}\bSigma^2\boldsymbol{\Pi}_0\right) },\\[0.3cm]
\hat{\beta}_{MPR}^*(t_0)&=&\dfrac{\hat{\grave{s}}_2\left(t_0,\frac{1}{p}\bSigma^2\right)
\hat{q}_1\left(\frac{1}{p}\boldsymbol{\Pi}_0\right)
-[\hat{v}^{(1)}(t_0)]^2\hat{d}_1\left(t_0,\frac{1}{p}\bSigma\right)\hat{d}_1\left(t_0,\frac{1}{p}\bSigma^2\boldsymbol{\Pi}_0\right)}
{\hat{\grave{s}}_2\left(t_0,\frac{1}{p}\bSigma^2\right) \hat{q}_2\left(\frac{1}{p}\boldsymbol{\Pi}_0^2\right)
  -[\hat{v}^{(1)}(t_0)]^2\hat{d}^2_1\left(t_0,\frac{1}{p}\bSigma^2\boldsymbol{\Pi}_0\right) }\,,\label{bet_prec-opt-MPR-hat}
  \end{eqnarray}
}
with
{\scriptsize
\begin{eqnarray}\label{hat-grave_s2}
\hat{\grave{s}}_2\left(t_0,\frac{1}{p}\bSigma^2\right)&=& -\left\{[\hat{v}^{(1)}(t_0)]^2 \hat{d}_2\left(t_0,\frac{1}{p}\bSigma^2\right)-\frac{1}{2}\hat{v}^{(2)}(t_0) \hat{d}_1\left(t_0,\frac{1}{p}\bSigma^2\right)\right\} \\
&&\hspace{-1.4cm}+t_0\left\{\frac{1}{6}\hat{v}^{(3)}(t_0) \hat{d}_1\left(t_0,\frac{1}{p}\bSigma^2\right)-\hat{v}^{(1)}(t_0)\hat{v}^{(2)}(t_0) \hat{d}_2\left(t_0,\frac{1}{p}\bSigma^2\right)+[\hat{v}^{(1)}(t_0)]^3 \hat{d}_3\left(t_0,\frac{1}{p}\bSigma^2\right)\right\},\nonumber\\
\hat{d}_1\left(t_0,\frac{1}{p}\bSigma\right)
&=&\frac{[\hat{v}(t_0)]^{-2}+[\hat{v}^{(1)}(t_0)]^{-1}}{c_n}, \label{hd1-Sigma-MPR}\\
\hat{d}_1\left(t_0,\frac{1}{p}\bSigma^2\right)&=&
\frac{1}{\hat{v}(t_0)}\left\{d_0\left(t_0,\frac{1}{p}\bSigma^2\right) - 
\hat{d}_1\left(t_0,\frac{1}{p}\bSigma\right)\right\},\label{hd1-Sigma2-MPR}\\
\hat{d}_1\left(t_0,\frac{1}{p}\bSigma^2\boldsymbol{\Pi}_0\right)&=&\frac{1}{\hat{v}(t_0)}\hat{d}_0\left(t_0,\frac{1}{p}\bSigma^2\boldsymbol{\Pi}_0\right) +\frac{1}{[\hat{v}(t_0)]^2} \hat{d}_1\left(t_0,\frac{1}{p}\boldsymbol{\Pi}_0\right)\nonumber\\
&-& \frac{1}{[\hat{v}(t_0)]^3}\left\{ \frac{1}{p}\text{tr}\left\{\boldsymbol{\Pi}_0\right\}-
\hat{d}_0\left(t_0,\frac{1}{p}\boldsymbol{\Pi}_0\right)\right\},\label{hd1-Sigma2Pi0-MPR}\\
\hat{d}_2\left(t_0,\frac{1}{p}\bSigma^2\right)&=&
\frac{1}{\hat{v}(t_0)}\left\{\hat{d}_1\left(t_0,\frac{1}{p}\bSigma^2\right) - \frac{1}{c_n}\left(
\frac{1}{[\hat{v}(t_0)]^{3}}+\frac{\hat{v}^{(2)}(t_0)}{ 2[\hat{v}^{(1)}(t_0)]^3}\right)\right\},\label{hd2-Sigma2-MPR}\\
\hat{d}_3\left(t_0,\frac{1}{p}\bSigma^2\right)&=&
\frac{1}{\hat{v}(t_0)}\left\{\hat{d}_2\left(t_0,\frac{1}{p}\bSigma^2\right) - \frac{1}{c_n}\left(
\frac{1}{[\hat{v}(t_0)]^{4}}+\frac{1}{2}\frac{[\hat{v}^{(2)}(t_0)]^2}{[\hat{v}^{(1)}(t_0)]^5}-\frac{1}{6}\frac{\hat{v}^{(3)}(t_0)}{[\hat{v}^{(1)}(t_0)]^4}\right)
\right\},\label{hd3-Sigma2-MPR}
\end{eqnarray}
}
where $\hat{v}(t_0)$, $\hat{v}^{(1)}(t_0)$, $\hat{v}^{(2)}(t_0)$, $\hat{v}^{(3)}(t_0)$, $\hat{d}_0\left(t_0,\frac{1}{p}\boldsymbol{\Pi}_0\right)$, $\hat{d}_1\left(t_0,\frac{1}{p}\boldsymbol{\Pi}_0\right)$,
$\hat{q}_1\left(\frac{1}{p}\boldsymbol{\Pi}_0\right)$, $\hat{q}_2\left(\frac{1}{p}\boldsymbol{\Pi}_0^2\right)$, $\hat{d}_0\left(t_0,\frac{1}{p}\bSigma^2\right)$ and $\hat{d}_0\left(t_0,\frac{1}{p}\bSigma^2\boldsymbol{\Pi}_0\right)$ are given in (7), \eqref{hd0t}, \eqref{hd1t}, \eqref{hq_1}, \eqref{hq_2}, (56) and (57).
\end{theorem}

\begin{proof}[Proof of Theorem \ref{th_prec_MPR-est}:]
It holds that
\begin{eqnarray*}
d_1\left(t_0,\frac{1}{p}\bSigma\right)&=&\frac{1}{p}\text{tr}\left\{\left(v(t_0)\bSigma+\bI_p\right)^{-1}\bSigma\left(v(t_0)\bSigma+\bI_p\right)^{-1}\bSigma\right\}\\
&=&\frac{[v(t_0)]^{-2}-h_2(t)}{c_n}=\frac{[v(t_0)]^{-2}+[v^{(1)}(t_0)]^{-1}}{c_n},
\end{eqnarray*} 
\begin{eqnarray*}
d_1\left(t_0,\frac{1}{p}\bSigma^2\right)&=&\frac{1}{p}\text{tr}\left\{\left(v(t_0)\bSigma+\bI_p\right)^{-1}\bSigma\left(v(t_0)\bSigma+\bI_p\right)^{-1}\bSigma^2\right\}\\
&=&
\frac{1}{v(t_0)}\left\{d_0\left(t_0,\frac{1}{p}\bSigma^2\right) - 
d_1\left(t_0,\frac{1}{p}\bSigma\right)\right\},
\end{eqnarray*} 
\begin{eqnarray*}
d_1\left(t_0,\frac{1}{p}\bSigma^2\boldsymbol{\Pi}_0\right)&=&\frac{1}{p}\text{tr}\left\{\left(v(t_0)\bSigma+\bI_p\right)^{-1}\bSigma\left(v(t_0)\bSigma+\bI_p\right)^{-1}\bSigma^2\boldsymbol{\Pi}_0\right\}\\
&=&\frac{1}{v(t_0)}d_0\left(t_0,\frac{1}{p}\bSigma^2\boldsymbol{\Pi}_0\right) +\frac{1}{[v(t_0)]^2} d_1\left(t_0,\frac{1}{p}\boldsymbol{\Pi}_0\right)\\
&-& \frac{1}{[v(t_0)]^3}\left\{ \frac{1}{p}\text{tr}\left\{\boldsymbol{\Pi}_0\right\}-
d_0\left(t_0,\frac{1}{p}\boldsymbol{\Pi}_0\right)\right\},
\end{eqnarray*} \begin{eqnarray*}
d_2\left(t_0,\frac{1}{p}\bSigma^2\right)&=&\frac{1}{p}\text{tr}\left\{\left(v(t_0)\bSigma+\bI_p\right)^{-1}\bSigma\left(v(t_0)\bSigma+\bI_p\right)^{-1}\bSigma\left(v(t_0)\bSigma+\bI_p\right)^{-1}\bSigma^2\right\}\\
&=&
\frac{1}{v(t_0)}\left\{d_1\left(t_0,\frac{1}{p}\bSigma^2\right) - 
\frac{[v(t_0)]^{-3}-h_3(t)}{c_n}\right\}\\
&=&
\frac{1}{v(t_0)}\left\{d_1\left(t_0,\frac{1}{p}\bSigma^2\right) - \frac{1}{c_n}\left(
\frac{1}{[v(t_0)]^{3}}+\frac{v^{(2)}(t_0)}{ 2[v^{(1)}(t_0)]^3}\right)\right\},
\end{eqnarray*}
\begin{eqnarray*}
d_3\left(t_0,\frac{1}{p}\bSigma^2\right)&=&\frac{1}{p}\text{tr}\left\{\left(v(t_0)\bSigma+\bI_p\right)^{-1}\bSigma\left(v(t_0)\bSigma+\bI_p\right)^{-1}\bSigma\left(v(t_0)\bSigma+\bI_p\right)^{-1}\bSigma\left(v(t_0)\bSigma+\bI_p\right)^{-1}\bSigma^2\right\}\\
&=&
\frac{1}{v(t_0)}\left\{d_2\left(t_0,\frac{1}{p}\bSigma^2\right) - 
\frac{[v(t_0)]^{-4}-h_4(t)}{c_n}\right\}\\
&=&
\frac{1}{v(t_0)}\left\{d_2\left(t_0,\frac{1}{p}\bSigma^2\right) - \frac{1}{c_n}\left(
\frac{1}{[v(t_0)]^{4}}+\frac{1}{2}\frac{[v^{(2)}(t_0)]^2}{[v^{(1)}(t_0)]^5}-\frac{1}{6}\frac{v^{(3)}(t_0)}{[v^{(1)}(t_0)]^4}\right)
\right\}.
\end{eqnarray*}
\end{proof}

\begin{theorem}\label{th_prec_MPR-L-est} Let $\bY_n$ fulfill the stochastic representation (1). Then, under Assumptions \textbf{(A1)}-\textbf{(A2)} for any $t>0$, it holds that
\begin{eqnarray*}\frac{1}{p}\left|\hat{L}^2_{MPR;2}(t) - L^2_{MPR;2}(t)\right|\stackrel{a.s.}{\rightarrow} 0,~~
   \text{for} \quad p/n \rightarrow c \in (0,\infty) 
~ \text{as} ~~ n \rightarrow \infty
   \end{eqnarray*}
with
{\footnotesize
\begin{equation}
\label{L_prec-opt-MPR-hat}
\hat{L}^2_{MPR;2}(t) =\frac{1}{\hat{q}_2\left(\dfrac{1}{p}\boldsymbol{\Pi}_0^2\right)}\frac{[\hat{v}^{(1)}(t)]^2\left[\hat{d}_1\left(t,\dfrac{1}{p}\bSigma\right)\hat{q}_2\left(\dfrac{1}{p}\boldsymbol{\Pi}_0^2\right)
-\hat{d}_1\left(t,\dfrac{1}{p}\bSigma^2\boldsymbol{\Pi}_0\right) \hat{q}_1\left(\frac{1}{p}\boldsymbol{\Pi}_0\right)\right]^2
}{\hat{\grave{s}}_2\left(t,\frac{1}{p}\bSigma^2\right)\hat{q}_2\left(\frac{1}{p}\boldsymbol{\Pi}_0^2\right)
  -[\hat{v}^{(1)}(t)]^2\hat{d}^2_1\left(t,\frac{1}{p}\bSigma^2\boldsymbol{\Pi}_0\right) },
\end{equation}
}
where $\hat{v}^{(1)}(t)$, 
$\hat{q}_1\left(\frac{1}{p}\boldsymbol{\Pi}_0\right)$, $\hat{q}_2\left(\frac{1}{p}\boldsymbol{\Pi}_0^2\right)$, $\hat{\grave{s}}_2(t,\bSigma^2)$, $\hat{d}_1\left(t,\dfrac{1}{p}\bSigma\right)$ and $\hat{d}_1\left(t,\dfrac{1}{p}\bSigma^2\boldsymbol{\Pi}_0\right)$ are given in (7), \eqref{hq_1}, \eqref{hq_2}, \eqref{hat-grave_s2}, \eqref{hd1-Sigma-MPR} and \eqref{hd1-Sigma2Pi0-MPR}, respectively.
\end{theorem}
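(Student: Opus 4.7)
The plan is to view $\hat{L}^2_{MPR;2}(t)$ as a plug-in estimator assembled from finitely many consistent building blocks, and then to invoke the continuous mapping theorem to transfer consistency through the rational function that defines $L^2_{MPR;2}(t)$ in Theorem \ref{th_prec_MPR-L}.

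First, I would catalog the constituents. $L^2_{MPR;2}(t)$ is a rational function of $v^\prime(t)$, $d_1(t,\bSigma)$, $d_1(t,\bSigma^2\boldsymbol{\Pi}_0)$, $\grave{s}_2(t,\bSigma^2)$, $\tr(\bSigma\boldsymbol{\Pi}_0)$, and $\|\bSigma\boldsymbol{\Pi}_0\|_F^2 = \tr(\bSigma^2\boldsymbol{\Pi}_0^2)$. Consistency (after the $1/p$-normalization built into the hat-quantities) is already available in the paper: the derivatives $\hat{v}^{(j)}(t)$ for $j=0,1,2,3$ are consistent by \eqref{hv0-all1} via Corollary \ref{cor0Ridge}; $\hat{q}_1(\frac{1}{p}\boldsymbol{\Pi}_0)$ and $\hat{q}_2(\frac{1}{p}\boldsymbol{\Pi}_0^2)$ are consistent via \eqref{hq_1}--\eqref{hq_2} and Corollary \ref{cor5}, once one observes that $\frac{1}{p}\boldsymbol{\Pi}_0^k$ meets Assumption \textbf{(A3)} because $\lambda_{\max}(\frac{1}{p}\boldsymbol{\Pi}_0^k)=O(1/p)=o(1)$ under Assumption \textbf{(A1)}; and $\hat{d}_0(t,\bTheta)$, $\hat{d}_1(t,\bTheta)$ are consistent via \eqref{hd0t}--\eqref{hd1t}, which follow from Theorem \ref{th2}.

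The second step is to produce consistent estimators of $d_1(t,\frac{1}{p}\bSigma)$, $d_1(t,\frac{1}{p}\bSigma^2)$, $d_1(t,\frac{1}{p}\bSigma^2\boldsymbol{\Pi}_0)$, $d_2(t,\frac{1}{p}\bSigma^2)$ and $d_3(t,\frac{1}{p}\bSigma^2)$, from which $\hat{\grave{s}}_2(t,\frac{1}{p}\bSigma^2)$ assembles by Corollary \ref{cor3MPR} via \eqref{hat-grave_s2}. These are exactly the identities \eqref{hd1-Sigma-MPR}--\eqref{hd3-Sigma2-MPR} and \eqref{hd1-Sigma2Pi0-MPR} of the theorem, which I would derive by iterating the elementary identity $\bSigma(v(t)\bSigma+\bI_p)^{-1}=v(t)^{-1}(\bI_p-(v(t)\bSigma+\bI_p)^{-1})$ inside the definition \eqref{th2-dk} of $d_k(t,\cdot)$, reducing each target to a continuous function of $\hat{v}^{(j)}(t)$, $\hat{d}_0(t,\cdot)$, $\hat{d}_1(t,\cdot)$, and the sample quantities $\frac{1}{p}\tr[\bS_n]$, $\frac{1}{p}\tr[\bS_n\boldsymbol{\Pi}_0]$ (consistent by Corollary \ref{cor5}). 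Since a finite composition of continuous maps preserves almost sure convergence, each intermediate estimator converges a.s.\ to its deterministic counterpart, and substitution into \eqref{hat-grave_s2} yields consistency of $\hat{\grave{s}}_2(t,\frac{1}{p}\bSigma^2)$.

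Finally, applying the continuous mapping theorem to the rational expression \eqref{L_prec-opt-MPR-hat} gives the desired almost sure convergence, provided that the denominator is bounded away from zero asymptotically. This is the step I expect to require most care. I would identify the denominator, up to the appropriate $p$-scaling, as the limit of $p^{-2}\bigl\{\|\bS_n^{\pm}(t)\bSigma\|_F^2\,\|\boldsymbol{\Pi}_0\bSigma\|_F^2 - [\tr(\bS_n^{\pm}(t)\bSigma^2\boldsymbol{\Pi}_0)]^2\bigr\}$, whose nonnegativity is the Cauchy--Schwarz inequality for the Frobenius inner product of $\bS_n^{\pm}(t)\bSigma$ and $\boldsymbol{\Pi}_0\bSigma$, as already exploited in the derivation of the oracle intensities following \eqref{alfa-gen}. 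Strict positivity in the limit amounts to ruling out asymptotic proportionality of $\bS_n^{\pm}(t)$ and $\boldsymbol{\Pi}_0$, which follows from Assumption \textbf{(A1)} since the limiting spectral distribution of $\bS_n^{\pm}(t)$ is supported on a non-degenerate interval whereas $\boldsymbol{\Pi}_0$ is fixed. Combining this positivity with the a.s.\ convergence of the numerator and the outer factor $\hat{q}_2(\frac{1}{p}\boldsymbol{\Pi}_0^2)$ completes the proof.
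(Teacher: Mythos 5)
Your proposal is correct and follows essentially the same route the paper takes: the paper treats this theorem as an immediate consequence of plugging the consistent building blocks (the $\hat{v}^{(j)}(t)$ from \eqref{hv0-all1}, $\hat{q}_1$, $\hat{q}_2$, and the $\hat{d}_k$ identities derived in the proof of Theorem \ref{th_prec_MPR-est} by iterating $\bSigma(v(t)\bSigma+\bI_p)^{-1}=v(t)^{-1}(\bI_p-(v(t)\bSigma+\bI_p)^{-1})$) into the rational expression and invoking continuity, with the denominator's strict positivity handled by the Cauchy--Schwarz argument already given after \eqref{alfa-gen}. Your write-up is a faithful, somewhat more explicit version of that argument (only note that the boundedness of $\lambda_{\max}(\boldsymbol{\Pi}_0)$ is a property of the chosen target, not a consequence of Assumption \textbf{(A1)}).
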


In the case of $t=0$, a consistent estimator for $L^2_{MPR;2}(0)$ is constructed by using the properties of the Moore-Penrose inverse. Namely, it holds that
{\footnotesize
\begin{equation}
\label{L0_prec-opt-MPR-hat}
\hat{L}^2_{MPR;2}(0) =\frac{1}{\hat{q}_2\left(\dfrac{1}{p}\boldsymbol{\Pi}_0^2\right)}\frac{[\hat{v}^{(1)}(0)]^2\left[\hat{d}_1\left(\dfrac{1}{p}\bSigma\right)\hat{q}_2\left(\dfrac{1}{p}\boldsymbol{\Pi}_0^2\right)
-{\hat{d}_1\left(0, \dfrac{1}{p}\bSigma^2\boldsymbol{\Pi}_0\right)} \hat{q}_1\left(\frac{1}{p}\boldsymbol{\Pi}_0\right)\right]^2
}{\hat{s}_2\left(\frac{1}{p}\bSigma^2\right)\hat{q}_2\left(\frac{1}{p}\boldsymbol{\Pi}_0^2\right)
  -[\hat{v}^{(1)}(0)]^2{\hat{d}^2_1\left(0, \frac{1}{p}\bSigma^2\boldsymbol{\Pi}_0\right) }}
\end{equation}
}
with
\begin{eqnarray}\label{hat-s2}
\hat{s}_2\left(\frac{1}{p}\bSigma^2\right)&=& -\left\{[\hat{v}^{(1)}(0)]^2 \hat{d}_2\left(\frac{1}{p}\bSigma^2\right)-\frac{1}{2}\hat{v}^{(2)}(0) \hat{d}_1\left(\frac{1}{p}\bSigma^2\right)\right\},
\end{eqnarray}
where $\hat{v}^{(1)}(0)$, $\hat{v}^{(2)}(0)$, $\hat{q}_1\left(\frac{1}{p}\boldsymbol{\Pi}_0\right)$, $\hat{q}_2\left(\frac{1}{p}\boldsymbol{\Pi}_0^2\right)$, $\hat{d}_1\left(\dfrac{1}{p}\bSigma\right)$, $\hat{d}_1\left(0, \dfrac{1}{p}\bSigma^2\boldsymbol{\Pi}_0\right)$, $\hat{d}_1\left(\frac{1}{p}\bSigma^2\right)$ and $\hat{d}_2\left(\frac{1}{p}\bSigma^2\right)$ are given in (8), \eqref{hq_1}, \eqref{hq_2}, (48), (49), (50) and (51), respectively. Furthermore, if $\boldsymbol{\Pi}_0=\bI_p$, then one can simplify the computation of $\hat{d}_1\left(\dfrac{1}{p}\bSigma^2\boldsymbol{\Pi}_0\right)$ by using $d_0\left(0,\frac{1}{p}\bI_p\right)=\frac{c_n-1}{c_n}$.

To find the optimal value of the tuning parameter $t$, we first maximize $\hat{L}^2_{MPR;2}(t)$ over the open interval $(0,\infty)$ to get $t^*$. If $\hat{L}^2_{MPR;2}(t^*)>\hat{L}^2_{MPR;2}(0)$, then $t^*$ should be used as the optimal value of the tuning parameter. Otherwise, one opts for the Moore-Penrose inverse and the optimal shrinkage 
intensities are determined as in Section 3.1.1.

\section{Oracle estimator of the GMV portfolio weights with the Moore-Penrose inverse}

Next, we derive the expression of the oracle estimator of the GMV portfolio weights.
%The proof of the following theorem follows directly from Theorem \ref{th1} and Corollary \ref{cor1}.
\begin{theorem}\label{MP_GMV_as}
 Let $\bY_n$ possess the stochastic representation as in (1). 
% Assume that the relative loss of the target portfolio expressed as
% \begin{equation}\label{eq:rel_loss}
%     L_{\bb}=\frac{\bb^\top\bSigma \bb - \frac{1}{\bOne^\top\bSigma^{-1}\bOne}}{\frac{1}{\bOne^\top\bSigma^{-1}\bOne}}=\bb^\top\bSigma \bb \bOne^\top\bSigma^{-1}\bOne-1
% \end{equation}
% is uniformly bounded in $p$. 
Then, under Assumptions \textbf{(A1)}-\textbf{(A2)}, it holds that
% \begin{enumerate}[(i)]
%     \item
%     \begin{equation}\label{Li2-lambda-as}
%         \left|L_{n;2}(\lambda)-L_{2}(\lambda)\right| \stackrel{a.s.}{\rightarrow} 0
%     \end{equation}
% for $p/n \to c \in (0,\infty)$ as $n \to \infty$ with\footnote{This convergence is uniform if $\lambda$ stays in a compact interval away from zero and one.}
% \begin{equation}\label{Li2-lambda-star}
%     L_{2}(\lambda)=\frac{\left(1-\frac{1}{\bb^\top\bSigma \bb}\frac{d_1\left(\frac{\bOne\bb^\top\bSigma}{p}  \right)}{d_1\left(\frac{\bOne\bOne^\top}{p}\right)} \right)^2}{1-2\frac{d_1\left(\frac{\bOne\bb^\top\bSigma}{p}  \right)}{\bb^\top\bSigma \bb}+\frac{d_3\left(\frac{\bOne\bOne^\top}{p}\right)}{v(0)\bb^\top\bSigma \bb}}
% \end{equation}
%     \item
    \begin{equation}\label{alpha-as}
  \left|\alpha_n^*-\alpha^*\right| \stackrel{a.s.}{\rightarrow} 0
\end{equation}
for $p/n \to c \in (1, +\infty)$ as $n \to \infty$ with
\begin{equation}\label{psi-star}
\alpha^*=\left\{\begin{array}{cc}
  \frac{p\bb^\top\bSigma \bb-\frac{d_1\left(\bOne\bb^\top\bSigma  \right)}{d_1\left(\frac{\bOne\bOne^\top}{p}\right)}}{p\bb^\top\bSigma \bb-2\frac{d_1\left(\bOne\bb^\top\bSigma \right)}{d_1\left( \frac{\bOne\bOne^\top}{p}\right)}+\frac{d_3\left(\frac{\bOne\bOne^\top}{p}\right)}{d_1^2\left(\frac{\bOne\bOne^\top}{p}\right)}}   & ~~~\text{if~~ $p\bb^\top\bSigma\bb = O(1),$}\\[0.8cm]
  1, & \text{otherwise.}
\end{array}\right.
\end{equation}
\end{theorem}

\begin{proof}[Proof of Theorem \ref{MP_GMV_as}:]
First, it is noted that in case of Moore-Penrose inverse, i.e., $\bS_n^{\#}(t)=\bS_n^+$ the function $L_{n;2}(t)$ is independent of $t$ in the case of the Moore-Penrose inverse, i.e., $\bS_n^{\#}(t)=\bS_n^+$, so we just need to find the asymptotic behavior of $\alpha_n^*(t)=\alpha_n^*$ with
\begin{eqnarray*}
\alpha_n^* &=& 
    \frac{
        \bb^\top \bSigma \left(\bb -  \mathbf{w}_{\bS_n^{+}}\right)
    }{
        \left(\bb -  \mathbf{w}_{\bS_n^{+}}\right)^\top \bSigma\left(\bb- \mathbf{w}_{\bS_n^{+}}\right)
    }=      \frac{
        p\bb^\top \bSigma \bb -  \frac{\bb^\top \bSigma \bS_n^+\bOne}{\frac{1}{p}\bOne^\top\bS_n^+\bOne}
    }{
        p\bb^\top\bSigma\bb -  2\frac{\bb^\top\bSigma\bS_n^+\bOne}{\frac{1}{p}\bOne^\top\bS_n^+\bOne}+ \frac{\frac{1}{p}\bOne^\top\bS_n^+\bSigma\bS_n^+\bOne}{(\frac{1}{p}\bOne^\top\bS_n^+\bOne)^2}
    }
\end{eqnarray*}

Next, we study the asymptotic behavior of $\bb^\top \bSigma\bS_n^+\bOne$ and $\frac{1}{p}\bOne^\top\bS_n^+\bOne$. Since $\tr\left(\frac{\bOne\bOne^\top}{p}\right)=1$, $$\tr\left(\bOne\bb^\top \bSigma\right)=\tr\left(1/2\left(\bOne\bb^\top+\bb\bOne^\top\right) \bSigma\right)\leq \lambda_{max}(\bSigma)\bb^\top\bOne=\lambda_{max}(\bSigma)<\infty,$$ 
and using Theorem 2.1, we get for $p,n\to\infty$ with $p/n\to c>1$ that
\begin{equation}\label{ratioterm}
   \frac{\bb^\top \bSigma\bS_n^+\bOne}{\frac{1}{p}\bOne^\top\bS_n^+\bOne}\overset{a.s.}{\longrightarrow} \frac{d_1\left(\bOne\bb^\top \bSigma\right)}{d_1\left(\frac{\bOne\bOne^\top }{p}\right)}\,,
\end{equation}
where due to Theorem 2.1, it holds that
\begin{eqnarray}\label{1orderterm}
\tr(\bS_n^+\bTheta)\overset{d.a.s.}{\longrightarrow}(-v'(0))d_1\left(\bTheta\right) \quad   
\text{with}\quad \bTheta\in\{\bOne\bb^\top \bSigma,~ \frac{1}{p}\bOne\bOne^\top\}.
\end{eqnarray}

To examine the asymptotic behavior of$\frac{1}{p}\bOne^\top\bS_n^+\bSigma\bS_n^+\bOne$, we consider the eigenvalue decomposition of $\bSigma=\sum\limits_{i=1}^p \mathbf{v}_i\mathbf{v}^\top_i\tau_i$ with $\tau_1,\ldots, \tau_p$ eigenvalues of $\bSigma$ and $\bv_1, \ldots, \bv_p$ the corresponding eigenvectors. It holds that
\begin{eqnarray*}
   \frac{1}{p}\bOne^\top\bS_n^+\bSigma\bS_n^+\bOne=\frac{1}{p}\sum\limits_{i=1}^p \bOne^\top\bS_n^+\mathbf{v}_i\mathbf{v}^\top_i\bS_n^+\bOne \tau_i=\sum\limits_{i=1}^p \left(\frac{1}{\sqrt{p}}\bOne^\top\bS_n^+\mathbf{v}^\top_i\right)^2\tau_i\,.
\end{eqnarray*}

Since $\tr\left(\frac{1}{\sqrt{p}}\mathbf{v}_i\bOne^\top\right)$ is bounded by one due to the Cauchy-Schwarz inequality, the application of Theorem 2.1 yields 
\begin{eqnarray*}
    \left|\frac{1}{\sqrt{p}}\bOne^\top\bS_n^+\mathbf{v}_i- (-v'(0))d_1\left( \frac{\mathbf{v}_i\bOne^\top}{\sqrt{p}}\right)\right|{a.s.}{\rightarrow} 0
\quad \text{for} \quad p/n \rightarrow c \in (1,\infty) \quad \text{as} \quad n \rightarrow \infty.
\end{eqnarray*}
Furthermore, due to $\frac{\mathbf{v}^\top_i\bOne}{\sqrt{p}}=O(1)$, one immediately gets for $(p, n)\to\infty$ using the trace inequality that 
\begin{eqnarray}\label{smallo}
  && \frac{1}{\sqrt{p}}\bOne^\top\bS_n^+\mathbf{v}_i- (-v'(0))d_1\left( \frac{\mathbf{v}_i\bOne^\top}{\sqrt{p}}\right)=o_{\mathbbm{P}}\left(\frac{\mathbf{v}^\top_i\bOne}{\sqrt{p}}\right),\\
  && (-v'(0))d_1\left( \frac{\mathbf{v}_i\bOne^\top}{\sqrt{p}}\right)=O\left(\frac{\mathbf{v}^\top_i\bOne}{\sqrt{p}}\right) \label{bigO}\,. 
\end{eqnarray}
The application of \eqref{smallo} and \eqref{bigO} results into
    \begin{eqnarray*}
\sum\limits_{i=1}^p \left(\frac{1}{\sqrt{p}}\bOne^\top\bS_n^+\mathbf{v}^\top_i\right)^2\tau_i&=&\sum\limits_{i=1}^p \left(\frac{1}{\sqrt{p}}\bOne^\top\bS_n^+\mathbf{v}^\top_i-(-v'(0))d_1\left( \frac{\mathbf{v}_i\bOne^\top}{\sqrt{p}}\right)+(-v'(0))d_1\left( \frac{\mathbf{v}_i\bOne^\top}{\sqrt{p}}\right)\right)^2\tau_i\\
&=& \sum\limits_{i=1}^p \underbrace{\left(\frac{1}{\sqrt{p}}\bOne^\top\bS_n^+\mathbf{v}^\top_i-(-v'(0))d_1\left( \frac{\mathbf{v}_i\bOne^\top}{\sqrt{p}}\right)\right)^2}_{o_{\mathbbm{P}}\left(\frac{\mathbf{v}^\top_i\bOne}{\sqrt{p}}\right)^2}\tau_i\\
&+& 2 \sum\limits_{i=1}^p \underbrace{\left(\frac{1}{\sqrt{p}}\bOne^\top\bS_n^+\mathbf{v}^\top_i-(-v'(0))d_1\left( \frac{\mathbf{v}_i\bOne^\top}{\sqrt{p}}\right)\right)}_{o_{\mathbbm{P}}\left(\frac{\mathbf{v}^\top_i\bOne}{\sqrt{p}}\right)} \underbrace{(-v'(0))d_1\left( \frac{\mathbf{v}_i\bOne^\top}{\sqrt{p}}\right)}_{O\left(\frac{\mathbf{v}^\top_i\bOne}{\sqrt{p}}\right)}\tau_i\\
&+& \sum\limits_{i=1}^p  (-v'(0))^2d^2_1\left( \frac{\mathbf{v}_i\bOne^\top}{\sqrt{p}}\right)\tau_i\\
&=& C\sum\limits_{i=1}^p o_{\mathbbm{P}}\left(\frac{\bOne^\top\mathbf{v}_i\mathbf{v}^\top_i\bOne}{p}\right)\tau_i+\sum\limits_{i=1}^p  (-v'(0))^2d^2_1\left( \frac{\mathbf{v}_i\bOne^\top}{\sqrt{p}}\right)\tau_i\,,
\end{eqnarray*}
where the constant $C$ is independent of $p$ and $n$, and
\begin{eqnarray*}
    \sum\limits_{i=1}^p o_{\mathbbm{P}}\left(\frac{\bOne^\top\mathbf{v}_i\mathbf{v}^\top_i\bOne}{p}\right)\tau_i &\leq&   o_{\mathbbm{P}}\left(\sum\limits_{i=1}^p\frac{\bOne^\top\mathbf{v}_i\mathbf{v}^\top_i\bOne}{p}\right) \tau_{max}=  o_{\mathbbm{P}}\left(\frac{\bOne^\top\sum\limits_{i=1}^p\mathbf{v}_i\mathbf{v}^\top_i\bOne}{p}\right)\tau_{max} \\
    &=&   o_{\mathbbm{P}}\left(\frac{1}{p}\bOne^\top\underbrace{[\bv_1, \ldots, \bv_p][\bv_1, \ldots, \bv_p]^\top}_{=\bI_p}\bOne\right)\tau_{max}=  o_{\mathbbm{P}}\left(\tau_{max}\right)\,.
\end{eqnarray*}

Since the maximum eigenvalue of $\bSigma$, i.e., $\tau_{max}=\lambda_{max}(\bSigma)$, is uniformly bounded in $p$, we get $o_{\mathbbm{P}}\left(\tau_{max}\right)=o_{\mathbbm{P}}\left(1\right)$ and the definitions of $d_1(\cdot)$ and $d_3(\cdot)$ from (11) yield
\begin{eqnarray*}
&&\sum\limits_{i=1}^p \left(\frac{1}{\sqrt{p}}\bOne^\top\bS_n^+\mathbf{v}^\top_i\right)^2\tau_i    \overset{d.a.s.}{\longrightarrow}    \sum\limits_{i=1}^p  (-v'(0))^2d^2_1\left( \frac{\mathbf{v}_i\bOne^\top}{\sqrt{p}}\right)\tau_i
%+ o_{\mathbbm{P}}\left(1\right)
\nonumber\\
%&=&  (v'(0))^2\sum\limits_{i=1}^pd^2_1\left(\frac{\mathbf{v}_i\bOne^\top}{\sqrt{p}}\right)+o_{\mathbbm{P}}\left(1\right)\\
&=&(v'(0))^2\sum\limits_{i=1}^p\text{tr}^2\left\{\left(v(0)\bSigma+\bI_p\right)^{-1}\left[\bSigma\left(v(0)\bSigma+\bI_p\right)^{-1}\right]\frac{\mathbf{v}_i\bOne^\top}{\sqrt{p}}\right\}\tau_i
%+ o_{\mathbbm{P}}\left(1\right)
\nonumber\\
&=& \frac{(v'(0))^2}{p}\sum\limits_{i=1}^p \bOne^\top \left(v(0)\bSigma+\bI_p\right)^{-1}\left[\bSigma\left(v(0)\bSigma+\bI_p\right)^{-1}\right] \mathbf{v}_i\mathbf{v}_i^\top \left(v(0)\bSigma+\bI_p\right)^{-1}\left[\bSigma\left(v(0)\bSigma+\bI_p\right)^{-1}\right]\bOne \tau_i
%+o_{\mathbbm{P}}\left(1\right)
\nonumber\\
&=& \frac{(v'(0))^2}{p}\bOne^\top \left(v(0)\bSigma+\bI_p\right)^{-1}\left[\bSigma\left(v(0)\bSigma+\bI_p\right)^{-1}\right] \underbrace{\left(\sum\limits_{i=1}^p \mathbf{v}_i\mathbf{v}_i^\top \tau_i\right)}_{\bSigma}\left(v(0)\bSigma+\bI_p\right)^{-1}\left[\bSigma\left(v(0)\bSigma+\bI_p\right)^{-1}\right]\bOne 
%+o_{\mathbbm{P}}\left(1\right)
\nonumber\\
&=& (v'(0))^2\tr\left\{ \left(v(0)\bSigma+\bI_p\right)^{-1}\left[\bSigma\left(v(0)\bSigma+\bI_p\right)^{-1}\right]^3 \frac{\bOne\bOne^\top}{p} \right\}%+o_{\mathbbm{P}}\left(1\right)
= (v'(0))^2d_3\left(\frac{\bOne\bOne^\top}{p}\right)
%+o_{\mathbbm{P}}\left(1\right)
\,.
\end{eqnarray*}

Finally, the application of the last equality, \eqref{ratioterm}, \eqref{1orderterm} and the expression of $\alpha^*_n$ finish the proof of the theorem.
\end{proof}

\begin{remark}
 It should be noted that the condition $p\bb^\top\bSigma\bb = O(1)$ actually implies that the variance of the target portfolio is of order $O(1/p)$. This situation is quite natural since the variance of the GMV portfolio is also typically at most $1/p$, similar to that of the equally weighted portfolio, represented as $1/p\bOne$. On the other hand, if the rate of $\bb^\top\bSigma\bb$ exceeds $1/p$, then $\alpha_n^* \to 1$, indicating that the target portfolio may be suboptimal.
\end{remark}

\section{Additional results of numerical studies}\label{sec:S-num-study}

\subsection{Benchmarks: Precision matrix}\label{sec:benchmarks-prec}

The following benchmark approaches are considered for the estimation of the precision matrix:
\begin{itemize}
%\item Scaled standard estimator (SSE) considered in \cite{mestre2005finite}, \cite{srivastava2007}, \cite{kubokawa2008estimation}. It is defined by
%\begin{equation*}
%\widehat{\boldsymbol{\Pi}}_{SSE} = \dfrac{n-p-2}{n-1}\bS_n^{-1}\delta_{(p<n)} + \dfrac{p}{n-1}\bS_n^{+}\delta_{(p\geq n)},
%\end{equation*}
%where $\bS_n^+$ is the Moore-Penrose inverse of $\bS_n$ and $\delta_{(\cdot)}$ is a Dirac delta function.
\item {\bf Empirical Bayes} ridge-type estimator of \cite{kubokawa2008estimation} given by
\begin{equation}\label{EBR-prec}
\widehat{\boldsymbol{\Pi}}_{EBR}=p \left((n-1)\bS_n+\tr[\bS_n]\bI_p\right)^{-1}.
\end{equation}
\item {\bf Optimal ridge} estimator of \cite{wang2015shrinkage} expressed as
\begin{equation}\label{OR-prec}
\widehat{\boldsymbol{\Pi}}_{OR}=
\hat{\alpha}_{OR;n}\left(\bS_n+\hat{\beta}_{OR;n}\bI_p\right)^{-1},
\end{equation}
where $\hat{\alpha}_{OR;n}=\hat{R}_{1,n}(\hat{\beta}_{OR;n})/\hat{R}_{2,n}(\hat{\beta}_{OR;n})$,
\[\hat{R}_{1,n}(\lambda)=\frac{\hat{a}_{1;n}(\lambda)}{1-c_n\hat{a}_{1;n}(\lambda)},\quad
\hat{R}_{2,n}(\lambda)=
\frac{\hat{a}_{1;n}(\lambda)}{(1-c_n\hat{a}_{1;n}(\lambda))^3}
-\frac{\hat{a}_{2;n}(\lambda)}{(1-c_n\hat{a}_{1;n}(\lambda))^4}
\]
with 
$\hat{a}_{1;n}(\lambda)=1-\frac{1}{p}\tr[(\bS_n/\lambda+\bI_p)^{-1}],\,
\hat{a}_{2;n}(\lambda)=\frac{1}{p}\tr[(\bS_n/\lambda+\bI_p)^{-1}-\frac{1}{p}\tr[(\bS_n/\lambda+\bI_p)^{-2},$ and $\hat{\beta}_{OR;n}$ minimizes
$L_{OR;n}=1-\hat{R}_{1,n}(\lambda)^2/\hat{R}_{2,n}(\lambda)$.

\item {\bf Inverse nonlinear shrinkage} estimator of the covariance matrix introduced in \cite{lw20}. For $i\in\{1,\ldots,p\}$, is given by
\begin{equation}\label{or_sh_cov_mat}
\bS_{NLSh}= \bU \text{diag}(d_1^{or},...,d_p^{or})\bU^\top, \quad d_i^{or}=\left\{
  \begin{array}{ll}
    \frac{d_i}{|1-c-c d_i \breve{m}_{F}(d_i)|^2},    & \text{if $d_i>0$},\\
    \frac{1}{(c-1)\breve{m}_{\underline{F}}(0)}, & \text{if $d_i=0$},
  \end{array}
 \right.
\end{equation}
where $\bU=(\mathbf{u}_1,...\mathbf{u}_p)$ is the matrix with the sample eigenvectors of $\bS_n$, $d_i$, $i=1,...,$ are the sample eigenvalues of $\bS_n$ and $\breve{m}_{F}(x)=\lim\limits_{z\to x}m_{F}(z)$ with $m_{F}(z)$ the limiting Stieltjes transform of the sample covariance matrix. A numerical approach to estimate $\breve{m}_{F}(x)$ is provided in \cite{lw20} and is available in the R-package \textit{HDShOP} (see \cite{HDShOP}). 
\item {\bf Oracle nonlinear shrinkage} estimator is derived for the loss function considered in \cite{ledoit2021shrinkage} and it is given by
\begin{equation}\label{or1_sh_cov_mat}
\bS_{oNLSh}= \bU \text{diag}(\tilde{d}_1^{or},...,\tilde{d}_p^{or})\bU^\top, \quad \tilde{d}_i^{or}=\frac{\mathbf{u}_i^\top \bSigma^2  \mathbf{u}_i}{\mathbf{u}_i^\top \bSigma\mathbf{u}_i}\,.
\end{equation}
\end{itemize}

%For the ridge type-inverse and the shrinkage estimator based on the ridge-type inverse we set $t_0 \in \{p^{-1},p^{-1/4}\}$. The first value has been used in the literature (see, e.g., \cite{bodnarokhrinparolya2023}), while the second one is motivated by Figure \ref{fig:Ridge}, which shows better convergence properties in the estimation of $v(t)$ and $v^{(1)}(t)$.

\subsection{Additional figures: Precision matrix}\label{sec:S-fig-prec}

In Figure \ref{fig:shrinkage-prec-add}, we present the results of the simulation study obtained for the three suggested shrinkage estimators of the precision matrix and three benchmark approaches. The results of Figure \ref{fig:shrinkage-prec-add} complement the findings of Figure 3 by adding the values of the PRIAL computed for the empirical Bayes estimator and the inverse nonlinear shrinkage estimator. In Figure \ref{fig:shrinkage-prec-time}, we report the average computational time of the most relevant estimators presented in Figure \ref{fig:shrinkage-prec-add} based on 100 repetitions. The MP estimator is used as a benchmark since it is essentially a plug-in estimator, requiring only the inversion of the nonzero eigenvalues. As expected, the MP shrinkage estimator performs best in terms of computational efficiency, being closest to the benchmark. The NL shrinkage estimator ranks second, while all ridge-type estimators are the most computationally demanding. %Moreover, the computational time of the ridge-type estimators appears unstable with respect to the ratio $c=p/n$.

\begin{figure}[h!t]
\centering
\begin{tabular}{cc}
\hspace{-0.5cm}\includegraphics[width=7cm]{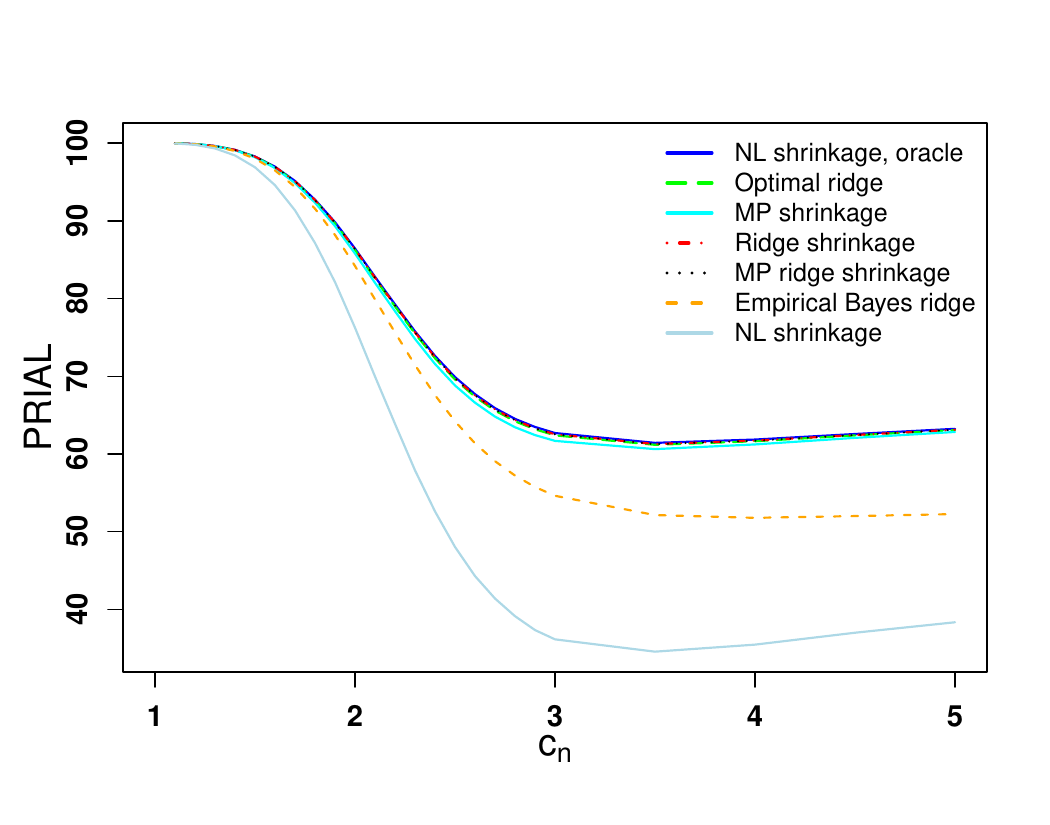}&
\hspace{-0.5cm}\includegraphics[width=7cm]{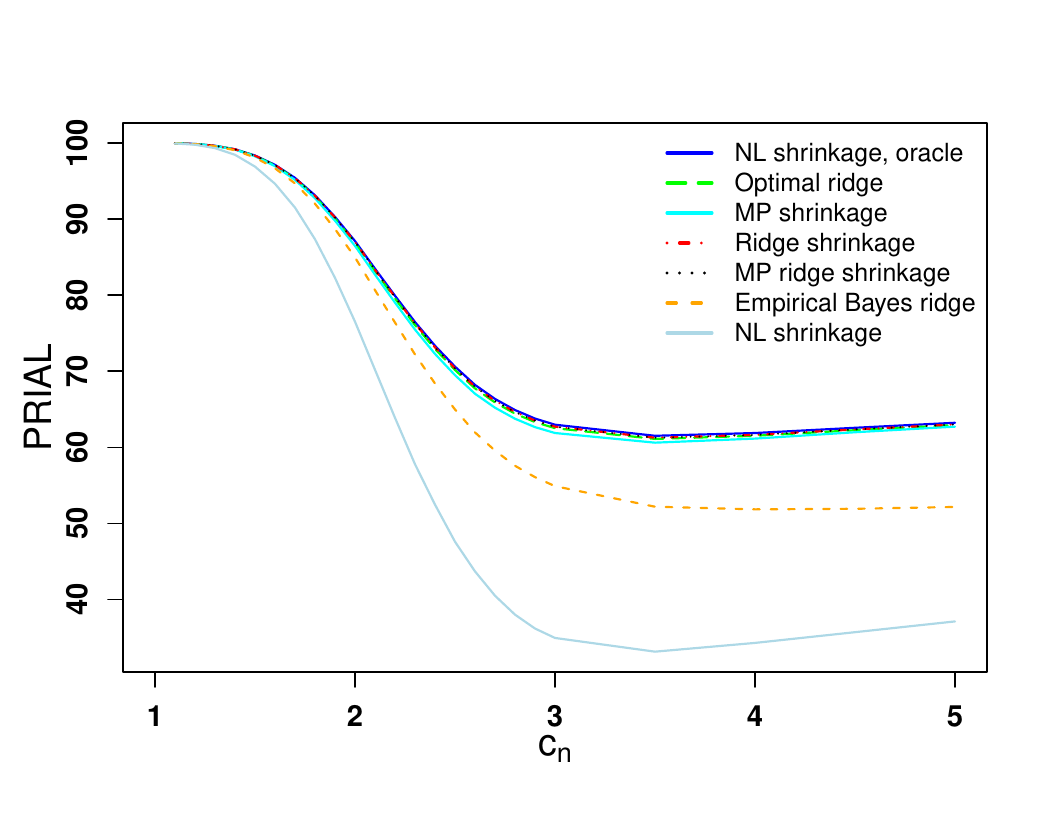}\\[-1cm]
\hspace{-0.5cm}\includegraphics[width=7cm]{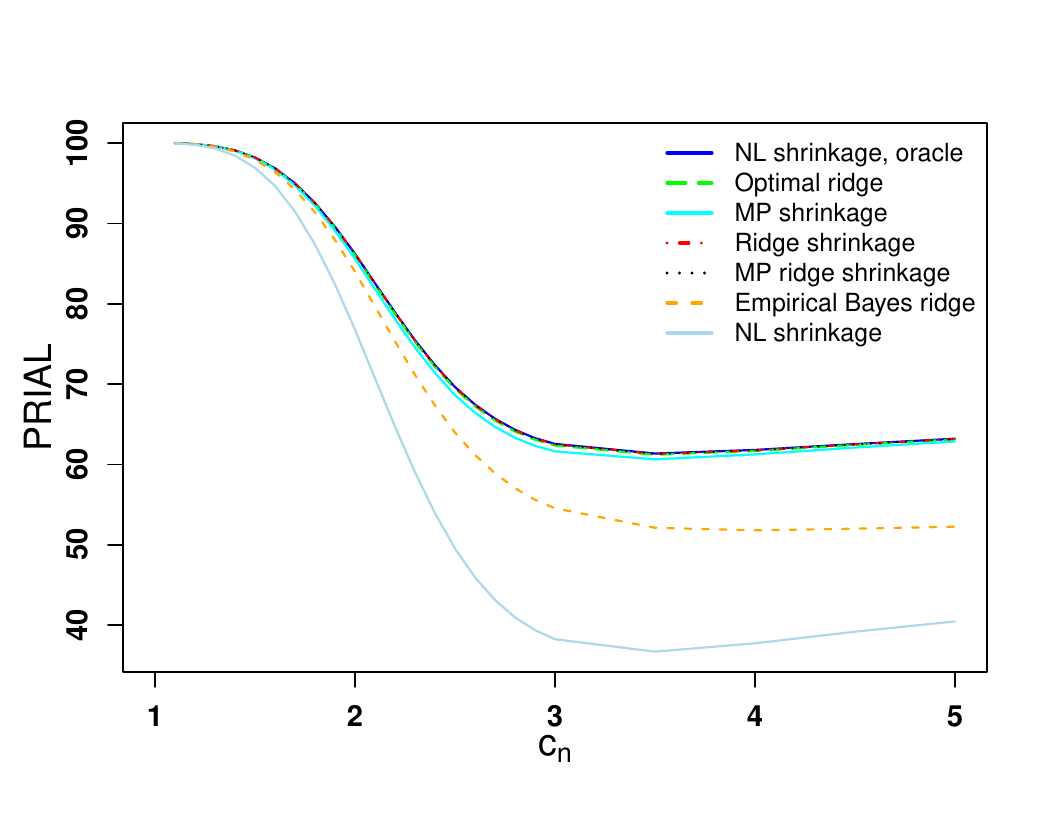}&
\hspace{-0.5cm}\includegraphics[width=7cm]{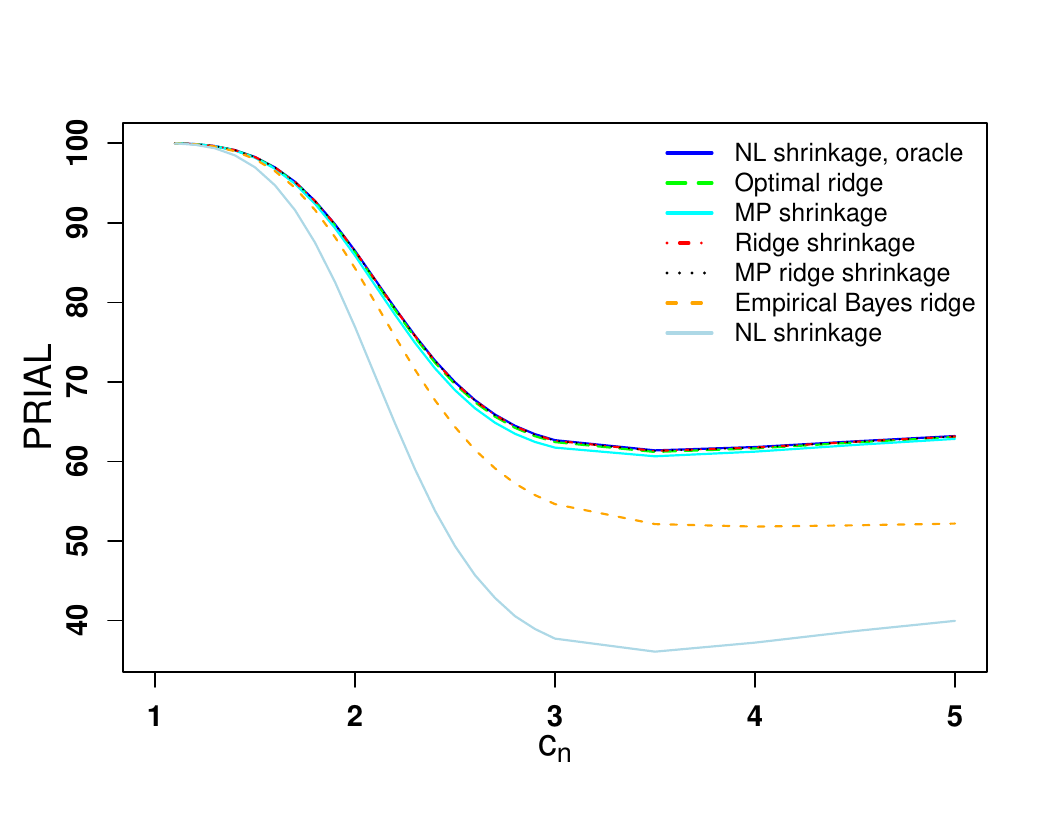}\\[-0.7cm]
\end{tabular}
 \caption{PRIAL for $c_n \in (1,5]$, $n=100$ (first row) and $n=250$ (second row) when the elements of $\bX_n$ are drawn from the normal distribution (first column) and scale $t$-distribution (second column).}
\label{fig:shrinkage-prec-add}
 \end{figure} 
\begin{figure}[h!t]
\centering
\begin{tabular}{cc}
\hspace{-0.5cm}\includegraphics[scale=0.35]{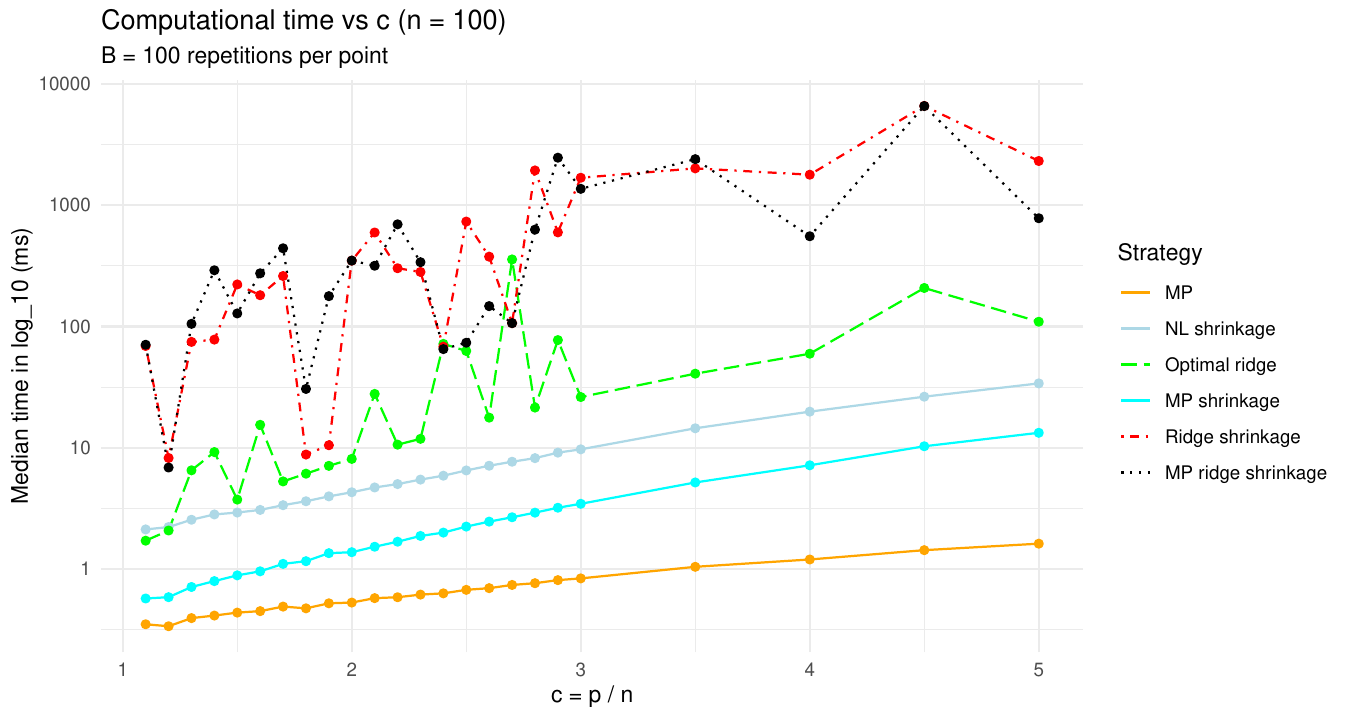}&
\hspace{-0.5cm}\includegraphics[scale=0.35]{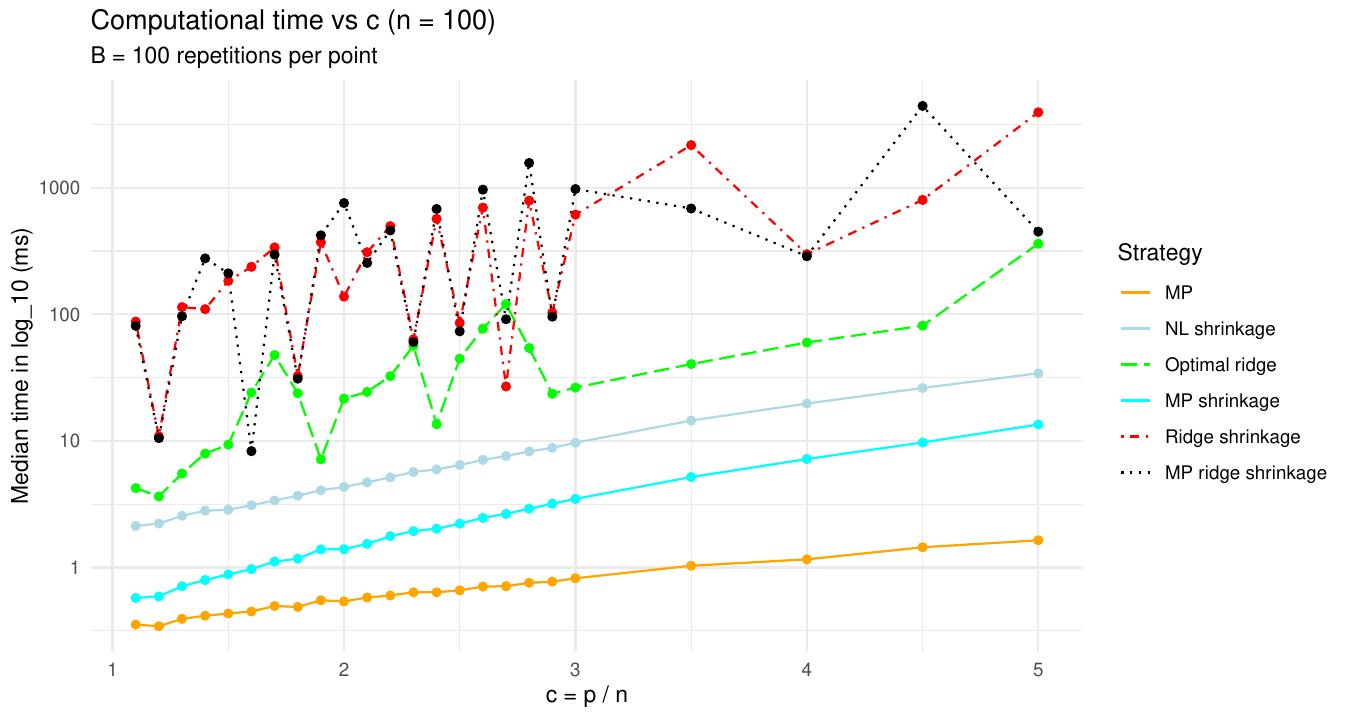}\\[-0.5cm]
\hspace{-0.5cm}\includegraphics[scale=0.35]{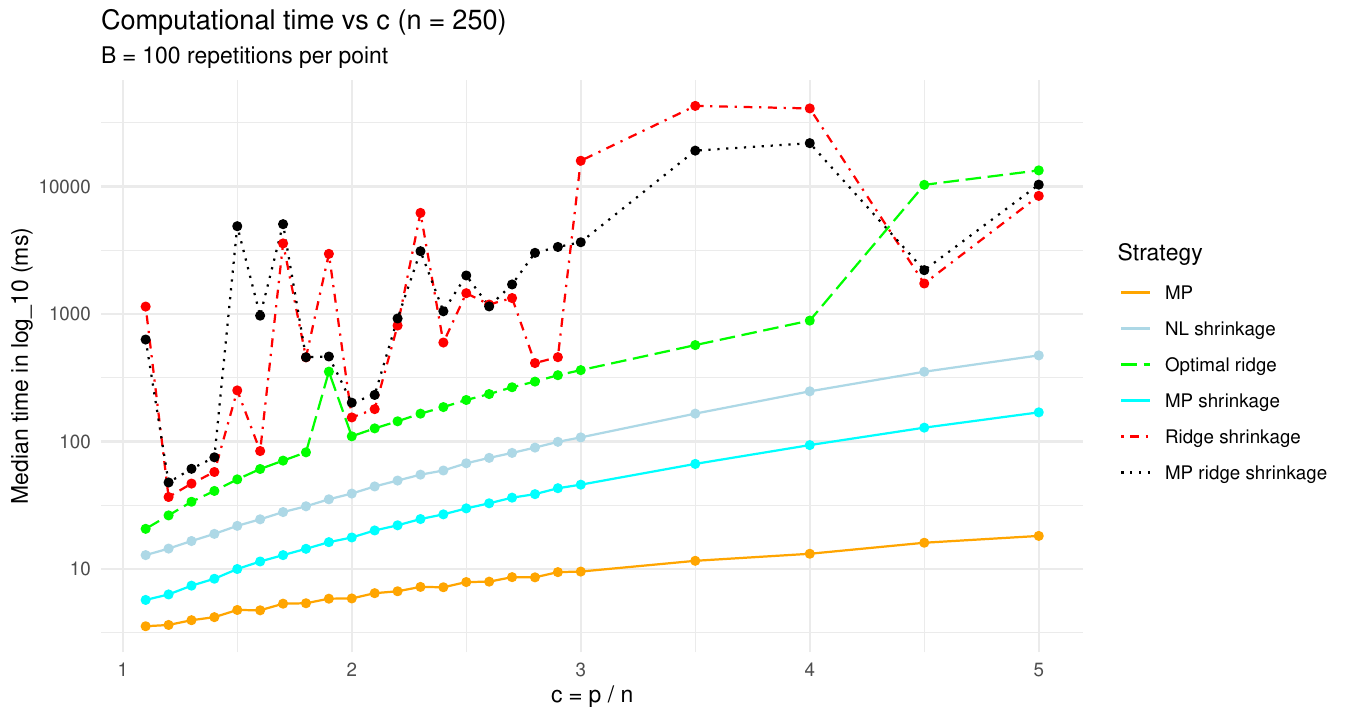}&
\hspace{-0.5cm}\includegraphics[scale=0.35]{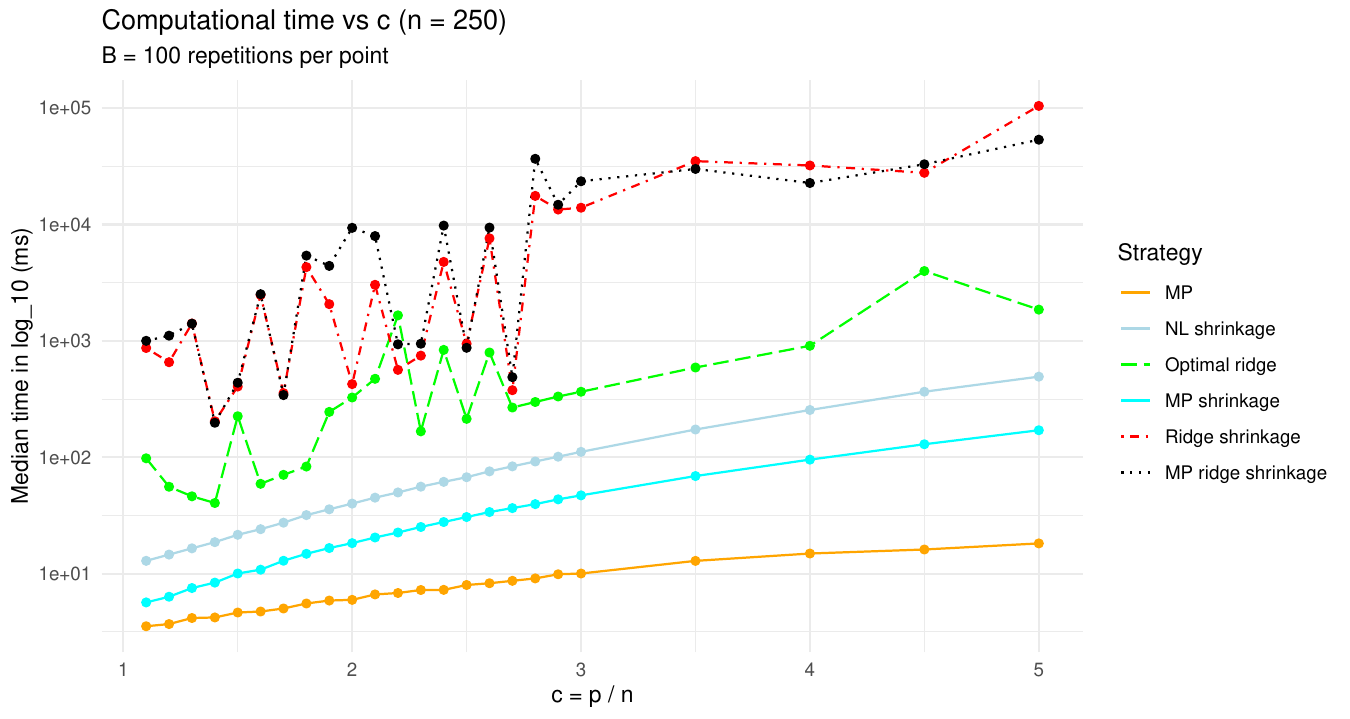}\\[-0.5cm]
\end{tabular}
 \caption{Computational time (in logarithmic scale) for precision matrix estimators for $c_n \in (1,5]$, $n=100$ (first row) and $n=250$ (second row) when the elements of $\bX_n$ are drawn from the normal distribution (first column) and scale $t$-distribution (second column).}
\label{fig:shrinkage-prec-time}
 \end{figure}
Figure \ref{fig:opt-t-prec} depicts the kernel density estimators of the relative differences between the values of $t$ which maximize $\hat{L}^2_{R;2}(t)$ and $L^2_{R;n,2}(t)$ in the case of the ridge estimator, and $L^2_{MPR;n,2}(t)$ and  $\hat{L}^2_{MPR;2}(t)$ for the Moore-Penrose ridge estimator. The results are obtained when the elements of $\bX_n$ are drawn from the normal distribution and $t$-distribution with $n \in \{100,250\}$ and $c_n \in \{1.1,2,4\}$. We observe that the computed kernel densities are concentrated around zero in almost all of the considered cases, except for the extreme case when $n=100$ and $c_n=4$. Furthermore, the kernel densities are skewed to the right with the exception when $n=250$ and $c_n=1.1$. As such, it can be concluded that the optimization of the estimated loss function instead of the true one may lead to larger values of the tuning parameter $t$. However, as the sample size increases, the difference between the two values, as expected, becomes negligible. Finally, faster convergence is achieved when the matrix $\bX_n$ is drawn from the normal distribution. 

\begin{figure}[h!t]
\centering
\begin{tabular}{cc}
$\mathbf{c_n=1.1,~n=100}$&$\mathbf{c_n=1.1,~n=250}$\\[-0.8cm]
\includegraphics[width=7cm]{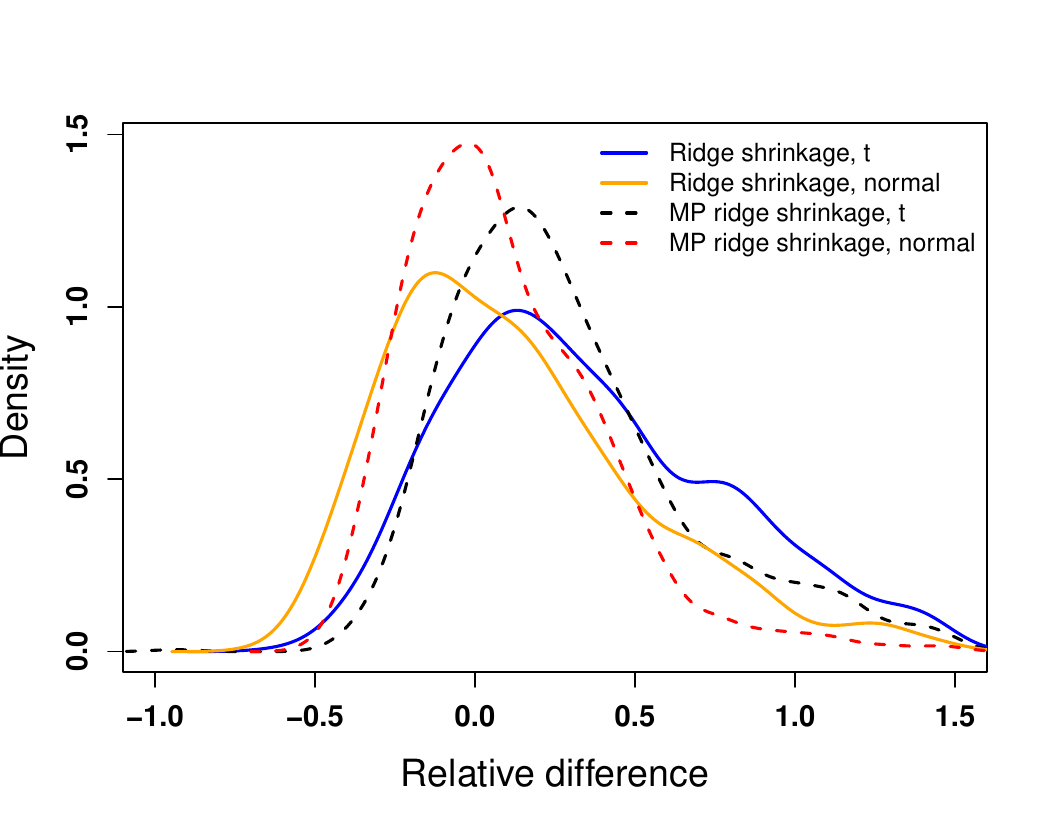}&
\includegraphics[width=7cm]{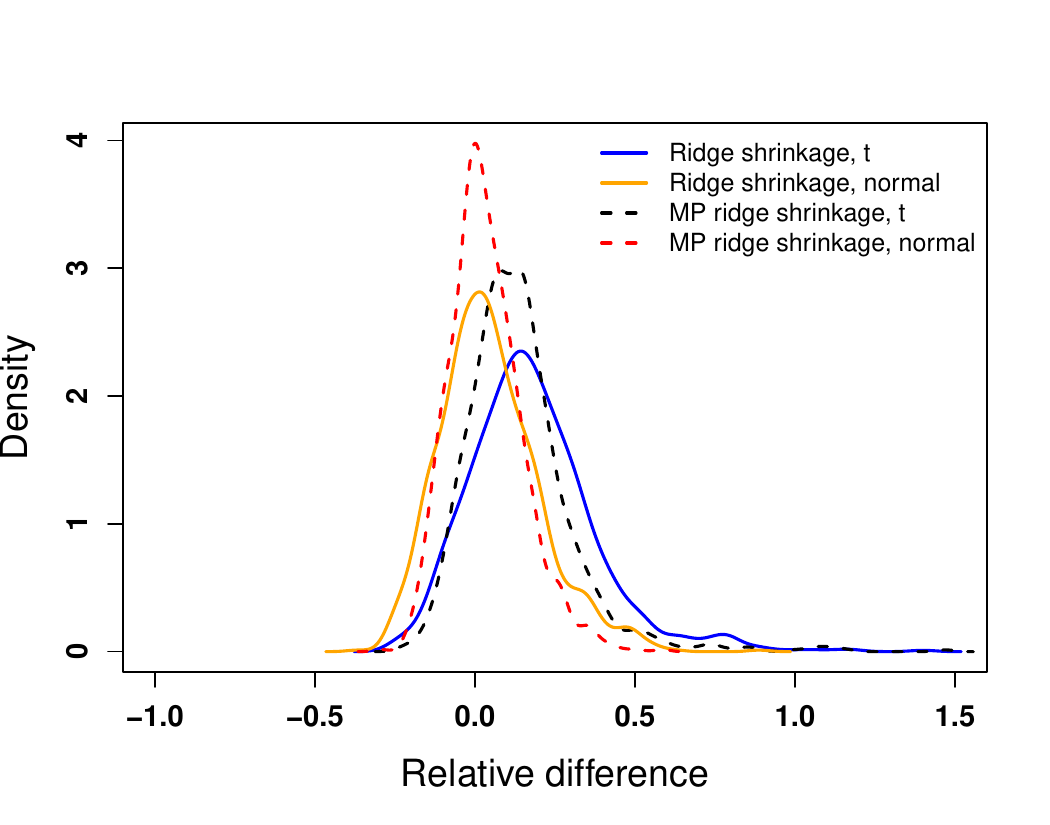}\\
$\mathbf{c_n=2,~n=100}$&$\mathbf{c_n=2,~n=250}$\\[-0.8cm]
\includegraphics[width=7cm]{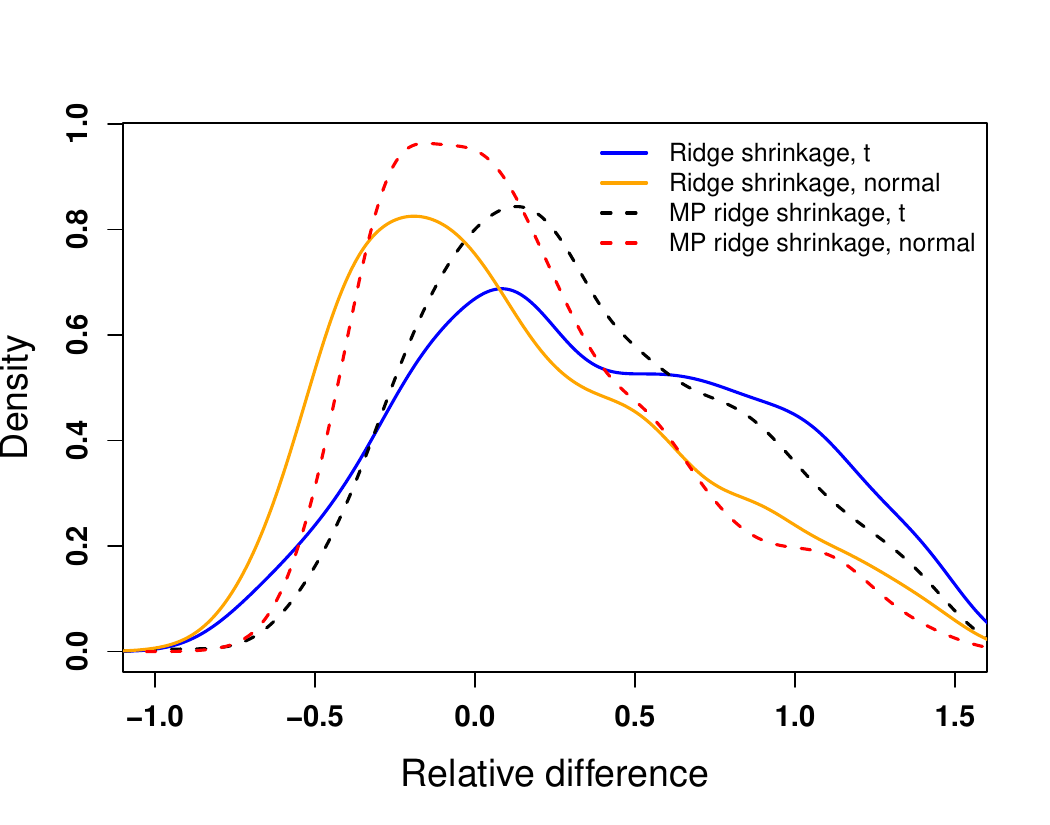}&
\includegraphics[width=7cm]{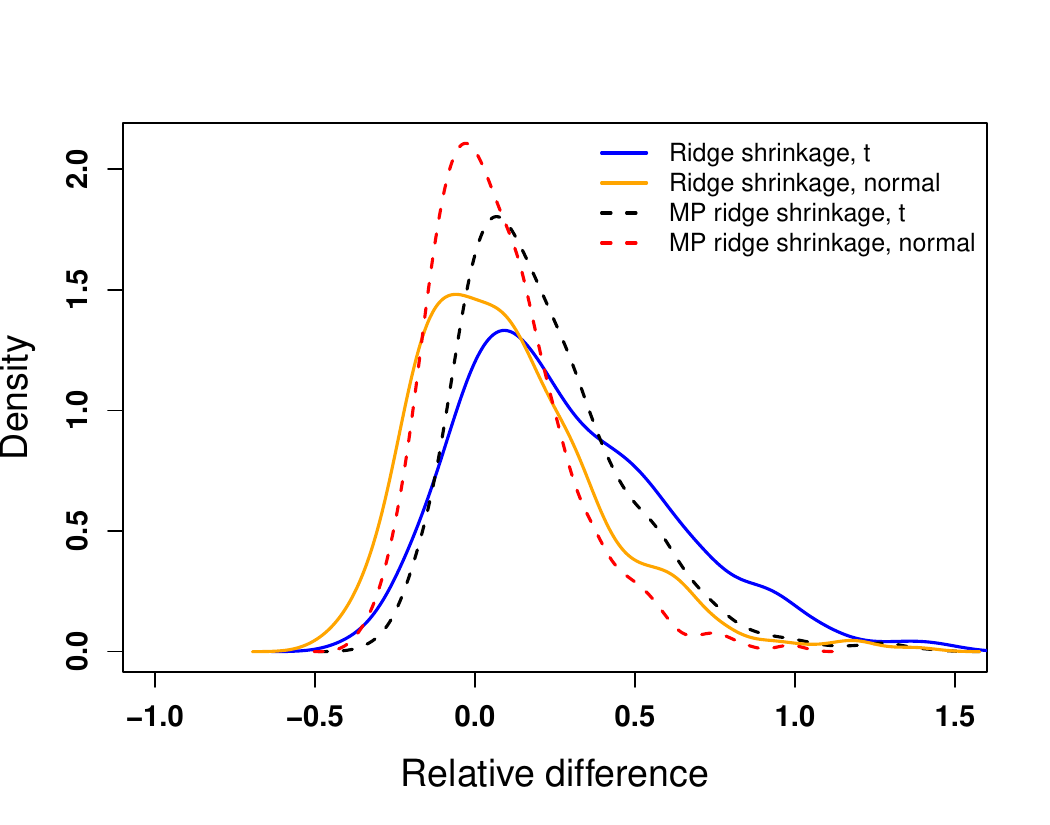}\\
$\mathbf{c_n=4,~n=100}$&$\mathbf{c_n=4,~n=250}$\\[-0.8cm]
\includegraphics[width=7cm]{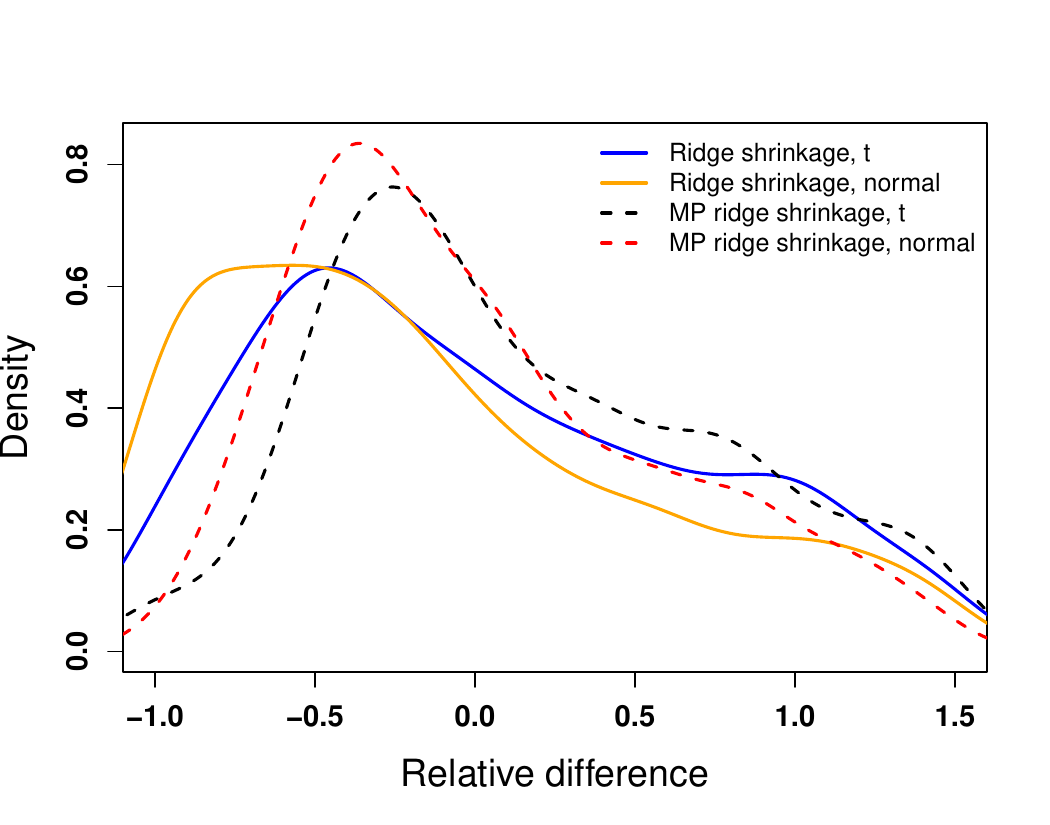}&
\includegraphics[width=7cm]{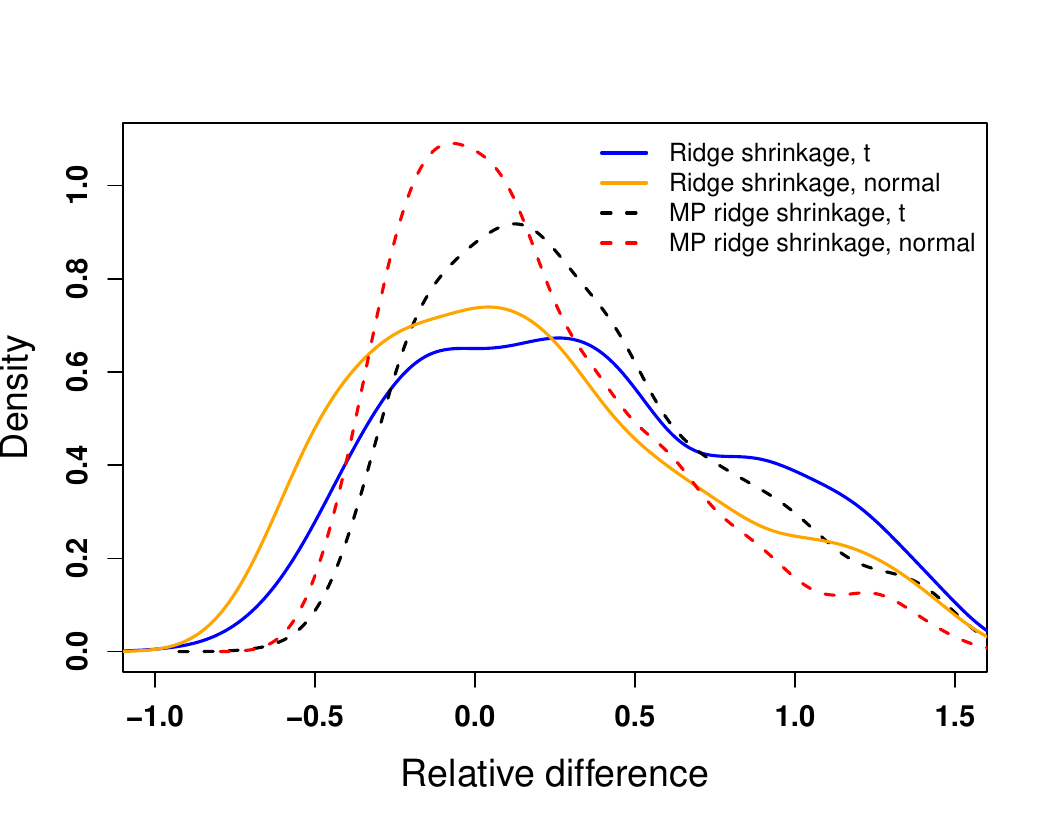}\\
\end{tabular}
 \caption{Relative differences of $t$'s that maximize $L^2_{R;n,2}(t)$ ($L^2_{MPR;n,2}(t)$) and $\hat{L}^2_{R;2}(t)$ ($\hat{L}^2_{MPR;2}(t)$)  for $c_n \in \{1.1,2,4\}$, $n=100$ and $n=250$ when the elements of $\bX_n$ are drawn from the normal distribution and scale $t$-distribution.}
\label{fig:opt-t-prec}
 \end{figure}

\subsection{Benchmarks: Global minimum variance portfolio}\label{sec:benchmarks-gmv}

Three benchmark methods to estimate the weights of the global minimum variance portfolio are considered in Section 4.2:
\begin{itemize}
\item Traditional {\bf sample estimator} is given by
\begin{eqnarray}\label{hw_GMV-S}
  \mathbf{w}_{S}=\frac{\bS_n^{+}\bOne}{\bOne^\top\bS_n^{+}\bOne},
\end{eqnarray}
where $\bS_n^{+}$ is the Moore-Penrose inverse of $\bS_n$.
\item {\bf Reflexive inverse estimator} of \cite{bodnar2018estimation} is defined by
\begin{eqnarray}\label{hw_GMV-ref}  \mathbf{w}_{Ref}=\hat{\alpha}^*_{Ref}\mathbf{w}_{S}+(1-\hat{\alpha}^*_{Ref})\mathbf{b},
\end{eqnarray}
with
\[\hat{\alpha}^*_{Ref}=\frac{(p/n-1)\hat{R}_{Ref}}{(p/n-1)^2+p/n+(p/n-1)\hat{R}_{Ref}}\]
where
\[\hat{R}_{Ref}=\frac{p}{n}\left(\frac{p}{n}-1\right)\mathbf{b}^\top\bS_n\mathbf{b}\cdot\bOne^\top\bS_n^{+}\bOne-1\]
In the simulation study, we use the equally weighted portfolio as the target portfolio $\mathbf{b}$, i.e., $\mathbf{b}=\frac{1}{p}\mathbf{1}$.

\item {\bf Double shrinkage estimator} or {\bf ridge shrinkage estimator} of \cite{BPTJMLR2024} after some simplifications is expressed as
\begin{eqnarray}\label{hw_GMV-ridge}  \mathbf{w}_{Rid}=\hat{\alpha}^*_{Rid}(\hat{\eta})\frac{\bS_n^{-}(\hat{\eta})\bOne}{\bOne^\top\bS_n^{-}(\hat{\eta})\bOne}+(1-\hat{\alpha}^*_{Rid}(\hat{\eta}))\mathbf{b},
\end{eqnarray}
with
\[\hat{\alpha}^*_{Rid}(\hat{\eta})=\frac{1-
        \frac{(1+\hat{\eta})^{-1}}{\bb^\top \bS_n \bb
        }\frac{d_1(\hat{\eta})}{ \bOne^\top \bS_n^{-}(\hat{\eta})\bOne}
        }
        {1-\frac{2(1+\hat{\eta})^{-1}}{\bb^\top \bS_n \bb} \frac{d_1(\hat{\eta})}{\bOne^\top \bS_{n}^{-}(\hat{\eta})\bOne}
        +\frac{(1+\hat{\eta})^{-2}}{\bb^\top \bS_n \bb} \frac{(1-\hat{v}_2^{(1)}(\hat{\eta}, 0))d_2(\hat{\eta})}{\left(\bOne^\top \bS_{n}^{-}(\hat{\eta})\bOne\right)^2}}
\]
where 
\begin{eqnarray*}
d_1 (\eta)&=&  \frac{(1+\eta)}{ \hat{v}(\eta, 0)}\left(1-\eta \bb^\top \bS_n^{-}(\eta) \bOne\right),\\  
d_2(\eta)&=& \frac{(1+\eta)^2}{\hat{v}(\eta, 0)}\left( \bOne^\top\bS_{n}^{-}(\eta)\bOne  - \eta\bOne^\top(\bS_{n}^{-}(\eta))^2\bOne 
\right),
\\
\hat{v}(\eta, 0) &=& 1-\frac{p}{n}\left(1-\eta\frac{1}{p}\tr\left(\bS_{n}^{-}(\eta)\right) \right),\\
\hat{v}_1^{(1)}(\eta, 0)
&=&\hat{v}(\eta,0) \frac{p}{n}\left(\frac{1}{p}\tr\left(\bS_{n}^{-}(\eta)\right)-\eta\frac{1}{p}\tr\left(\left(\bS_{n}^{-}(\eta) \right)^{2}\right)\right),\\
\hat{v}_2^{(1)}(\eta, 0) &=&
1-\frac{1}{\hat{v}(\eta,0)}+\eta\frac{\hat{v}_1^{(1)}(\eta,0)}
{\hat{v}(\eta,0)^2},
\end{eqnarray*}
and $\hat{\eta}$ maximizes of the following function
\[
 \hat{L}_{n;2}(\eta)=
    \frac{\left(1-
        \frac{(1+\eta)^{-1}}{\bb^\top \bS_n \bb
        }\frac{d_1(\eta)}{ \bOne^\top \bS_n^{-}(\eta)\bOne}\right)^2
        }
        {1-\frac{2(1+\eta)^{-1}}{\bb^\top \bS_n \bb} \frac{d_1(\eta)}{\bOne^\top \bS_{n}^{-}(\eta)\bOne}
        +\frac{(1+\eta)^{-2}}{\bb^\top \bS_n \bb} \frac{(1-\hat{v}_2^{(1)}(\eta, 0))d_2(\eta)}{\left(\bOne^\top \bS_{n}^{-}(\eta)\bOne\right)^2}}.
\]
In the simulation study, the target portfolio $\bb$ is set to be equal to the equally weighted portfolio.
\end{itemize}

\subsection{Additional figures: Global minimum variance portfolio}\label{sec:S-fig-gmv}

In Figure \ref{fig:shrinkage-gmv-add}, we present the results of the simulation study for the proposed MP shrinkage estimator of the GMV portfolio weights alongside three benchmark approaches. The results in Figure \ref{fig:shrinkage-gmv-add} complement those in Figure 4 by including the rOSV values computed for the traditional GMV portfolio estimator obtained by directly plugging in the Moore-Penrose inverse (see \eqref{hw_GMV-S}). Figure  \ref{fig:shrinkage-gmv-time} complements the findings from Figure \ref{fig:shrinkage-gmv-add} by presenting the corresponding computational time.
\begin{figure}[h!t]
\centering
\begin{tabular}{cc}
\hspace{-0.5cm}\includegraphics[width=7cm]{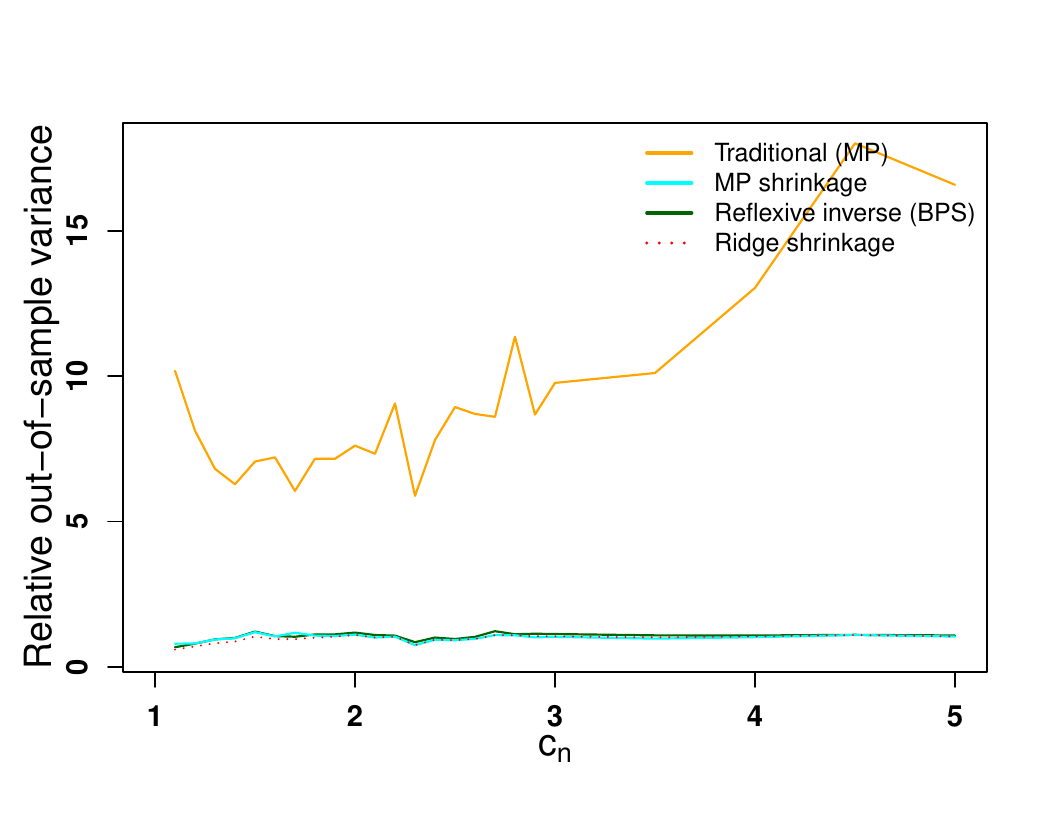}&
\hspace{-0.5cm}\includegraphics[width=7cm]{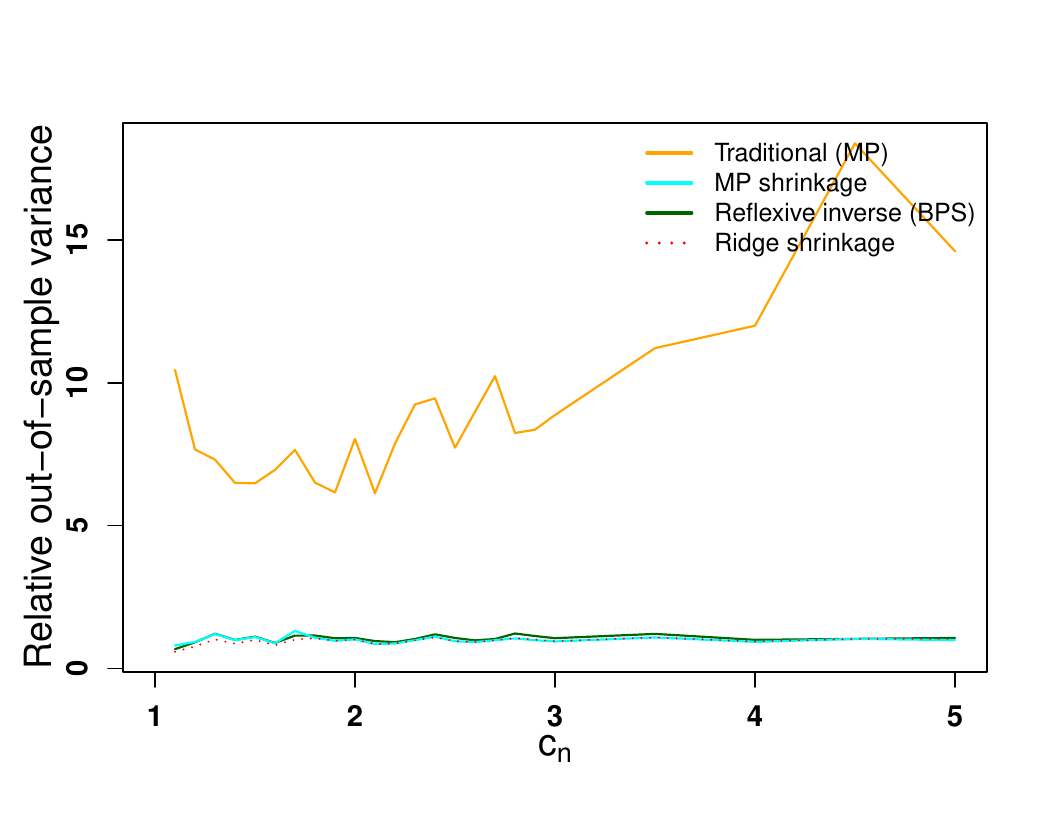}\\[-1cm]
\hspace{-0.5cm}\includegraphics[width=7cm]{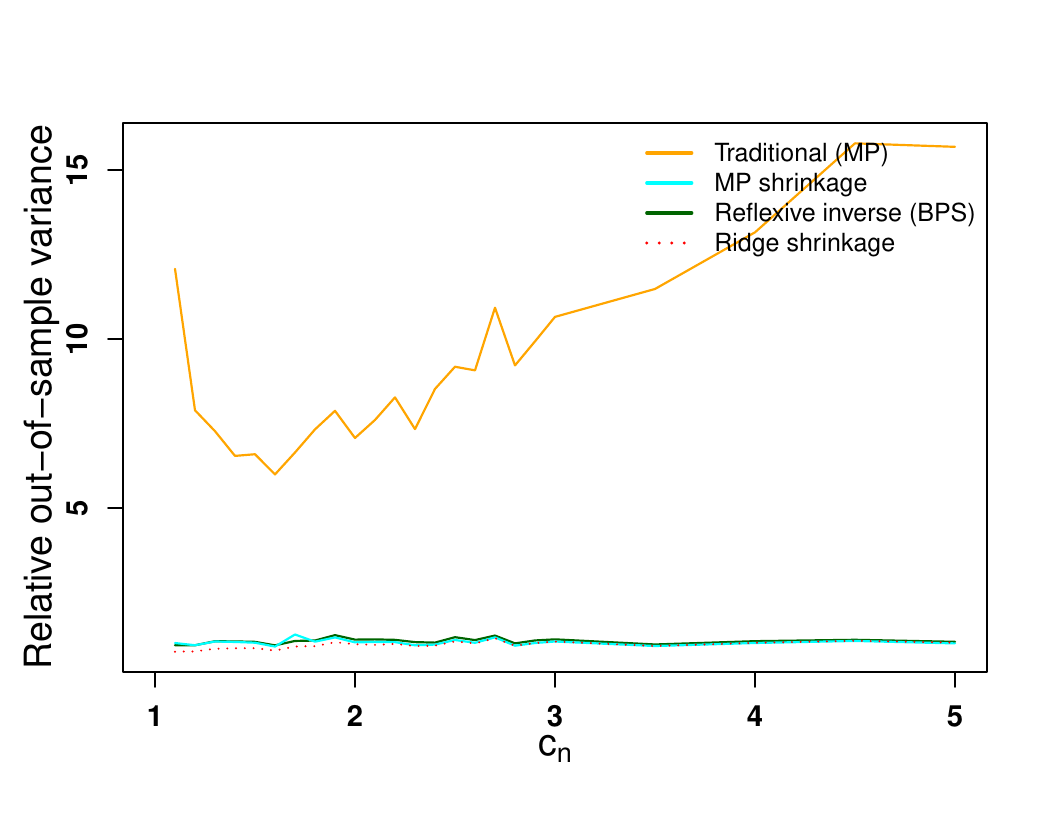}&
\hspace{-0.5cm}\includegraphics[width=7cm]{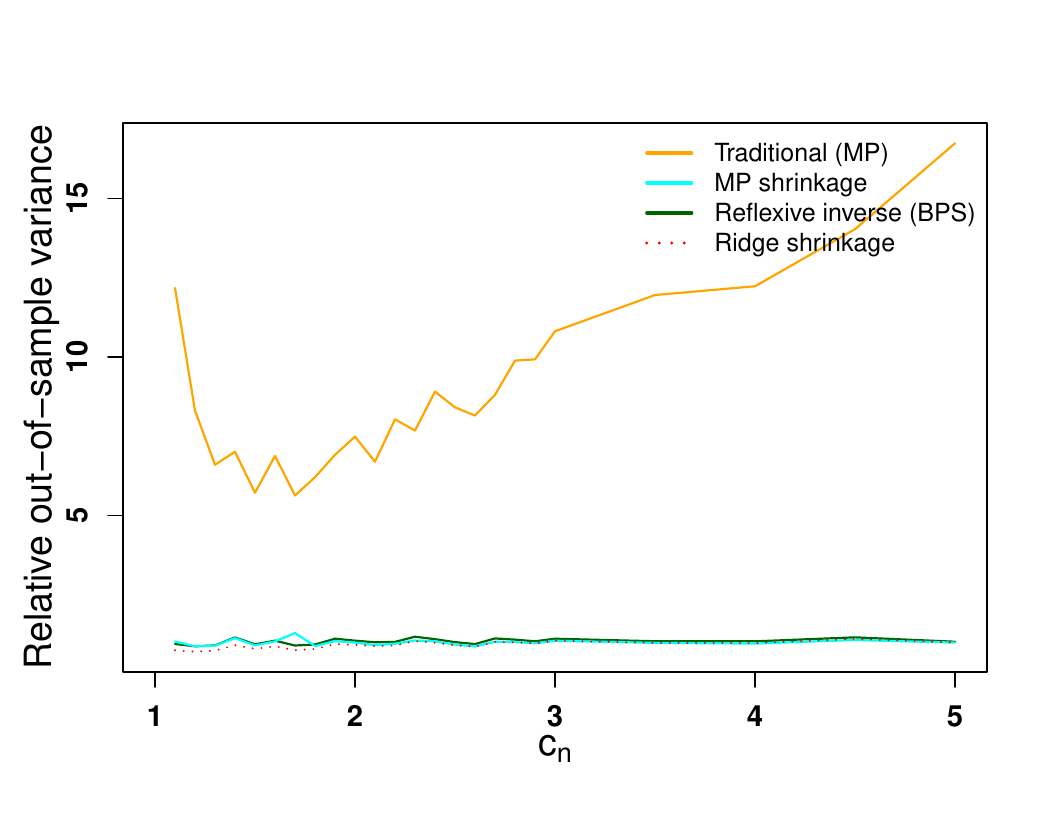}\\[-0.7cm]
\end{tabular}
 \caption{rOSV for $c_n \in (1,5]$, $n=100$ (first row) and $n=250$ (second row) when the elements of $\bX_n$ are drawn from the normal distribution (first column) and scale $t$-distribution (second column).}
\label{fig:shrinkage-gmv-add}
 \end{figure}
 \begin{figure}[h!t]
\centering
\begin{tabular}{cc}
\hspace{-0.5cm}\includegraphics[scale=0.35]{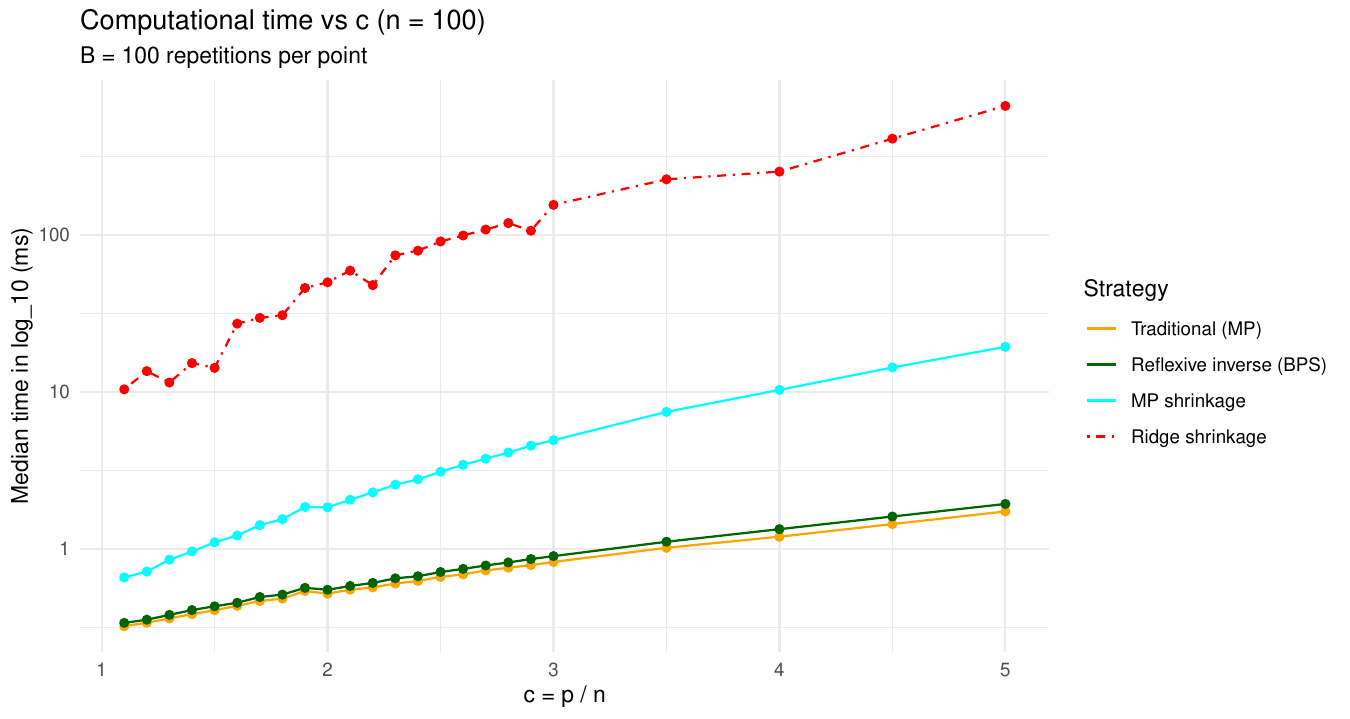}&
\hspace{-0.5cm}\includegraphics[scale=0.35]{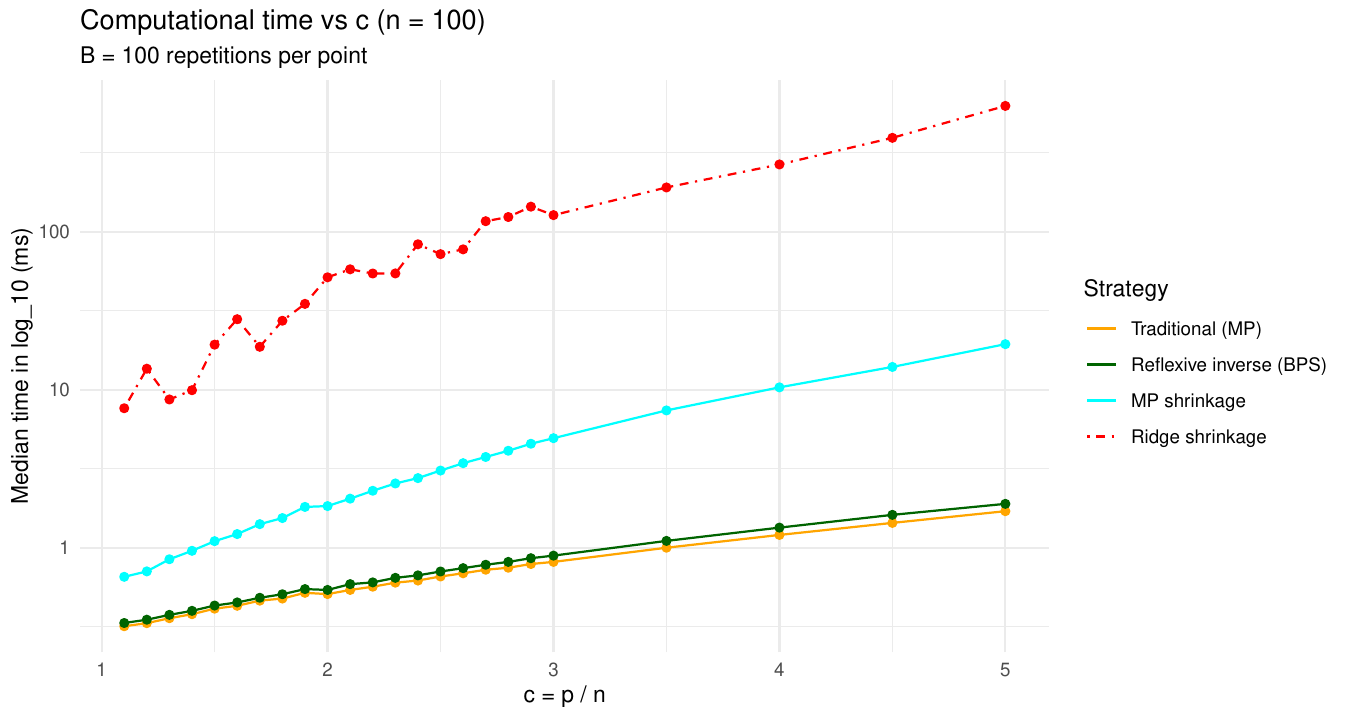}\\[-0.5cm]
\hspace{-0.5cm}\includegraphics[scale=0.35]{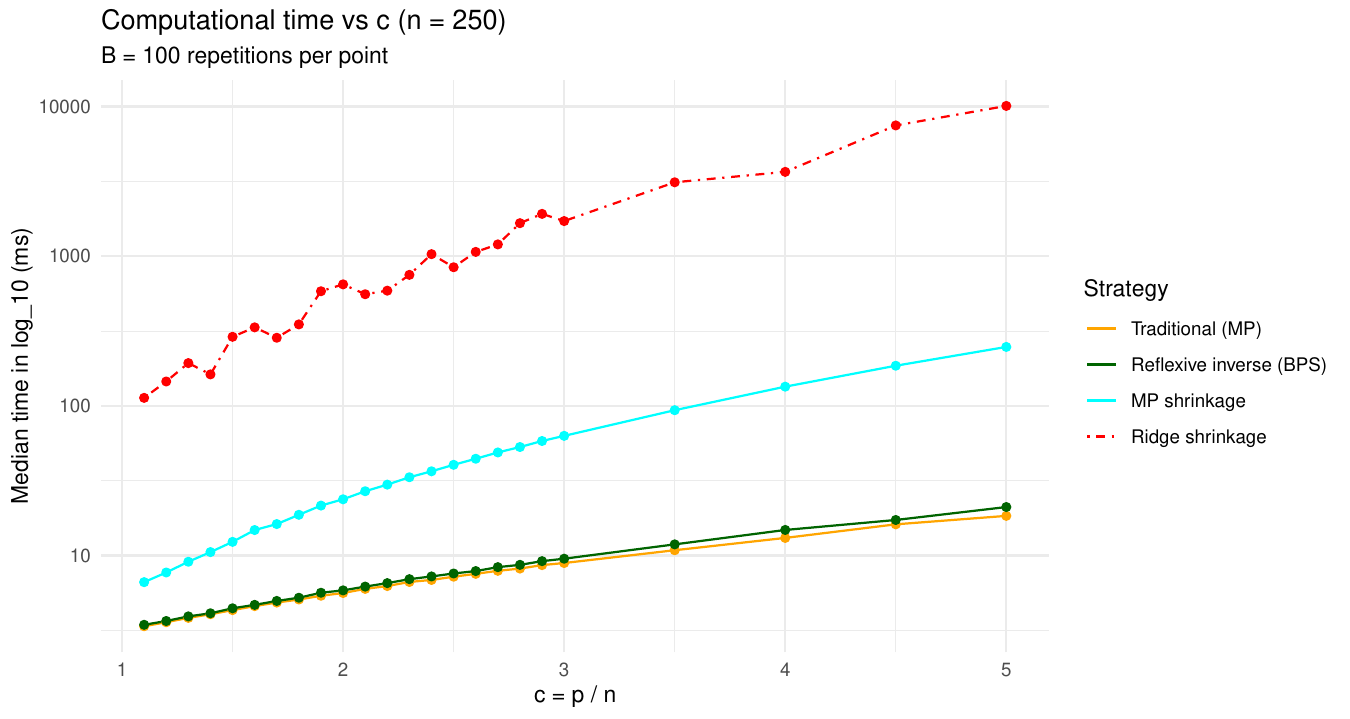}&
\hspace{-0.5cm}\includegraphics[scale=0.35]{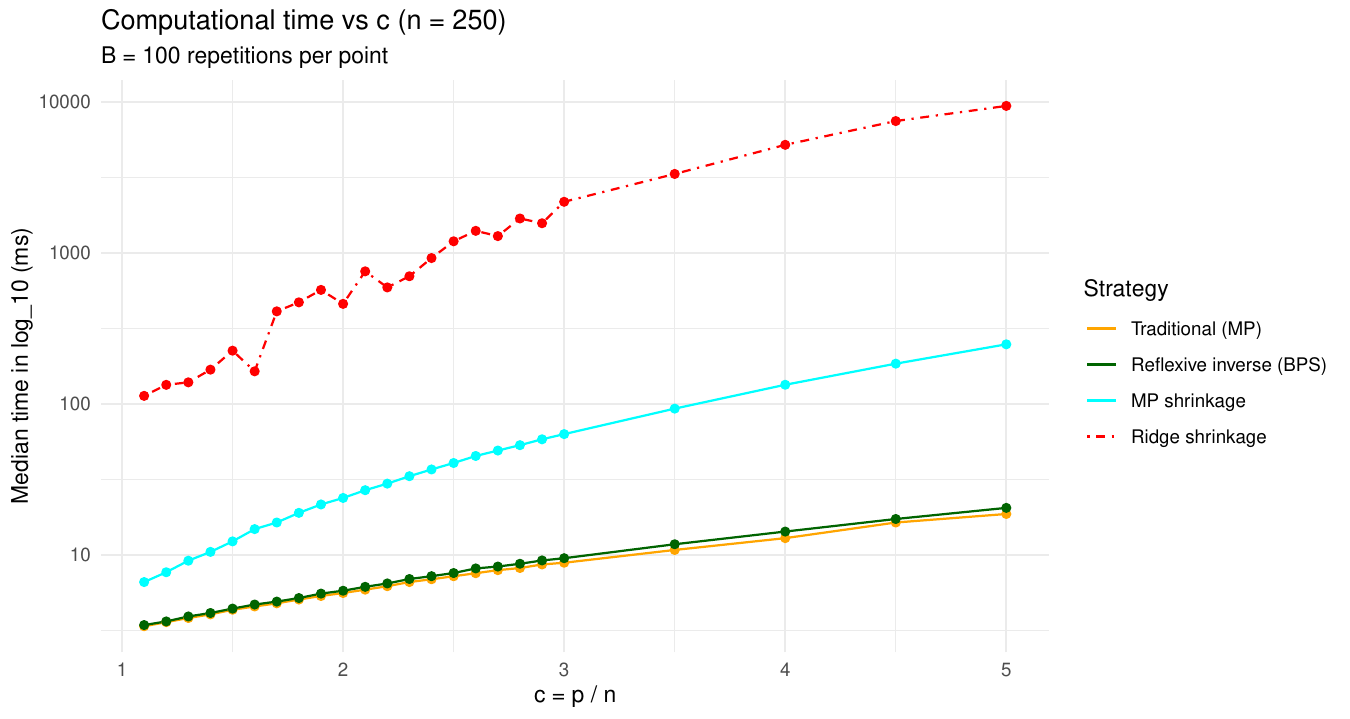}\\[-0.5cm]
\end{tabular}
 \caption{Computational time (in logarithmic scale) for GMV estimators for $c_n \in (1,5]$, $n=100$ (first row) and $n=250$ (second row) when the elements of $\bX_n$ are drawn from the normal distribution (first column) and scale $t$-distribution (second column).}
\label{fig:shrinkage-gmv-time}
 \end{figure}

\bibliography{ref}

\end{document}